\renewcommand\section{\@startsection {section}{1}{\z@}%
  {-2ex \@plus -1ex \@minus -.2ex}%
  {1ex \@plus.1ex}%
  {\normalfont\bf\sffamily}}
\renewcommand\subsection{\@startsection{subsection}{2}{\z@}%
  {-1.75ex\@plus -0.4ex \@minus -.2ex}%
  {0.6ex \@plus .1ex}%
  {\normalfont\small\bf\sffamily}}
\renewcommand\subsubsection{\@startsection{subsubsection}{3}{\z@}%
  {-0.6ex\@plus -0.2ex \@minus -.2ex}%
  {0.4ex \@plus .1ex}%
  {\normalfont\normalsize\it}}
\renewcommand\paragraph{\@startsection{paragraph}{4}{\z@}%
  {0.2ex \@plus0.2ex \@minus0.1ex}{-0.5em}%
  {\normalfont\normalsize\bfseries}}
\def\ps@headings{%
  \let\@oddfoot\@empty
  \let\@evenfoot\@empty
  \def\@evenhead{\small\sffamily\thepage\hfil\slshape\leftmark}%
  \def\@oddhead{\small\sffamily{\slshape\rightmark}\hfil\thepage}%
  \let\@mkboth\markboth
  \def\chaptermark##1{\markboth{{\ifnum \c@secnumdepth >\m@ne
		\if@mainmatter \@chapapp\ \thechapter. \ \fi \fi ##1}}{}}%
  \def\sectionmark##1{\markright {{\ifnum \c@secnumdepth >\z@
		\thesection. \ \fi ##1}}}}
\def\fbf#1{\setbox0=\hbox{$#1$}\kern-0.10\wd0
  \lower0.02em\copy0\kern-\wd0 \lower0.02em\hbox{\kern+0.04em\copy0}\kern-\wd0
  \raise0.00em\copy0\kern-\wd0 \raise0.00em\hbox{\kern-0.04em\box0}}
\def\overl@ss#1#2{\vcenter{\offinterlineskip
        \ialign{$\m@th#1\hfil##\hfil$\crcr#2\crcr<\crcr } }}
\def\gl{\mathrel{\mathpalette\overl@ss>}}
\numberwithin{equation}{section}
\newtheorem{theorem}{Theorem}[section]
\newtheorem{definition}[theorem]{Definition}
\newtheorem{corollary}[theorem]{Corollary}
\newtheorem{lemma}[theorem]{Lemma}
\newtheorem{remark}[theorem]{Remark}
\newtheorem{proposition}[theorem]{Proposition}
\def\maketitle{\par\noindent{\LARGE\bf\sffamily\thetitle}\\[1.4ex]
{\large\theauthor}\\[0.6ex]
\textit{\thetextinfo}\\[0.2ex]
{\small\today}\par\vglue1.4\bigskipamount}
\def\title#1{\def\thetitle{#1}}
\def\author#1{\def\theauthor{#1}}
\def\textinfo#1{\def\thetextinfo{#1}}
\def\be{\begin{equation}}
\def\ee{\end{equation}}
\def\bse{\begin{subequations}}
\def\ese{\end{subequations}}
\newtheorem{RHP}{RHP}[section]
\definecolor{deeppurple}{rgb}{0.5, 0, 0.7}
\def\half{{\textstyle\frac12}}
\def\diag{\mathop{\rm diag}\nolimits}
\def\Natural{\mathbb{N}}
\def\Real{\mathbb{R}}
\def\R{\mathbb{R}}
\def\Complex{\mathbb{C}}
\def\I{\mathbb{I}}
\def\Integer{\mathbb{Z}}
\def\Z{\mathbb{Z}}
\def\i{\text{i}}
\def\Re{\mathop{\rm Re}\nolimits}
\def\Im{\mathop{\rm Im}\nolimits}
\def\Res{\mathop{\rm Res}\limits}
\def\tr{\mathop{\rm tr}\nolimits}
\def\re{\mathrm{re}}
\def\im{\mathrm{im}}
\def\d{\mathrm{d}}
\def\sgn{\mathop{\rm sgn}\nolimits}
\def\e{\mathop{\rm e}\nolimits}
\def\@#1{{\mathbf{#1}}}
\def\_#1{{\mathsf{#1}}}
\def\L{\mathcal{L}}
\def\min{\mathop{\rm min}\nolimits}
\def\max{\mathop{\rm max}\nolimits}
\def\Dir{{\mathrm{Dir}}}
\def\note[#1]{\marginpar{\color{blue}[#1]}}
\def\partialderiv#1#2{{\frac{\partial #1}{\partial #2}}}
\def\txtfrac#1#2{{\textstyle\frac{#1}{#2}}}
\def\C{{\mathbb C}}
\def\R{{\mathbb R}}
\def\Z{{\mathbb Z}}
\def\D{\Delta}
\def\1{{\bf 1}}
\def\z{\zeta}
\def\l{\lambda}
\def\A{\mathcal{A}}
\def\e{\mathrm{e}}
\def\g{g}
\let\trueparagraph=\paragraph
\def\paragraph#1{\par\smallskip\trueparagraph{\rm\textbf{#1}}}
\def\bm{\begingroup\color{magenta}}
\def\em{\endgroup}
\def\bm{\begingroup}
\begin{document}
\pagestyle{plain}

\title{Spectral theory for periodic self-adjoint Dirac\\[0.1ex]
operators and inverse scattering transform for the\\[0.1ex]
defocusing nonlinear Schr\"odinger equation with\\[0.1ex]
periodic boundary conditions}
\author{Gino Biondini and Zechuan Zhang}
\textinfo{Department of Mathematics, State University of New York at Buffalo, Buffalo, New York 14260}
\maketitle

\begin{abstract}
\noindent
The inverse spectral theory for a self-adjoint one-dimensional Dirac operator associated
periodic potentials is formulated via a Riemann-Hilbert problem approach.
The resulting formalism is also used to solve the initial value problem for the nonlinear Schr\"odinger (NLS) equation.
A uniqueness theorem for the solutions of the Riemann-Hilbert problem is established, 
which provides a new method for obtaining the potential from the spectral data.
Two additional, scalar Riemann-Hilbert problems are also formulated that provide conditions for the periodicity in space and time of the solution generated by arbitrary sets of spectral data.
The formalism applies for both finite-genus and infinite-genus potentials.
The formalism also shows that only a single set of Dirichlet eigenvalues is needed in order to uniquely reconstruct 
the potential of the Dirac operator and the corresponding solution of the defocusing NLS equation,
in contrast with the representation of the solution of the NLS equation via the  trace formulae in the finite-genus formalism,
in which two different sets of Dirichlet eigenvalues are used.
\end{abstract}

\medskip
\tableofcontents

\section{Introduction and outline} 
\label{s:intro}

In this work we formulate the direct and inverse spectral theory for the self-adjoint Zakharov-Shabat (ZS) operator 
with periodic potentials,
and we use the results to solve the initial value problem for the defocusing nonlinear Schr\"odinger (NLS) equation
with periodic boundary conditions.
More precisely, we study the eigenvalue problem 
\vspace*{-1ex}
\be
\L\,v = z\,v\,,
\label{e:Diraceigenvalueproblem}
\ee
where the matrix-valued differential operator
\vspace*{-1ex}
\be
\L:= \i\sigma_{3}(\partial_{x} - Q)\,
\label{e:Diracoperator}
\ee
with $v = v(x,z) = (v_1, v_2)^{T} $, the superscript $T$ denoting matrix transpose, 
where $\sigma_3$ is the third Pauli matrix (cf.\ Appendix~\ref{a:notation}), 
and $Q(x)$ is the matrix-valued function
\be
Q(x) = \begin{pmatrix} 0 & q(x) \\ q^*(x) & 0 \end{pmatrix}\,,
\label{e:Q}
\ee
the asterisk denoting complex conjugation,
and the ``potential'' $q: \R \to \C$,
is a function 
in $L^1_\mathrm{loc}(\R)$ 
with minimal period~$L$,
i.e.,
\be
q(x+L) = q(x)\,,\quad \forall x\in\Real\,.
\label{e:Qperiodic}
\ee
The operator $\L$ in~\eqref{e:Diracoperator} 
is a one-dimensional Dirac operator acting in $L^2(\Real,\C^2)$ with dense domain $H^{1}(\Real,\C^2)$.
The Lax spectrum $\Sigma(\L)$ of $\L$ is defined as the set 
\be
\Sigma(\L) := \big\{z\in\C: \mathcal L\phi=z\phi,\,\, 0<\|\phi\|_{L^{\infty}(\R;\C^2)} < \infty \big\}\,,
\label{e:laxspec}
\ee
i.e., the set of complex numbers $z$ such that \eqref{e:Diraceigenvalueproblem} has at least one bounded nonzero solution. 
It can be proved that for $q$ locally integrable the Lax spectrum defined above equals the spectrum of the maximal operator associated with $\mathcal L$ in $L^2(\R;\C^2)$, the space of square-integrable two-component vector-valued functions,
namely the set $\{z\in\Complex : z\notin\rho(\mathcal L)\}$, where $\rho(\mathcal L)$ is the resolvent set of $\mathcal L$ \cite{rofebek}. 
Throughout this work, we will use the term ``spectrum'' as a synonym for the Lax spectrum. 
Also, to avoid technical complications, we will require $q\in C^2([0,L])$ for simplicity,
unless explicitly stated otherwise.


The eigenvalue problem~\eqref{e:Diraceigenvalueproblem} is of fundamental interest in the theory of completely integrable 
nonlinear evolution equations, 
since it comprises the first half of the Lax pair for the defocusing NLS equation, namely the partial differential equation (PDE)
\be
\i q_t + q_{xx} - 2 |q|^2 q = 0\,,
\label{e:nls}
\ee 
with $q = q(x,t)$, 
and where subscripts $x$ and $t$ denoting partial differentiation.
Indeed, Zakharov and Shabat \cite{ZS1972,ZS1973} showed that \eqref{e:nls} is the compatibility condition of the following
overdetermined linear system of ordinary differential equations (ODEs)
\vspace*{-1ex}
\bse%
\label{e:NLSLP}
\begin{gather}
v_x = (-\i z\sigma_3 + Q(x,t))\,v\,,
\label{e:zs}
\\
v_t = ( -2\i z^2\sigma_3 + H(x,t,z))\,v\,,
\label{e:NLSLP2}
\end{gather}
\ese
where $v = v(x,t,z)$,
with 
$H(x,t,z) = 2zQ - \i\sigma_3(Q^2-Q_x)$ and 
$Q(x,t)$ has the same form as in~\eqref{e:Q}.
For this reason, \eqref{e:zs} is referred to as the Zakharov-Shabat scattering problem,
$z\in\C$ as the scattering (or spectral) parameter,
and $v$ as the scattering eigenfunction.
It is trivial to see~\eqref{e:zs} is equivalent to the eigenvalue problem~\eqref{e:Diraceigenvalueproblem}.
Together, \eqref{e:zs} and~\eqref{e:NLSLP2} comprise the Lax pair of the NLS equation~\eqref{e:nls}.

In 1967, Gardner, Greene, Kruskal and Miura pioneered the use of direct and inverse spectral methods to solve the 
initial value problem (IVP) for the Korteweg-de\,Vries equation \cite{GGKM},
whose scattering problem is the time-independent Schr\"odinger equation.
Then, using a similar approach, in \cite{ZS1972,ZS1973} Zakharov and Shabat 
showed how the Lax pair~\eqref{e:NLSLP} can be used to solve the IVP for the NLS equation~\eqref{e:nls}
via a technique which is now referred to as the inverse scattering transform (IST),
in which the direct and inverse spectral theory of the Zakharov-Shabat problem~\eqref{e:zs} play a crucial role.
Shortly afterwards, similar ideas were shown by Ablowitz, Kaup, Newell and Segur to also apply to a rather large class of integrable nonlinear PDEs \cite{AKNS1974}.
Various significant further developments of the IST then followed 
(e.g., see \cite{AC1991,APT2004,AS1981,BDT1988,DeiftTrubowitz,FT1987,Konopelchenko,NMPZ1984}),
a key one among them being the reformulation of the inverse problem
(i.e., the reconstruction of the potential from the scattering data) via a matrix Riemann-Hilbert problem (RHP) \cite{AC1991,BealsCoifman,Deift1998,Its2003,NMPZ1984,Zhou1989},
which in turn made it possible to formulate powerful asymptotic techniques such as the Deift-Zhou method \cite{DeiftZhou1991},
which has been used with great success in a variety of different settings.

Following the development of the direct and inverse spectral theory and IST for localized potentials 
(i.e., potentials decaying rapidly as $x\to\pm\infty$, 
the precise conditions dependending on the particular PDE under consideration), 
a natural next step was the study of IVP with periodic boundary conditions.
The direct and inverse spectral theory for Hill's equation
(i.e., the time-independent Schr\"odinger equation with periodic potentials)
and its application to the KdV equations with periodic boundary conditions
has of course a long and distinguished history, going back to the early 1970s, 
related to the so-called finite-genus formalism, 
e.g., see 
\cite{BBEIM,Dubrovin1975,ItsMatveev1,ItsMatveev2,ItsMatveev3,KappelerPoschel2010,Lax1975,MW1966,Marchenko1974,MarchenkoOstrovsky1975,Matveev2008,McKeanVanMoerbeke,Novikov1974,NMPZ1984}.
The theory was also extended to the case of infinite genus in \cite{KappelerPoschel2010,McKeanTrubowitz1,McKeanTrubowitz2},
including the construction of global coordinate transformations which map the KdV flow to a periodic flow on an infinite-dimensional torus.
The direct and inverse spectral theory of focusing and defocusing Zakharov-Shabat operators
and in particular their finite-gap formalism
also have a long history
\cite{GesztesyHolden,gesztesyweikard_acta1998,gesztesyweikard_bams1998,Harutyunyan,ItsKotlyarov1976,Kotlyarov1976,Levitan1975,Levitan1991,MaAblowitz,McKean1981,McLaughlinOverman}.
In particular, it is well known that, for the self-adjoint Zakharov-Shabat spectral problem~\eqref{e:Diraceigenvalueproblem} with periodic potentials,
$\Sigma(\mathcal L)$ is purely continuous \cite{danilov,Eastham,hislop,rofebek}
(that is, essential without any eigenvalues and empty residual spectrum),
and it is comprised of an at most countable collection of segments 
referred to as spectral bands, also often referred to as stability bands.
All such bands lie along the real $z$-axis, separated by spectral gaps (also often referred to as instability bands),
with the endpoints of the gaps lying at interlaced periodic and antiperiodic real eigenvalues.


Despite its undeniable success, the finite-genus approach also has certain drawbacks.
For infinite gap potentials, computation of the map from the spectral variables to the infinite-genus torus requires solving 
an infinite-genus Jacobi inversion problem,
and the infinite-dimensional Riemann period matrix must be explicitly computed in order to use the Its-Matveev formula.
Anothed unsettled question is how effective is the finite-gap approach for the solution of the IVP for general periodic initial data. 
Finally, even in the case of finite genus, the fact that the finite-gap integration method is so different from the IST on the line
is somewhat puzzling.

Recently, McLaughlin and Nabelek developed a Riemann-Hilbert approach to solve the inverse spectral problem for 
Hill's operator in \cite{McLaughlinNabelek}.
Besides avoiding the above-mentioned technical difficulties arising from the finite-genus theory, 
the novel formulation of the inverse problem makes it possible for the first time to employ 
all the machinery available for Riemann-Hilbert problems
in order to study a variety of physically interesting problems.
\bm
The direct spectral theory for periodic self-adjoint operators, including first order systems, is classical, and is naturally formulated in terms of Floquet theory, the monodromy matrix, and the resulting band-gap decomposition of the spectrum, see \cite{Levitan1975,Levitan1991}. Building on this framework, 
in this work we generalize the approach of~\cite{McLaughlinNabelek} 
to formulate the inverse spectral theory for the self-adjoint Dirac operator~\eqref{e:Diracoperator} with periodic potentials 
via a Riemann-Hilbert problem formalism \em
and we use the resulting formalism to solve the initial value problem for the defocusing NLS equation~\eqref{e:nls} with periodic BC.
In the process, we will show that a single set of Dirichlet eigenvalues is sufficient to completely determine the potential 
from the inverse problem.
This result is in sharp contrast to the reconstruction formula for the potential from the finite-genus formalism, 
in which two different sets of Dirichlet eigenvalues appear (e.g., see~\cite{ForestLee,MaAblowitz,McLaughlinOverman}).

Specifically, the outline of this work is as follows.
In section~\ref{s:spectrum} we begin by recalling some well-known results from Bloch-Floquet thoery,
and we also introduce a similarity transformation of the fundamental solution of the scattering problem
that leads to the definition of the Dirichlet spectrum.
We then explicitly define the spectral data that will allow the reconstruction of the potential in the inverse problem.
In section~\ref{s:asymptotics} 
we define modified Bloch-Floquet solutions,
we compute the asymptotics of all relevant quantities as $z\to\infty$,
and we write down infinite product expansions of $\~y_{12}(L,z)$, $\D(z)-1$ and $\D(z)+1$.
All these results are used to formulate the inverse problem as a suitable matrix Riemann-Hilbert problem in section~\ref{s:inverse},
where we also show how the potential can be recovered from the solution of the RHP. 
A key step in the process is the introduction of a matrix function $B(z)$, uniquely determined by the spectral data, 
which serves two purposes: 
first, to ensure that the solution of the RHP is unimodular, 
and, second, to eliminate the singularities of the RHP.  
Using $B(z)$ and the modified Bloch-Floquet solutions,
we then define a matrix-valued function $\Phi(x,z)$ that satisfies RHP~\ref{RHP1}.
We then also prove this RHP to have a unique solution.
In section~\ref{s:time} we use the results of sections~\ref{s:spectrum}--\ref{s:inverse} to establish a Riemann–Hilbert problem characterization 
for the solution of the initial value problem for the defocusing NLS equation with periodic, smooth, infinite-gap initial conditions. 
Namely, we compute the time evolution of the spectral data and study the evolution of the normalized 
Bloch–Floquet solutions to the Dirac equation. 
In section~\ref{s:periodicity}, we use the results of section~\ref{s:inverse} and section~\ref{s:time} to obtain conditions for the spatial and temporal periodicity of the solutions to the defocusing NLS equation.
In section~\ref{s:finitegap} we connect the uniqueness results for the solutions to RHP \ref{RHP1} and RHP \ref{RHP2} to a uniqueness results for the corresponding spatially periodic infinite gap Baker–Akhiezer functions, 
and we discuss the relation of the infinite gap theory discussed in this paper to the well-known finite gap theory.
Finally, in section~\ref{s:conclusions} we end this work with some final remarks and a discussion of some open questions motivated
by the results of this work.
Some technical results, various proofs, an explicit example 
and a few other considerations are relegated to the appendix.

\section{Direct spectral theory for the periodic self-adjoint Zakharov-Shabat problem}
\label{s:spectrum}

In this section we begin formulating the direct spectral theory for the defocusing ZS spectral problem.
Since the time dependence of the potential does not play any role in the direct and inverse spectral theory,
in this section and the following ones we will temporarily omit the time dependence, 
which will then be restored when discussing the IVP for the NLS equation in section~\ref{s:time}.

We begin by briefly recalling some well-known results from Bloch-Floquet theory.
These will be used to then define various necessary quantities as well as the spectral data that will allow us in section~\ref{s:inverse} 
to uniquely reconstruct the potential.
Recall that, for $A\in L^1_{{\rm loc}}(\R)$ an $n \times n$ matrix-valued function with $A(x+L) = A(x)$,
Floquet's theorem~\cite{Floquet_int,Eastham,Floquet} states that
any fundamental matrix solution $Y(x)$ of \eqref{e:yprime} of the system of linear homogeneous ODEs 
\be
y_x = A(x)\,y
\label{e:yprime} 
\ee
can be written in the Floquet normal form
\be
Y(x) = W(x)\,\e^{Rx}\,,
\label{e:canonical}
\ee
where $W(x)$ is a nonsingular matrix with $W(x+L) = W(x)$, and $R$ is a constant matrix. 
Thus, all bounded solutions of the ZS system~\eqref{e:zs} have the form
\be
v(x,z) = \e^{\i\nu x} w(x,z)\,,
\label{e:bounded}
\ee
where 
$w(x+L,z) = w(x,z)$,
and $\nu\in\Real$ is the Floquet \bm exponent, \em or quasi-momentum. 
One also defines the so-called Bloch-Floquet solutions, or normal solutions, 
as the solutions of \eqref{e:zs} such that 
\be
\psi(x+L,z) = \rho\,\psi(x,z)\,,
\label{e:bdsol}
\ee 
where $\rho$ is the Floquet multiplier. 
Thus, 
a solution of~\eqref{e:zs} is bounded if and only if $|\rho|=1$, 
in which case $\rho= \e^{\i\nu L}$ with $\nu\in\Real$.
Moreover, 
the Floquet multipliers are the eigenvalues of the monodromy matrix $M(z)$,  
which is defined by
\be
Y(x+L,z)=Y(x,z)M(z)\,, 
\label{e:monodromy}
\ee
where $Y(x,z)$ is any fundamental matrix solution of~\eqref{e:zs}.
Hereafter, we choose $Y(x,z)$ as the principal matrix solution of~\eqref{e:zs}
that is,
the matrix solution of~\eqref{e:zs} normalized so that $Y(0,z)\equiv\mathbb{I}$,
where $\I$ is the $2\times 2$ identity matrix.
We then have
\be
M(z) = Y(L,z)\,.
\label{e:monodromy2}
\ee
Standard techniques allow one to shows that, under the above assumptions, 
$Y(x,z)$ can be expressed as the following Volterra integral equation
\be
Y(x,z)=\e^{-\i z\sigma_3x}+\int_0^x\e^{-\i z\sigma_3(x-s)}Q(s)Y(s,z)\,\d s\,, 
\label{phi}
\ee
which also allows one to show that, for all $x\in\Real$, $Y(x,z)$ is an entire function of $z$.
Since the RHS of ~\eqref{e:zs} is traceless, 
Abel's formula implies $\det M(z) \equiv 1$.  
Hence the the Floquet multipliers, i.e., the eigenvalues of $M(z)$, are given by roots of the quadratic equation
\vspace*{-0.6ex}
\be
\rho^2 - 2\D(z)\rho+ 1 = 0,
\label{tracepoly}
\ee
where $\D(z)$ is the Floquet discriminant
\be
\D:=\D(z)=\half\tr M(z)\,,
\label{e:DeltaM}
\ee
which is also is an entire function of $z$ \cite{ForestLee,MaAblowitz, McLaughlinOverman}.
Moreover, the Schwarz symmetry of the ZS problem implies that $\D(z)$ satisfies a Schwarz reflection principle:
$\D(z^*)=\D^*(z)$.
As a result, $\D(z)$ is real-valued along the real $z$-axis.
Accordingly, for $z\in\Real$ the following possible cases arise:
(i)~if $\Delta^2(z)>1$, the Floquet multipliers are real and the two Bloch eigenfunctions are unstable
(i.e., they diverge either as $x\to\infty$ or as $x\to-\infty$).
(ii)~if $\Delta^2(z)<1$, the Floquet multipliers are complex conjugate, have unit magnitude and the two Bloch eigenfunctions are stable (i.e., bounded for all $x\in\Real$).
(iii)~if $\Delta^2(z)=1$, the Floquet multipliers $\pm 1$ and at least one of the Bloch eigenfunctions is periodic or antiperiodic.
Thus, an equivalent representation of the Lax spectrum is:
\be
\Sigma(\mathcal L) = \{ z \in \Complex : \D(z) \in [-1,1]\}.
\label{e:laxspec2}
\ee

For the development of the direct and inverse spectral theory, 
it is convenient to have an explicit representation of the Floquet multipliers for all $z\in\Complex$ as follows.
For $z\in\Complex$, denote the roots of~\eqref{tracepoly} as
\vspace*{-0.4ex}
\be
\rho_{1,2}(z) = \D(z) \mp \sqrt{\D^2(z)-1},
\label{rho}
\ee
for some appropriate choice of the complex square root, 
to be defined next.
Obviously $\rho_{1,2}(z)$ satisfy the relation $\rho_1(z) = 1/\rho_2(z)$.
For $z\in\Sigma(\L)$, $\D(z)\in[-1,1]$ implies $|\rho_{1,2}(z)|=1$, whereas 
for $z\in\Complex\setminus\Sigma(L)$, $\D(z)\notin[-1,1]$ implies $|\rho_{1,2}(z)|\ne1$. 

\begin{remark}
Let $\rho(z)$ be the root of~\eqref{tracepoly} that is holomorphic for $z\notin\Sigma(\L)$ and such that 
$|\rho(z)|<1$ for $z\in\C\setminus\Sigma(\L)$.
Setting $\rho(z) = \rho_1(z)$ $\forall z\in\Complex$ uniquely defines the complex square root $\sqrt{\D^2(z)-1}$ so that:%
\begingroup
\def\item[#1]{#1}
\item[(i)]
its branch cut coincides with $\Sigma(\L)$,
\item[(ii)] 
$\sqrt{\D^2(z)-1}>0$ for all $z\in\Real\setminus\Sigma(\L)$.
\item[(iii)]
$\sqrt{\D^2(z)-1}$ is continuous from above for all $z\in\Sigma(\L)$.
\endgroup
\end{remark}

The \textit{main spectrum} $\{\z_j\}_{j\in\Integer}$ of the ZS problem~\eqref{e:zs} is defined as the set
$\{\z\in\C: \Delta^2(\z) = 1\}$.
That is, the main spectrum comprises the eigenvalues $\z_j$ for which at least one eigenfunction is periodic 
($\D(\z) =1$, such that $\rho(\z)=1$) or antiperiodic
($\D(\z) = - 1$, such that $\rho(\z)= -1$).   
Therefore we can represent the Lax spectrum as $\Sigma(\L)=\cup_{j\in\Integer}[\z_{2j},\z_{2j+1}]$.

It is well known that, for the ZS problem with periodic potentials,
knowledge of the main spectrum is not sufficient to uniquely recover the potential, 
and as a result one must introduce auxiliary spectral data in the form of Dirichlet eigenvalues.
(The same is also true for the direct and inverse spectral theory for Hill's operator.)
Following \cite{MaAblowitz,McLaughlinOverman},
we therefore define the \textit{Dirichlet spectrum} associated with \eqref{e:zs} as follows:
\be
\Sigma_{\Dir}(x_o)
:=\{\mu \in \C : \exists~v \not\equiv 0 \in H^1([x_o,x_o+L],\C^2)~ ~\text{s.t.}~ ~ \L\,v = \mu\,v ~ \wedge~ v\in {\rm BC}_{{\rm Dir},x_o}
\}\,,
\label{e:dirichlet}
\ee
where ``BC$_{{\rm Dir},x_o}$'' denote the following Dirichlet boundary conditions (BC) with base point $x_o$:
\be
\label{e:Dirbcs}
v_1(x_o)+v_2(x_o)=v_1(x_o+L)+v_2(x_o+L)=0\,,
\ee
where $v = (v_1,v_2)^T$.
Any point $\mu\in\Sigma_{\Dir}(x_o)$ will be referred to as a Dirichlet eigenvalue of~\eqref{e:Diraceigenvalueproblem}.
Similarly to the Floquet spectrum,
one can identify $\Sigma_{\Dir}(x_o)$ with the zero set of a suitable entire function.
For the ZS problem~\eqref{e:zs}, however,
additional complications arise compared to Hill's operator.
For this reason, 
we introduce the following similarity transformation, which will be instrumental not only for characterizing the Dirichlet spectrum,
but also for carrying out the inverse spectral theory:
\be
\~Y(x,z)=U\,Y(x,z)\,U^{-1},
\label{e:Ytildedef}
\ee
where
\be
U = \bm\frac1{\sqrt2}\em\begin{pmatrix} 1 & 1 \\ -\i & \i \end{pmatrix}.
\label{e:Udef}
\ee
Then $\~Y(x,z)$ is the fundamental solution of the following modified scattering problem:
\be
\~y_x = U(-\i z\sigma_3 + Q)U^{-1}\,\~y\,.
\label{e:modZS}
\ee
For convenience, also let 
$\tilde{M}(z)=\~Y(L,z)$,
which is also an entire function of~$z$ like $M(z)$.
Moreover, since the trace and determinant are invariant under the transformation~\eqref{e:Ytildedef},
the Floquet eigenvalues of the modified scattering problem~\eqref{e:modZS},
i.e., the eigenvalues of $\~M(z)$, coincide with $\rho(z)$.
On the other hand, we now have

\begin{proposition}
\label{p:dirzeros}
The Dirichlet spectrum $\{\mu_j\}$ with base point $x_o=0$ coincides with the set
\be
\Sigma_{\Dir}(0)= \left\{\mu \in \C : \~y_{12}(L,\mu) = 0\right\}.
\label{e:dirzeros}
\ee
\end{proposition}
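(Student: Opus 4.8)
The plan is to reduce the Dirichlet boundary-value problem defining $\Sigma_{\Dir}(0)$ to a single condition on one entry of the transformed monodromy matrix $\~M(z)$, exploiting precisely the similarity transformation~\eqref{e:Ytildedef}--\eqref{e:Udef}. To begin, since $q\in C^2([0,L])$ every solution of $\L v=\mu v$ on $[0,L]$ is automatically of class $C^2$, hence lies in $H^1([0,L],\C^2)$, so the regularity requirement in~\eqref{e:dirichlet} is vacuous; moreover every such solution has the form $v(x)=Y(x,\mu)\,c$ with $c=v(0)\in\C^2$, where $Y$ is the principal solution of~\eqref{e:zs}. Because $\det Y(x,\mu)\equiv1$ by Abel's formula, the correspondence $c\leftrightarrow v$ is a linear bijection and $Y(x,\mu)$ is invertible for every $x$, so $v\not\equiv0$ precisely when $c\ne0$ (and in that case $v$ is in fact nowhere zero).

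Next I would pass to $\~v:=U v$. Since the first row of $U$ in~\eqref{e:Udef} is $(1,1)$, one has $\~v_1=v_1+v_2$, so the Dirichlet conditions~\eqref{e:Dirbcs} with base point $x_o=0$ become exactly $\~v_1(0)=\~v_1(L)=0$. On the other hand $\~v$ solves the modified scattering problem~\eqref{e:modZS}, and $\~Y(0,z)=U\,\I\,U^{-1}=\I$, so $\~v(x)=\~Y(x,\mu)\,\~c$ with $\~c:=\~v(0)=U c$; note that $\~v\not\equiv0$ iff $\~c\ne0$.

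The remaining argument is then purely algebraic. The condition $\~v_1(0)=0$ says $\~c=(0,\kappa)^T$ for some $\kappa\in\C$, and $\~v\not\equiv0$ forces $\kappa\ne0$. Using $\~Y(L,\mu)=\~M(\mu)$ from~\eqref{Mtilde}, the second condition reads $0=\~v_1(L)=\big(\~M(\mu)\,\~c\big)_1=\~y_{12}(L,\mu)\,\kappa$. Hence a nonzero Dirichlet eigenfunction at $\mu$ exists if and only if $\~y_{12}(L,\mu)\,\kappa=0$ for some $\kappa\ne0$, i.e.\ if and only if $\~y_{12}(L,\mu)=0$, which is the claimed set equality. (Equivalently, one may skip the transformation and note that the Dirichlet condition is $(1,1)\,M(\mu)\,(1,-1)^T=0$, and a direct computation of $U M U^{-1}$ gives $(1,1)\,M(z)\,(1,-1)^T=-2\i\,\~y_{12}(L,z)$; I prefer the transformed formulation since $\~Y$ is what is used throughout the inverse problem.)

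There is no genuine obstacle here beyond bookkeeping; the only two points needing care are the invertibility of $Y(x,\mu)$, which is what guarantees that $v\not\equiv0$ is equivalent to $c\ne0$ and rules out spurious identically-zero ``eigenfunctions'', and the identification $\~v_1=v_1+v_2$, which is exactly the feature that makes~\eqref{e:Ytildedef} the natural change of variables for encoding the boundary conditions~\eqref{e:Dirbcs}.
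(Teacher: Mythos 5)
Your proof is correct and follows essentially the same route as the paper's: write the general solution in terms of the principal fundamental matrix, translate the boundary conditions~\eqref{e:Dirbcs} into a linear-algebra condition on the monodromy matrix, and identify the resulting combination $M_{11}+M_{21}-M_{12}-M_{22}$ with (a nonzero multiple of) $\~y_{12}(L,\mu)$ via the similarity transformation~\eqref{e:Ytildedef}. Working directly in the transformed variables $\~v=Uv$ and spelling out why $v\not\equiv0$ is equivalent to $\~c\neq0$ is a cleaner presentation of the same argument, not a different one.
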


\begin{proof}
Any vector solution of~\eqref{e:Diraceigenvalueproblem} can be expressed as
$v(x,z)= c_1 y_1(x,z) + c_2 y_2(x,z)$ for suitable constants $c_{1,2}$. 
Evaluating this expression at $x=0$ and imposing $v_1(0)+v_2(0)=0$, we obtain $c_2=-c_1$.  
Then, evaluating it at $x=L$ and imposing $v_1(L)+v_2(L)=0$, we obtain 
$c_2(M_{11}+M_{21}-M_{12}-M_{22})=0$. 
Since $c_2$ cannot be zero, we obtain that the Dirichlet boundary conditions \eqref{e:Dirbcs}
are equivalent to the condition
$M_{11}+M_{21}-M_{12}-M_{22}=0$, i.e., $\~y_{12}(L,z)=0$ by~\eqref{e:Ytildedef}.
\end{proof}

\begin{figure}[b!]
\bigskip
\centerline{\includegraphics[width=0.95\textwidth]{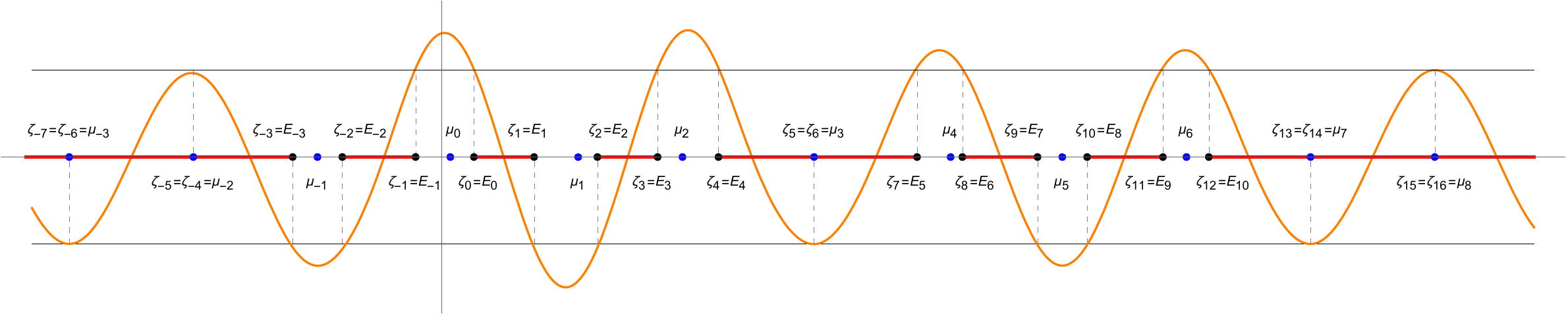}}
\caption{Schematic representation of the behavior of the Floquet discriminant $\D(z)$ (vertical axis) as a function of $z$
(horizontal axis), 
together with the periodic and antiperiodic spectrum, the spectral bands (in red) and gaps and the Dirichlet eigenvalues (blue dots).
See text for further details.}
\label{f:1}
\end{figure}

The formulation of the inverse spectral theory in section~\ref{s:inverse}
will require the use of some known results on forward spectral theory, 
which are summarized for convenience in the following theorem
\cite{djakovmityagin,grebertkappelermityagin,kappeler,McLaughlinOverman,MaAblowitz}.
(A proof of some of these results is also given in Appendix~\ref{a:Dirichlet}.)
\begin{theorem}
\label{spectraprop}
The main spectrum and Dirichlet spectrum of the Dirac operator $\L$~\eqref{e:Diracoperator} with $L$-periodic potential 
$q\in L^2(\R,\C)$ satisfy the following properties:
\begin{enumerate}
\item 
The main spectrum, $\{\z_j\}_{j\in\Integer}$ is real and can be divided into nondegenerate band edges $E_j$ 
and degenerate band edges $\hat{\z}_j$, at which one has 
$\Delta'(E_j)\ne0$ and $\Delta'(\hat{\z}_j)=0$, respectively.
Each $\z_j$ is either a simple or a double root of of $\Delta^2(z)-1=0$, but not of higher order.
\item 
The Dirichlet spectrum, $\{\mu_j\}_{j\in\Integer}$, is real, and each $\mu_j$ lies inside an unstable band or on a band edge. Moreover, there is exactly one Dirichlet eigenvalue $\mu_j$ in each unstable band.
All the $\mu_j$ are simple roots of $\~y_{12}(L,z)$. 
\item 
$\Delta'(z)\neq0$ for all $z\in\Real$ such that $-1<\Delta(z)<1$.
\item 
The periodic, antiperiodic and Dirichlet eigenvalues $\z_j$ and $\mu_j$  have the following asymptotic behavior as $|j|\to\infty$:
\be
\z_{2j}, \z_{2j-1} = \frac{\pi j}{L}+O(1/j),\qquad
\mu_j =\frac{\pi j}{L}+O(1/j).
\label{asymmuz}
\ee
\end{enumerate}
\end{theorem}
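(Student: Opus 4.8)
\noindent
The plan is to dispatch the four items with four different tools: self-adjointness for the reality statements, elementary $2\times2$ matrix identities together with the Schwarz symmetry for the localization and multiplicity statements, first-order eigenvalue perturbation theory for the monotonicity of $\D$ and the interlacing of the Dirichlet points, and a Neumann-series estimate for the asymptotics. The common starting point is the Schwarz symmetry $Y(x,z)=\sigma_1\overline{Y(x,z)}\,\sigma_1$ for $z\in\R$, immediate from the structure of $Q$, which through $\overline U=U\sigma_1$ makes the modified monodromy matrix $\tilde M(z)=\tilde Y(L,z)$ real for $z\in\R$; hence $\D(z)$, $\tilde y_{12}(L,z)$ and all entries of $\tilde M(z)$ are real there, and, using $\det\tilde M\equiv1$ and $\half\tr\tilde M=\D$,
\be
4\big(\D^2(z)-1\big)=\big(\tilde y_{11}(L,z)-\tilde y_{22}(L,z)\big)^2+4\,\tilde y_{12}(L,z)\,\tilde y_{21}(L,z),\qquad z\in\R.
\label{e:discid}
\ee
For the reality of the three spectra I would observe that each of the periodic, antiperiodic and Dirichlet boundary conditions annihilates the boundary form of the self-adjoint operator $\L$ --- for the Dirichlet BC this is because $v_1(x_o)+v_2(x_o)=0$ already forces $\overline{u_1}v_1-\overline{u_2}v_2$ to vanish at $x_o$ for any two solutions --- so the corresponding problems on $[0,L]$ (resp.\ $[0,2L]$) are self-adjoint and regular; hence $\{\z_j\},\{\mu_j\}\subset\R$. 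Evaluating \eqref{e:discid} at a Dirichlet eigenvalue then gives $4(\D^2(\mu_j)-1)=\big(\tilde y_{11}(L,\mu_j)-\tilde y_{22}(L,\mu_j)\big)^2\ge0$, so each $\mu_j$ lies in a closed gap and never in the interior of a band.

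For items 1 and 3 the relevant perturbation formulas come from differentiating the Volterra representation \eqref{phi}: $\partial_z M=MN$ with $N(z)=\int_0^L Y^{-1}(x,z)(-\i\sigma_3)Y(x,z)\,\d x$, so $2\,\partial_z\D=\tr(MN)$, and a second differentiation (using that the trace of a commutator vanishes) gives $2\,\partial_z^2\D(\z)=\D(\z)\,\tr(N(\z)^2)$ whenever $M(\z)=\pm\I$. Using the Schwarz symmetry and $\det Y=1$ one finds $N(\z)=-\i\left(\begin{smallmatrix}a & b\\ -\bar b & -a\end{smallmatrix}\right)$ with $a=\int_0^L(1+2|Y_{12}|^2)\,\d x$, and crucially $a>|b|$ with \emph{strict} inequality, since the pointwise bound $2|Y_{12}||Y_{11}|\le1+2|Y_{12}|^2$ rests on $|Y_{11}|^2=1+|Y_{12}|^2$ and can never be an equality. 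The same matrix gives $\partial_z\D(\z)=\beta a+\Im\big(\overline{M_{12}(\z)}\,b\big)$ with $M_{11}(\z)=\D(\z)+\i\beta$ and $|M_{12}(\z)|=|\beta|$, whence $|\partial_z\D(\z)|\ge(a-|b|)|\beta|$. Therefore $\partial_z\D(\z)=0\iff M(\z)=\pm\I$, and in that case $\partial_z^2\D(\z)=-\D(\z)(a^2-|b|^2)\ne0$: every $\z_j$ is a simple or a double zero of $\D^2-1$ and never of higher order, which is item 1 (the $E_j$/$\hat\z_j$ dichotomy being exactly $M(\z_j)\ne\pm\I$ vs.\ $M(\z_j)=\pm\I$). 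For item 3 I would instead use the Bloch form $\psi=\e^{\i\nu x}p$ with $p$ $L$-periodic: then $p$ solves $(\L-\nu\sigma_3)p=zp$, a self-adjoint problem on $L$-periodic vectors, so $z=z(\nu)$ is real-analytic with $\partial_\nu z=-\langle p,\sigma_3 p\rangle/\|p\|^2$; but $\langle p,\sigma_3 p\rangle=\int_0^L(|\psi_1|^2-|\psi_2|^2)\,\d x$ is independent of $x$ (the Dirac current is conserved at real $z$) and equals the Wronskian of the two Bloch solutions $\psi$ and $\sigma_1\overline\psi$, which are linearly independent exactly when the Floquet multiplier is $\ne\pm1$, i.e.\ on band interiors; hence $\partial_\nu z\ne0$ there, $z(\nu)$ is strictly monotone, and $\D=\cos(\nu L)$ (with $\nu L\in(0,\pi)$ across the band) is strictly monotone --- i.e.\ $\D'(z)\ne0$ whenever $-1<\D(z)<1$.

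For item 4 I would iterate \eqref{phi} once and estimate the tail of the Neumann series: for $z\in\R$, $M(z)=\e^{-\i z\sigma_3 L}+o(1)$ by the Riemann--Lebesgue lemma, hence $\D(z)=\cos(zL)+o(1)$ and $\tilde M(z)=\left(\begin{smallmatrix}\cos zL & \sin zL\\ -\sin zL & \cos zL\end{smallmatrix}\right)+o(1)$, so $\tilde y_{12}(L,z)=\sin(zL)+o(1)$; a Rouch\'e argument on the circles $|z|=\pi(n+\half)/L$ then shows that $\D^2-1$ and $\tilde y_{12}(L,\cdot)$ each have $2n+O(1)$ zeros inside, forcing $\z_j=\pi j/L+o(1)$ and $\mu_j=\pi j/L+o(1)$. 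Finally, for the remaining content of item 2: \eqref{e:discid} already confines each $\mu_j$ to a closed gap; that there is exactly one such $\mu_j$ in each unstable band, and that every $\mu_j$ is a simple zero of $\tilde y_{12}(L,\cdot)$, I would obtain by combining (a) the global count from item 4, (b) the classical interlacing of the Dirichlet spectrum with the periodic/antiperiodic spectrum, all viewed as self-adjoint realizations on $[0,L]$ or $[0,2L]$, and (c) a sign analysis of \eqref{e:discid} at the two edges of a gap --- using $\D'\ne0$ at a nondegenerate edge --- showing that $\tilde y_{12}(L,\cdot)$ genuinely changes sign across each open gap; the simplicity of its zeros then follows from the analogue of the formula for $\partial_z\D$ above, applied to $\partial_z\tilde y_{12}(L,\cdot)$ at a Dirichlet eigenvalue.

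The step I expect to be the main obstacle is (c): matching the zeros of $\tilde y_{12}(L,\cdot)$ to the gaps of $\D$ band by band, including the degenerate cases where a gap has closed and the Dirichlet point has landed on a band edge. Conceptually this is the classical oscillation/interlacing argument for Hill's equation, but for the first-order ZS system it has to be run through the discriminant identity \eqref{e:discid} and the conserved Dirac current rather than through a scalar Pr\"ufer angle, and keeping the signs consistent at the band edges --- so that ``at least one per gap'' together with the count ``$2n+O(1)$'' upgrades to ``exactly one'' --- is where the bookkeeping is delicate.
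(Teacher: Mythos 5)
Your worked-out parts are essentially correct, but they reach the conclusions by a genuinely different route than the paper's Appendix~B. The paper's engine is a variation-of-parameters computation: it differentiates the ODE in $z$, writes the entries $w_{ij}(L,z)$ of $\partial_z\~Y(L,z)$ as explicit bilinear integrals of the $\~y_{ij}$, and from these extracts (i) the master formula for $\D'(z)$ whose bracket is manifestly positive when $\D^2<1$ (item~3), (ii) the sign relation $\sgn\D'(z)=\sgn\~y_{21}(L,z)$ on $\R$, (iii) $w_{12}(L,\mu_j)=\~y_{11}(L,\mu_j)\int_0^L(\~y_{22}^2+\~y_{12}^2)\,\d x\neq0$ for simplicity of the Dirichlet zeros, and (iv) a strict Cauchy--Schwarz inequality giving $\D''\neq0$ at degenerate edges. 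You instead package the same information into the matrix $N=\int_0^L Y^{-1}(-\i\sigma_3)Y\,\d x$ and the strict bound $a>|b|$, which yields item~1 (including the clean equivalence $\D'(\z)=0\iff M(\z)=\pm\I$) and, transported by $U$, the simplicity of the Dirichlet zeros; your algebraic identity $4(\D^2-1)=(\~y_{11}-\~y_{22})^2+4\~y_{12}\~y_{21}$ gives the confinement of each $\mu_j$ to a closed gap more directly than anything in the appendix; and your Feynman--Hellmann/conserved-current argument for item~3 is a legitimate alternative to formula~\eqref{dd}. Reality via self-adjointness of the boundary form is equivalent to the paper's Wronskian integration, and your Riemann--Lebesgue plus Rouch\'e sketch of item~4 is at the same level of detail as the paper, which simply cites the literature for that item (your zero count for $\D^2-1$ should be $4n+O(1)$ with multiplicity rather than $2n+O(1)$, but this does not affect the conclusion).

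The one genuine gap is the step you yourself flag: ``exactly one movable Dirichlet eigenvalue per open gap.'' Your identity only confines $\mu_j$ to the closed gaps; it does not by itself produce a sign change of $\~y_{12}(L,\cdot)$ across a gap, because relating $\sgn\D'$ at the two edges to the sign of an off-diagonal entry of $\~M$ is not immediate from $2\D'=\tr(MN)$ (one finds $\~M_{12}=\Im M_{12}-\Im M_{11}$, which mixes the two quantities entering $\tr(MN)$). The paper closes this with two concrete facts you would still need to establish: first, the explicit relation $\sgn\D'(z)=\sgn\~y_{21}(L,z)$ from~\eqref{dd}, which together with $\D'$ having opposite signs at the two edges of a nondegenerate gap forces at least one zero of the relevant off-diagonal entry inside; and second, that $w_{12}(L,z)$ keeps a definite sign throughout each gap (its sign is that of $\~y_{11}(L,z)=\D+\sqrt{\D^2-1}$, hence of $\D$, which is constant on a gap), so the zero is unique. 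Your proposed substitute --- global counting from item~4 plus abstract interlacing --- can in principle deliver the same conclusion, but as written it is a plan rather than a proof, and the degenerate-gap bookkeeping you worry about is exactly what the paper's sign formula is designed to handle.
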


For simplicity of presentation, 
we will assume that $\z_0$ is the first point of the main spectrum that is greater than or equal to~$0$,
with $z=0$ lying in a degenerate or non-degenerate spectral gap, 
and that $\z_0$ and $\z_{-1}$ are the zeros of $\Delta-1$, so that we can order the $\z_j$ as
\be
\cdots <\z_{-2j}\leq\z_{-2j+1}<\cdots<{\z}_{-1}\leq{\z}_0<{\z}_1\leq\cdots<\z_{2j}\leq\z_{2j+1}<\cdots
\ee
(cf.\ Fig.~\ref{f:1}).
We also number the Dirichlet eigenvalues so that $\z_{2j-1}\le \mu_{j} \le\z_{2j}$. 
%
If $z=0$ lies a in spectral band instead of a spectral gap, 
we set can simply set $\zeta_{-1}$ to be the first point of the main spectrum that is greater to or equal than zero. 
The Lax spectrum can still be expressed as above, and the whole formalism remains unchanged.
(Moreover, if $\z_{-1}>0$, one need not worry about the possibility that $\mu_0=0$ in section~\ref{s:inverse}.)
Similarly, if $z=0$ 
lies in a spectral gap but $\zeta_1$ and $\zeta_0$ are zeros of $\D(z)+1$ instead of $\Delta(z)-1$, 
one just needs to switch the expressions for the infinite products in~\eqref{e:d-1} and \eqref{e:d+1} in section~\ref{s:inverse}.

\begin{definition}
For each Dirichlet eigenvalue $\mu_k$, we define $\nu_k=-\sgn(\log|\~y_{22}(L,\mu_k)|)$.
\end{definition}

\begin{lemma}
\label{y2}
When $z = \mu_j$, 
one has 
$\~y_{22}(L,\mu_j) = 1/\~y_{11}(L,\mu_j) = \rho^{\nu_j}(\mu_j)$, and 
the vector-valued fundamental solution $\~y_2(x,\mu_j)$ is a Bloch-Floquet solution of~\eqref{e:modZS}
with Floquet multiplier  $\~y_{22}(L,\mu_j)$.
\end{lemma}

\begin{proof}
The Dirichlet eigenfunction  $\~y_2(x,\mu_j)$ solves the modified ZS problem with spectral parameter $z=\mu_j$, 
and so does $\~y_2(x+L,\mu_j)$.  
So we have $\~y_2(x+L,\mu_j)=a\~y_1(x,\mu_j)+b\~y_2(x,\mu_j)$ for some constants $a$ and $b$. 
Evaluating this expression at $x=0$, we conclude $a=0$ and $b=\~y_{22}(L,\mu_j)$. 
Therefore, by Floquet's theorem, $\~y_2(x,\mu_j)$ is a Bloch-Floquet solution with multiplier $\~y_{22}(L,\mu_j)=\rho(\mu_j)$  or $\~y_{22}(L,\mu_j)=\rho(\mu_j)^{-1}$. Since $\det \~Y(L,\mu_j)=1$, we also have $\~y_{22}(L,\mu_j)= 1/\~y_{11}(L,\mu_j)$.
\end{proof}

We are now ready to define the spectral data that will eventually determine uniquely the potential~$q(x)$.
For simplicity we exclude the trivial potential $q(x)\equiv0$ from consideration.
\begin{definition}
(Non-degenerate gaps)
Fix $\g_-$ and $\g_+$ with $\g_- < \g_+$ so that 
$\{E_{2\g_--1},\dots,E_{2\g_+}\}$ is the subsequence of periodic/antiperiodic eigenvalues $\{{\z}_j\}_{j\in\Integer}$ 
for which the spectrum 
$\Sigma(\L)$ is the disjoint union 
\vspace*{-1ex}
\be
\Sigma(\L) = (-\infty, E_{2\g_--1}]\cup[E_{2\g_+},\infty)
  \bigcup_{j=\g_-}^{\g_+-1}[E_{2j},E_{2j+1}],
\ee
where each of the numbers $\g_\pm$ is either finite or countably infinite.
The finite-length intervals\break $(E_{2\g_--1},E_{2\g_-}),\dots,(E_{2\g_+-1},E_{2\g_+})$
comprise the spectral gaps of the potential.
\end{definition}

\begin{remark}
When $\g_-$ and $\g_+$ are finite, their value denotes the smallest and largest values of $j$ for which the corresponding
spectral gaps are non-empty.  
In other words, 
$\{E_{2\g_--1},\dots,E_{2\g_+}\}$ is the subset of the periodic/antiperiodic eigenvalues $\{\z_j\}_{j\in\Integer}$
that are associated to open spectral gaps.
Note that the set of open spectral gaps might not necessarily be consecutive.
\end{remark}

If both $\g_\pm$ are finite, 
the total number of gaps is $\g_+ - \g_- + 1$, and 
the genus of the potential (and the associated solution of the defocusing NLS equation) is $\g = \g_+ - \g_- $.
(That is, the number of spectral gaps is $g+1$.)
The case $\g=0$ (i.e., $\g_+=\g_-$, implying that there is only one gap and no finite spectral band) 
yields a constant or plane-wave potential,
while the case $\g=1$ (i.e., two gaps and one finite band) yields the elliptic potentials of the NLS equation.
Note that the above definition only identifies the indices $\g_\pm$ up to an identical arbitrary additive constant.
We used the freedom to choose this constant when definining $\z_0$ above.

\begin{definition}(Fixed and movable Dirichlet eigenvalues)\label{defgamma}
We denote by $\{\gamma_{\g_-},\dots,\gamma_{\g_+}\}$ 
the subsequence of Dirichlet eigenvalues that lie in the non-degenerate spectral gaps, numbered so that 
${E}_{2k-1} \le \gamma_k \le {E}_{2k}$.
These Dirichlet eigenvalues are movable, i.e., they depend on the base point $x_o$ 
(here chosen to be zero)
and also on time (cf.\ section~\ref{s:time}).
The remaining eigenvalues are independent of the base point, and are called the fixed Dirichlet eigenvalues.
\end{definition}

\begin{definition}
(Spectral data)
\label{specdata}
We define the spectral data associated to the potential $q$ as the set
\be
S(q):=\{{E}_{2k-1},{E}_{2k},\gamma_k,\nu_k\}_{k=\g_-,\dots,\g_+}.
\label{e:spectraldata}
\ee
\end{definition}

\begin{remark}
Lemma~\ref{y2} implies that, for all $k\in\Integer$:
(i)
If $\nu_k=\pm1$, $\~y_2(x,\mu_k)$ is a Bloch-Floquet solution associated with the Floquet multiplier
$\rho^{\pm1}(\mu_k)$, respectively.
(ii)
If $\nu_k=0$, $\mu_k$ lies on one of the edges of a (degenerate or non-degenerate) spectral gap.
Thus, $\nu_k=0$ for all fixed Dirichlet eigenvalues,
whereas $\nu_k$ can equal $\pm1$ or 0 for the movable Dirichlet eigenvalues.
\end{remark}

\begin{remark}
In section~\ref{s:inverse} we will show that the spectral data~\eqref{e:spectraldata} defined by the main spectrum and 
the single set of Dirichlet eigenvalues of~\eqref{e:zs} 
enables the unique reconstruction of the potential~$q$.
The result is in contrast with the trace formulae obtained from the finite-genus formalism
(e.g., see~\cite{MaAblowitz,McLaughlinOverman}),
for which two sets of Dirichlet eigenvalues are used.
(See Appendix~\ref{a:alternative} for further discussion.)
\end{remark}

\section{Modified Bloch-Floquet solutions}
\label{s:asymptotics}

Some additional results are needed before we can begin to formulate the inverse spectral theory.
Specifically, as usual one needs knowledge of the asymptotic behavior of various relevant quantities as $z\to\infty$.
It is straightforward to show the following (see Appendix~\ref{a:Ytildeasymp} for details): 
\begin{lemma}
\label{l:asymY}
If $q\in C^2([0,L],\Complex)$, then for all $z\in\Complex$ the fundamental matrix $Y(x,z)$ has
the following asymptotic behavior as $z\to\infty$: 
\bm\be
\label{asymY}
Y(x,z)= \left[I-\frac{1}{2\i z}\sigma_3 K[q](x)+\frac{1}{2\i z}
\begin{pmatrix}
   0  & q(x)-q(0)\e^{-2\i zx}\\
    q^*(0)\e^{2\i z x}-q^*(x) & 0
\end{pmatrix}
\right]\e^{-\i zx\sigma_3}
\,,
\ee
where
\be
K[q](x)=\int_0^x|q(s)|^2\d s\,.
\ee\em
\end{lemma}

\begin{remark}
The off-diagonal terms of $Y(x,z)$ at $O(1/z)$ contain the value of the potential $q(x)$ at $x=0$ (as well as its complex conjugate).
As a result, $Y(x,z)$ is not sufficient for the formulation of an inverse spectral theory.
As with Hill's equation \cite{McLaughlinNabelek}, 
one uses the fundamental solution to define suitable Bloch-Floquet eigenfunctions instead.
In the case of the Zakharov-Shabat problem~\eqref{e:zs}, however, a further step needed is the similarity transformation 
from 
$Y(x,z)$ to $\~Y(x,z)$
introduced in~\eqref{e:Ytildedef}.
\end{remark}

Hereafter, for simplicity we will always assume that $q\in C^2([0,L],\Complex)$ unless explicitly stated otherwise.
It is straightforward to show that the asymptotic behavior of $Y(x,z)$ in Lemma~\ref{l:asymY} implies the following:
\begin{proposition}
\label{asymD}
The Floquet discriminant $\D(z)$ has the following asymptotic behavior as $z\to\infty$:
\vspace*{-0.6ex}
\begin{gather}
\Delta(z)=\half\e^{\mp\i zL}\left(1\mp\frac{1}{2\i z}\int_0^L|q(x)|^2\d x+O\left(\frac{1}{z^2}\right)\right),
\qquad 
z\in\C^\pm\,.
\end{gather}
\end{proposition}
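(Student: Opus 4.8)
The plan is to transfer the known large-$z$ behaviour of the fundamental matrix $Y$ to the discriminant and then, in each half-plane, factor out the dominant exponential. Set $I:=\int_0^L|q(x)|^2\,\d x$; by \eqref{e:DeltaM} and \eqref{e:monodromy2} we have $\Delta(z)=\half\tr M(z)=\half\tr Y(L,z)$, so only the two diagonal entries of $Y(L,z)$ enter.

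\textbf{Step 1 (trace asymptotics).} First I would evaluate Lemma~\ref{l:asymY} at $x=L$. The diagonal of the exponential factor $\e^{-\i zL\sigma_3}$ is $(\e^{-\i zL},\e^{\i zL})$, so the two diagonal entries of $Y(L,z)$ are $\bigl(1-\tfrac{1}{2\i z}I+O(z^{-2})\bigr)\e^{-\i zL}$ and $\bigl(1+\tfrac{1}{2\i z}I+O(z^{-2})\bigr)\e^{\i zL}$. Adding them and using $\e^{\pm\i zL}=\cos(zL)\pm\i\sin(zL)$ gives
\[
\Delta(z)=\cos(zL)+\frac{\sin(zL)}{2z}\,I+O\!\bigl(z^{-2}\e^{L|\Im z|}\bigr)\qquad(|z|\to\infty),
\]
where the remainder is the $O(1/z^2)$ part of Lemma~\ref{l:asymY} (uniform in $x\in[0,L]$) multiplied by the diagonal exponentials, whose moduli are $\le\e^{L|\Im z|}$.

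\textbf{Step 2 (separate the dominant exponential).} For $z\in\C^+$ one has $|\e^{-\i zL}|=\e^{L\Im z}\ge|\e^{\i zL}|$, so $\e^{-\i zL}$ dominates. Substituting $\cos(zL)=\half\e^{-\i zL}(1+\e^{2\i zL})$ and $\sin(zL)=\tfrac{1}{2\i}\e^{-\i zL}(\e^{2\i zL}-1)$ into the display above yields
\[
\Delta(z)=\half\e^{-\i zL}\Bigl[\,1-\frac{1}{2\i z}I+O(z^{-2})+\e^{2\i zL}\bigl(1+O(z^{-1})\bigr)\Bigr]\,,\qquad z\in\C^+ .
\]
The term $\e^{2\i zL}\bigl(1+O(z^{-1})\bigr)$ is exponentially subdominant relative to the leading $1-\tfrac{1}{2\i z}I$; discarding it gives exactly the asserted expansion with the upper sign. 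The case $z\in\C^-$ follows at once from the Schwarz reflection principle $\Delta(z)=\Delta^*(z^*)$ (conjugate the $\C^+$ result, since $z^*\in\C^+$), and produces the lower sign; alternatively one repeats Step~2 with $\e^{\i zL}$ now dominant.

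\textbf{Where the care is needed.} The estimate is essentially a one-line corollary of Lemma~\ref{l:asymY}, so there is no real difficulty; the only delicate point is the passage from the symmetric $\cos$/$\sin$ form --- which holds on all of $\C$ --- to the one-sided exponential form of the statement. The subdominant exponential $\e^{\pm2\i zL}$ is only $O(1)$, not $o(1)$, as $z\to\infty$ along the real axis, so the half-plane form must be read for $z$ staying away from $\R$ (equivalently, away from $\Sigma(\L)\subset\R$), where $\e^{\pm2\i zL}$ is genuinely negligible against the $O(z^{-2})$ correction; once $\Im z$ grows at least like $\log|z|$ it is $O(z^{-2})$ outright. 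One should also make sure the $O(1/z^2)$ in Lemma~\ref{l:asymY} is uniform in $x\in[0,L]$ before setting $x=L$, but that uniformity is part of the lemma.
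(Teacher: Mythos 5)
Your proposal is correct and follows exactly the route the paper intends (the paper presents Proposition~\ref{asymD} as an immediate consequence of Lemma~\ref{l:asymY}, and in the proof of Proposition~\ref{p:spectralproducts} it records precisely your intermediate form $\Delta(z)=\cos Lz+\frac{\sin Lz}{2z}\int_0^L|q|^2\,\d x+O(1/z^2)$ before passing to the one-sided exponential form). Your reading of the free-evolution factor as $\e^{-\i zL\sigma_3}$ is the one consistent with that formula and with~\eqref{phi}, and your caveat that the subdominant $\e^{\pm2\i zL}$ is only negligible for $z$ bounded away from the real axis is a fair observation about how the half-plane statement must be interpreted.
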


Next we introduce a particular family of Bloch-Floquet solutions of the modified ZS problem~\eqref{e:modZS},
which will be instrumental in the formulation of the inverse problem:
\begin{definition}
Let $\psi^\pm(x,z)$ be the Bloch-Floquet solutions of the modified ZS problem~\eqref{e:modZS} 
uniquely determined by the conditions 
\be
\psi^{\pm}(x+L,z)=\rho^{\pm1}(z)\psi^{\pm}(x,z)\,,\qquad
\psi_1^{\pm}(0,z)=1.
\label{e:psipmdef}
\ee
Also, let $\Psi(x,z)$ be the matrix Bloch-Floquet solution defined as
\vspace*{-1ex}
\be
\label{e:defPsi}
\Psi(x,z) = \big( \,\psi^\mp\,, \,\psi^\pm\, \big)\,,\qquad z\in\Complex^\pm\setminus\Real\,.
\ee
\end{definition}

\noindent 
In Appendix~\ref{a:Dirichlet} and Appendix~\ref{asymofpsipm}
we prove the following:
\begin{proposition}
\label{p:BFyexpansion}
The Bloch-Floquet solutions~ $\psi^\pm(x,z)$ defined by~\eqref{e:psipmdef} are given by
\be
\psi^{\pm}(x,z)=\~y_1(x,z)+\frac{\rho^{\pm1}(z)-\~y_{11}(L,z)}{\~y_{12}(L,z)}\~y_2(x,z),
\label{BF}
\ee
where $\~Y(x,z) = (\~y_1,\~y_2)$.
\end{proposition}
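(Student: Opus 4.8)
The plan is to verify formula~\eqref{BF} directly by checking that the right-hand side satisfies the two defining properties in~\eqref{e:psipmdef}, namely (a) it is a Bloch-Floquet solution of~\eqref{e:modZS} with multiplier $\rho^{\pm1}(z)$, and (b) its first component equals $1$ at $x=0$. Since the columns $\~y_1(x,z)$ and $\~y_2(x,z)$ of $\~Y(x,z)$ form a fundamental system of solutions of the linear ODE~\eqref{e:modZS}, any linear combination of them is automatically a solution, so the only content is to pin down the coefficients so that the combination is an \emph{eigenvector of the monodromy} and is correctly normalized.

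First I would address the normalization~(b). Because $\~Y(0,z)=U\,Y(0,z)\,U^{-1}=\I$, we have $\~y_1(0,z)=(1,0)^T$ and $\~y_2(0,z)=(0,1)^T$. Hence for any candidate of the form $\~y_1(x,z)+c(z)\,\~y_2(x,z)$, evaluating the first component at $x=0$ gives exactly $1$, independently of $c(z)$; so property~(b) holds for the proposed expression regardless of the specific coefficient. Next I would impose~(a): by Floquet's theorem a Bloch-Floquet solution is precisely an eigenvector of the monodromy matrix $\~M(z)=\~Y(L,z)$, whose eigenvalues are the Floquet multipliers $\rho(z)$ and $\rho^{-1}(z)$ (established above, since trace and determinant are invariant under~\eqref{e:Ytildedef}). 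Writing $\psi^\pm(x,z)=\~Y(x,z)\,c^\pm$ with constant vector $c^\pm=(1,c^\pm_2)^T$ (the first entry being $1$ from the normalization), the condition $\psi^\pm(x+L,z)=\rho^{\pm1}(z)\psi^\pm(x,z)$ becomes, at $x=0$, the eigenvalue equation $\~M(z)\,c^\pm=\rho^{\pm1}(z)\,c^\pm$. The first row of this $2\times 2$ system reads $\~y_{11}(L,z)+\~y_{12}(L,z)\,c^\pm_2=\rho^{\pm1}(z)$, which solves to $c^\pm_2=\bigl(\rho^{\pm1}(z)-\~y_{11}(L,z)\bigr)/\~y_{12}(L,z)$, exactly the coefficient appearing in~\eqref{BF}. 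I would then note that the second row of the eigenvalue equation is automatically satisfied: it is the remaining scalar relation $\~y_{21}(L,z)+\~y_{22}(L,z)\,c^\pm_2=\rho^{\pm1}(z)\,c^\pm_2$, which follows because $\rho^{\pm1}(z)$ is genuinely an eigenvalue of $\~M(z)$ (equivalently, one uses $\det\~M(z)=1$ and $\tr\~M(z)=2\D(z)=\rho(z)+\rho^{-1}(z)$ to see the two rows are consistent). Finally, uniqueness of $\psi^\pm$ is immediate since the multiplier $\rho^{\pm1}(z)$ is simple for $z\in\Complex^\pm\setminus\Real$ (there $|\rho(z)|\neq1\neq|\rho^{-1}(z)|$, so the two eigenvalues are distinct), hence the eigenspace is one-dimensional and the normalization $\psi_1^\pm(0,z)=1$ fixes the solution completely.

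The one genuinely delicate point — and the step I would flag as the main obstacle — is the well-definedness of the coefficient $\bigl(\rho^{\pm1}(z)-\~y_{11}(L,z)\bigr)/\~y_{12}(L,z)$, i.e.\ the fact that $\~y_{12}(L,z)\neq0$ on the relevant domain. By Proposition~\ref{p:dirzeros} the zeros of $\~y_{12}(L,z)$ are exactly the Dirichlet eigenvalues, which by Theorem~\ref{spectraprop} are all real; therefore for $z\in\Complex^\pm\setminus\Real$ the denominator is nonzero and the formula makes sense verbatim. At a real $z$ coinciding with a Dirichlet eigenvalue $\mu_j$ one should interpret~\eqref{BF} via the appropriate limit: the numerator $\rho^{\pm1}(\mu_j)-\~y_{11}(L,\mu_j)$ may also vanish there — indeed Lemma~\ref{y2} shows $\~y_2(x,\mu_j)$ is itself a Bloch-Floquet solution with multiplier $\~y_{22}(L,\mu_j)=1/\~y_{11}(L,\mu_j)$ — so the quotient has a removable singularity and the correct Bloch-Floquet solution is recovered by continuity; since $\mu_j$ is a simple zero of $\~y_{12}(L,\cdot)$ (Theorem~\ref{spectraprop}, part~2), the limit is finite and unambiguous. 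Writing this out carefully, together with invoking the asymptotic and analyticity statements from Appendix~\ref{asymofpsipm} to justify the limiting argument, is where the real work lies; the algebraic verification of~(a) and~(b) is otherwise routine.
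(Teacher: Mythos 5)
Your argument is correct and is essentially the paper's own proof: both reduce the Bloch--Floquet condition to the eigenvector equation $\~M(z)\,c^\pm=\rho^{\pm1}(z)\,c^\pm$ for the monodromy matrix, read off $c^\pm_2$ from the first row, and fix $c^\pm_1=1$ by the normalization $\psi^\pm_1(0,z)=1$ (the paper phrases it as solving the $2\times2$ homogeneous system whose determinant vanishes at $\rho^{\pm1}$, while you phrase it as verifying the given coefficient, but the content is identical). One small caveat in your closing discussion: at a movable Dirichlet eigenvalue $\mu_j$ with $\sigma_j=\pm1$ only the numerator matching that sign vanishes, so only one of $\psi^+,\psi^-$ has a removable singularity there while the other has a genuine simple pole (this is exactly what the paper's later cancellation by $f^\pm$ in the construction of $B(z)$ is designed to handle), so the blanket claim that the quotient extends by continuity is too strong.
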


\begin{proposition}
\label{p:asympsi}
For all $x>0$, $\psi^-(x,z)$ has the following asymptotic behavior as $z\to\infty$
\bse
\label{asympsi-}
\begin{align}
    &\psi^-(x,z)=\e^{-\i zx}\left(\begin{array}{c}
    1-\frac{1}{2\i z}\int_0^x|q(s)|^2\d s-\frac1{2\i z}(q^*(x)-q^*(0))+O(1/z^2) \\ -\i+\frac{1}{2z}\int_0^x|q(s)|^2\d s-\frac1{2z}(q^*(x)+q^*(0))+O(1/z^2)\end{array}\right)\,,\quad 
    z\in\C^+\,,
    \\
    &\psi^-(x,z)=\e^{\i zx}\left(\begin{array}{c}1+\frac{1}{2\i z}\int_0^x|q(s)|^2\d s+\frac1{2\i z}(q(x)-q(0))+O(1/z^2)\\\i+\frac{1}{2z}\int_0^x|q(s)|^2\d s-\frac1{2z}({q(x)+q(0)})+O(1/z^2)\end{array}\right), \quad 
    z\in\C^-\,.	
\end{align}
\ese
Similarly, $\psi^+(x,z)$ has the following asymptotic behavior as $z\to\infty$:
\bse
\label{asympsi+}
\begin{align}
    \psi^+(x,z)&=\e^{\i zx}\left(\begin{array}{c}1+\frac{1}{2\i z}\int_0^x|q(s)|^2\d s+\frac1{2\i z}(q(x)-q(0))+O(1/z^2)\\\i+\frac{1}{2z}\int_0^x|q(s)|^2\d s-\frac1{2z}(q(x)+q(0)+O(1/z^2)\end{array}\right),\quad z\in\C^+\,,
	\\
	\psi^+(x,z)&=\e^{-\i zx}\left(\begin{array}{c}1-\frac{1}{2\i z}\int_0^x|q(s)|^2\d s-\frac1{2\i z}(q^*(x)-q^*(0))+O(1/z^2)\\-\i+\frac{1}{2z}\int_0^x|q(s)|^2\d s-\frac1{2z}(q^*(x)+q^*(0))+O(1/z^2)\end{array}\right)\,,\quad z\in\C^-\,.
\end{align}
\ese
\end{proposition}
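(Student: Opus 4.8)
The plan is to derive the asymptotics of $\psi^\pm(x,z)$ from the explicit representation in Proposition~\ref{p:BFyexpansion} together with the asymptotics of $\~Y(x,z)$, which follow immediately from Lemma~\ref{l:asymY} and the similarity transformation~\eqref{e:Ytildedef}. First I would compute the large-$z$ expansion of $\~Y(x,z) = U\,Y(x,z)\,U^{-1}$ by conjugating~\eqref{asymY}: writing $Y(x,z) = (\I + A(x)/z + O(1/z^2))\,\e^{\i zx\sigma_3}$ with $A(x)$ the explicit matrix read off from Lemma~\ref{l:asymY}, one gets $\~Y(x,z) = U(\I + A(x)/z + O(1/z^2))U^{-1}\, U\e^{\i zx\sigma_3}U^{-1}$. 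The nontrivial point is that $U\e^{\i zx\sigma_3}U^{-1}$ is not diagonal but equals $\begin{pmatrix}\cos zx & \sin zx\\ -\sin zx & \cos zx\end{pmatrix}$ up to the factor structure of $U$; more precisely it is an exponential of an off-diagonal matrix, so one must keep both $\e^{\i zx}$ and $\e^{-\i zx}$ in play. This is where the careful bookkeeping lies, but it is entirely routine linear algebra with $2\times2$ matrices.

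Next I would substitute into~\eqref{BF}. The key quantities are $\~y_{11}(L,z)$, $\~y_{12}(L,z)$ and $\rho^{\pm1}(z)$ as $z\to\infty$. From Proposition~\ref{asymD} and~\eqref{rho} one has, for $z\in\C^+$, $\rho(z) = \rho_1(z) \sim \e^{\i zL}(1 + O(1/z))$ times the appropriate correction (the root with $|\rho|<1$ off the spectrum corresponds to $\e^{+\i zL}$ in the upper half plane since $\Im z>0$ makes $|\e^{\i zL}|<1$), while $\rho^{-1}(z)\sim \e^{-\i zL}$; symmetrically in $\C^-$. Combined with the expansions of $\~y_{11}(L,z)$ and $\~y_{12}(L,z)$ obtained in the first step, one evaluates the coefficient $\big(\rho^{\pm1}(z)-\~y_{11}(L,z)\big)/\~y_{12}(L,z)$. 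The point to watch is that $\~y_{12}(L,z)$ is exponentially small (like $\e^{-\i zL}$ in one half-plane) so the ratio must be computed so that the exponentially large and small pieces cancel correctly, leaving an $O(1)$ leading coefficient $\mp\i$ (this is forced by consistency with $\psi_1^\pm(0,z)=1$ and the known structure, and matches the $\mp\i$ in the second component of~\eqref{asympsi-}--\eqref{asympsi+} at leading order). Then~\eqref{BF} gives $\psi^\pm(x,z)$ as $\~y_1(x,z)$ plus a bounded multiple of $\~y_2(x,z)$, and plugging in the expansions from the first step yields the stated formulas after collecting the $O(1/z)$ terms.

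The main obstacle I anticipate is the sign and branch bookkeeping: tracking which of $\e^{\pm\i zx}$ is the dominant exponential in each half-plane, ensuring the $\rho^{\pm1}$ branches are assigned consistently with the Remark following~\eqref{rho}, and verifying that the exponentially growing contributions in the ratio $(\rho^{\pm1}-\~y_{11})/\~y_{12}$ cancel so that only the correct power of $\e^{\pm\i zx}$ survives in $\psi^\pm$. A clean way to organize this is to note that $\psi^-$ in $\C^+$ and $\psi^+$ in $\C^-$ are the "decaying" Bloch solutions (associated with $\rho^{-1}$, i.e., multiplier of modulus $<1$ upon translation by $L$ — or rather the one whose asymptotics carry $\e^{-\i zx}$), while $\psi^+$ in $\C^+$ and $\psi^-$ in $\C^-$ carry $\e^{+\i zx}$; this symmetry under $z\mapsto z^*$ combined with the Schwarz reflection $\D(z^*)=\D^*(z)$ means it suffices to do one case in detail (say $\psi^-$ for $z\in\C^+$) and obtain the other three by the reflection symmetry and by the interchange $\psi^+\leftrightarrow\psi^-$ corresponding to $\rho\leftrightarrow\rho^{-1}$.

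Finally, a consistency check at $x=0$: all four expansions should reduce to $(1, \mp\i)^T$ or $(1,\pm\i)^T$ with no $O(1/z)$ correction in the first component (since $\psi_1^\pm(0,z)=1$ exactly), which the stated formulas satisfy because the integral $\int_0^0$ vanishes and $q^*(0)/(2\i z)$, $q(0)/(2\i z)$ are precisely the terms that the similarity transformation was designed to remove from the reconstruction — this is the content of the Remark after Lemma~\ref{l:asymY}. Confirming this cancellation at $x=0$ is the sanity check that the branch assignments in the previous step were made correctly, and once it holds the general-$x$ formula follows without further subtlety.
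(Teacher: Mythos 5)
Your route is genuinely different from the paper's. The paper does not substitute the full $x$-dependent expansion of $\~Y(x,z)$ into \eqref{BF}; it instead writes $(\psi^-,\psi^+)=P(x,z)\,\rho^{-x\sigma_3/L}$ with $P$ periodic in $x$, derives a Volterra-type integral equation for $P$, and sums a Neumann series, anchoring everything on the boundary data $\psi^\pm(0,z)$ and $\psi^\pm_x(0,z)$. What that factorization buys is that the periodic factor carries no exponentials, so the single-exponential form $\e^{\mp\i zx}(\cdots)$ of the answer is built in from the start. Your direct substitution is viable in principle (all the ingredients are in Appendices C--D), but it forces you to confront head-on the point you only gesture at: each of $\~y_1(x,z)$ and $\~y_2(x,z)$ contains \emph{both} $\e^{\i zx}$ and $\e^{-\i zx}$ at $O(1)$, and for $z\to\infty$ near the real axis the subdominant exponential is not small. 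Hence the $\e^{+\i zx}$ parts of $\~y_1+c\,\~y_2$ (for $\psi^-$ in $\C^+$, with $c(z)=(\rho^{-1}-\~y_{11}(L,z))/\~y_{12}(L,z)$) must cancel \emph{through order $1/z$}, which requires expanding $c(z)$ one full order beyond the leading $-\i$, i.e.\ $c(z)=-\i-q^*(0)/z+O(1/z^2)$. Your sketch attributes the delicacy to exponential cancellations inside the ratio itself; in fact both numerator and denominator are dominated by the same exponential there (note $\~y_{12}(L,z)\sim-\sin(zL)$ is exponentially \emph{large} off the real axis, not small), and the genuine delicacy is the one-order-deeper cancellation in the linear combination. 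Without carrying that out, your expansion is only justified in sectors bounded away from $\R$.

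There is also a concrete error in your final step: the $x=0$ sanity check does not come out the way you assert. Setting $x=0$ in \eqref{asympsi-} leaves $-q^*(0)/(2\i z)$ in the first component, so the stated formula does \emph{not} reduce to exactly $1$ there; you claim it does because ``those are the terms the similarity transformation removes,'' but they are plainly present. Performing your check honestly is actually informative: carrying out your substitution with the expansions of Appendix C yields $-\bigl(q^*(x)-q^*(0)\bigr)/(2\i z)$ in the first component and $-\bigl(q^*(x)+q^*(0)\bigr)/(2z)$ in the second, which do satisfy $\psi^-_1(0,z)=1$ and $\psi_2^-(0,z)=-\i-q^*(0)/z$ (Proposition 3.8), and which agree with the exact genus-zero computation $\psi^-_2(x,z)=\e^{-\i\lambda x}\bigl(-\i-q^*/z+O(1/z^2)\bigr)$ of Appendix H. So the check is the right instinct, but asserting that it passes without doing it is where your argument has a hole: done properly, it shows that your method does not reproduce the displayed coefficients verbatim, and that discrepancy must be resolved (not waved away) before the proof is complete.
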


\begin{remark}
The introduction of the transformation~\eqref{e:Ytildedef} 
is needed to ensure that 
the quantities $q(0)$ and $q^*(0)$ can be eliminated from the 
$O(1/z)$ terms in~\eqref{asympsi-} and \eqref{asympsi+}.
This will make it possible to use the Bloch-Floquet eigenfunctions~$\psi^\pm$ to define a suitable Riemann-Hilbert problem in section~\ref{s:inverse}
(see RHP~\ref{RHP1}),
and would have not be possible if one defined $\psi^\pm$ using $Y(x,z)$ instead of $\~Y(x,z)$
(see Appendix~\ref{a:alternative} for details).
The second components of $\psi^\pm$ are the Weyl-Titchmarsh $m$-functions \cite{Teschl,titchmarsh}.
Note that $\psi_\pm(x,z)$ are undefined at points $z$ for which $\~y_{12}(L,z) = 0$,
due to the normalization $\psi_\pm(0,z) = 1$, which cannot be satisfied if the
first component of the eigenvector of the monodromy matrix vanishes.
It is straightforward to see the asymptotic behavior in Proposition~\ref{p:asympsi}, 
together with the definitions~\eqref{e:Ytildedef} and~\eqref{BF}, yields:
\end{remark}

\begin{corollary}
The matrix potential $Q(x)$ of the Dirac operator~\eqref{e:Diracoperator} can be recovered as 
\be
Q(x) = \lim_{z\to\infty} \bm \sqrt2 \em iz[\sigma_3,U^{-1}\Psi(x,z)\e^{\i zx\sigma_3}]\,.
\label{e:reconstruction}
\ee
\label{c:reconstruction}
\end{corollary}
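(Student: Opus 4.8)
The plan is to extract $Q(x)$ from the large-$z$ asymptotics of $\Psi(x,z)$ established in Proposition~\ref{p:asympsi}. First I would recall that, by definition~\eqref{e:defPsi}, for $z\in\C^+$ the columns of $\Psi(x,z)$ are $(\psi^-,\psi^+)$, so that the asymptotic expansions in~\eqref{asympsi-}--\eqref{asympsi+} give, after factoring out $\e^{\i zx\sigma_3}$,
\be
\Psi(x,z)\,\e^{\i zx\sigma_3} = \begin{pmatrix} 1 & 1 \\ -\i & \i \end{pmatrix} + \frac{1}{2\i z}\begin{pmatrix} -q^*(x) & q(x) \\ -\i q^*(x) & -\i q(x) \end{pmatrix} + O(1/z^2)\,,\qquad z\in\C^+\,,
\ee
where I have used $\int_0^x|q|^2$ cancelling appropriately in the combination that appears; the leading term is precisely $U$ from~\eqref{e:Udef}. (For $z\in\C^-$ the roles of $\psi^\pm$ are swapped, but since $\psi^+$ in $\C^-$ has the same expansion as $\psi^-$ in $\C^+$, and vice versa, the matrix $\Psi(x,z)\,\e^{\i zx\sigma_3}$ has the same expansion in both half-planes, so the limit is unambiguous.) Thus $U^\dag\Psi(x,z)\e^{\i zx\sigma_3} = U^\dag U + \frac{1}{2\i z}U^\dag R(x) + O(1/z^2)$ for the $O(1/z)$ coefficient matrix $R(x)$ read off above.

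Next I would compute $U^\dag U$ and $U^\dag R(x)$ explicitly. One finds $U^\dag U = 2\I$, which commutes with $\sigma_3$, so the leading term contributes nothing to the commutator $[\sigma_3,\,\cdot\,]$. A direct $2\times 2$ multiplication gives $U^\dag R(x) = 2\begin{pmatrix} 0 & q(x) \\ q^*(x) & 0 \end{pmatrix}$ — that is, $U^\dag R(x) = 2Q(x)$ — and then $[\sigma_3, U^\dag R(x)]$, using $\sigma_3 Q = -Q\sigma_3$ for the off-diagonal $Q$, equals $2\sigma_3 Q - 2Q\sigma_3 = 4\sigma_3 Q = -4\i\,(\i\sigma_3 Q)$; one checks the constants so that $\i z \cdot \frac{1}{2\i z}[\sigma_3, U^\dag R(x)] \to Q(x)$ as $z\to\infty$, with the $O(1/z)$ remainder killed by the single power of $z$ in front. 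Assembling these pieces yields~\eqref{e:reconstruction}.

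The only genuinely delicate point is the bookkeeping across the two half-planes: one must verify that the definition $\Psi = (\psi^\mp,\psi^\pm)$ for $z\in\C^\pm$ produces the same $O(1/z)$ coefficient in both cases, so that the limit in~\eqref{e:reconstruction} is well defined regardless of how $z\to\infty$. This is immediate from Proposition~\ref{p:asympsi} once one observes that the expansion of the first column of $\Psi$ in $\C^+$ (namely $\psi^-$) matches the expansion of the second column of $\Psi$ in $\C^-$ (namely $\psi^-$ again) and similarly for the other column — the two matrices agree term by term at $O(1)$ and $O(1/z)$. Everything else is the routine $2\times 2$ matrix algebra sketched above; no uniformity subtleties arise because the expansions in Proposition~\ref{p:asympsi} are already stated with explicit $O(1/z^2)$ control, which is more than enough to justify taking the limit after multiplying by $\i z$.
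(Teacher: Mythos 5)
Your strategy is exactly the one the paper intends --- the paper gives no written proof beyond asserting that the corollary follows from Proposition~\ref{p:asympsi} and the definitions --- so the question is whether your computation actually closes. It does not, at two points.

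First, the displayed expansion of $\Psi(x,z)\,\e^{\i zx\sigma_3}$ is not correct as written: the terms $K(x)=\int_0^x|q(x')|^2\,\d x'$ do \emph{not} cancel in that matrix. The true coefficient of $1/(2\i z)$ is
\be
C(x)=\begin{pmatrix}-K-q^* & K+q\\ \i(K-q^*) & \i(K-q)\end{pmatrix}=-K\,U\sigma_3+\begin{pmatrix}-q^*&q\\-\i q^*&-\i q\end{pmatrix}.
\ee
The $K$-part only disappears at the next step, because $U^\dag(-K\,U\sigma_3)=-2K\sigma_3$ is diagonal and is annihilated by $[\sigma_3,\cdot\,]$. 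That is a fixable imprecision, but it should be said that way rather than suppressed in the display.

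Second, and more seriously, the matrix product is wrong: with $U^\dag=\begin{pmatrix}1&\i\\1&-\i\end{pmatrix}$ and $R$ your off-diagonal piece, one finds $U^\dag R=\begin{pmatrix}0&2q\\-2q^*&0\end{pmatrix}=2\sigma_3Q$, not $2Q$. Carrying this through, $[\sigma_3,2\sigma_3Q]=4Q$, so that $\i z\cdot\frac{1}{2\i z}[\sigma_3,U^\dag C(x)]=2Q(x)$. In other words, the limit in \eqref{e:reconstruction}, computed carefully from Proposition~\ref{p:asympsi}, equals $2Q(x)$ rather than $Q(x)$; the sentence ``one checks the constants'' is precisely where the argument needs to be honest, and the constants do not check. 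The discrepancy is a factor of $2$ traceable to $U^\dag U=2\I$ (equivalently $U^{-1}=\tfrac12U^\dag$): the formula holds verbatim with $U^{-1}$ in place of $U^\dag$, or with a prefactor $\tfrac12$. You should either carry the constants through explicitly and flag this normalization issue, or the proof is asserting a conclusion its own arithmetic does not produce. (Your observation that the two half-planes yield identical $O(1)$ and $O(1/z)$ coefficients, so the limit is direction-independent, is correct and worth keeping.)
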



\kern-\bigskipamount
Proposition~\ref{p:asympsi} immediately gives the asymptotic behavior of $\psi_2^\pm(0,z)$ as $z\to\infty$
up to $O(1/z)$. 
When considering the time dependence of the potential in section~\ref{s:time}, however,
the next term in the asymptotic expansion of the Bloch-Floquet solutions at $z=0$
will also be needed
(cf.\ Appendix~\ref{asymofpsipm}):
\begin{proposition}
\label{asympsi0}
If $q\in C^2([0,L],\Complex)$, in addition to~\eqref{asympsi-} and~\eqref{asympsi+} we also have,
as $z\to\infty$: 
\bse
\begin{align}
	&\psi_2^-(0,z)=\begin{cases}\displaystyle
		-\i-\frac{q^*(0)}{z}-\frac{q^2(0)+q_x(0)}{2\i z^2}+O(1/z^3)\,, &z\in\C^+\,,
		\\[1ex]\displaystyle
		\i-\frac{q(0)}{z}+\frac{q^2(0)+q_x(0)}{2\i z^2}+O(1/z^3)\,, & z\in\C^-\,,\end{cases}\label{asymppsi-20}\\
	&\psi_2^+(0,z)=\begin{cases}\displaystyle
		\i-\frac{q(0)}{z}+\frac{q^2(0)+q_x(0)}{2\i z^2}+O(1/z^3)\,, &z\in\C^+\,,
		\\[1ex]\displaystyle
		-\i-\frac{q^*(0)}{z}-\frac{q^2(0)+q_x(0)}{2\i z^2}+O(1/z^3)\,, & z\in\C^-\,.\end{cases}\label{asymppsi+20}
\end{align}
\ese
\end{proposition}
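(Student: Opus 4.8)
The plan is to compute one more order in the asymptotic expansion of the Bloch–Floquet solutions $\psi^\pm(x,z)$ at the base point $x=0$, building directly on the machinery already developed. The key observation is that $\psi^\pm(0,z)=(1,\psi_2^\pm(0,z))^T$, so only the second component is unknown, and by Proposition~\ref{p:BFyexpansion} it equals $(\rho^{\pm1}(z)-\~y_{11}(L,z))/\~y_{12}(L,z)$. Hence the whole problem reduces to obtaining three-term expansions of $\~y_{11}(L,z)$, $\~y_{12}(L,z)$ and $\rho^{\pm1}(z)$ as $z\to\infty$, and dividing. First I would push the Volterra iteration of~\eqref{phi} for $Y(x,z)$ to one order beyond Lemma~\ref{l:asymY}: the third iterate contributes the $O(1/z^2)$ entries, and a routine integration-by-parts (using $q\in C^2$ so that $q_x(0)$ appears) produces the needed coefficients involving $q(0)$, $q^*(0)$, $q_x(0)$ and $\int_0^L|q|^2\,\d x$. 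Conjugating by $U$ via~\eqref{e:Ytildedef} then gives the entries of $\~Y(L,z)=\~M(z)$ to the same order; in particular $\~y_{11}(L,z)=\half(M_{11}+M_{22})-\halfi(M_{12}-M_{21})$ and $\~y_{12}(L,z)=\half(M_{11}-M_{22})+\halfi(M_{12}+M_{21})$ in terms of the entries of $M(z)=Y(L,z)$.

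Next I would expand $\rho^{\pm1}(z)$. From $\rho_1(z)=\D(z)-\sqrt{\D^2(z)-1}$ and Proposition~\ref{asymD}, for $z\in\C^+$ one has $\D(z)=\half\e^{-\i zL}(1-\frac1{2\i z}\int_0^L|q|^2+O(1/z^2))$, which blows up, so $\rho^{-1}(z)=1/\rho_1(z)$ is the exponentially small branch and $\rho(z)=\rho_1(z)$ is the large one; the relevant combination in $\psi_2^\pm(0,z)$ is chosen in~\eqref{e:defPsi} precisely so that the exponentially-growing pieces cancel against $\~y_{11}(L,z)$ in the numerator, leaving a ratio with a finite limit. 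Concretely, $\rho(z)-\~y_{11}(L,z)$ must be computed to $O(1/z^2)$: the leading $\e^{\mp\i zL}$ terms cancel by construction, and what survives is a polynomial-in-$1/z$ expression whose coefficients are exactly the local data $q(0),q^*(0),q_x(0)$. Dividing by the expansion of $\~y_{12}(L,z)$ — whose leading behavior is also $\sim\frac12\e^{\mp\i zL}$ times a series in $1/z$ — the exponentials drop out and one reads off~\eqref{asymppsi-20}–\eqref{asymppsi+20}. Consistency with Proposition~\ref{p:asympsi} at order $1/z$ (namely $\psi_2^-(0,z)=-\i-q^*(0)/z+\cdots$ for $z\in\C^+$) provides a useful check, and the $\C^-$ formulas follow either by the analogous computation or by the Schwarz symmetry $\D(z^*)=\D^*(z)$ together with the corresponding reflection property of $\~Y$.

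The main obstacle I anticipate is bookkeeping, not conceptual: one must carry the Volterra expansion of $Y(x,z)$ to $O(1/z^2)$ with the oscillatory factors $\e^{\pm2\i zx}$ kept track of correctly, since integrating terms like $\int_0^L\e^{-2\i zx'}q^*(x')q(x')\,\d x'$ by parts is what generates the $q_x(0)$ contribution, and sign errors there propagate through the $U$-conjugation and the division. A secondary subtlety is ensuring that the "growing minus growing" cancellation in $\rho(z)-\~y_{11}(L,z)$ is done consistently to the order needed — one cannot truncate $\rho(z)$ and $\~y_{11}(L,z)$ independently at $O(1/z^2)$ and then subtract, because their leading terms are $O(\e^{|z|L})$; instead one should factor out $\e^{\mp\i zL}$ first, expand the bracketed series, and only then cancel. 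Once the algebra is organized this way the result falls out directly, which is presumably why the details are deferred to Appendix~\ref{asymofpsipm}.
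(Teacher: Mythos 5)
Your overall strategy is the same as the paper's: write $\psi_2^\pm(0,z)=(\rho^{\pm1}(z)-\~y_{11}(L,z))/\~y_{12}(L,z)$ via Proposition~\ref{p:BFyexpansion}, push the Volterra iteration for $Y(x,z)$ one order past Lemma~\ref{l:asymY}, conjugate by $U$ to get the entries of $\~Y(L,z)$ to $O(1/z^2)$ (this is exactly Appendix~\ref{a:Ytildeasymp}), and exploit the exponential dichotomy of the two Floquet multipliers. However, as written your branch identification is backwards, and this is a step that would fail: the paper fixes $\rho(z)=\rho_1(z)=\D(z)-\sqrt{\D^2(z)-1}$ to be the root with $|\rho(z)|<1$ off $\Sigma(\L)$, so as $\D\to\infty$ one has $\rho_1=\D\big(1-\sqrt{1-\D^{-2}}\big)=1/(2\D)+\cdots$, i.e.\ $\rho(z)$ is the exponentially \emph{small} root and $\rho^{-1}(z)=2\D(z)+O(1/\D(z))$ is the exponentially large one --- the opposite of what you assert. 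Carried through literally, your identification gives $\psi_2^-(0,z)\approx-\~y_{11}(L,z)/\~y_{12}(L,z)\to+\i$ in $\C^+$, which is the expansion of $\psi_2^+$, not $\psi_2^-$. (Your own consistency check against Proposition~\ref{p:asympsi} would catch this, but the proof as stated computes the wrong quantity.) A second, smaller error: your formulas $\~y_{11}(L,z)=\half(M_{11}+M_{22})-\halfi(M_{12}-M_{21})$ and $\~y_{12}(L,z)=\half(M_{11}-M_{22})+\halfi(M_{12}+M_{21})$ are not the conjugation by the $U$ of \eqref{e:Udef}; e.g.\ the correct relation $\~y_{12}(L,z)=\halfi(M_{11}+M_{21}-M_{12}-M_{22})$ is already recorded in Proposition~\ref{p:dirzeros}.

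There is also a cleaner way to organize the cancellation than "factor out $\e^{\mp\i zL}$ and subtract," which is what the paper actually does: since $2\D=\~y_{11}(L,z)+\~y_{22}(L,z)$, the trace identity gives \emph{exactly}
\begin{equation*}
\rho^{-1}(z)-\~y_{11}(L,z)=\~y_{22}(L,z)+O(1/\D(z)),\qquad
\rho(z)-\~y_{11}(L,z)=-\~y_{11}(L,z)+O(1/\D(z)),
\end{equation*}
with the $O(1/\D)$ terms exponentially small by Proposition~\ref{asymD}. Hence $\psi_2^-(0,z)=\~y_{22}(L,z)/\~y_{12}(L,z)$ and $\psi_2^+(0,z)=-\~y_{11}(L,z)/\~y_{12}(L,z)$ up to exponentially small corrections, and one simply divides the three-term expansions \eqref{asymy11tilde}, \eqref{asymy12tilde}, \eqref{asymy22tilde}. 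No term-by-term cancellation of growing exponentials is ever needed, which removes the bookkeeping hazard you correctly worry about in your last paragraph. With the branch labels corrected and this simplification in place, your argument coincides with the paper's proof.
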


\begin{remark}
Using the asymptotics in~\eqref{asymY}
and the corresponding expressions for the entries of $\~Y(x,z)$ (cf.\ Appendix~\ref{a:Ytildeasymp}), 
one can prove that $\Delta(z)$, and $\~y_{12}(L,z)$ are entire functions with order~1
(cf.\ Appendix~\ref{ordereigenf}). 
Thus, Hadamard's factorization theorem (e.g., see \cite{Ahlfors,Krantz}), 
allows us to write $\Delta(z)\pm1$ and $\~y_{12}(L,z)$ as the following infinite products:
if $\mu_0\ne0$ and $\z_{0}\ne0$,
\bse
\label{e:Hadamardproducts}
\begin{gather}
\~y_{12}(L,z)=\e^{A_1z+B_1}\prod_{j\in\mathbb{Z}}\left(1-\frac{z}{\mu_j}\right)\e^{\frac{z}{\mu_j}}\,,
\label{y12}
\\
\Delta(z)-1=\e^{A_2z+B_2}\prod_{j\in\Z\atop j\text{ even}}\left(1-\frac{z}{\z_{2j-1}}\right)\left(1-\frac{z}{\z_{2j}}\right)\e^{z\big(\frac{1}{\z_{2j-1}}+\frac{1}{\z_{2j}}\big)}\,,
\label{e:d-1}
\\
\Delta(z)+1=\e^{A_3z+B_3}\prod_{j\in\Z\atop j\text{ odd}}\left(1-\frac{z}{\z_{2j-1}}\right)\left(1-\frac{z}{\z_{2j}}\right)\e^{z\big(\frac{1}{\z_{2j-1}}+\frac{1}{\z_{2j}}\big)}\,.
\label{e:d+1}
\end{gather}
\ese
If $\mu_0=0$, \eqref{y12} is replaced by
\bse
\be
\~y_{12}(L,z)=z\e^{A_1z+B_1}\prod_{j\neq 0}\bigg(1-\frac{z}{\mu_j}\bigg)\,\e^{\frac{z}{\mu_j}}.
\ee
Similarly, if $\zeta_{0}=0$, \eqref{e:d-1} is replaced respectively by 
\begin{gather} 
\Delta(z)-1 = z\bigg(1-\frac{z}{\z_{-1}}\bigg)\,\e^{A_2z+B_2+\frac{z}{\z_{-1}}}\prod_{j\neq 0\atop j\text{ even}}\left(1-\frac{z}{\z_{2j-1}}\right)\left(1-\frac{z}{\z_{2j}}\right)\e^{z\big(\frac{1}{\z_{2j-1}}+\frac{1}{\z_{2j}}\big)}\,,
\end{gather}
\ese
while the factorization of $\Delta(z)+1$ remains unchanged.
The presence of the exponential terms in~\eqref{e:Hadamardproducts} is a significant difference between the spectral theory
for the ZS operator compared to that for Hill's equation,
and it is a consequence of the fact that the solutions of the ZS problem~\eqref{e:zs} have order of growth 1 as $z\to\infty$,
as opposed to those of Hill's equation, whose order of growth is~1/2.
A corresponding difference will appear in the definition of the jump matrix for the RHP in section~\ref{s:inverse}.
\bm
Note also that, by virtue of \cite{kappeler2} (see Lemma~5.4 and Appendix~C), we can also express
$\Delta^2(z)-1$ and $\~y_{12}(L,z)$ via the following infinite products:
\bse
\label{e:modifiedinfiniteproducts}
\begin{gather}
\tilde{y}_{12}(L,z)=-\prod_{j\in\Integer}\frac{\mu_j-z}{\pi_j},\label{y12simpler}\\
\Delta^2(z)-1=-\prod_{j\in\Integer}\frac{(\zeta_{2j}-z)(\zeta_{2j-1}-z)}{\pi_j^2},
\end{gather}
\ese
where $\pi_j$ is defined as
\be
\pi_j=\begin{cases}
	j\pi,& j\neq 0\\
	1, & j=0.
	\end{cases}
\ee
\em
\end{remark}

While the fundamental matrices $Y(x,z)$ and $\~Y(x,z)$ are entire, 
the Bloch-Floquet eigenfunctions $\psi^\pm$ are singular at $z=\mu_j$, as is easily seen from~\eqref{BF}.
For completeness, we next characterize this singular behavior.

\begin{proposition}
If $\nu_j=0$, both $\psi^+(x,z)$ and $\psi^-(x,z)$ remain finite at $z=\mu_j$.
\end{proposition}

\proof
When $\nu_j=0$, we have $\rho(\mu_j)=1/\rho(\mu_j)=1$. Thanks to Lemma \ref{y2}, $\~y_{11}(L,\mu_j)=1/\rho(\mu_j)=1$, indicating that $\mu_j$ is the zero of  both $\~y_{12}(L,z)$ and $\rho^{\pm1}(z)-\~y_{11}(L,z)$. According to the definition of $\psi^{\pm}$ \eqref{BF}, they are finite at $\mu_j$. 
\endproof

\begin{proposition}
If $\nu_j=1$, then $\psi^-(x,\mu_j)$ is finite, while $\psi^+(x,z)$ has a simple pole at $\mu_j$ with residue
\bse
\be
\Res_{z=\mu_j}\psi^+=\frac{\rho(\mu_j)-\rho^{-1}(\mu_j)}{\~y'_{12}(L,\mu_j)}\~y_2(x,\mu_j).
\ee
Conversely, if $\nu_j=-1$, then $\psi^+(x,\mu_j)$ is finite, while $\psi^-(x,z)$ has a simple pole at $\mu_j$ with residue
\be
\Res_{z=\mu_j}\psi^-=\frac{\rho^{-1}(\mu_j)-\rho(\mu_j)}{\~y'_{12}(L,\mu_j)}\~y_2(x,\mu_j).
\ee
\ese
\end{proposition}

\proof
For the purposes of the present calculation, it is convenient to define $\~\Psi(x,z) = (\psi^-,\psi^+)$ for all $z\in\Complex$.
[Obviously one has $\~\Psi = \Psi$ for all $z\in\Complex^+$ and $\~\Psi = \Psi\,\sigma_1$ for all $z\in\Complex^-$.]
We can write 
\be
\~\Psi(x,z) = \~Y(x,z)\,C(z),,\qquad
C(z) = \begin{pmatrix} 1 & 1 \\ c_-(z) & c_+(z) \end{pmatrix}\,,
\ee
with $c_\pm(z) = (\rho^{\pm1}(z) - \~y_{11}(L,z))/\~y_{12}(L,z)$. 
\bm 
The result then follows because, by Lemma~\ref{y2}, one has
$\~y_{11}(L,\mu_j) = \rho^{-\nu_j}(\mu_j)$.
Incidentally, note that 
\be
C^{-1}(z)=  \frac1{\rho(z) - \rho^{-1}(z)} \begin{pmatrix} \rho(z)-\~y_{11}(L,z) & - \~y_{12}(L,z) \\
	- \rho^{-1}(z)-\~y_{11}(L,z) & \~y_{12}(L,z)
		\end{pmatrix}
\ee
for all $z$ such that $\rho(z)\ne1$.
\em
\endproof

\begin{proposition}
The fundamental matrix solutions $Y$ and $\~Y$ satisfy the following symmetries:
	\be
    Y^*(z^*)=\sigma_1 Y(z)\sigma_1\,,\quad
	\~Y^*(z^*)= \~Y(z)\,,\qquad z\in\C\,.\label{e:symY~}
	\ee
\end{proposition}
\begin{proof}
The symmetry of $Y(z)$ follows as usual from its definition as a solution of the ZS problem and the initial condition $Y(0,z) = I$.
Using the definition~\eqref{e:Ytildedef} of $\~Y$ and the fact that $U^*\sigma_1 = U$, 
we then have
$\sigma_1 Y^*(z^*)\sigma_1 = \sigma_1 U^{-1*}\~Y^*(z^*)U^*\sigma_1 = Y(z)=U^{-1}\~Y(z)U$,
	which directly yields
$\~Y(z)=U\sigma_1 U^{-1*}\~Y^*(z^*)U^*\sigma_1U^{-1}=\~Y^*(z^*)$.
\end{proof}
\begin{proposition}
The Bloch-Floquet solutions $\psi^\pm$ and the matrix Bloch-Floquet solution satisfy
the symmetries 
\be
\psi^{\pm*}(z^*)=\psi^\pm(z)\,,\quad
\Psi^*(z^*)=\Psi(z)\,\sigma_1\,,\qquad
\forall z\in\C\setminus\R\,.
\ee
\end{proposition}
\begin{proof}
	Using \eqref{BF}, \eqref{e:symY~}, and the fact that $\rho^*(z^*)=\rho(z)$,  we have
	\be
	(\psi^+)^*(z^*)= \~y_1^*(z^*)+\frac{\rho^{*}(z^*)-\~y^*_{11}(L,z^*)}{\~y^*_{12}(L,z^*)}\~y^*_2(x,z^*)=\~y_1(x,z)+\frac{\rho(z)-\~y_{11}(L,z)}{\~y_{12}(L,z)}\~y_2(x,z)=\psi^+(z).
	\ee
The result for $\psi^-$ is proved in a similar way.
The result for $\Psi$ then follows immediately from the definition~\eqref{e:defPsi}.\end{proof}

\bm
\begin{lemma}
If the potential $q(x)$ is purely imaginary and even,
the Dirichlet eigenvalues are located at points of the main spectrum.
\label{l:Dirichletbase0}
\end{lemma}

\begin{proof}
Recall first that, for any purely imaginary and even potential, the symmetries of the ZS problem allow the monodromy matrix $M(z)$ to be written as 
\be
M(z) = \Delta(z) I + c(z) \sigma_3 + \i s(z)\sigma_2\,,
\label{e:MDeltaIcs}
\ee
where $I$ and $\sigma_2$ are the identity matrix and the second Pauli matrix.
In terms of $M(z)$, the Dirichlet eigenvalues are the zeros of 
$\frac \i2 [ M_{11} - M_{22} +   M_{21} - M_{12} ] = \i(c(z) - s(z))$.
Thus, the Dirichlet eigenvalues are the points for which $c(z)-s(z)=0$.
Next, note that, since $\det M(z) =1$, \eqref{e:MDeltaIcs} yields
\be
\Delta^2(z)- 1 = c^2(z) - s^2(z)\,.
\ee
The main spectrum is therefore located at the points for which $c^2(z)=s^2(z)$.  
We then see that each Dirichlet eigenvalue must coincide with a main eigenvalue.
\end{proof}

\em

\section{Inverse spectral theory for the periodic self-adjoint Zakharov-Shabat problem}
\label{s:inverse}

Using the results of section~\ref{s:asymptotics}, 
we now construct a RHP for the ZS eigenvalue problem~\eqref{e:zs} with periodic potentials 
that also applies in the case of infinite genus.
We then establish a unique solution of this RHP and we obtain a reconstruction formula for the potential 
from the solution of this RHP.

\begin{definition}
\label{d:f0f+f-}
Let the $2\times2$ matrix-valued function $B(z)$ be defined as
\bm
\bse
    \be
	B(z) = b(z) \begin{cases} 
            \i\,\diag(f^-,f^+),&z\in\Complex^+\setminus\Sigma(\L)\,,\\[1ex]
            \diag(f^+,f^-),&z\in\Complex^-\setminus\Sigma(\L)\,,
	\end{cases}
	\label{B}
    \ee
    with
    \be
        b(z) = \frac{(f^0(z))^{1/2}}{(\Delta^2(z)-1)^{1/4}}\,,
    \ee
and where $f^0(z)$ and $f^\pm(z)$ are defined as 
    \be
	f^0(z)= - \prod_{\nu_j=0}\frac{\mu_j-z}{\pi_j}\,,\quad
	f^+(z)=\prod_{\nu_j=1}\frac{\mu_j-z}{\pi_j}\,,\quad
	f^-(z)=\prod_{\nu_j=-1}\frac{\mu_j-z}{\pi_j}\,.
	\label{e:f+-0def}
    \ee
\ese\em
\end{definition}
%
Note that the functions $f^0(z)$, $f^\pm(z)$ defined by the infinite products in \eqref{e:f+-0def} are entire. 
\bm 
Moreover, one has $f^0(z) f^+(z) f^-(z) = \~y_{12}(L,z)$.
\em
As usual, if any of the products in~\eqref{e:f+-0def} contain no terms, the corresponding functions are defined to be equal to~1.
The function $\sqrt{f^0(z)}$ is defined to be holomorphic everywhere except where $f^0(z)\le0$,
and the branch for the square root is chosen accordingly (cf. Fig.~\ref{f:2}).
The function $\sqrt[4]{\Delta^2-1}$ is defined so that it is holomorphic for 
$z\in\C\setminus\big(\cup_{n=-\infty}^{\infty}[\zeta_{4n-4},\zeta_{4n-1}]\big)$
(also see Fig.~\ref{f:2}).

\begin{figure}[b!]
\centerline{\includegraphics[width=0.6\textwidth]{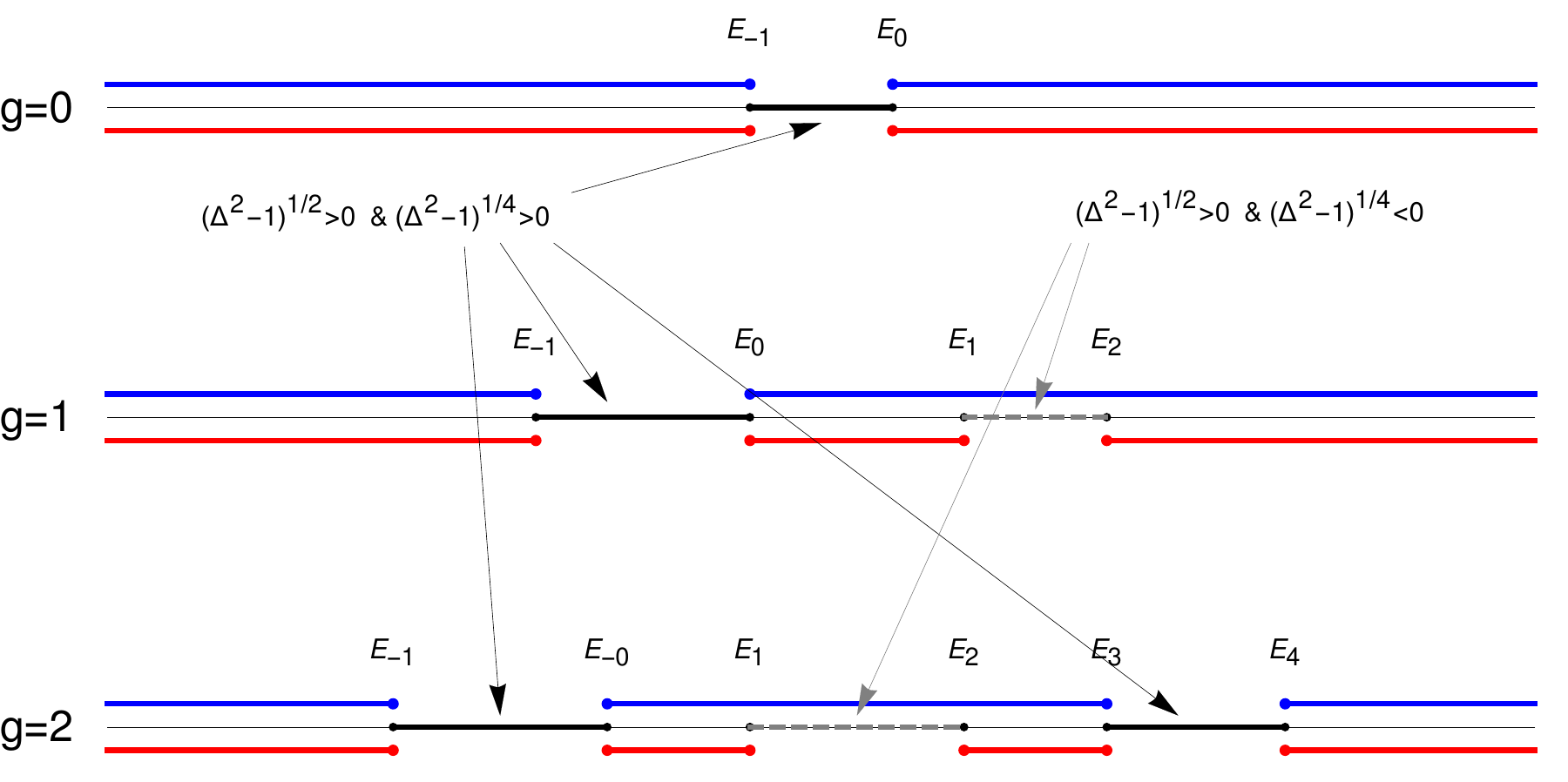}}
\caption{The branch cuts along the real $z$-axis for the functions $\sqrt{\D^2-1}$ (in red) and for $\sqrt[4]{\D^2-1}$ (in blue),
as well as the segments (thick black) where $\sqrt{\D^2-1}$ and $\sqrt[4]{\D^2-1}$ are both positive.
Top: $g=0$. Middle: $g=1$. Bottom: $g=2$.}
\label{f:2}
\end{figure}

\bm
\begin{proposition}
\label{p:spectralproducts}
The matrix $B(z)$ is completely determined by the spectral data $S(q)$ defined in \eqref{e:spectraldata}.
Explicitly,  
we have 
\vspace*{-0.6ex}
\bse
\begin{gather}
    b^4(z) =  - 
	 \prod\limits_{\nu_j=0} \frac{(\gamma_j-z)^2}{(E_{2j}-z)(E_{2j}-z)} \prod\limits_{\nu_j\neq0}\frac{\pi_j^2}{(E_{2j}-z)(E_{2j}-z)}\,,
		\label{e:rootratio}
		\\
    f^+(z)=\prod_{j=\g^-,\atop\nu_j=1 }^{\g^+}\frac{\gamma_j-z}{\pi_j}\,,
		\qquad
		f^-(z)=\prod_{j=\g^-\atop\nu_j=-1 }^{\g^+}\frac{\gamma_j-z}{\pi_j}\,.
		\label{e:fpmnew}
\end{gather}
\ese 
\end{proposition}
\begin{proof}
Taking the fourth power of the coefficient of $B(z)$, we have
\vspace*{-1ex}
\begin{multline}
    b^4(z) =  - \frac{
	\prod\limits_{\nu_j=0}(\mu_j-z)^2}{\prod\limits_{j\in\mathbb{Z}}(\z_{2j}-z)(\z_{2j-1}-z)}
    = \frac{
	\prod\limits_{\nu_j=0}(\mu_j-z)^2}{\prod\limits_{\nu_j=0}(\z_{2j}-z)(\z_{2j-1}-z)}\frac{\pi_j^2}{\prod\limits_{\nu_j\neq0}(E_{2j}-z)(E_{2j-1}-z)}
\\[-1ex]
    = \prod\limits_{\nu_j=0}\frac{(\gamma_j-z)^2}{(E_{2j}-z)(E_{2j-1}-z)}
	\prod\limits_{\nu_j\neq0}\frac{\pi_j^2}{(E_{2j}-z)(E_{2j-1}-z)},
\end{multline}
where in the last equality we used the fact that the fixed Dirichlet eigenvalues coincide with the double main eigenvalues $\zeta_j$, allowing them to cancel each other in the first fraction. 
\end{proof}
\em

\begin{definition}
Let the sectionally meromorphic $2\times2$ matrix $\Phi(x,z)$ be defined as
\be
\Phi(x,z)=\Psi(x,z)B(z)\e^{\i zx\sigma_3}\,,
\quad z\in\Complex\,.
\label{Phi}
\ee 
Also, let 
\be
V(x,z) = \begin{cases}
(-1)^{n + m(z)} \begin{pmatrix}{f^-(z)}/{f^+(z)} & 0 \\ 0 & {f^+(z)}/{f^-(z)}\end{pmatrix}\,, &z\in[E_{2n-2},E_{2n-1}]\,,\\
(-1)^{n + m(z)}\i\e^{-2\i zx\sigma_3}\sigma_1\,, &z\in[E_{2n-1},E_{2n}]\,.
\end{cases}
\label{e:Vsimple}
\ee
The counting function $m(z)$ in~\eqref{e:Vsimple} is defined as follows:
If there are no $\gamma_k$ with $\nu_k=0$, let $m(z) = 0$.
Alternatively, if there is at least one $\gamma_k$ with $\nu_k=0$, 
let $\gamma_*$ be the value of $\gamma_k$ closest to zero with $\nu_k=0$,
and let 
\be
m(z) = \begin{cases} 
  \big|\{\gamma_k:\gamma_*\leq\gamma_k\leq z,\nu_k=0\} \big|\,, &z\geq\gamma_*,
\\[0.4ex]
  \big|\{\gamma_k:z<\gamma_k< \gamma_*,\nu_k=0\} \big|\,, &z < \gamma_*\,.
\end{cases}
\label{e:mdef}
\ee
In other words,
for $z\ge\gamma_*$, $m(z)$ counts the number of Dirichlet eigenvalues $\gamma_k$ smaller than or equal to $z$ with $\nu_k=0$, 
whereas  
for $z<\gamma_*$, $m(z)$ counts the number of Dirichlet eigenvalues $\gamma_k$ larger than or equal to $z$ with $\nu_k=0$.
\end{definition}

\begin{definition}
\label{d:disks}
Let $R=\max_{\g^-\leq k\leq \g^+}\left\{E_{2k}-E_{2k-1}\right\}$. Define $D_j$ to be the open disc of radius $R$ centered at 
$c_j=\frac12(\z_{2j}+\z_{2j-1})$, and 
\be
\mathcal{D}=\mathbb{C}\setminus\left(\mathbb{R}\cup\bigcup_{k=\g^-}^{\g^+}\bar{D}_{k}\right).
\ee
\end{definition}

\begin{theorem}
The $2\times2$ matrix-valued function $\Phi(x,z)$ defined by~\eqref{Phi} solves the following Riemann-Hilbert problem:
\label{t:direct}
\begin{RHP}
\label{RHP1}
Find a $2\times2$ matrix-valued function $\Phi(x,z)$ such that
\vspace*{-1ex}
\begin{enumerate}
\advance\itemsep-2pt
\item 
$\Phi(x,z)$ is a holomorphic function for of $z$ for $z\in\C\setminus\R$.
\item 
$\Phi^\pm(x,z)$ are continuous functions of $z$ for $z\in\R\setminus\{E_k\}$, and have at worst quartic root singularities on $\{E_k\}$.
\item 
$\Phi^\pm(x,z)$ satisfies the jump relation 
\be
\Phi^+(x,z)=\Phi^-(x,z)V(x,z),\qquad z\in\Real\,,
\ee
where $V(x,z)$ is given by~\eqref{e:Vsimple}.
\item 
As $z\to\infty$ with $z\in\Complex$, $\Phi(x,z)$ has the following asymptotic behavior 
\be
\Phi(x,z) = U\,(I+O(1/z))B(z),
\label{asymPhi}
\ee
with $U$ as in~\eqref{e:Udef} and $B(z)$ as in~\eqref{B}.
\item 
There exist positive constants $c$ and $M$ such that $|\phi_{ij}(x,z)|\leq M\e^{c|z|^2}$ for all $z\in \mathcal{D}$.
\end{enumerate}
\end{RHP}
\end{theorem}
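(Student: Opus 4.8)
The plan is to verify each of the five properties of RHP~\ref{RHP1} in turn by tracing through the definition $\Phi(x,z) = \Psi(x,z) B(z) \e^{\i z x \sigma_3}$ and combining the analyticity structure of $\Psi$, the explicit form of $B$ from Definition~\ref{d:f0f+f-}, and the asymptotics of Proposition~\ref{p:asympsi}. First I would establish property~1 (holomorphy off~$\R$): $\Psi(x,z) = (\psi^\mp, \psi^\pm)$ is built from $\~y_1, \~y_2$ (entire in $z$) and the rational expression $(\rho^{\pm1} - \~y_{11})/\~y_{12}$ via Proposition~\ref{p:BFyexpansion}, so its only possible singularities off~$\R$ are at the zeros of $\~y_{12}(L,z)$, i.e.\ the Dirichlet eigenvalues $\mu_j$, and at the branch points of $\rho(z)$, which lie on $\Sigma(\L)\subset\R$. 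The crucial cancellation is that $B(z)$ carries precisely the factors $f^0 f^+ f^- = \half\~y_{12}(L,z)$ in its scalar and diagonal parts, chosen so that the simple poles of $\Psi$ at each $\mu_j$ are killed by the corresponding zero of $B$ — this is the stated purpose of $B(z)$ ``to eliminate the singularities of the RHP.'' One must check this pole cancellation case by case according to the sign $\sigma_j$: when $\sigma_j = \pm 1$ the zero sits in $f^\pm$ in the appropriate diagonal slot, and when $\sigma_j = 0$ (band edge, hence $\mu_j$ degenerate with a $\z$) the apparent singularity of $\Psi$ is already removable because $\rho^{\pm1} - \~y_{11}$ also vanishes there. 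The square-root and fourth-root branch cuts of $\sqrt{f^0}$ and $\sqrt[4]{\D^2-1}$ are, by construction (cf.\ Fig.~\ref{f:2}), confined to $\R$, so property~1 holds.

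Next, for properties~2 and~3 I would compute the boundary values $\Phi^\pm(x,z)$ from above and below on each type of real interval. On a band $[E_{2n-1},E_{2n}]$ one has $|\rho|=1$ and $\rho$ continues across the cut to $1/\rho$; tracking how $\Psi = (\psi^\mp,\psi^\pm)$ swaps columns, how $\sqrt[4]{\D^2-1}$ picks up a factor of $\i$ (or $-1$) crossing its cut, and how $\sqrt{f^0}$ behaves, the jump collapses to $\Phi^+ = \Phi^- \cdot (-1)^{n+m(z)}\i\,\e^{-2\i z x\sigma_3}\sigma_1$. On a gap $[E_{2n-2},E_{2n-1}]$ the Bloch solutions $\psi^\pm$ are real-analytic across the interval except for the diagonal factors $f^\pm/f^{\mp}$ in $B$, giving the diagonal jump matrix. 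The sign bookkeeping — the origin of the counting function $m(z)$ in~\eqref{e:mdef} and the factor $(-1)^n$ — comes from how many sign changes $\sqrt{f^0(z)}$ and $\sqrt[4]{\D^2-1}$ have accumulated up to the interval in question; I would argue that $m(z)$ is exactly designed to make $V(x,z)$ single-valued and consistent between adjacent intervals. The quartic-root singularities at the $E_k$ in property~2 are immediate from the $\sqrt[4]{\D^2-1}$ in the denominator of $B(z)$: since $\D^2-1$ has a simple zero at each nondegenerate band edge, $1/\sqrt[4]{\D^2-1}$ blows up like $|z-E_k|^{-1/4}$, and $\Psi$ contributes at worst a matching bounded factor.

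Properties~4 and~5 are asymptotic/growth statements. For property~4, I would combine the definition $U^\dag\Psi = (\psi^\mp, \psi^\pm)$ expressed in the $Y$-basis with Proposition~\ref{p:asympsi}: writing $\Psi(x,z)\e^{\i z x\sigma_3} = \~U(I + O(1/z))\cdot(\text{diagonal factors})$, where the diagonal factors are exactly those absorbed into $B(z)$, yields $\Phi(x,z) = \~U(I + O(1/z))B(z)$. The content here is really just Proposition~\ref{p:asympsi} rewritten; the transformation~\eqref{e:Ytildedef} was introduced precisely so that the $O(1/z)$ correction contains $q(x)$ and not $q(0)$, but for the mere form~\eqref{asymPhi} one only needs the leading behavior. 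Property~5, the $\e^{c|z|^2}$ bound on the cut plane $\mathcal D$ (away from $\R$ and the discs around the bands), follows because $\Psi$ is a ratio of order-$1$ entire functions — $\~y_1,\~y_2,\rho,\~y_{11},\~y_{12}$ all have order~$1$ (cf.\ the Remark after Proposition~\ref{asympsi0}) — and $B(z)$, through $\sqrt{f^0}/\sqrt[4]{\D^2-1}$ and $f^\pm$, also has order at most~$1$; away from the zeros of $\~y_{12}$ (which is why one excises the discs $\bar D_k$ and a neighborhood of $\R$) the denominator is bounded below, so the product obeys $|\phi_{ij}|\le M\e^{c|z|}\le M\e^{c|z|^2}$.

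The main obstacle, I expect, is not analyticity but the sign/branch bookkeeping in properties~2 and~3: getting the counting function $m(z)$ and the alternating $(-1)^n$ exactly right requires careful, simultaneous tracking of the branch behavior of $\sqrt{f^0(z)}$, $\sqrt[4]{\D^2(z)-1}$, and the column-swap in $\Psi$ across each band and gap, and verifying compatibility at every band edge $E_k$ — in particular that the jump matrix has determinant consistent with $\det\Phi$ and that no spurious singularity survives at the degenerate edges where $\sigma_k=0$. Everything else reduces to bookkeeping with results already proved (Propositions~\ref{p:BFyexpansion}, \ref{p:asympsi}, \ref{asympsi0}, \ref{p:spectralproducts}) and the Hadamard factorizations~\eqref{e:Hadamardproducts}.
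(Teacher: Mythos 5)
Your overall architecture --- verifying the five conditions one at a time via the pole cancellations built into $B(z)$, the boundary values of $\rho$ and of the root functions, Proposition~\ref{p:asympsi} for the normalization, and order-of-growth estimates for the $\e^{c|z|^2}$ bound --- is the same as the paper's. Conditions 1, 4 and 5 are sketched along essentially the paper's lines (though for Condition~5 your claim that the denominator $\~y_{12}(L,z)$ is ``bounded below'' away from its zeros is stronger than what is true or needed; the paper instead shows $\~y_{12}(L,\cdot)^{-1}\in\A_2$ outside the discs $\overline D_k$ via Lemma~\ref{PA}, and that canonical-product estimate is precisely where the exponent $2$ in $\e^{c|z|^2}$ comes from).

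The genuine problem is in your mechanism for Condition~3: you have interchanged the roles of bands and gaps. In the paper's conventions $[E_{2n-2},E_{2n-1}]\subset\Sigma(\L)$ is a \emph{band} (part of the branch cut of $\sqrt{\D^2-1}$) and $[E_{2n-1},E_{2n}]$ is a \emph{gap}. On a band, $\rho_+=\rho_-^{-1}$ does hold, but this multiplier swap exactly cancels the definitional column swap in $\Psi=(\psi^\mp,\psi^\pm)$ between $\C^+$ and $\C^-$, so that $\Psi_+=\Psi_-$ there and the whole jump is the \emph{diagonal} one coming from $B_-^{-1}B_+$. The factor $\sigma_1$ appears on the \emph{gaps}, where $\rho$ is continuous and only the definitional reordering of the columns survives, giving $\Psi_+=\Psi_-\sigma_1$. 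Your plan attributes the $\sigma_1$ jump to the interval where ``$\rho$ continues across the cut to $1/\rho$''; carried out literally, that computation yields $\Psi_+=\Psi_-$ and hence a diagonal jump on that interval, contradicting the formula you then assert. This has to be fixed before the $m(z)$ and $(-1)^n$ bookkeeping (which you correctly flag as delicate) can even start. A secondary incompleteness in Condition~2: for $\sigma_j=\pm1$ you only treat the column whose pole is killed by the zero of $f^\pm$; in the \emph{other} column the pole at $\mu_j$ is cancelled by the vanishing of the numerator $\rho^{\mp1}(z)-\~y_{11}(L,z)$, and at a degenerate gap one additionally needs the square-root zero of $\sqrt{f^0}$ to absorb the square-root singularity of $1/\sqrt[4]{\D^2-1}$ --- both cancellations are spelled out in the paper and neither is automatic.
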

\begin{proof} 
We prove each condition separately.

\textbf{Condition 1}. 
It follows from \eqref{BF} that $\psi^\pm$ are meromorphic functions of $z$ for $z\in\mathbb{C}\setminus\Sigma(\L)$, so $\Psi(x,z)$ is meromorphic in $\mathbb{C}\setminus\Sigma(\L)$. From \eqref{B}, we can derive that $\Phi(x,z)$ could only be singular on $\{\mu_j\}\subset\mathbb{R}$ and $\{\z_j\}\subset\mathbb{R}$. Then $\Phi(x,z)$ is a holomorphic function of $z$ for $z\in\mathbb{C}\setminus\mathbb{R}$.
	
\textbf{Condition 2}.
From the definition of the Bloch-Floquet solutions~\eqref{BF}, we can derive directly that $\Psi_\pm(x,z)$ can only be singular at $\mu_j$. 
From \eqref{B}, we can also get that $\Phi_\pm(x,z)$ can only be singular at $\mu_j$ and $\z_j$. Next we will prove that $\Phi_\pm(x,z)$ cannot be singular at $\mu_j$ unless $\mu_j=E_k$ for some $k$. We will discuss three cases: $\nu_j=\pm1$ and $\z_{2j}=\z_{2j-1}=\mu_j$.

Suppose that $\nu_j=1$, which occurs only when $j=k$ for some $k$. 
From the definition of $\nu_j$  and Lemma~\ref{y2}, we have that $\~y_{11}(L,\mu_j)=\rho^{-1}(\mu_j)$,
and $\rho^{-1}(z)-\~y_{11}(L,z)$ is holomorphic in a neighborhood of $\mu_j$.
For $z\in\C^+$, we have 
\be
\Phi_+ = i\bm b(z)\em \big(\,\psi^-f^-\,,\,\psi^+f^+\,\big)\,\e^{\i zx\sigma_3}.
\ee
For the 1st column, the zeros of $\rho^{-1}(z)-\~y_{11}(L,z)$ at $\mu_j$ can cancel the zeros of $\~y_{12}(L,z)$ at $\mu_j$. For the~2nd column, the zeros of $f^+$ cancel the singularities of $\psi^+$. Therefore, $\Phi_+$ is nonsingular at $\mu_j$.
We can get the same conclusion for $z\in\C^-$ using the same method.

Suppose that $\nu_j=-1$,  which occurs only when $j=k$ for some $k$. We also have $\rho(\mu_j)=\~y_{11}(L,\mu_j)$. So $\rho(z)-\~y_{11}(L,z)$ is holomorphic in $\mu_j$'s neighborhood, and $\mu_j$ are the zeros of $\rho(z)-\~y_{11}(L,z)$. Using the same method as $\nu_j=1$, we see that $\Phi_\pm$ are nonsigular at $\mu_j$ when $\nu_j=-1$.

Suppose that $\z_{2j-1}=\z_{2j}$.  In this case, $\rho(\mu_j)=\rho^{-1}(\mu_j)=\pm1$. 
Therefore, $\mu_j$ are the zeros of both $\rho(z)-\~y_{11}(L,z)$ and $\rho^{-1}(z)-\~y_{11}(L,z)$. 
The zeros of $\rho(z)-\~y_{11}(L,z)$ and $\rho^{-1}(z)-\~y_{11}(L,z)$ at $\mu_j$ can cancel the zeros of $\~y_{12}(L,z)$ at $\mu_j$. Moreover, the square root zeros of $\sqrt{f^0}$ at $\mu_j$ cancel the square root of $\sqrt[4]{\D^2-1}$ at $\z_{2j}=\z_{2j-1}$. So  $\Phi_\pm$  are nonsigular at $\mu_j$.

Next we will prove that $\Phi_\pm(x,z)$ have at worst quartic root singularities on $\{E_k\}$.
Suppose that $\nu_{k}=\pm1$, then the only singular contribution to $\Phi_\pm$ at $z\to E_{2k}$ or $E_{2k-1}$ 
is from the boundary values of $1/{\sqrt[4]{\D^2-1}}$ for $z\in\mathbb{R}$. 
Suppose that $\nu_{k}=0$, then $\mu_j=E_{2j}$ or $E_{2j-1}$.  
If $\mu_j=E_{2j}$, then the only singular contribution to $\Phi_\pm$ near $E_{2j-1}$ is from the boundary values 
of $1/{\sqrt[4]{\D^2-1}}$ for $z\in\mathbb{R}$. 
If $\mu_j=E_{2j-1}$, then the only singular contribution to $\Phi_\pm$ near $E_{2j}$ is from the boundary values of ${1}/{\sqrt[4]{\D^2-1}}$ for $z\in\mathbb{R}$.

Suppose that either $\mu_{j}=E_{2j}$ or $\mu_j=E_{2j-1}$.  
By Lemma \ref{y2}, we have $\rho^{\pm1}(\mu_j)-\~y_{11}(L,x)=0$, since $\rho(\mu_j)=\pm1$ at $\mu_j$. 
As the singular behaviors of $\psi^\pm(x,z)$ are from $(\rho^{\pm1}(z)-\~y_{11}(L,z))/\~y_{12}(L,z)$, 
and the numerator  of $\psi^\pm(x,z)$  comprises only holomorphic functions and the square root of a holomorphic function, the zeros of the numerator at $\mu_j$ must have an order of at least $\half$. 
Since $\~y_{22}(L,z)$ has a simple zero at $\mu_j$, $\psi_\pm^\pm(x,z)$ must then have at worst square root singularities at $\mu_j$. 
The boundary values of ${1}/{\sqrt[4]{\D^2-1}}$ for $z\in\mathbb{R}$ have a quartic root singularity at $\mu_j$ and the boundary values of $\sqrt{f^0}$ have a square root zero at $\mu_j$. 
Hence, $\Phi$ has at worst a quartic root singularity at $\mu_j$.

\textbf{Condition 3}. 
Recalling the definition of $\rho(z)$ in section~\ref{s:spectrum},
we have that 
the discrepancy between the boundary values of $\sqrt{\D^2-1}$ on the branch cut~$\Sigma(\L)$ is 
\vspace*{-0.7ex}
\be
\rho_+(z)=\rho_-^{-1}(z),\qquad \rho_+^{-1}(z)=\rho_-(z),\qquad  z\in\Sigma(\L).
\ee
Therefore, 
\vspace*{-0.4ex}
\begin{align}
&\Psi_+(x,z)=\Psi_-(x,z)\,,\quad z\in\Sigma(\L)\,,\\
&\Psi_+(x,z)=\Psi_-(x,z)\sigma_1\,,\quad z\in\mathbb{R}\setminus\Sigma(\L)\,,
\end{align}
where $\sigma_1$ is the 1st Pauli matrix (cf.\ Appendix~\ref{a:notation}).
As a result, we see that the jump matrix $V(x,z)$ takes the form 
\be
V(x,z) = \begin{cases}\e^{-\i zx\sigma_3}(B_-)^{-1}B_+\e^{\i zx\sigma_3}, &z\in\Sigma(\L)\,,
\\
\e^{-\i zx\sigma_3}(B_-)^{-1}\sigma_1B_+\e^{\i zx\sigma_3}, &z\in\mathbb{R}\setminus\Sigma(\L)\,,
\end{cases}
\label{e:Vdef}
\ee
Next, we show that \eqref{e:Vdef} reduces to \eqref{e:Vsimple}. 
To do this, we need to compute the product $(B_-)^{-1}B_+$.
Since the functions $f^\pm(z)$ appearing in~\eqref{B} are holomorphic,
it is sufficient to compute the jump of the scalar factor $\sqrt{f^0(z)}/\sqrt[4]{\Delta^2(z)-1}$.
To this end, recall that one can equivalently express this ratio as~\eqref{e:rootratio}.
It is convenient to consider separately the numerator and denominator of the right hand side of~\eqref{e:rootratio}
and to compute their jumps individually.
We therefore define 
\vspace*{-0.7ex}
\bse
\begin{gather}
p(z) = \left(\prod_{k=\g^-\atop\nu_{k}=0 }^{\g^+}\bigg(1 - \frac{z}{\gamma_k}\bigg)\,\e^{z/\gamma_k}\right)^{\frac{1}{2}}\,,
\label{e:pdef}
\\
h(z) = \left(\prod_{k=\g^- }^{\g^+}{\frac{1}{E_{2k-1}E_{2k}}(E_{2k-1}-z)(E_{2k}-z)\e^{z\big(\frac{1}{E_{2k-1}}+\frac{1}{E_{2k}}\big)}}
 \right)^{\frac{1}{4}}\,,
\end{gather}
\ese
so that $\sqrt{f^0(z)}/\sqrt[4]{\Delta^2(z)-1} = \e^{{\i\pi}/{4}} p(z)/(\sqrt{2}h(z))$ by \eqref{e:rootratio}.
We choose the branch cut for the square root in~\eqref{e:pdef} consistently with the definition of the counting function
$m(z)$ in \eqref{e:mdef}.
Namely, if there are no $\gamma_k$ with $\nu_k=0$, $p(z)\equiv1$ and there is no branch cut.
If there is at least one $\gamma_k$ with $\nu_k=0$, 
recall that $\gamma_*$ is the value of $\gamma_k$ closest to zero with $\nu_k=0$.
Then we take the branch cut for $p(z)$ to be the subset of the real $z$-axis for which $m(z)$ is odd.
With this choice, we have 
\bse
\label{e:phjumps}
\begin{gather}
\frac{p_-(z)}{p_+(z)} = (-1)^{m(z)}, 
\qquad
z\in\Real,
\\
\noalign{\noindent Moreover, the choice of branch cut for the fourth root given after Definition~\ref{e:f+-0def} implies}
\frac{h_-(z)}{h_+(z)} = 
\begin{cases}
     (-1)^{n-1}\i , &z\in [E_{2n-2},E_{2n-1}]\subset\Sigma(\L) \,,\\
    (-1)^n , &z\in [E_{2n-1},E_{2n}]\subset\Real\setminus\Sigma(\L)\,.
   \end{cases}
\end{gather}
\ese
Inserting \eqref{e:phjumps} in \eqref{e:Vdef}, one now obtains~\eqref{e:Vsimple}. 

\textbf{Condition 4}.
Using the asymptotic behavior of $\psi^\pm(x,z)$ in Proposition~\ref{p:asympsi}, we have 
\be
\Psi(x,z)= (U + O(1/z))\,\e^{-\i zx\sigma_3},\qquad z\to\infty,~z\in\Complex.
\ee 
Therefore,
\be
U^{-1}\Psi(x,z)\e^{\i zx\sigma_3}=I+O(1/z),\qquad z\to\infty
\ee
for $z\in\Complex$, implying \eqref{asymPhi}.

\textbf{Condition 5}.
Let $y_{ij}$, $\phi_{ij}$ and $\psi_{ij}$ denote the $(i,j)$-th entries of the matrices $Y$, $\Phi$ and $\Psi$, respectively.  
We will prove that 
\vspace*{-0.4ex}
\be
\phi_{ij}\in\mathcal{A}_2\left(\mathbb{C}\setminus\left(\mathbb{R}\cup\bigcup\overline{D}_{k}\right)\right),\qquad \psi_{ij}\in\mathcal{A}_2\left(\mathbb{C}\setminus\left(\mathbb{R}\cup\bigcup\overline{D}_{k}\right)\right),
\ee
where the function class $\mathcal{A}_p(\Omega)$ is defined in Appendix~\ref{s:phragmenlindelof}.
In Appendix~\ref{ordereigenf}
we show that the entire functions $y_{ij}(x,z)$ have order at most~$1$. 
Thus, by Proposition~\ref{3.2}, we have 
\be
\~y_{ij}(x,z)\in\A_2(\mathbb{C}).
\ee
Next, since 
\be
|\rho^{\pm1}(z)|\leq|\D(z)|+\sqrt{|\D^2(z)-1|}\leq1+|\D(z)|,
\ee
and $\D(z)$ is an entire function of order $1$, we also have 
\be
\rho^{\pm1}(z)\in\A_2(\mathbb{C}\setminus\mathbb{R}).
\ee
Applying Lemma~\ref{PA} to \eqref{y12}, we establish that 
\vspace*{-0.4ex}
\be
\~y_{12}(L,z)^{-1}\in\A_2\left(\mathbb{C}\setminus\bigcup_{j\in\mathbb{Z}}\overline{D}_j\right).
\ee
Therefore, using Proposition \ref{3.1} and \ref{3.2}, we conclude that
\be
\psi_{ij}\in\A_2(\mathcal{D}).
\ee

Next, note that the smallest numbers $a_\pm$ and $a_0$ so that
\be
\sum_{\nu_j=\pm1}\mu_j^{-a_\pm},\hspace{0.2cm}\sum_{\nu_j=0}\mu_j^{-a_0}
\ee
converge are such that $a_\pm$, $a_0\leq 1$. So $f^\pm$ and $f^0$ are entire functions with orders at most $1$. Hence, 
\be
f^\pm,f^0\in\A_2(\mathbb{C}).\label{fA}
\ee
Applying Lemma~\ref{PA} to \eqref{e:d-1} and \eqref{e:d+1}, we conclude that
\be
\frac{1}{\D^2-1}\in\A_2\left(\mathbb{C}\setminus\bigcup_{j}\overline{D}_j\right).\label{DA}
\ee
Combining both \eqref{fA} and \eqref{DA}, we have
\be
g^\pm(z):=\frac{f^0(z)f^\pm(z)}{\D^2(z)-1}\in\A_2\left(\mathbb{C}\setminus\bigcup_{j}\overline{D}_j\right).
\ee
If $\mu_j\notin\{E_k\}$, then we can easily prove that $g^\pm(z)$ is analytic at $\mu_j$, which coincides with the center of $D_j$. Therefore, $g^\pm(z)$ are analytic in such a $D_j$. 
Finally, using Lemma~\ref{PA}, we have 
\be 
g^\pm(z)\in\A_2\left(\mathbb{C}\setminus\bigcup_{k=\g^-}^{\g^+}\overline{D}_{k}\right).
\ee
Recalling the definition \eqref{Phi} of $\Phi(x,z)$, 
this completes the proof of condition~5.
\end{proof}
\begin{lemma}
If $\Phi(x,z)$ solves the RHP~\ref{RHP1}, then $\det\Phi(x,z)\equiv1$.
\end{lemma}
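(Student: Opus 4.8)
The plan is to prove that $d(x,z):=\det\Phi(x,z)$, which is holomorphic on $\C\setminus\R$ by Condition~1 of RHP~\ref{RHP1}, in fact extends to an entire function of $z$ which is identically~$1$. The first step is to see that $d(x,\cdot)$ is entire. Taking the determinant in the jump relation of Condition~3 gives $d^+(x,z)=d^-(x,z)\det V(x,z)$, so it suffices that $\det V(x,z)\equiv 1$ on $\R$; this follows at once from~\eqref{e:Vsimple}, since on $[E_{2n-2},E_{2n-1}]$ the scalar prefactor contributes $\big((-1)^{n+m(z)}\big)^2=1$ and $\det\diag(f^-/f^+,f^+/f^-)=1$, while on $[E_{2n-1},E_{2n}]$ the prefactor again squares to $1$ and $\det(\i\,\e^{-2\i zx\sigma_3}\sigma_1)=\i^2\det(\e^{-2\i zx\sigma_3})\det\sigma_1=(-1)(1)(-1)=1$. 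Hence $d^+=d^-$ on $\R\setminus\{E_k\}$, so together with Condition~2 the function $d$ continues holomorphically across $\R\setminus\{E_k\}$; and at each band edge Condition~2 gives $\phi_{ij}^\pm(x,z)=O(|z-E_k|^{-1/4})$, hence $d(x,z)=O(|z-E_k|^{-1/2})$ and in particular $(z-E_k)\,d(x,z)\to0$, so by Riemann's removable‑singularity theorem each $E_k$ is removable and $d(x,\cdot)$ is entire.

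The second step is to show $d$ is constant. By Condition~4, $\Phi(x,z)B(z)^{-1}=\~U\,(I+O(1/z))$ as $z\to\infty$ away from $\R$, so $d(x,z)=\det\~U\,(1+O(1/z))\,\det B(z)$. From~\eqref{B} and the Hadamard factorization~\eqref{y12} one has $f^0f^+f^-=\tfrac12\,\~y_{12}(L,z)$, hence $\det B(z)=-f^0f^+f^-/(\sqrt[4]{\Delta^2-1})^2=-\tfrac12\,\~y_{12}(L,z)/\sqrt{\Delta^2(z)-1}$, the square root being the square of the fourth root used in~\eqref{B}. Using the relation $\~y_{12}^2(L,z)/(\Delta^2(z)-1)=-1+O(1/z)$ established in the proof of Proposition~\ref{p:spectralproducts}, together with $(\det\~U)^2=(2\i)^2=-4$, this yields $d(x,z)^2=-4(\det B(z))^2(1+O(1/z))=1+O(1/z)$ as $z\to\infty$. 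Thus $d(x,\cdot)^2-1$ is entire and tends to $0$ at infinity; invoking the growth bound of Condition~5 together with a Phragm\'en--Lindel\"of argument to control $d$ in the strip near $\R$ and in the discs $\overline{D}_k$, it is bounded, hence identically zero by Liouville's theorem. Therefore $d(x,z)\in\{+1,-1\}$ for all $z$, and since the entire function $d(x,\cdot)$ is continuous on the connected set $\C$ it equals a single constant, $+1$ or $-1$.

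The third step fixes the sign. Because the previous step shows every solution of RHP~\ref{RHP1} has the same constant determinant, it is enough to evaluate $d$ on the explicit solution $\Phi_0(x,z)=\Psi(x,z)B(z)\e^{\i zx\sigma_3}$ of Theorem~\ref{t:direct}: then $\det\Phi_0=\det\Psi\cdot\det B$, and $\det\Psi(x,z)$ is independent of $x$ by Abel's formula, the coefficient matrix $U(-\i z\sigma_3+Q)U^{-1}$ of~\eqref{e:modZS} being traceless. Evaluating at $x=0$ with $\~Y(0,z)=\I$ and~\eqref{BF}, the columns of $\Psi(0,z)$ are $(1,m^\mp)^T$ and $(1,m^\pm)^T$ with $m^\pm(z)=(\rho^{\pm1}(z)-\~y_{11}(L,z))/\~y_{12}(L,z)$, so by~\eqref{rho} one gets $\det\Psi(0,z)=\mp\,2\sqrt{\Delta^2(z)-1}/\~y_{12}(L,z)$ on $\C^\pm$; multiplying by $\det B(z)=\mp\tfrac12\,\~y_{12}(L,z)/\sqrt{\Delta^2(z)-1}$, the factors $\~y_{12}(L,z)$ cancel, and so do the two square roots of $\Delta^2(z)-1$ once one checks — using the branch convention for $\rho$ after~\eqref{rho} and that for $\sqrt[4]{\Delta^2-1}$ after Definition~\ref{d:f0f+f-}, e.g.\ by comparing both at a point deep in $\C^+$ via Proposition~\ref{asymD} — that they coincide. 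Hence $\det\Phi_0\equiv1$, so $d\equiv1$. The degenerate cases $\mu_0=0$ or $\zeta_0=0$ are handled identically with the correspondingly modified infinite products.

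The step I expect to be the main obstacle is precisely this last sign determination: the argument through "$d\equiv\pm1$" is routine, but showing the constant is $+1$ rather than $-1$ rests on the mutual consistency of the branch choices for $\sqrt{f^0}$, $\sqrt[4]{\Delta^2-1}$ and the square root defining $\rho$, and so requires the bookkeeping indicated above. A lesser technical point is the Phragm\'en--Lindel\"of justification that $d^2-1$, most easily seen to decay only off the real axis and on $\mathcal{D}$, is in fact globally bounded so that Liouville applies.
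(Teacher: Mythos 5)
Your proof is correct and follows essentially the same route as the paper's: both rest on $\det V\equiv1$ (so the determinant is entire after removing the at-worst square-root singularities at the $E_k$), the $\A_2$/Phragm\'en--Lindel\"of growth control feeding into Liouville, and the explicit computation $\det\Psi\cdot\det B=1$ for the particular solution \eqref{Phi}. The only difference is ordering: the paper computes $\det\Phi_0\equiv1$ first and uses it to normalize $\det(\~U B(z))=1+O(1/z)$, so that Liouville applies directly to $f-1$ for an arbitrary solution, whereas you apply Liouville to $d^2-1$ and then pin the sign via the explicit solution --- the substance, including the branch-consistency check you flag, is the same.
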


\begin{proof}
We first prove that the particular solution to RHP \ref{RHP1} given by \eqref{Phi} has unit determinant. 
We have
\be
\det\Phi(x,z)=\det B(z)\det\Psi(x,z) = 
  \mp \frac{\~y_{12}(L,z)}{2\sqrt{\D^2-1}}\det\Psi_\pm\,, \qquad z\in\Complex^\pm\,.
\ee
Moreover,
\be
\det\Psi(x,z) = \det\Psi(0,z) = 
   \pm \frac{\rho(z)-\rho^{-1}(z)}{\~y_{12}(L,z)}= \mp \frac{2\sqrt{\D^2-1}}{\~y_{12}(L,z)}\,,\qquad z\in\Complex^\pm\,.
\ee
Thus, we conclude that $\det\Phi(x,z)=1$ for all $x$ and $z$.
Comparing $\det\Phi\equiv1$ with the asymptotics of $\Phi(x,z)$ in \eqref{asymPhi}, we have
\be
\det\left(U\,B(z)\right)=1+O(1/z)\,,
\label{asymphi0}
\ee
as $z\to\infty$ for $z\in\Complex$.

Next we show that \textit{all} solutions of the RHP~\ref{RHP1} have unit determinant.
Let $\tilde{\Phi}(x,z)$ be an arbitrary solution to RHP \ref{RHP1}. 
Define $f(x,z)=\det\tilde{\Phi}(x,z)$, which is holomorphic in $\mathbb{C}\setminus\mathbb{R}$.
As $\det V(x,z)=1$, we can derive that for $z\in\mathbb{R}\setminus\{E_k\}$
\be
f_+(x,z)=\det\tilde{\Phi}_+(x,z)=\det(V(x,z)\tilde{\Phi}_-(x,z))=\det\tilde{\Phi}_-(x,z)=f_-(x,z)\,.
\ee
As $\tilde{\Phi}$ also satisfies condition 2 in RHP \ref{RHP1}, $f(x,z)$ has at worst radical root singularities at $E_k$. Nevertheless, isolated singularities of a holomorphic function must have at least order 1. Thus, $f$ can be extended to an entire function.

To get $f\equiv1$, we still need to prove that $f$ is bounded. First, we prove that $f\in\A_2(\mathbb{C})$. From condition~5 of RHP \ref{RHP1}, we have
\be
\phi_{ij},\tilde{\phi}_{ij}\in\A_2\left(\mathbb{C}\setminus\left(\mathbb{R}\cup\bigcup_{k=\g^-}^{\g^+}\overline{D}_{k}\right)\right)\,.
\ee
Hence, we can conclude that 
\be
f\in\A_2\left(\mathbb{C}\setminus\left(\mathbb{R}\cup\bigcup_{k=\g^-}^{\g^+}\overline{D}_{k}\right)\right)\,.
\ee
Since $f$ is an entire function, by continuity, we have 
\be
f\in\A_2\left(\mathbb{C}\setminus\bigcup_{k=\g^-}^{\g^+}\overline{D}_{k}\right)\,.
\ee
By Proposition \ref{3.4}, we have that $f\in\A_2(\mathbb{C})$\,.
Comparison of \eqref{asymPhi} with \eqref{asymphi0} implies that 
\be
f(x,z)=1+O(1/z),\qquad z\to\infty\,.
\label{asymf}
\ee
But since $f(x,z)$ is entire, the asymptotic behavior in \eqref{asymf} implies that $f(x,z)$ is bounded for all $z\in\Complex$. 
So, by Liouville's theorem, $f(x,z)$ is constant and therefore $f(x,z)\equiv1~\forall z\in\Complex$.
\end{proof}

\begin{theorem}
For all $x\in\mathbb{R}$, the solution of  RHP \ref{RHP1} is unique.
\end{theorem}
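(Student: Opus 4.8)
The plan is to show that any two solutions of RHP~\ref{RHP1} coincide by forming their matrix ``quotient'' and proving it equals the identity, in exactly the spirit of the preceding lemma. So let $\Phi_1(x,z)$ and $\Phi_2(x,z)$ be two solutions of RHP~\ref{RHP1} for the same fixed $x\in\Real$. By the preceding lemma $\det\Phi_1\equiv\det\Phi_2\equiv1$, so $\Phi_2(x,z)$ is invertible for every $z$, with $\Phi_2^{-1}$ equal to its adjugate; hence the entries of $\Phi_2^{-1}$ inherit the holomorphy, the band-edge singularity bounds and the growth bound stated in conditions 1, 2 and 5 of RHP~\ref{RHP1}. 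Define $\Xi(x,z)=\Phi_1(x,z)\,\Phi_2(x,z)^{-1}$, which by condition 1 is holomorphic in $\Complex\setminus\Real$.

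First I would check that $\Xi$ extends to an entire function. On $\Real\setminus\{E_k\}$ the common jump relation of condition 3 gives
\[
\Xi^+=\Phi_1^+(\Phi_2^+)^{-1}=\Phi_1^-V\,V^{-1}(\Phi_2^-)^{-1}=\Phi_1^-(\Phi_2^-)^{-1}=\Xi^-,
\]
so $\Xi$ has no jump across $\Real\setminus\{E_k\}$ and continues holomorphically there. At each band edge $E_k$, condition 2 together with $\det\Phi_i\equiv1$ forces the entries of $\Phi_1$ and of $\Phi_2^{-1}$, and hence of $\Xi$, to have at worst a $|z-E_k|^{-1/2}$ singularity; since an isolated singularity of a holomorphic function that is weaker than a simple pole is removable, $\Xi$ extends to an entire matrix-valued function of $z$.

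Next I would control the growth and invoke Liouville. Using condition 5 and the same reasoning as in the proof of the preceding lemma, the entries of $\Phi_1$ and of $\Phi_2^{-1}$ lie in $\A_2\big(\Complex\setminus(\Real\cup\bigcup_k\overline{D}_k)\big)$, hence so do those of $\Xi$; since $\Xi$ is entire, by continuity and the extension argument already used for the determinant its entries lie in $\A_2(\Complex)$, i.e.\ $\Xi$ is entire of order at most $2$. The asymptotics of condition 4 then give, as $z\to\infty$,
\[
\Xi(x,z)=\~U\,(I+O(1/z))\,B(z)\,B(z)^{-1}(I+O(1/z))^{-1}\,\~U^{-1}=I+O(1/z),
\]
so $\Xi$ is bounded at infinity and therefore, being entire, bounded on all of $\Complex$; by Liouville's theorem $\Xi$ is constant, and the asymptotics force $\Xi(x,z)\equiv I$, i.e.\ $\Phi_1\equiv\Phi_2$. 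The hard part will be the growth bookkeeping: condition 5 only supplies the order-$2$ bound on $\mathcal D$, so one must argue carefully (as in the determinant lemma, via the $\A_2$-extension propositions) that the entire continuation of $\Xi$ still has order at most $2$ across the real axis and the excised discs $D_k$ before Liouville applies; verifying that the band-edge singularities are genuinely removable is the other point requiring care, but it follows directly from the quartic-root bound of condition 2 and $\det\Phi_i\equiv1$.
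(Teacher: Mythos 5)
Your argument is correct and follows essentially the same route as the paper: form the quotient $\Phi_1\Phi_2^{-1}$ using the unit-determinant lemma, remove the jump and the at-worst-square-root singularities at the $E_k$ to get an entire function, control its growth via the $\A_2$ machinery on $\mathcal D$ and the extension propositions, and conclude $\Xi\equiv I$ by Liouville and the normalization at infinity. No substantive differences to report.
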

\begin{proof}
For fixed $x\in\mathbb{R}$, let $\Phi$ and $\tilde{\Phi}$ be the solutions of RHP \ref{RHP1}. Define
\be
	R(x,z):=\Phi(x,z)\tilde{\Phi}^{-1}(x,z)\,.
\ee
To establish the uniqueness of the solution to RHP \ref{RHP1}, it suffices to prove that $\det R(x,z)\equiv 1$.

\noindent As $\det\tilde{\Phi}=1$, we can express $\tilde{\Phi}^{-1}$ as
\be
\~\Phi^{-1}(x,z)=\begin{pmatrix}\tilde{\phi}_{22}(x,z) &-\tilde{\phi}_{12}(x,z)\\-\tilde{\phi}_{21}(x,z)&\tilde{\phi}_{11}(x,z)\end{pmatrix}\,,
\ee
where $\~\phi_{ij}(x,z)$ is the $ij$-element of $\~\Phi(x,z)$.
It follows from condition 3 of RHP \ref{RHP1} that $R(x,z)$ has no jump, so it can be extended to a holomorphic function on $\mathbb{C}\setminus\{E_k\}$. 
Additionally, since the elements of $\Phi$ and $\tilde{\Phi}$ have at worst quartic root singularities at $E_k$, $R(x,z)$ has at worst square root singularities at $E_k$.  
Nevertheless, isolated singularities of a holomorphic function must have at least order~1. So $R(x,z)$ is an entire function of $z$.
	
Next, we will show that $r_{ij}\in\A_2(\mathbb{C})$. Since $\Phi$ and $\tilde{\Phi}$ solve RHP \ref{RHP1}, we have that
\vspace*{-0.6ex}
\be
\phi_{ij},\tilde{\phi}_{ij}\in\A_2\left(\mathbb{C}\setminus\left(\mathbb{R}\cup\bigcup_{k=\g^-}^{\g^+}\overline{D}_{k}\right)\right)\,.
\ee
Hence, we conclude that 
\be
	r_{ij}\in\A_2\left(\mathbb{C}\setminus\left(\mathbb{R}\cup\bigcup_{k=\g^-}^{\g^+}\overline{D}_{k}\right)\right)\,.
\ee
	Since $R(x,z)$ is an entire function, by continuity, we have 
\be
	r_{ij}\in\A_2\left(\mathbb{C}\setminus\bigcup_{k=\g^-}^{\g^+}\overline{D}_{k}\right)\,.
\ee
By Proposition~\ref{3.4}, we can derive that $r_{ij}\in\A_2(\mathbb{C})$.
	
The asymptotic behavior of both of the matrices $\Phi$ and $\tilde{\Phi}$ as $z\to\infty$ is given by~\eqref{asymPhi}.
Therefore we can calculate the asymptotic behavior of $\~\Phi^{-1}(x,z)$ as
\be
\~\Phi^{-1}(x,z)=B^{-1}(z)(I+O(1/z))U^{-1}\,,\qquad z\to\infty\,.
\ee
We can then easily get the asymptotic behavior of $R(x,z)$ as
\begin{equation}
\label{asymR}
R(x,z) = U\,(I+O(1/z))\,U^{-1} = I+O(1/z)\,,\qquad z\to\infty\,.
\end{equation}
Again, since $R(x,z)$ is entire, the asymptotic behavior \eqref{asymR} indicates that $r_{ij}$s are bounded for $z\in\Complex$. 
Using Liouville's theorem, we have  that $R(x,z)$ is constant.
The asymptotic behavior of $R(x,z)$ in \eqref{asymR} implies that 
$R(x,z)\equiv I~\forall z\in\Complex$.
Hence, the solution of RHP \ref{RHP1} is unique.
\end{proof}

\begin{corollary}
The potential matrix $Q(x)$ of the Dirac equation~\eqref{e:Diraceigenvalueproblem} is obtained from the solution of any solution $\Phi(x,z)$ of RHP~\ref{RHP1} by
	\be
	Q(x) = \lim_{z\to\infty} \i z [\sigma_3,U^{-1}\Phi(x,z)B^{-1}(z)]\,,
	\label{recons}
	\ee
with $U$ as in~\eqref{e:Udef}.
\end{corollary}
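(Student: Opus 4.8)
The plan is to reduce the statement to the reconstruction formula already established for the explicit Bloch--Floquet solution, by invoking the uniqueness of the solution of RHP~\ref{RHP1}. First, for a fixed $x\in\R$ and \emph{any} solution $\Phi(x,z)$ of RHP~\ref{RHP1}, the uniqueness theorem just proved forces $\Phi(x,z)$ to coincide with the particular solution $\Psi(x,z)B(z)\e^{\i zx\sigma_3}$ of~\eqref{Phi}, which solves RHP~\ref{RHP1} by Theorem~\ref{t:direct}. Hence it suffices to evaluate the right-hand side of~\eqref{recons} on this particular $\Phi$.

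Second, I would note that $B(z)$ in~\eqref{B} is, on each of $\C^\pm$, a scalar multiple of a diagonal matrix, and $\e^{\i zx\sigma_3}$ is also diagonal; since diagonal matrices commute,
\[
\Phi(x,z)B^{-1}(z)=\Psi(x,z)B(z)\e^{\i zx\sigma_3}B^{-1}(z)=\Psi(x,z)\e^{\i zx\sigma_3}.
\]
Therefore $U^\dag\Phi(x,z)B^{-1}(z)=U^\dag\Psi(x,z)\e^{\i zx\sigma_3}$, so the right-hand side of~\eqref{recons} is identical to the right-hand side of~\eqref{e:reconstruction}, and the corollary follows immediately from Corollary~\ref{c:reconstruction}.

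If one prefers a self-contained derivation that does not quote Corollary~\ref{c:reconstruction}, one would instead expand $U^\dag\Phi(x,z)B^{-1}(z)=U^\dag\~U\,(I+\Phi_1(x)/z+O(1/z^2))$ as $z\to\infty$, using item~(4) of RHP~\ref{RHP1} together with the finer asymptotics of $\psi^\pm(x,z)$ in Proposition~\ref{p:asympsi}; these put $q(x)$ (rather than $q(0)$) into the off-diagonal entries of $\Phi_1(x)$, which is precisely the reason the similarity transformation~\eqref{e:Udef} was introduced. Commuting with $\sigma_3$ kills the diagonal part and leaves twice the off-diagonal part, and multiplication by $\i z$ turns the $1/z$ coefficient into the finite limit, producing $Q(x)$ up to the same normalization as in Corollary~\ref{c:reconstruction}.

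I do not expect a genuine obstacle here: all the analytic content is carried by the uniqueness theorem (which identifies the abstract RHP solution with the explicit construction) and by Proposition~\ref{p:asympsi}. The only steps requiring a line of care are the commutativity of $B(z)$ with $\e^{\i zx\sigma_3}$ used above, and the bookkeeping of the constant factor in the commutator-and-limit extraction; neither is delicate.
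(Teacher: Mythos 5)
Your proposal is correct and follows essentially the same route as the paper, which simply cites the definition~\eqref{Phi} together with Corollary~\ref{c:reconstruction}; you have merely made explicit the two implicit steps (uniqueness identifying an arbitrary solution with $\Psi B\,\e^{\i zx\sigma_3}$, and the commutation of the diagonal matrices $B(z)$ and $\e^{\i zx\sigma_3}$). No gaps.
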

\begin{proof}
The reconstruction of $Q(x)$ in \eqref{recons} follows from \eqref{Phi}
and Corollary~\ref{c:reconstruction}.
\end{proof}

\section{Initial value problem for the defocusing NLS equation with periodic BC}
\label{s:time}

We now show how the results of the previous sections can be used to solve the initial value problem for the 
defocusing NLS equation with periodic BC,
\be
q(x+L,t)=q(x,t)\,.
\ee
Specifically, we construct a Riemann-Hilbert characterization of periodic solutions the defocusing NLS equation with infinite-gap initial conditions. 
%
To this end, recall that the defocusing NLS equation~\eqref{e:nls}
is the compatibility condition (or zero curvature condition) of the Lax pair \eqref{e:NLSLP}.
An equivalent formulation of the zero-curvature condition is the Lax equation
\bse
\begin{gather}
\L_t=[\mathcal{A},\L]\,,
\\
\noalign{\noindent with $\L$ as in~\eqref{e:Diracoperator} and}
\mathcal{A}=2\i\sigma_3\partial_x^2+2\i Q\sigma_3\partial_x-\i(Q^2-Q_x)\sigma_3\,.
\end{gather}
\ese
It is well known that, if the potential $q$ of~\eqref{e:Diraceigenvalueproblem} evolves according to the NLS equation~\eqref{e:nls}, the Lax spectrum $\Sigma(\L)$ of the ZS problem is invariant in time
(which is easily proven by showing that the Floquet discriminant $\Delta(z)$ is time-independent).
On the other hand, the movable Dirichlet eigenvalues are time dependent, and so are the Bloch-Floquet eigenfunctions.
Thus, in order to construct an effective time-dependent Riemann-Hilbert problem, 
one must determine the time evolution of the Dirichlet eigenvalues and 
of the Bloch-Floquet eigenfunctions.
We turn to these tasks next.


\begin{proposition}
For all $n\in\Integer$, the Dirichlet eigenvalues $\mu_n(t)$ satisfy the ODE
\bse
\label{e:dmundt}
\be
\partialderiv{\mu_{n}(t)}t=\nu_n(t)c_1(\mu_{n}(t))\frac{\rho(\mu_n(t))-\rho^{-1}(\mu_n(t))}{\~y'_{12}(L,t,\mu_{n}(t))}.
\ee
where 
\be
c_1(z)=-2z^2-\i z(q^*(0,t)-q(0,t))-|q(0,t)|^2-\frac{1}{2}(q_x(0,t)+q_x^*(0,t))\,.
\label{e:c1def}
\ee
\ese
\label{p:dirichlettimeevol}
\end{proposition}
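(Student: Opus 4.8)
The plan is to derive the ODE for the Dirichlet eigenvalue $\mu_n(t)$ by differentiating the defining relation $\~y_{12}(L,t,\mu_n(t))=0$ (Proposition~\ref{p:dirzeros}) with respect to $t$ and evaluating the resulting expression. Writing $F(t,z):=\~y_{12}(L,t,z)$, implicit differentiation gives
\be
F_t(L,t,\mu_n(t)) + F_z(L,t,\mu_n(t))\,\mu_n'(t) = 0\,,\nonumber
\ee
so that $\mu_n'(t) = -F_t/\~y_{12}'(L,t,\mu_n)$, where $\~y_{12}'$ denotes $\partial_z\~y_{12}$. Since Theorem~\ref{spectraprop} guarantees that each $\mu_n$ is a \emph{simple} zero of $\~y_{12}(L,z)$, the denominator $\~y_{12}'(L,t,\mu_n)$ is nonzero, so the quotient is well defined. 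The core of the proof is therefore to compute $F_t(L,t,\mu_n)$ — the time derivative of the $(1,2)$ entry of the modified monodromy matrix $\~M(z)=\~Y(L,t,z)$ — and to show it equals $-\sigma_n(t)\,c_1(\mu_n(t))\,(\rho(\mu_n)-\rho^{-1}(\mu_n))$.

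The computation of $F_t$ proceeds via the standard Lax-pair argument. First I would write down the $t$-evolution of $\~Y(x,t,z)$: since $Y$ satisfies both halves of the Lax pair \eqref{e:NLSLP}, the transformed fundamental solution $\~Y = UYU^{-1}$ satisfies $\~Y_t = U(-2\i z^2\sigma_3 + H)U^{-1}\,\~Y + \~Y\,C(t,z)$ for some matrix $C(t,z)$ fixed by the normalization $Y(0,t,z)=\I$, namely $C(t,z) = -U(-2\i z^2\sigma_3 + H(0,t,z))U^{-1}$ (differentiating the normalization in $t$). Evaluating the evolution equation at $x=L$ and using $L$-periodicity of $Q$ (so $H(L,t,z)=H(0,t,z)$), one obtains $\~M_t = [\,\~N(t,z),\,\~M\,]$ where $\~N(t,z) = U(-2\i z^2\sigma_3 + H(0,t,z))U^{-1}$. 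Then I would extract the $(1,2)$ component of this commutator: $(\~M_t)_{12} = \~N_{11}\~M_{12} - \~M_{12}\~N_{22} + \~N_{12}\~M_{11} - \~M_{22}\~N_{12}$, which at $z=\mu_n$ simplifies because $\~M_{12}(L,\mu_n)=0$, leaving $F_t(L,t,\mu_n) = \~N_{12}(t,\mu_n)\big(\~y_{11}(L,t,\mu_n) - \~y_{22}(L,t,\mu_n)\big)$.

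The remaining steps are algebraic. By Lemma~\ref{y2}, at a Dirichlet eigenvalue $\~y_{11}(L,\mu_n) = \rho^{\pm 1}(\mu_n)$ and $\~y_{22}(L,\mu_n) = \rho^{\mp 1}(\mu_n)$, with the sign governed precisely by $\sigma_n = -\sgn(\log|\~y_{22}(L,\mu_n)|)$; thus $\~y_{11}(L,\mu_n) - \~y_{22}(L,\mu_n) = \sigma_n\big(\rho(\mu_n)-\rho^{-1}(\mu_n)\big)$. It then remains to identify $\~N_{12}(t,z)$ — the $(1,2)$ entry of $U(-2\i z^2\sigma_3 + H(0,t,z))U^{-1}$ with $H = 2zQ - \i\sigma_3(Q^2 - Q_x)$ — with the scalar $c_1(z)$ in \eqref{e:c1def}. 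This is a direct $2\times2$ conjugation calculation using $U = \big(\begin{smallmatrix}1&1\\-\i&\i\end{smallmatrix}\big)$ and the explicit form of $Q(0,t)$; collecting the $-2z^2$ term from the diagonal part, the $O(z)$ term $\propto q^*(0)-q(0)$ from $2zQ$, and the $O(1)$ terms $-|q|^2$ and $-\frac12(q_x+q_x^*)$ from $-\i\sigma_3(Q^2-Q_x)$, one recovers \eqref{e:c1def}. Combining the three pieces yields \eqref{e:dmundt}.

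I expect the main obstacle to be bookkeeping rather than anything conceptual: getting the signs and conjugation right in passing from $H$ to $\~N_{12}=c_1$, and correctly tracking which branch $\rho^{\pm1}$ appears in $\~y_{11}$ versus $\~y_{22}$ so that the factor $\sigma_n$ emerges with the right sign from Definition (the definition of $\sigma_k$) and Lemma~\ref{y2}. A secondary subtlety is justifying that $\mu_n(t)$ is differentiable in $t$ — this follows from the implicit function theorem applied to $\~y_{12}(L,t,z)=0$, using the simplicity of the zero ($\~y_{12}'(L,t,\mu_n)\neq0$) from Theorem~\ref{spectraprop} together with the joint analyticity of $\~y_{12}(L,t,z)$ in $(t,z)$, which follows from the Volterra representation \eqref{phi} and the smooth $t$-dependence of $Q$.
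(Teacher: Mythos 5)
Your strategy is the same as the paper's, just packaged at the level of the full monodromy matrix rather than column by column: the paper shows $\~y_{2,t}-\~{\mathcal A}\~y_2=c_1\~y_1+c_2\~y_2$, fixes $c_1,c_2$ from the normalization at $x=0$, and evaluates the first component at $x=L$, $z=\mu_n$, which is exactly your commutator identity $\~M_t=[\~N,\~M]$ restricted to the $(1,2)$ entry; the paper then gets $\~y_{12,t}(L,t,\mu_n)=-\mu_{n,t}\,\~y_{12}'(L,t,\mu_n)$ by differentiating the Hadamard product \eqref{y12}, where you use implicit differentiation and the simplicity of the zeros directly. Both routes are equivalent and your justification of differentiability of $\mu_n(t)$ is a point the paper glosses over.

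However, three of your displayed identities carry sign errors, and they do not all cancel. First, $[\~N,\~M]_{12}=\~N_{12}(\~M_{22}-\~M_{11})$ when $\~M_{12}=0$, not $\~N_{12}(\~M_{11}-\~M_{22})$. Second, since $|\rho(\mu_n)|<1$ in an open gap, $\sigma_n=+1$ forces $\~y_{22}(L,\mu_n)=\rho(\mu_n)$ and $\~y_{11}(L,\mu_n)=\rho^{-1}(\mu_n)$, so the correct relation is $\~y_{22}-\~y_{11}=\sigma_n(\rho-\rho^{-1})$, the opposite of what you wrote; these first two slips happen to cancel. Third, and this one does not cancel: the $(1,2)$ entry of $U(-2\i z^2\sigma_3)U^{-1}$ is $+2z^2$ (since $U\sigma_3U^{-1}=-\sigma_2$), so $\~N_{12}(t,z)=-c_1(z)$ rather than $c_1(z)$; this matches the paper's $c_1=-(\~{\mathcal A}\~y_2(0,t,z))_1$. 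As written your chain yields $\mu_{n,t}=-c_1\sigma_n(\rho-\rho^{-1})/\~y_{12}'$, off by an overall sign from \eqref{e:dmundt}; fixing the identification $\~N_{12}=-c_1$ (while also correcting the two cancelling slips) recovers the stated result.
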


\begin{proof}
Recall that the Dirichlet eigenvalues are poles of the modified Bloch-Floquet solutions,
which, in turn, are defined via~\eqref{e:Ytildedef}.
The transformation~\eqref{e:Ytildedef} yields the modified Lax equation
\bse
\begin{gather}
\tilde{\L}_t=[\tilde{\mathcal{A}},\tilde{\L}]\,,
\label{mlax}
\\
\noalign{\noindent with}
\tilde{\L}= U\L U^{-1}\,,
\qquad
\tilde{\mathcal{A}} = U\mathcal{A}U^{-1}\,.
\end{gather}
\ese
Recall that $\~y_2$ is a Bloch-Floquet solution of $\tilde\L$, 
i.e., $\tilde{\L}\~y_2=z\~y_2$. 
Differentiating both sides with respect to $t$, we have
$\tilde{\L}_t\~y_2+\tilde{\L}\~y_{2,t}=z\~y_{2,t}$,
which, 
combined with the modified Lax equation \eqref{mlax} implies 
\be
\tilde{\L}(\~y_{2,t}-\tilde{\mathcal{A}}\~y_2)=z(\~y_{2,t}-\tilde{\mathcal{A}}\~y_2)\,.
\ee
Therefore, for fixed $t$ and $z$, there exist constants $c_1$ and $c_2$ such that
\be
\~y_{2,t}-\tilde{\mathcal{A}}\~y_2=c_1\~y_1+c_2\~y_2\,.\label{y2t}
\ee
The normalization condition of $\~Y(x,z)$ at $x=0$ gives that $\~y_{12,t}(0,t,z)=\~y_{22,t}(0,t,z)=0$, so we obtain
\be
  (c_1,c_2)^T = -\tilde{\mathcal{A}}\~y_{2}(0,t,z)\,.
\ee
Moreover, the modified equation \eqref{e:modZS} also implies
\be
\~y_{2,xx}(x,t,z)=-\i zU\sigma_3U^{-1}\~y_{2,x}(x,t,z)+UQ(x,t)U^{-1}\~y_{2,x}(x,t,z)+UQ_x(x,t)U^{-1}\~y_2(x,t,z)\,,
\ee
and therefore
\begin{align}
\tilde{\mathcal{A}}\~y_2(x,t,z) 
        = -2\i z^2U\sigma_3U^{-1}\~y_2(x,t,z)+2zUQU^{-1}\~y_2(x,t,z)-\i U(Q^2+Q_x)\sigma_3U^{-1}\~y_2(x,t,z)\,.
\label{Ay2}
\end{align}
Evaluation at $x=0$ yields
\bse
\label{c1c2}
\begin{align}
	&c_1=-2z^2-\i z(q^*(0,t)-q(0,t))-|q(0,t)|^2-\half(q_x(0,t)+q_x^*(0,t))\,,\label{c1}\\
    &c_2=z(q^*(0,t)+q(0,t))-\frac{\i}{2}(q_x(0,t)-q_x^*(0,t))\,.\label{c2}
\end{align}
\ese
Plugging \eqref{Ay2} and~\eqref{c1c2} into \eqref{y2t} and evaluating~\eqref{y2t} at $x=L$ and $z=\mu_n(t)$, we get 
\begin{multline}
\~y_{12,t}(L,t,\mu_n(t)) = \big(2\mu_n(t)^2+\i\mu_n(t)(q^*(0,t)-q(0,t))+|q(0,t)|^2+\half(q_x(0,t)+q_x^*(0,t))\big)\\
\big(\~y_{22}(L,t,\mu_n(t))-\~y_{11}(L,t,\mu_n(t))\big)\,.
\end{multline}
At the same time, differentiating the expansion \eqref{y12} in $t$ we have
\be
\~y_{12,t}(L,t,z)=\e^{A_1z+B_1}\sum_{n\in\mathbb{Z}}\left[\frac{z\mu_{n,t}(t)}{\mu_n^2(t)}\e^{\frac{z}{\mu_n(t)}}\frac{z}{\mu_n(t)}\prod_{j\neq n}\left(1-\frac{z}{\mu_j(t)}\right)\e^{\frac{z}{\mu_j(t)}}\right]\,,
\label{y12t}
\ee
whereas differentiating  \eqref{y12} in $z$ we have
\be
\~y'_{12}(L,t,z)=A_1\e^{A_1z+B_1}\prod_{j\in\mathbb{Z}}\left(1-\frac{z}{\mu_j(t)}\right)\e^{\frac{z}{\mu_j(t)}}+\e^{A_1z+B_1}\sum_{n\in\mathbb{Z}}\left[-\frac{z}{\mu_n^2(t)}\e^{\frac{z}{\mu_n(t)}}\prod_{j\neq n}\left(1-\frac{z}{\mu_j(t)}\right)\e^{\frac{z}{\mu_j(t)}}\right]\,.
\label{y12z}
\ee
Evaluating \eqref{y12t} and \eqref{y12z} at $z=\mu_n(t)$ gives $\~y_{12,t}(L,t,\mu_n(t))=-\mu_{n,t}(t)\~y'_{12}(L,t,\mu_{n}(t))$.
Therefore, we can express $\mu_{n,t}(t)$ as:
\be
	\mu_{n,t}(t) = -\frac{\~y_{12,t}(L,t,\mu_n(t))}{\~y'_{12}(L,t,\mu_{n}(t))}
		 = \frac{c_1(\mu_{n}(t))(\~y_{22}(L,t,\mu_n(t))-\~y_{11}(L,t,\mu_n(t)))}{\~y'_{12}(L,t,\mu_{n}(t))}\,,
\nonumber
\ee
which yields \eqref{e:dmundt} upon substituting $c_1$ via~\eqref{e:c1def}.
\end{proof}

\begin{remark}
As a special case, 
\eqref{e:dmundt} shows that the fixed Dirichlet eigenvalues, for which $\nu_n=0$, are indeed
time-independent, as was mentioned in section~\ref{s:spectrum}.
Note that \eqref{e:dmundt} differs from the Dubrovin equations, which express the temporal derivative of 
the Dirichlet eigenvalues only in terms of known spectral data.
\end{remark}

\begin{proposition}
\label{asymofa}
The time-dependent Bloch-Floquet solutions $\psi^\pm(x,t,z)$ satisfy the ODEs
\be
\psi^\pm_t(x,t,z)+\alpha^\pm(t,z)\psi^\pm(x,t,z)=\tilde{\mathcal{A}}\psi^\pm(x,t,z)
\label{psit}
\ee
where 
\vspace*{-1ex}
\begin{align}
	\begin{split}
\alpha^\pm(t,z)=&z(q^*(0,t)+q(0,t))-\frac{\i}{2}(q_x(0,t)-q^*_x(0,t))\\
	&+\Big(2z^2+\i z\big(q^*(0,t)-q(0,t)\big)+|q(0,t)|^2+\half q_x(0,t)+\half q_x^*(0,t)\Big)
\psi^\pm_2(0,t,z)\,.
\end{split}
    \label{e:alphapm}
\end{align}
As a result, for each $s>0$, as $z\rightarrow\infty$, $z\in\Complex$, we have 
\bse
\label{e:alphapmasymp}
\begin{align}
	&\alpha^\pm=\pm2\i z^2+O(1/z),\qquad z\in\Complex^+,\\
	&\alpha^\pm=\mp2\i z^2+O(1/z),\qquad z\in\Complex^-.
\end{align}
\ese
Furthermore, if $\nu_n(t)=1$ then $\alpha^+(t,z)$ has a simple pole at $\mu_{n}(t)$ with residue $-\mu_{n,t}(t)$ and if $\nu_{n}=-1$ then $\alpha^-(t,z)$ has a simple pole at $\mu_{n}(t)$ with residue $-\mu_{n,t}(t)$.
\end{proposition}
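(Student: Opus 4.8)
The plan is to derive the ODE \eqref{psit} in exactly the same way as the ODE for the Dirichlet eigenvalues in the proof of Proposition~\ref{p:dirichlettimeevol}, and then read off the asymptotics and the pole structure from already-established facts. First I would note that $\psi^\pm(x,t,z)$ solves the modified ZS problem $\tilde{\L}\psi^\pm=z\psi^\pm$ for each fixed $t$; differentiating in $t$ and using the modified Lax equation \eqref{mlax} (precisely as in \eqref{dt}--\eqref{y2t}) shows that $\psi^\pm_t-\tilde{\mathcal{A}}\psi^\pm$ again solves $\tilde{\L}w=zw$. Since $\psi^\pm$ is, for each $t$ and generic $z$, a Bloch--Floquet solution with multiplier $\rho^{\pm1}(z)$ — and this eigenspace is one-dimensional — the vector $\psi^\pm_t-\tilde{\mathcal{A}}\psi^\pm$ must be a scalar multiple of $\psi^\pm$ itself (here one uses that $\tilde{\mathcal A}$ preserves the $L$-periodicity structure, i.e.\ commutes with the shift $x\mapsto x+L$ up to the Floquet factor, so that $\psi^\pm_t-\tilde{\mathcal A}\psi^\pm$ has the same Floquet multiplier as $\psi^\pm$; one also uses $\rho_t=0$, which follows from time-invariance of $\Delta$). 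This yields \eqref{psit} with some scalar $\alpha^\pm(t,z)$. To identify $\alpha^\pm$, evaluate \eqref{psit} at $x=0$: from the normalization $\psi_1^\pm(0,t,z)\equiv1$ one gets $\partial_t\psi_1^\pm(0,t,z)=0$, so the first component of \eqref{psit} at $x=0$ gives $\alpha^\pm(t,z)=\big(\tilde{\mathcal A}\psi^\pm(x,t,z)\big)_1\big|_{x=0}$. Using the explicit action of $\tilde{\mathcal A}$ on a solution of \eqref{e:modZS} — exactly the computation \eqref{Ay2} with $\~y_2$ replaced by $\psi^\pm$, which produces the second-derivative term via the ODE — and evaluating the resulting expression at $x=0$ with $\psi_1^\pm(0)=1$, $\psi_2^\pm(0)=\psi_2^\pm(0,t,z)$, one obtains exactly \eqref{e:alphapm}.

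Next, the asymptotics \eqref{e:alphapmasymp} follow by substituting into \eqref{e:alphapm} the known large-$z$ behavior of $\psi_2^\pm(0,t,z)$ from Proposition~\ref{asympsi0}. Indeed, in $\C^+$ one has $\psi_2^-(0,z)=-\i-q^*(0)/z+O(1/z^2)$ and $\psi_2^+(0,z)=\i-q(0)/z+O(1/z^2)$, and multiplying the leading $2z^2$ coefficient in \eqref{e:alphapm} by $\pm\i$ produces the $\pm2\i z^2$ leading terms; the $O(z)$ and $O(1)$ contributions must then cancel against the remaining pieces of \eqref{e:alphapm} and against the $O(1/z)$ and $O(1/z^2)$ corrections in $\psi_2^\pm(0,z)$, leaving an $O(1/z)$ remainder. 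This is the reason Proposition~\ref{asympsi0} was stated with the expansion carried one order further than \eqref{asympsi-}--\eqref{asympsi+}: here I would simply track the three terms of \eqref{e:alphapm} (the linear-in-$z$ part, the $|q|^2+\half(q_x+q_x^*)$ part, and the product with $\psi_2^\pm$) and verify the cancellations, which is a short algebraic check. The case $z\in\C^-$ is identical with the roles of $\psi_2^+$ and $\psi_2^-$ interchanged, accounting for the sign flip.

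Finally, for the residue statement: by Proposition~\ref{p:BFyexpansion}, $\psi^\pm(x,z)=\~y_1(x,z)+\frac{\rho^{\pm1}(z)-\~y_{11}(L,z)}{\~y_{12}(L,z)}\~y_2(x,z)$, so $\psi_2^\pm(0,t,z)=\big(\rho^{\pm1}(z)-\~y_{11}(L,z)\big)/\~y_{12}(L,z)$ (using $\~y_1(0)=(1,0)^T$, $\~y_2(0)=(0,1)^T$). If $\sigma_n(t)=1$, then by the remark following Lemma~\ref{y2} the numerator $\rho^{-1}(z)-\~y_{11}(L,z)$ is analytic and nonvanishing at $\mu_n$ while $\~y_{12}(L,z)$ has a simple zero there (Theorem~\ref{spectraprop}), so $\psi_2^+(0,t,z)$ has a simple pole at $\mu_n$; inserting this into \eqref{e:alphapm} shows $\alpha^+$ has a simple pole at $\mu_n$ with residue $c_1(\mu_n)\cdot\operatorname{Res}_{z=\mu_n}\psi_2^+(0,t,z)$, where $c_1$ is as in \eqref{e:c1def} and $\operatorname{Res}_{z=\mu_n}\psi_2^+(0,t,z)=\big(\rho^{-1}(\mu_n)-\~y_{11}(L,\mu_n)\big)/\~y_{12}'(L,\mu_n)=\big(\rho^{-1}(\mu_n)-\rho(\mu_n)\big)/\~y_{12}'(L,\mu_n)$ since $\~y_{11}(L,\mu_n)=\rho(\mu_n)$ when $\sigma_n=1$. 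Comparing with the formula \eqref{e:dmundt}--\eqref{e:c1def} for $\mu_{n,t}(t)$ gives precisely residue $=-\mu_{n,t}(t)$. The case $\sigma_n=-1$ is the mirror image, with $\psi_2^-(0,t,z)$ having the pole and $\~y_{11}(L,\mu_n)=\rho^{-1}(\mu_n)$. The main obstacle I anticipate is not any single step but the bookkeeping in the second paragraph: making the $O(z)$ and $O(1)$ terms in \eqref{e:alphapmasymp} cancel requires using the full second-order expansions from Proposition~\ref{asympsi0}, and one must be careful that the $\psi_2^\pm(0,z)$ expansions are evaluated at $x=0$ (where the $q(x)$ in \eqref{asympsi-}--\eqref{asympsi+} becomes $q(0)$), consistently with the $q(0,t)$-dependent coefficients already present in \eqref{e:alphapm}.
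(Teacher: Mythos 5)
Your argument is correct and, except for one step, follows the same route as the paper: differentiate $(\tilde{\L}-z)\psi^\pm=0$ in $t$, use the modified Lax equation \eqref{mlax} to see that $\psi^\pm_t-\tilde{\mathcal{A}}\psi^\pm$ again solves the spectral problem, identify it as a scalar multiple of $\psi^\pm$, and extract $\alpha^\pm(t,z)=(\tilde{\mathcal{A}}\psi^\pm)_1(0,t,z)$ from the normalization $\psi^\pm_1(0,t,z)=1$. The one place you genuinely diverge is in eliminating the $\psi^\mp$ component: the paper writes $\psi^\pm_t-\tilde{\mathcal{A}}\psi^\pm=-\alpha^\pm\psi^\pm-\beta^\pm\psi^\mp$ and forces $\beta^\pm=0$ by letting $x\to\pm\infty$ and using $|\rho(z)|<1$ off the spectrum, whereas you argue that $\tilde{\mathcal{A}}$ has $L$-periodic coefficients and $\rho_t=0$, so $\psi^\pm_t-\tilde{\mathcal{A}}\psi^\pm$ already carries the Floquet multiplier $\rho^{\pm1}$ and must lie in that one-dimensional eigenspace. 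Both work for $z\notin\Sigma(\L)$ (where $\rho\neq\rho^{-1}$); yours is the cleaner algebraic statement, the paper's trades that for a decay argument that does not require checking the commutation of $\tilde{\mathcal{A}}$ with the shift. You also supply the asymptotics and the residue computation, which the paper's printed proof leaves implicit, and your strategy for both (plug Proposition~\ref{asympsi0} into \eqref{e:alphapm}; locate the pole via the simple zero of $\~y_{12}(L,\cdot)$ and compare with \eqref{e:dmundt}) is exactly what is needed.

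Two bookkeeping cautions on the parts you deferred. First, when you carry out the $O(1)$ cancellation, it closes only if the $z$-independent term of $\alpha^\pm$ is $-\frac{\i}{2}(q_x^*(0,t)-q_x(0,t))$, i.e.\ the sign produced by evaluating \eqref{e:alphapmeq2} at $x=0$; with the sign as printed in \eqref{e:alphapm} the $q_x$, $q_x^*$ contributions at $O(1)$ do not cancel, so take \eqref{e:alphapmeq2} as the authoritative formula. Second, in the residue paragraph you have swapped the conventions: for $\sigma_n=1$ one has $\~y_{22}(L,\mu_n)=\rho(\mu_n)$ and hence $\~y_{11}(L,\mu_n)=\rho^{-1}(\mu_n)$ (not $\rho(\mu_n)$), so it is the numerator $\rho(z)-\~y_{11}(L,z)$ of $\psi_2^+$ that is nonvanishing at $\mu_n$ while $\rho^{-1}(z)-\~y_{11}(L,z)$ vanishes there; the correct residue is $\Res_{z=\mu_n}\psi_2^+(0,t,z)=\big(\rho(\mu_n)-\rho^{-1}(\mu_n)\big)/\~y_{12}'(L,\mu_n)$, which is then multiplied by $-c_1(\mu_n)$ (the coefficient of $\psi_2^\pm$ in \eqref{e:alphapm} is $-c_1$, not $c_1$). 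Your two sign slips compensate and you land on the correct value $-\mu_{n,t}(t)$, but as written the intermediate identities contradict the ones the paper uses in the proof of Theorem~\ref{t:direct}.
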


\begin{proof}
For each $t$, we can factor the normalized Bloch-Floquet solutions $\psi^\pm(x,t,z)$  as
\be
\psi^+(x,t,z)=p^+(x,t,z)\rho^{x/L}(z),\qquad 
\psi^-(x,t,z)=p^-(x,t,z)\rho^{-x/L}(z)\,.
\ee
Differentiating $(\tilde{\L}-z)\psi^+=0$ with respect to $t$ and using the modified Lax equation imply that
\be
\tilde{\L}_t\psi^++\tilde{\L}\psi^+_t-z\psi^+_t=[\tilde{\mathcal{A}},\tilde{\L}]\psi^++\tilde{\L}\psi^+_t-z\psi_t^+=(\tilde{\L}-z)(\psi^+_t-\tilde{\mathcal{A}}\psi^+)=0.
\ee
So $\psi_t^+-\tilde{\mathcal{A}}\psi^+$ solves~\eqref{e:modZS} for all $t>0$. 
Therefore, it can be written as a linear combination of $\psi^\pm$:
\be
	p^+_t\rho^{x/L}(z) - \tilde{\mathcal{A}}\,p^+\rho^{x/L}(z) = 
	  - \alpha^+(t,z)p^+\rho^{x/L}(z) - \beta^+(t,z)p^-\rho^{-x/L}(z),
\ee
where $\alpha^+(t,z)$ and $\beta^+(t,z)$ are some undetermined functions independent of $x$. 
Recall that $|\rho(z)|<1~\forall z\in\mathbb{C}\setminus\Sigma(\L)$.
Evaluating the LHS and RHS of the above equation as $x\to\infty$,
every term decays exponentially except the last one.
Therefore, in order for the equation to hold, one needs $\beta^+(t,z)=0$. 
Hence $\psi^+$ solves~\eqref{psit} with the plus sign.
An analogous argument, but now taking the limit as $x\to-\infty$, 
shows that $\psi^-$ solves~\eqref{psit} with the minus sign, for some $\alpha^-(t,z)$. 

Next we derive~\eqref{e:alphapm} and~\eqref{e:alphapmasymp}. 
To do so, we evaluate the first component of~\eqref{psit} at $x=0$.
Since $\psi^\pm_1(0,t,z)=1$ for all time, we have $\psi^\pm_{1,t}(0,t,z)=0$. 
Therefore \eqref{psit} yields
\be
\alpha^\pm(t,z) = (\mathcal{A}\psi^\pm)_1(0,t,z)\,.
\label{e:alphapmeq}
\ee
From \eqref{Ay2} we have that
\be
\tilde{\mathcal{A}}v = -2\i z^2U\sigma_3U^{-1}v + 2zUQU^{-1}v-\i U(Q^2+Q_x)\sigma_3U^{-1}v\,.
\ee
Explicitly, the first component of the above equation is
\be
(\tilde{\mathcal{A}}v)_1 = 2z^2\,v_2 + z\,\big[(q^* + q)\,v_1 + \i(q^*-q)v_2 \big]
\\
- \txtfrac \i2 \big[ (q^*_x - q_x)\,v_1 + \i(2|q|^2 + q^*_x + q_x ) v_2 \big]\,.
\label{e:alphapmeq2}
\ee
Letting $v(x,t,z) = \psi^\pm(x,t,z)$, evaluating \eqref{e:alphapmeq2} at $x=0$ 
recalling that~\eqref{BF} evaluated at $x=0$ yields
$\psi^\pm_1(0,t,z)=1$ and 
$\psi^\pm_2(0,t,z) = (\rho^{\pm1}-\~y_{11}(L,t,z))/\~y_{12}(L,t,z)$,
\eqref{e:alphapmeq} then yields~\eqref{e:alphapm}.
\end{proof}

We are now ready to define the time-dependent Bloch-Floquet solutions.
Let $e^\pm(t,z)$ be solutions of the differential equation  
$e^\pm_t(t,z)=\alpha^\pm(t,z)e^\pm(t,z)$ with the initial condition $e^\pm(0,z)=1$,
i.e.,
\vspace*{-1ex}
\be
e^\pm(t,z)=\exp\left(\int_0^t\alpha^\pm(\tau,z)\d\tau\right)\,.
\ee
Define
\be\label{checkpsi}
 \check{\psi}^\pm(x,t,z)=\psi^\pm(x,t,z)e^\pm(t,z)\,.
 \ee
Then $\check{\psi}^\pm(x,t,z)$ satisfy the system of $\~\L\check\psi^\pm = z\check\psi^\pm$ and 
$\check{\psi}^\pm_t=\tilde{\mathcal{A}}\check{\psi}^\pm$,
for which the modified Lax equation~\eqref{mlax} is the compatibility condition.

\begin{proposition}
\label{p:e}
The functions $e^\pm(t,z)$ satisfy the following properties:
\vspace*{-1ex}
\begin{enumerate}
\advance\itemsep-2pt
\item 
$e^\pm(t,z)$ are meromorphic functions in $\mathbb{C}\setminus\Sigma(\mathcal{L})$.
\item 
$e^+(t,z)$ has simple poles on $\gamma_{k}(0)$ as $\nu_{k}(0)=1$and simple zeros on $\gamma_{k}(t)=1$ as $\nu_{k}(t)=1$;
$e^-(t,z)$ has simple poles on $\gamma_{k}(0)$ as $\nu_{k}(0)=-1$ and simple zeros on $\gamma_{k}(t)$ as $\nu_{k}(t)=-1$. 
\item
$e^\pm(t,z)$ both have square root singularities at $\gamma_{k}(0)$ when $\nu_{k}(0)=0$ and square root zeros at $\gamma_{k}(t)$ when $\nu_{k}(t)=0$.
\item 
The boundary values of $e^\pm$ satisfy $e^\pm_+(t,z)=e^\mp_-(t,z)$ for $z\in\Sigma(\mathcal{L})$.
\item 
For fixed $t$, $e^\pm$ have the following asymptotic behaviors 
as $z\rightarrow\infty$:
\vspace*{-0.6ex}
\be
e^\pm(t,z)=\begin{cases} \e^{\pm2\i z^2t}(1+O(1/z)),& z\in\Complex^+\\
		\e^{\mp2\i z^2t}(1+O(1/z)),& z\in\Complex^-\end{cases}\,.
\ee
\vspace*{-2ex}
\item 
There exist positive constants $c$ and $M$ such that $|e^\pm(t,z)|\leq M\e^{c|z|^2}$.
\end{enumerate}
\end{proposition}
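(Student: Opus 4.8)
The plan is to read off all six properties from the representation $e^\pm(t,z)=\exp\big(\int_0^t\alpha^\pm(\tau,z)\,\d\tau\big)$, using the structural information on $\alpha^\pm$ already contained in Proposition~\ref{asymofa} (meromorphy, residues at the movable Dirichlet eigenvalues, and the $z\to\infty$ behavior) together with the Dirichlet ODE~\eqref{e:dmundt}. The key device is a local factorization near a movable Dirichlet eigenvalue: if $\sigma_n(\tau)=1$ along a subinterval of $[0,t]$, then by Proposition~\ref{asymofa} the function $\alpha^+(\tau,\cdot)$ has a simple pole at $z=\mu_n(\tau)$ with residue $-\mu_{n,\tau}(\tau)$, so that
\be
\alpha^+(\tau,z)=\frac{\partial}{\partial\tau}\log\big(z-\mu_n(\tau)\big)+r_n(\tau,z)\,,
\ee
with $r_n$ jointly continuous in $\tau$ and holomorphic in $z$ on a neighborhood of the arc $\{\mu_n(\tau):\tau\in[0,t]\}$. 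Integrating over $[0,t]$ and exponentiating then gives, locally near that arc,
\be
e^+(t,z)=\frac{z-\mu_n(t)}{z-\mu_n(0)}\,\exp\Big(\int_0^t r_n(\tau,z)\,\d\tau\Big)\,,
\ee
the exponential factor being holomorphic and nonvanishing; the mirror statement holds for $e^-$ at indices with $\sigma_n=-1$.

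With this in hand, properties~1--3 follow. By~\eqref{e:alphapm} and~\eqref{BF}, $\alpha^\pm(\tau,\cdot)$ is meromorphic on $\C\setminus\Sigma(\mathcal{L})$ with poles confined to $\{\mu_n(\tau)\}$; covering $\C\setminus\Sigma(\mathcal{L})$ by discs each meeting only finitely many of the arcs $\tau\mapsto\mu_n(\tau)$ and applying the factorization above shows that $e^\pm(t,\cdot)$ is meromorphic on $\C\setminus\Sigma(\mathcal{L})$ (property~1), with a simple pole at $\mu_n(0)$ and a simple zero at $\mu_n(t)$ exactly at the indices with $\sigma_n(0)=\pm1$, resp.\ $\sigma_n(t)=\pm1$, and finite and nonvanishing otherwise (property~2). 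For property~3 one must examine an index $n$ for which $\mu_n$ sits at a band edge $E$ at the relevant instant (so $\sigma_n=0$ there): near $E$, $\rho^{\pm1}(z)=\Delta(z)\mp\sqrt{\Delta^2(z)-1}$ carries a square-root branch point, while $\~y_{12}(L,\tau,\cdot)$ has a simple zero at $\mu_n(\tau)$; using Lemma~\ref{y2} one writes $\psi_2^\pm(0,\tau,z)$ from~\eqref{BF} as the sum of the simple pole at $\mu_n(\tau)$ and a term whose near-$E$ behavior is a constant times $\big((z-E)^{1/2}+(\mu_n(\tau)-E)^{1/2}\big)^{-1}$, uses $\mu_n(\tau)-E=O\big((\tau-\tau_*)^2\big)$ at the turning point $\tau_*$ (which follows from~\eqref{e:dmundt}, whose right-hand side contains the factor $\rho-\rho^{-1}=-2\sqrt{\Delta^2-1}$ vanishing at $E$), and finds after the $\tau$-integration that these two contributions combine so that $e^\pm(t,z)$ acquires a factor $(z-E)^{-1/2}$ near $\mu_n(0)$ when $\tau_*=0$ and a factor $(z-E)^{+1/2}$ near $\mu_n(t)$ when $\tau_*=t$, which is exactly the square-root singularities and zeros asserted in property~3.

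Properties~4--6 are comparatively direct. For property~4, the identity $\rho_+(z)=\rho_-^{-1}(z)$ on $\Sigma(\mathcal{L})$ established in the proof of Theorem~\ref{t:direct} gives, via~\eqref{BF}, $\psi^+_+(x,z)=\psi^-_-(x,z)$ there; since the remaining terms of $\alpha^\pm$ in~\eqref{e:alphapm} are single-valued in $z$, this yields $\alpha^+_+(\tau,z)=\alpha^-_-(\tau,z)$, and integrating in $\tau$ gives $e^+_+(t,z)=e^-_-(t,z)$. For property~5, the asymptotics~\eqref{e:alphapmasymp} hold uniformly in $\tau\in[0,t]$ because $q(\cdot,\tau)$ stays uniformly $C^2([0,L])$ on compact time intervals, so $\int_0^t\alpha^\pm(\tau,z)\,\d\tau=\pm2\i z^2t+O(1/z)$ for $z\in\overline{\C^+}$, with the sign-reversed statement for $z\in\overline{\C^-}$, whence $e^\pm(t,z)=\e^{\pm2\i z^2t}(1+O(1/z))$. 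For property~6, on the complement of fixed small discs about the poles of $\alpha^\pm$ one has $|\alpha^\pm(\tau,z)|\le C(1+|z|^2)$ uniformly in $\tau$, so $|e^\pm(t,z)|\le M\e^{c|z|^2}$ there; near and across those discs one combines the meromorphic structure from properties~1--3 with the fact that the canonical products over $\{\mu_n\}$ have order~$1$ (since $\mu_n=\pi n/L+o(1)$ makes $\sum_n|\mu_n|^{-2}$ convergent) and with Lemma~\ref{PA}, writing $e^\pm(t,z)$ as a ratio of order-one products times the exponential of an entire function of order at most~$2$, which gives $e^\pm(t,\cdot)\in\A_2$ of the relevant domain.

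I expect the main obstacle to be the band-edge analysis underlying property~3, both for the fixed Dirichlet eigenvalues and for the movable ones at a turning point of their motion: one must verify that the collision of the slowly-moving simple pole of $\psi_2^\pm(0,\tau,z)$ at $\mu_n(\tau)$ with the $(z-E)^{1/2}$ branch point of $\rho^{\pm1}$ produces, after the $\tau$-integration, an exponent of exactly $\pm\tfrac12$ in $e^\pm$ --- not a stronger singularity that would survive exponentiation as an essential singularity --- which hinges on the precise quadratic vanishing $\mu_n(\tau)-E=O((\tau-\tau_*)^2)$ read off from~\eqref{e:dmundt} and on tracking the leading coefficients throughout. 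A second, routine, point is the uniformity in $\tau$ of all the asymptotic and growth estimates, which rests on well-posedness of the defocusing NLS flow in $C^2([0,L])$.
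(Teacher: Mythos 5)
Your overall architecture coincides with the paper's: everything is read off from $e^\pm=\exp\big(\int_0^t\alpha^\pm\,\d\tau\big)$, the generic movable pole is handled by the local factorization $\int_0^t\frac{-\mu_{n,\tau}\,\d\tau}{z-\mu_n(\tau)}=\log\frac{z-\mu_n(t)}{z-\mu_n(0)}$ giving the factor $(z-\mu_n(t))/(z-\mu_n(0))$ times a nonvanishing holomorphic remainder, property~4 follows from $\rho_+=\rho_-^{-1}$ on $\Sigma(\L)$, property~5 from dominated convergence applied to~\eqref{e:alphapmasymp}, and property~6 from the $\A_2$ machinery. Those parts are fine (for property~6 the paper actually works harder than your sketch suggests, splitting $\mathcal{D}^+$ and bounding $\~y_{12}(L,z)^{-1}$ on the deformed semicircles $C_n$; note also that $\int_0^t\alpha^\pm\,\d\tau$ is not entire in $z$, so ``exponential of an entire function of order at most~2'' is not quite the right phrasing, though the intended Lemma~\ref{PA}-type estimate is the right one).

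The genuine gap is exactly where you flagged it: property~3 and, more generally, the case where $\mu_n(\tau)$ starts at, ends at, or reflects off a band edge $E$. Your plan is to stay in the $z$-plane, decompose $\psi_2^\pm(0,\tau,z)$ into a simple pole plus a term behaving like $\big((z-E)^{1/2}+(\mu_n(\tau)-E)^{1/2}\big)^{-1}$, and argue that after the $\tau$-integration the pieces recombine into a factor $(z-E)^{\mp1/2}$. You assert this recombination but do not carry it out, and it is not a routine bookkeeping step: the pole-plus-regular splitting of $\alpha^\pm$ degenerates as $\mu_n(\tau)\to E$ (the residue $-\mu_{n,\tau}$ vanishes like $|\tau-\tau_*|$ while the ``regular'' remainder blows up at the branch point of $\rho^{\pm1}$), so a naive integration of the pole term alone would wrongly produce a factor $(z-E)^{-1}$ rather than $(z-E)^{-1/2}$. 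The paper resolves this with a device you do not introduce: the local uniformizing coordinate $w(z)=\pm\big(\rho(z)-\rho^{-1}(z)\big)$ for $z\in\C^\pm$ near $E$, which is an analytic invertible map onto a neighborhood of $w=0$ gluing the two half-planes across the band edge. In the $w$-plane the single function $\alpha(t,w)=\alpha^\pm(t,z(w))$ has an honest simple pole at $w(\mu_n(\tau))$ with residue $-w_t(\mu_n(\tau))$ (computed from~\eqref{e:dmundt} and the logarithmic derivative of $\rho$), so the same elementary integration as in the generic case yields $e(t,w)=\frac{w-w(\mu_n(t))}{w-w(\mu_n(0))}\,e_{\rm reg}(w)$; the square-root zeros and singularities in $z$ then fall out of $w\sim c\,(z-E)^{1/2}$ with no coefficient-tracking. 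Without this (or an equivalent uniform-in-$\tau$ analysis), your property~3 --- and the claim in property~2 that the zeros/poles are exactly simple when the trajectory touches a band edge during $[0,t]$ --- remains unproven.
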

\begin{proof}
Note that since $\alpha^\pm$ are holomorphic for $z\in\C\setminus\Real$ for all $t$, it is obvious that $e^\pm$ are holomorphic for $z\in\C\setminus\Real$. 
In the process of proving Condition~2, we will prove that $e^\pm$ can be extended to meromorphic functions for $z\in\C\setminus\Sigma(\L)$,
thus also proving Condition~1.
	
\textbf{Condition 2}.
Suppose that at $t=0$, $\mu_n\in(\zeta_{2n-1},\zeta_{2n})$ and $\nu_n=1$. 
For $t$ small enough, we have $\mu_n(t)\in(\zeta_{2n-1},\zeta_{2n})$, then from Proposition \ref{asymofa}, $\mu_n$ is a simple pole of $\alpha^+$, so for $z$ near $\mu_n(t)$,  $e^+$ could be written as 
\be
	e^+(t,z)=\exp\left(-\int_{0}^t\frac{\mu_{n\tau}(\tau)\d\tau}{z-\mu_n(\tau)}\right)e^+_{\rm reg}(t,z)\,,
\ee
where $e^+_{\rm reg}(t,z)$ is holomorphic and nonzero in an open set that encompasses the image of $\mu_n(t)$ for some small time window. 
We can rewrite the above function in terms of $\mu_n$
\be
	-\int_{0}^t\frac{\mu_{n\tau}(\tau)\d\tau}{z-\mu_n(\tau)}=\int_{\mu_n(0)}^{\mu_n(t)}\frac{\d\mu_n}{\mu_n-z}=\log(\mu_n(t)-z)-\log(\mu_n(0)-z)\,,
\ee
	which implies
\be
	e^+(t,z)=\frac{z-\mu_n(t)}{z-\mu_n(0)}e^+_{\rm reg}(t,z)\,.
\ee
The proof for $e^-$ is entirely analogous.
While the Dirichlet eigenvalues $\mu_n(t)$ can also be equal to one of the endpoints $\zeta_{2n-1}$ or $\zeta_{2n}$ of the gaps. Here we apply a time translation so that $\mu_n(0)=\zeta_{2n}$.  Without loss of generality, we assume that $\mu_n(t)$ is approaching $\zeta_{2n}$ and that $\mu_n(t)$ is a pole of $\alpha^+$. Then a local coordinate $w$ can be defined by 
	\be
	w(z)=\begin{cases}
		\rho(z)-\rho^{-1}(z)\,,&z\in\C^+,\quad z~\text{near}~\zeta_{2n}\,,
		\\
		\rho^{-1}(z)-\rho(z)\,,&z\in\C^-,\quad z~\text{near}~\zeta_{2n}\,.
		\end{cases}
	\ee
This is an analytic invertible transformation from a neighborhood of $w = 0$ to the $z$ plane.
In the $w$ plane, the single function 
\be
\alpha(t,w) = \alpha^\pm(t,z(w))\,,\qquad z(w)\in\C^\pm\,,
\ee
is meromorphic in a neighborhood of $w=0$, with a single simple pole at $w(\mu_n)$. 
Then 
\be
e(t,w) = e^\pm(t,z(w))\,,\qquad z(w)\in\C^\pm\,,
\ee
is related to $\alpha$ by
	\be
	e(t,w)=\exp\bigg(\int_0^t\alpha(\tau,w)\d \tau\bigg).
	\ee
	The derivatives $w(z)$ is
    \be
	w'(z)=  \pm\frac{\rho'(z)}{\rho(z)}\D(z)\,,\quad z\in\C^\pm,
	\ee
	which implies that
	\be
	\label{e:wmuntdt}
	w_t(\mu_n(t))=\nu_n(t)\frac{\rho'(z)}{\rho(z)}\D(z)|_{z=z(w(\mu_n(t)))}\mu_{n,t}(t).
	\ee
	Then by differentiating \eqref{tracepoly}, we get $\rho'(z)(\rho(z)-\D(z))=\D'(z)\rho(z)$, which gives the logarithmic derivative of $\rho(z)$
	\be
	\label{e:logderirho}
	\frac{\rho'(z)}{\rho(z)}=-\frac{\D'(z)}{\sqrt{\D^2(z)-1}}.
	\ee
	Using \eqref{e:dmundt} and \eqref{e:logderirho} in \eqref{e:wmuntdt} indicates
	\be
	w_t(\mu_n(t))=\frac{\D(z)\D'(z)}{\~y'_{12}(L,t,z)}\bigg|_{z=z(w(\mu_n(t))}c_1(\mu_n(t)),
	\ee
	where $c_1(z)$ is defined in \eqref{e:c1def}. 
	
	Since $\mu_n(t)$ is a pole of $\alpha$, a straightforward computation gives that
	\be
	\alpha(w)=\frac{-w_t(\mu_n(t))}{w-w(\mu_n(t))}+\text{ reg. near }\zeta_{2n}.
	\ee
	Now we compute the local behavior of $e(w)$ for $w$ near $0$ as
	\be
	e(w)=\exp\left(-\int_0^t\frac{-w_t(\mu_n(\tau))}{w-w(\mu_n(\tau))}\d\tau\right)e_{\rm reg}(w)=\frac{w-w(\mu_n(t))}{w-w(\mu_n(0))}e_{\rm reg}(w),
	\ee
where $e_{\rm reg}$ is holomorphic and nonzero near $w=0$. So we complete the proof of Condition 2.

\textbf{Condition 3}. Condition 3 follows from considering the boundary behavior of $\rho(z)$.
	
\textbf{Condition 4}.
By Proposition \ref{asymofa}, we have that as $z\rightarrow\infty$, for $z\in\overline{\Complex^+}$, 
$\pm{\alpha^\pm(t,z)}/{(2\i z^2)}$ are continuous function of $z$ converging to $1$, 
while for $z\in\overline{\Complex^-}$, $\mp{\alpha^\pm(t,z)}/{(2\i z^2)}$ are continuous function of $z$ converging to $1$. 
By the dominated convergence theorem, as $z\to\infty$ we have 
\be
\int_0^t \alpha^+(\tau,z)\,\d\tau=
  \pm 2\i z^2t + O(1/z)\,,\quad z\in\C^\pm\,,
\ee
which yields the asymptotic behavior of $e^+(t,z)$ as $z\rightarrow\infty$. 
The same procedure gives us the asymptotic behavior of $e^-(t,z)$ as $z\rightarrow\infty$.
	
\textbf{Condition 5}.
From the asymptotic behaviors of $\alpha^\pm$ (\ref{e:alphapmasymp}), a subalgebraic bound of the form $|\alpha^\pm(t,z)|\leq\max\{Cz^2,C'\}$ always holds for $z\in\Complex\setminus\mathcal{D}$. 
Here we denote the upper half region of $\mathcal{D}$ by $\mathcal{D}^+$. 
So the domain $\mathcal{D}^+$ consists of the upper half plane with excised half-domes centered on the real line whose heights are bounded above by $R/n$. We then divide $\mathcal{D}^+$ into two parts and denote the first part by $\mathcal{D}^+_U=\{z\in\mathbb{C}^+:\Im{z}>\max\{R,\log2/L\}\}$.  
From \eqref{asymY}, we have that
\be
\label{e:y12Lasymp}
\~y_{12}(L,z)=\sin(zL)+O(1/z)\,.
\ee
Using the lower bound
$|\sin(zL)|\geq\half\left(|\e^{-\i zL}|-\half\right)\geq\frac{|\e^{-\i zL}|}{4}$
we have, for $|z|$ large enough,
\be
		|\~y_{12}(L,z)|\geq C|\e^{-\i zL}|.
\ee
From \eqref{asymprhoinverse} and the fact that $\~y_{11}(L,z)$ has growth order 1, we have the bound 
\be
		|\rho^\pm(z)-\~y_{11}(L,z)|\leq C|\e^{-\i zL}|.
\ee
These together with \eqref{e:alphapm} give us 
\be
		|\alpha^\pm(t,z)|\leq\max\{C|z|^2,C'\}
\ee
for $z\in\mathcal{D}^+_U$.
We use $\mathcal{D}^+_L$ to denote the remaining part of $\mathcal{D}^+$. 
For $z$ large enough, one of the excised discs will overlap. 
At this point the non-straight pieces of boundary are deformed semicircles $C_n$ labeled by $n$ with the closest point to $\mu_n$ a distance away from $\mu_n$ bounded below by $R/n$ for some constant $R$, and furthest point from $\mu_n$ a distance away from $\mu_n$ bounded above by $R'/n$ for some constant $R'$. As $z\rightarrow\infty$, \eqref{e:y12Lasymp} gives us that
\be
	    \~y'_{12}(L,z)=L\cos(zL)+O(1/z).
\ee
The values $z=\mu_n$ are a distance $O(n^{-1})\sim\mu_n^{-1}$ away from the values of $z$ where $\cos(zL)=\pm1$, so for large $n$
\be
	     \~y'_{12}(L,\mu_n)=L+O(\mu_n^{-1}).
\ee
Hence, on $C_n$ the functions $\~y_{12}(L,z)$ are approximated by
\be
	   \~y_{12}(L,z)=(L+O(\mu_n^{-1}))(z-\mu_n)+O(z-\mu_n)\geq\frac{RL}{n}
\ee
for $n$ large enough. 
As $z\sim\frac{n\pi}{L}$ for $z\in C_n$ for $n$ large enough, we have $\~y_{12}(L,z)^{-1}=O(z)$ for $z\in C_n$ for $n$ large enough. 
Finally, we have $|\~y_{12}(L,z)^{-1}|\leq\max\{C|z|, C'\}$ for $z\in\mathcal{D}^+_L$.  
This completes the proof of condition~5.
\end{proof}

\begin{definition}
Let $\check{V}(x,t,z)$ be defined by
	\be
	\check{V}(x,t,z) = \begin{cases}
		(-1)^{n + m(z)} \begin{pmatrix}{f^-(z)}/{f^+(z)} & 0 \\ 0 & {f^+(z)}/{f^-(z)}\end{pmatrix}, &z\in[E_{2n-2},E_{2n-1}],\\
		(-1)^{n + m(z)}\i\e^{-2\i zx\sigma_3-4\i z^2t\sigma_3}\sigma_1, &z\in[E_{2n-1},E_{2n}],
	\end{cases}
	\label{e:Vxtz}
	\ee
with $m(z)$ still defined by \eqref{e:mdef},
and let the matrix-valued time-dependent Bloch-Floquet solution $\check\Psi(x,t,z)$ be defined as in~\eqref{e:defPsi}
but with $\psi_j^\pm$ replaced by $\check\psi_j^\pm$, as defined in \eqref{checkpsi}.
Finally, let $\check{\Phi}(x,t,z)$ be given by
\be
\label{Phicheck}
\check{\Phi}(x,t,z)=\check{\Psi}(x,t,z)B(z)\e^{\i\sigma_3zx+2\i\sigma_3z^2t}\,.
\ee
\end{definition}
\begin{theorem}
\label{qtoPhi}
Let $q(x,t)$ be the solution to the defocusing NLS equation~\eqref{e:nls}
with smooth initial data $q(x,0)=q_0(x)$, 
and let $\Sigma(q_0)$ be the spectral data of the corresponding Dirac operator. 
There exists a solution $\check{\Phi}$ to the following Riemann-Hilbert problem, constructed by \eqref{Phicheck},  
which is uniquely determined by  this following Riemann-Hilbert problem:
\begin{RHP}
\label{RHP2}
Find a $2\times2$ matrix-valued function $\check{\Phi}(x,t,z)$ such that
\vspace*{-1ex}
\begin{enumerate}
\advance\itemsep-4pt
    \item 
    $\check{\Phi}(x,t,z)$ is a holomorphic function of $z$ for $z\in\C\setminus\R$.
    \item 
    $\check{\Phi}^\pm(x,t,z)$ are continuous functions of $z$ for $z\in\R\setminus\{E_k\}$, and have at worst quartic root singularities on $\{E_k\}$.
    \item 
    $\check{\Phi}^\pm(x,t,z)$ satisfy the jump relation $\check{\Phi}^+(x,t,z)=\check{\Phi}^-(x,t,z)\check{V}(x,t,z)$, with $\check{V}(x,t,z)$ given by \eqref{e:Vxtz}.
    \item 
    As $z\to\infty$ with $z\in\Complex$, $\check{\Phi}(x,t,z)$ has the following asymptotic behavior 
    \be
    \check{\Phi}(x,t,z) = U\,(I+O(1/z))B(0,z).\label{asymcheckPhi}
    \ee
    \item 
    There exist positive constants $c$ and $M$ such that $|\check{\phi}_{ij}(x,t,z)|\leq M\e^{c|z|^2}$ for all $z\in \mathcal{D}$.
\end{enumerate}
\end{RHP}
\end{theorem}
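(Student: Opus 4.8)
The plan is to follow, essentially line by line, the proofs of Theorem~\ref{t:direct}, of the lemma asserting $\det\Phi\equiv1$ for RHP~\ref{RHP1}, and of the uniqueness theorem for RHP~\ref{RHP1}; the only genuinely new inputs will be Proposition~\ref{p:e} on the analytic structure of $e^\pm(t,z)$ and the fact that, since $q$ solves the defocusing NLS equation~\eqref{e:nls}, the Floquet discriminant $\D(z)$ --- hence the main spectrum, the genus, and the matrix $B(z)$ of Definition~\ref{d:f0f+f-} --- is time-independent. Consequently $B(z)$ in~\eqref{Phicheck} is built once and for all from the time-zero spectral data $S(q_0)$, and the $t$-dependence of $\check\Phi$ enters only through $\check\psi^\pm=\psi^\pm e^\pm$ and the scalar exponential $\e^{2\i\sigma_3 z^2t}$.

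First I would verify that $\check\Phi$ from~\eqref{Phicheck} satisfies conditions~1--5 of RHP~\ref{RHP2}. For conditions~1 and~2 the point is a singularity count on the real axis: by Proposition~\ref{p:BFyexpansion} the poles of $\psi^\pm(x,t,\cdot)$ are the time-$t$ Dirichlet eigenvalues $\mu_j(t)$, while by Proposition~\ref{p:e}(1)--(3) $e^\pm(t,\cdot)$ is meromorphic on $\C\setminus\Sigma(\L)$; in $\check\psi^\pm=\psi^\pm e^\pm$ the zeros of $e^\pm$ at $\gamma_k(t)$ (for $\sigma_k(t)=\pm1$) cancel the poles of $\psi^\pm$ there, and in $\check\Psi(x,t,z)B(z)$ the poles of $e^\pm$ at $\gamma_k(0)$ (for $\sigma_k(0)=\pm1$) are cancelled by the zeros of the holomorphic factors $f^\pm(z)$ inside $B(z)$, which sit precisely at $\gamma_k(0)$; at the fixed Dirichlet eigenvalues and at any Dirichlet eigenvalue momentarily at a band edge (the $\sigma_k=0$ cases) the square-root features of $e^\pm$ combine with $\sqrt{f^0(z)}$ and $\sqrt[4]{\D^2(z)-1}$ exactly as in the proof of condition~2 of Theorem~\ref{t:direct}, leaving only the quartic-root singularities of $1/\sqrt[4]{\D^2(z)-1}$ at the band edges $E_k$; since all these points are real, $\check\Phi$ is holomorphic off $\R$. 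For condition~3 I would combine $\Psi_+=\Psi_-$ on $\Sigma(\L)$ and $\Psi_+=\Psi_-\sigma_1$ on $\R\setminus\Sigma(\L)$ (from the proof of Theorem~\ref{t:direct}) with $e^\pm_+=e^\mp_-$ on $\Sigma(\L)$ from Proposition~\ref{p:e}(4), obtaining $\check\Psi_+=\check\Psi_-$ on $\Sigma(\L)$ and $\check\Psi_+=\check\Psi_-\sigma_1$ elsewhere; then $\check V=\e^{-\i\sigma_3(zx+2z^2t)}(B_-)^{-1}(\,\cdot\,)B_+\e^{\i\sigma_3(zx+2z^2t)}$, and the scalar jump of $\sqrt{f^0}/\sqrt[4]{\D^2-1}$ already computed in Theorem~\ref{t:direct} together with $\e^{-\i\sigma_3 A}\sigma_1\e^{\i\sigma_3 A}=\e^{-2\i\sigma_3 A}\sigma_1$ for $A=zx+2z^2t$ reproduces~\eqref{e:Vxtz}, the extra factor $\e^{-4\i z^2t\sigma_3}$ being exactly the contribution of the $t$-term in $A$ and $m(z)$ still given by~\eqref{e:mdef}. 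For condition~4 I would write $\Psi(x,t,z)=(\~U+O(1/z))\e^{-\i zx\sigma_3}$ via Proposition~\ref{p:asympsi} and $\diag(e^\mp,e^\pm)=\e^{-2\i z^2t\sigma_3}(I+O(1/z))$ for $z\in\C^\pm$ via Proposition~\ref{p:e}(5) (a scalar, non-oscillatory estimate with diagonal $O(1/z)$); since $B(z)$ and all these factors are diagonal and commute, $\check\Psi(x,t,z)=(\~U+O(1/z))\e^{-\i(zx+2z^2t)\sigma_3}$, so the $\e^{-2\i z^2t\sigma_3}$ carried by the $e^\pm$ cancels the $\e^{2\i z^2t\sigma_3}$ in~\eqref{Phicheck} and one is reduced to the $t=0$ computation, giving $\check\Phi=\~U(I+O(1/z))B(z)$, which is~\eqref{asymcheckPhi}. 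Condition~5 follows from condition~5 of Theorem~\ref{t:direct} multiplied by Proposition~\ref{p:e}(6), using that $\A_2$ is closed under products and that $\e^{\i zx}$ and $\e^{2\i z^2t}$ are dominated by $M\e^{c|z|^2}$, so that $\check\phi_{ij}\in\A_2(\mathcal{D})$.

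Next I would prove uniqueness, again mirroring RHP~\ref{RHP1}. The particular solution~\eqref{Phicheck} has unit determinant: $\det\check\Psi$ is the Wronskian of $\check\psi^-$ and $\check\psi^+$, hence independent of $x$ because~\eqref{e:modZS} is traceless, and independent of $t$ because, by~\eqref{Ay2}, $\tilde{\mathcal{A}}$ acts on solutions of~\eqref{e:modZS} as the traceless matrix $-2\i z^2 U\sigma_3 U^{-1}+2zUQU^{-1}-\i U(Q^2+Q_x)\sigma_3 U^{-1}$; therefore $\det\check\Phi(x,t,z)=\det\check\Phi(x,0,z)=\det\Phi(x,z)\equiv1$, the last step using the $\det\Phi\equiv1$ lemma for RHP~\ref{RHP1} and $e^\pm(0,z)=1$. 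For an arbitrary solution, $\det\check V\equiv1$ makes $\det\check\Phi$ jump-free, condition~2 makes the $E_k$ removable so that $\det\check\Phi$ is entire, condition~5 and Proposition~\ref{3.4} place it in $\A_2(\C)$, and~\eqref{asymcheckPhi} with~\eqref{asymphi0} give $\det\check\Phi=1+O(1/z)$, hence bounded, hence $\equiv1$ by Liouville. Finally, for two solutions $\check\Phi$ and $\check{\tilde\Phi}$, the matrix $R:=\check\Phi\,\check{\tilde\Phi}^{-1}$ (with $\check{\tilde\Phi}^{-1}$ written via its cofactor matrix since $\det\check{\tilde\Phi}\equiv1$) has no jump and removable singularities at the $E_k$, lies in $\A_2(\C)$ by condition~5 and Proposition~\ref{3.4}, and satisfies $R=I+O(1/z)$, so Liouville forces $R\equiv I$, i.e.\ $\check\Phi=\check{\tilde\Phi}$.

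The hard part will be the bookkeeping in conditions~1--2: one must simultaneously track two distinct families of real points --- the time-zero Dirichlet eigenvalues $\gamma_k(0)$, which are poles of $e^\pm$ and zeros of $f^\pm$, and the time-$t$ ones $\gamma_k(t)$, which are poles of $\psi^\pm$ and zeros of $e^\pm$ --- check that their contributions cancel pairwise when $\sigma_k=\pm1$, and then keep straight the half-integer orders of $e^\pm$, $\sqrt{f^0}$ and $\sqrt[4]{\D^2-1}$ at the $\sigma_k=0$ points and the band edges so that nothing worse than a quartic-root singularity survives; everything else is a transcription of the $t=0$ arguments combined with Proposition~\ref{p:e}.
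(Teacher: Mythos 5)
Your proposal is correct and follows exactly the route the paper intends: the paper states Theorem~\ref{qtoPhi} without an explicit proof, having set up Propositions~\ref{p:dirichlettimeevol}, \ref{asymofa} and~\ref{p:e} precisely so that the arguments of Theorem~\ref{t:direct}, the $\det\Phi\equiv1$ lemma and the uniqueness theorem for RHP~\ref{RHP1} transfer verbatim once one knows the pole/zero bookkeeping of $e^\pm$ at $\gamma_k(0)$ and $\gamma_k(t)$, the boundary relation $e^\pm_+=e^\mp_-$ on $\Sigma(\L)$, the $\e^{\pm2\i z^2t}$ asymptotics, and the $\A_2$ bound. Your write-up supplies exactly that adaptation, including the correct cancellation pattern between the poles of $\psi^\pm$ at $\gamma_k(t)$, the zeros of $e^\pm$ there, the poles of $e^\pm$ at $\gamma_k(0)$ and the zeros of $f^\pm$ in the time-zero matrix $B(z)$.
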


For fixed $x,t\in\mathbb{R}$, the solution of  RHP \ref{RHP2} is unique.
Moreover, the potential matrix $Q(x,t)$ of the Dirac operator~\eqref{e:Diracoperator} 
is given in terms any solution $\check{\Phi}(x,t,z)$ of RHP \ref{RHP2} by
\be
Q(x,t)=\lim_{z\to\infty}\i z [\sigma_3,U^\dag\hat{\Phi}(x,t,z)B(0,z)^{-1}],
\label{trecons}
\ee
with $U$ as in \eqref{e:Udef}.

\section{Periodicity conditions in space and time}
\label{s:periodicity}

It is well known that, even in the finite-genus case, and even for Hill's equation,
the potential generated by a generic set of spectral data is not periodic in general, but only quasi-periodic.
A natural question is therefore whether it is possible to identify a subset of the spectral data
which guarantees that the associated potential is in fact periodic.
In this section, assuming existence of the solutions of the RHP~\ref{RHP1},
and following~\cite{McLaughlinNabelek}, 
we prove that it is possible to define a scalar RHP for which the existence of a solution 
implies the potential is periodic. 
Furthermore, we also prove there is an analogous scalar RHP for which the existence of a solution implies temporal periodicity.

To identify the desired RHP, note first that, if the potential is periodic, the Bloch-Floquet theory of the ZS problem
yields the existence of the Floquet multiplier $\rho(z)$.
Next, note that $\rho(z)$ satisfies the scalar RHP defined by Conditions~(i)--(iv) in Theorem~\ref{periodicity} below.
Therefore, the latter is the desired RHP, 
since the existence of solutions for it guarantees the existence of the Floquet multiplier $\rho(z)$.
Next we make these considerations more precise.

Consider a ``candidate spectral data'', namely a sequence $\{E_{2n-1},E_{2n},\gamma_n,\nu_n\}_{n=g_-,\dots,g_+}$ of periodic and antiperiodic eigenvalues $E_n$, 
Dirichlet eigenvalues $\gamma_n$ and signs $\nu_n$,
with $g_-<g_+$ either finite or infinite, satisfying the following conditions:
\vspace*{-1ex}
\begin{itemize}
\advance\itemsep-4pt
	\item $E_{2g_--1}<\dots<E_{n-1}<E_n<E_{n+1}<\dots<E_{2g_+}$;
	\item $\forall n = g_-,\dots,g_+$, $\gamma_n\in[E_{2n-1},E_{2n}]$ and $\nu_n\in\{-1,0,1\}$;
	\item $R=\max_k|E_{2k}-E_{2k-1}|<\infty$;
	\item 
	If $g_-$ and $g_+$ are finite, there exists $N$, $C$ such that $|E_n|>Cn^2$ for all $|n|>N$;
	\item 
	If either $g_-$ or $g_+$ are infinite, $\exists N>0$ such that the discs $D_n$ of radius $R$ centered at $(E_{2n}+E_{2n-1})/2$ are disjoint for $|n|\geq N$.
\end{itemize}

\begin{theorem}
\label{periodicity}
Let $\Phi(x,t,z)$ be the solution of RHP~\ref{RHP2} defined from the candidate spectral data above, 
which determines the potential $q(x,t)$ via \eqref{trecons}.
If there exists a function $r_1(z)$ such that
        \vspace*{-1ex}
		\begin{enumerate}
        \advance\itemsep-4pt
		\item[(i)] 
		$r_1$ is holomorphic in $\C\setminus(\Real\setminus\Sigma(\L))$ with continuous boundary values $r_{1\pm}$ on $\Real\setminus\Sigma(\L)$ from above and below;
		\item[(ii)] 
		$r_1$ satisfies the jump relation $r_{1+}(z)=r_{1-}^{-1}(z)$ for $z\in\Real\setminus\Sigma(\L)$;
        \item[(iii)] 
        $r_1$ satisfies the asymptotic condition $r_1(z)=\e^{\i zL_1}(1+O(1/z))$ for $L_1>0$;
        \item[(iv)] 
        $r_1(z)\in\A_2(\mathcal{D})$,
		\end{enumerate}
		then $q(x+L_1,t)=q(x,t)$.
\end{theorem}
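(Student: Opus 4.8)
The plan is to produce a diagonal matrix $\mathcal C(z)$, built solely from $r_1$, for which $\check\Phi(x+L_1,t,z)=\check\Phi(x,t,z)\,\mathcal C(z)$, and then to read off $q(x+L_1,t)=q(x,t)$ from the reconstruction formula~\eqref{trecons}, using that a diagonal factor commutes with $\sigma_3$ and hence leaves the reconstruction unchanged. The matrix I would use is
\be
\mathcal C(z):=\diag\!\big(r_1^{-1}(z),\,r_1(z)\big)\,\e^{\i zL_1\sigma_3}
=\diag\!\big(\e^{\i zL_1}/r_1(z),\;r_1(z)\,\e^{-\i zL_1}\big)\,.
\ee
By~(iii), both diagonal entries are $1+O(1/z)$, so $\mathcal C(z)=I+O(1/z)$ as $z\to\infty$ in $\C$; by~(i)--(ii), $\mathcal C$ is holomorphic on $\C$ off the closed spectral gaps, is continuous across the interior of each band, and its boundary values on the gaps are linked by $\mathcal C^-(z)=\sigma_1\,\mathcal C^+(z)\,\sigma_1\,\e^{2\i zL_1\sigma_3}$; moreover $\det\mathcal C\equiv1$, and from~(iv) (together with the zero-freeness of $r_1$ on $\mathcal D$) one gets $\mathcal C,\mathcal C^{-1}\in\A_2(\mathcal D)$.

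The core of the argument is to verify that $\Xi(x,t,z):=\check\Phi(x,t,z)\,\mathcal C(z)$ solves RHP~\ref{RHP2} with $x$ replaced by $x+L_1$. Conditions~1, 2 and~5 follow immediately from the corresponding properties of $\check\Phi$ in Theorem~\ref{qtoPhi} and those of $\mathcal C$ just listed; condition~4 holds because $\mathcal C$ is diagonal (hence commutes with $B(0,z)$) and equals $I+O(1/z)$, so $\Xi=\~U(I+O(1/z))B(0,z)\mathcal C(z)=\~U(I+O(1/z))B(0,z)$. The only substantive computation is the jump. From~\eqref{e:Vxtz}, on a gap $[E_{2n-1},E_{2n}]$ the $x$-dependence of $\check V$ sits only in the factor $\e^{-2\i zx\sigma_3}$, and $\sigma_1\,\e^{a\sigma_3}=\e^{-a\sigma_3}\,\sigma_1$ gives $\check V(x+L_1,t,z)=\check V(x,t,z)\,\e^{2\i zL_1\sigma_3}$, while on a band $[E_{2n-2},E_{2n-1}]$ the jump is $x$-independent, $\check V(x+L_1,t,z)=\check V(x,t,z)$. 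A short computation with $\check V(x,t,z)=(-1)^{n+m(z)}\i\,\e^{-2\i zx\sigma_3-4\i z^2t\sigma_3}\sigma_1$, using only $\sigma_1\,\e^{a\sigma_3}\,\sigma_1=\e^{-a\sigma_3}$ and the fact that $\sigma_1$ interchanges the diagonal entries of a diagonal matrix, reduces the required identity $\check V(x,t,z)\,\mathcal C^+(z)=\mathcal C^-(z)\,\check V(x+L_1,t,z)$ on the gaps exactly to~(ii) and, together with continuity of $\mathcal C$, on the bands exactly to~(i). Hence $\Xi$ solves RHP~\ref{RHP2} at $x+L_1$, and by the uniqueness statement in Theorem~\ref{qtoPhi}, $\check\Phi(x+L_1,t,z)=\Xi(x,t,z)=\check\Phi(x,t,z)\,\mathcal C(z)$.

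To conclude, I would substitute this relation into~\eqref{trecons}: since $\mathcal C$ is diagonal it commutes with both $\sigma_3$ and $B(0,z)$, and $\mathcal C(z)\to I$, so
\be
Q(x+L_1,t)=\lim_{z\to\infty}\i z\,\big[\sigma_3,\,U^\dag\check\Phi(x+L_1,t,z)B(0,z)^{-1}\big]
=\lim_{z\to\infty}\i z\,\big[\sigma_3,\,U^\dag\check\Phi(x,t,z)B(0,z)^{-1}\big]\,\mathcal C(z)=Q(x,t)\,,
\ee
where the last equality uses that both factors have limits. Therefore $q(x+L_1,t)=q(x,t)$.

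I expect the main obstacle to be pinning down the exact gauge: it must be precisely $\diag(r_1^{-1},r_1)\,\e^{\i zL_1\sigma_3}$ — not $\diag(r_1^{-1},r_1)$ alone, and not the version whose diagonal entries are interchanged between the two half-planes (as the genuine Floquet multiplier $\rho$, cut along the bands rather than the gaps, would suggest) — so that it simultaneously intertwines the jump matrices at $x$ and $x+L_1$ (where the $\sigma_1$-conjugation bookkeeping on the gaps has to reproduce~(ii) and continuity on the bands has to reproduce~(i)), tends to $I$ at infinity (which forces the $\e^{\i zL_1\sigma_3}$ factor via~(iii)), and is diagonal so that it drops out of the reconstruction. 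A secondary technical point is confirming that $\mathcal C$ actually inherits the band-edge regularity of condition~2 and the $\A_2(\mathcal D)$ bound of condition~5 of RHP~\ref{RHP2} — i.e. that $r_1$ is continuous and zero-free up to the band edges and on $\mathcal D$ — which has to be extracted from~(i), (ii) and~(iv).
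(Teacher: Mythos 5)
Your proposal is correct and follows essentially the same route as the paper: the paper gauges $\Phi(x+L_1,t,z)$ by $r_1(z)^{\sigma_3}\e^{-\i z\sigma_3 L_1}$ and shows the result solves RHP~\ref{RHP2} at $x$, which is exactly the inverse formulation of your claim that $\check\Phi(x,t,z)\,\mathcal C(z)$ with $\mathcal C=r_1^{-\sigma_3}\e^{\i zL_1\sigma_3}$ solves it at $x+L_1$; both then invoke uniqueness and the reconstruction formula. Your jump verification (commutation on bands, $\sigma_1$-conjugation reducing the gap condition to $r_{1+}=r_{1-}^{-1}$) and your handling of conditions 4 and 5 match the paper's, and you are in fact slightly more explicit than the paper about why the diagonal gauge drops out of \eqref{trecons}.
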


\begin{theorem}
\label{timeperiodicity}
Let $\Phi(x,t,z)$ be defined as in Theorem~\ref{periodicity}.
If there exists a function $r_2(z)$ such that
        \vspace*{-1ex}
		\begin{enumerate}
        \advance\itemsep-4pt
        \item[(i)] 
        $r_2$ is holomorphic in $\C\setminus(\Real\setminus\Sigma(\L))$ with continuous boundary values $r_{2\pm}$ on $\Real\setminus\Sigma(\L)$ from above and below;
        \item[(ii)] 
        $r_2$ satisfies the jump relation $r_{2+}(z)=r_{2-}^{-1}(z)$ for $z\in\Real\setminus\Sigma(\L)$;
        \item[(iii)] 
        $r_2$ satisfies the asymptotic condition $r_2(z)=\e^{2\i z^2L_2}(1+O(1/z))$ for  $L_2>0$;
        \item[(iv)] 
        $r_2(z)\in\A_2(\mathcal{D})$,
		\end{enumerate}
	then $q(x,t+L_2)=q(x,t)$.
\end{theorem}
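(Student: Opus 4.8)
The plan is to mirror the proof of Theorem~\ref{periodicity}, with the quadratic phase $e^{2\i z^2L_2}$ coming from the second flow in place of the linear phase $e^{\i zL_1}$, and to note that it is now the \emph{gap} portion of the jump $\check V(x,t,z)$ (rather than its band portion) that carries the $t$-dependent exponential $e^{-4\i z^2t\sigma_3}$. Fix $x,t\in\Real$. Since $\check\Phi$ solves RHP~\ref{RHP2} for every value of its parameters, $\check\Phi(x,t+L_2,z)$ is the unique solution of the Riemann--Hilbert problem obtained from RHP~\ref{RHP2} by replacing $\check V(x,t,z)$ with $\check V(x,t+L_2,z)$ in the jump condition. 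From \eqref{e:Vxtz} one reads off that $\check V(x,t+L_2,z)=\check V(x,t,z)$ on every band $[E_{2n-2},E_{2n-1}]$ (the diagonal factor there is $t$-independent), while on every gap $[E_{2n-1},E_{2n}]$ a short computation gives $\check V(x,t+L_2,z)=e^{-4\i z^2L_2\sigma_3}\,\check V(x,t,z)$.

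Next I would introduce the diagonal matrix
\be
G(z):=r_2(z)^{\sigma_3}\,e^{-2\i z^2L_2\sigma_3}
      =\diag\!\big(r_2(z)\,e^{-2\i z^2L_2},\ r_2(z)^{-1}e^{2\i z^2L_2}\big),
\ee
and set $\check\Phi^{\sharp}(x,t,z):=\check\Phi(x,t+L_2,z)\,G(z)$; the claim is that $\check\Phi^{\sharp}$ solves RHP~\ref{RHP2}. Indeed, by hypothesis~(i) $G$ is holomorphic on $\C\setminus(\Real\setminus\Sigma(\L))$, so, provided $r_2$ has no zeros there, $\check\Phi^{\sharp}$ is holomorphic off $\Real$ and inherits at worst quartic-root singularities at $\{E_k\}$ (conditions~1--2). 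Since $G$ is single-valued across the bands, $\check\Phi^{\sharp}$ keeps the jump $\check V(x,t,z)$ there; on the gaps one checks, using $r_{2+}=r_{2-}^{-1}$ from hypothesis~(ii), the identity $e^{-4\i z^2L_2\sigma_3}\check V(x,t,z)G_+(z)=G_-(z)\check V(x,t,z)$, so that $\check\Phi^{\sharp}_+=\check\Phi^{\sharp}_-\check V(x,t,z)$ on the gaps as well (condition~3). Hypothesis~(iii) gives $r_2(z)=e^{2\i z^2L_2}(1+O(1/z))$, hence $G(z)=I+O(1/z)$ and $\check\Phi^{\sharp}(x,t,z)=\tilde U(I+O(1/z))B(z)$ (condition~4); finally hypothesis~(iv) together with $e^{\pm2\i z^2L_2}\in\A_2(\C)$ yields the growth bound of condition~5.

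By the uniqueness part of Theorem~\ref{qtoPhi}, $\check\Phi^{\sharp}(x,t,z)=\check\Phi(x,t,z)$, i.e.\ $\check\Phi(x,t+L_2,z)=\check\Phi(x,t,z)\,G(z)^{-1}$. Because $G$ is diagonal it commutes with $\sigma_3$, so $[\sigma_3,M\,G^{-1}]=[\sigma_3,M]\,G^{-1}$ for any matrix $M$; applying this with $M=U^\dag\check\Phi(x,t,z)B(z)^{-1}$ in the reconstruction formula \eqref{trecons}, and using $G^{-1}(z)=I+O(1/z)$, one obtains
\be
Q(x,t+L_2)=\lim_{z\to\infty}\i z\,[\sigma_3,\,U^\dag\check\Phi(x,t,z)B(z)^{-1}]\,G(z)^{-1}=Q(x,t),
\ee
and hence $q(x,t+L_2)=q(x,t)$, as claimed.

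The step I expect to be the main obstacle is the verification, in the second paragraph, that $\check\Phi^{\sharp}$ genuinely meets \emph{all} five conditions of RHP~\ref{RHP2}: one must rule out spurious poles of $G$ in $\C\setminus\Real$ and on the band interiors (equivalently, show $r_2$ is zero-free on $\C\setminus(\Real\setminus\Sigma(\L))$ so that $r_2^{-1}$ is too), and control the behavior of $G$ at the band edges $\{E_k\}$ and at the Dirichlet points $\gamma_k$ lying in the gaps so that the singularity structure is not worsened. This is exactly where the precise form of hypotheses~(i)--(iv) is needed: (i)--(ii) make $G$ single-valued across the bands, give it the correct gap jump, and force $r_2(E_k)=\pm1$ (hence $G$ bounded and nonzero at the $E_k$), while the $\A_2(\mathcal D)$ bound in~(iv) controls the growth of $G$. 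Once this analysis is in place, the uniqueness theorem for RHP~\ref{RHP2} closes the argument exactly as in the proof of Theorem~\ref{periodicity}.
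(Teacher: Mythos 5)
Your proposal is correct and follows essentially the same route as the paper: the paper's proof simply defines $\tilde{\Phi}(x,t,z)=\check\Phi(x,t+L_2,z)\,r_2(z)^{\sigma_3}\e^{-2\i z^2L_2\sigma_3}$ and states that the argument proceeds exactly as in the spatial-periodicity theorem (verify the five conditions of RHP~\ref{RHP2}, invoke uniqueness, and read off $Q$ from the reconstruction formula). Your write-up in fact supplies more detail than the paper does — in particular the observation that the $t$-dependence sits in the gap portion of $\check V$ and the explicit conjugation identity $\e^{-4\i z^2L_2\sigma_3}\check V(x,t,z)G_+(z)=G_-(z)\check V(x,t,z)$ — all of which is consistent with the paper's intended argument.
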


\begin{proof}[Proof of Theorem~\ref{periodicity}]
Suppose that $r_1(z)$ exists, and define the function
\be
\tilde{\Phi}(x,t,z)=\Phi(x+L_1,t,z)r_1(z)^{\sigma_3}\e^{-\i z\sigma_3 L_1}\,.
\ee
We will prove that $\tilde{\Phi}$ also solves RHP \ref{RHP2}.
Condition~1 and condition~2 are obvious. 
Regarding condition~3 (the jump), 
note that on the spectral band $[E_{2n-2},E_{2n-1}]$, the jump relation for $\tilde{\Phi}$ is
\begin{multline}
\tilde{\Phi}_+(x,t,z) = \Phi_+(x+L_1,t,z)r_1(z)^{\sigma_3}\e^{-\i z\sigma_3 L_1}
\\
  = \Phi_-(x+L_1,t,z)(-1)^{n+m(z)}\begin{pmatrix} f^-/f^+ & 0 \\ 0 & f^+/f^- \end{pmatrix}r_1^{\sigma_3}\e^{-\i z\sigma_3 L_1}
\\
    = \tilde{\Phi}_-(x+L_1,t,z)(-1)^{n+m(z)}\begin{pmatrix} f^-/f^+ & 0 \\ 0 & f^+/f^- \end{pmatrix}.
\end{multline}
On the other hand, on $[E_{2n-1},E_{2k}]$ the jump can be derived by 
\begin{multline}
\tilde{\Phi}_+(x,t,z) = \Phi_+(x+L_1,t,z)r_{1+}(z)^{\sigma_3}\e^{-\i z\sigma_3 L_1}
\\
    = \Phi_-(x+L_1,t,z)(-1)^{n+m(z)}\i\e^{-2\i zx\sigma_3}\sigma_1r_{1-}^{-\sigma_3}\e^{-\i z\sigma_3 L_1}
\\
    =c\tilde{\Phi}_-(x+L_1,t,z)(-1)^{n+m(z)}\i\e^{-2\i zx\sigma_3}\sigma_1.
\end{multline}
Therefore, $\tilde{\Phi}$ satisfies condition~3. 
Condition~4 follows from the asymptotic behavior of $r_1(z)$. 
Condition~5 comes from the fact that $\A_2(\mathcal{D})$ is an algebra.  
Theorem~\ref{qtoPhi} and the reconstruction formula \eqref{trecons} imply that 
\be
\Phi(x,t,z)=\Phi(x+L_1,t,z)r_1(z)^{\sigma_3}\e^{-\i zL_1},
\ee
which indicates $q(x+L_1,t)=q(x,t)$.
\end{proof}

\begin{proof}[Proof of Theorem~\ref{timeperiodicity}]
To prove the statement about temporal periodicity, suppose that $r_2$ exists, and introduce $\tilde{\Phi}$ as 
$\tilde{\Phi}(x,t,z)=\Phi(x,t+L_2,z)r_2(z)^{\sigma_3}\e^{-2\i z^2L_2}$.
Then the proof proceeds in a manner entirely analogous to the above.
\end{proof}

\section{Baker-Akhiezer functions and finite-gap potentials}
\label{s:finitegap}

Up to this point we have made no use of the spectral curve underlying the periodic potentials of the Dirac operator.
In this section we show how the formalism presented above can also be interpreted in terms of Baker-Akhiezer functions.
Then we show how, 
when $\g_-$ and $\g_+$ are both finite, the formalism can be used to recover the finite-genus potentials of the Dirac operator 
and therefore the corresponding finite-gap solutions of the defocusing NLS equation.
Specifically, we provide a corollary to Theorems~\ref{t:direct} and~\ref{qtoPhi}
that allows us to interpret the method we have presented in terms of 
Baker-Akhiezer functions on a Riemann surface of possibly infinite genus.

Let $\mathcal{X}\subset\mathbb{C}\times\mathbb{C}$ be the curve defined by
\be
w^2=P(z)=-L^2(E_{-1}-z)(E_0-z)\prod_{k=g_-}^{g_+}(z-E_{2k-1})(z-E_{2k})\label{w1}
\ee
Similarly to \cite{McLaughlinNabelek},
this curve is diffeomorphic via a holomorphic map to the desingularization of the curve defined by 
$w^2=\Delta^2(z)-1$
by two-point blowups at the degenerate Dirichlet eigenvalues.
As in \cite{McLaughlinNabelek}, if $\g=\infty$ we choose not to compactify~$\mathcal{X}$ as a topological space
as the compactification would not be smooth at infinity 
(because of the accumulation of ``holes'' at infinity).
The projection $\pi((z,w))\rightarrow z$ onto the $z$ plane has two inverses under composition:
\begin{align}
\pi_\pm^{-1}(z)=(z,\pm\sqrt{P(z)}),
\end{align}
which hold only at branch points. 

\begin{corollary}
For every $x,t$ there is a unique meromorphic function $\check{\psi}(x,t,p)$ on $\mathcal{X}$ 
with only simple poles $p_k=(\mu_{n_k},\nu_{n_k},P(\mu_{n_k}))$ for $k=g_-,g_-+1,...,g_+-1,g_+$ such that,
as $z\rightarrow\infty$ with $z\in\C\setminus\R$,
\begin{align}
\check{\psi}(x,t,\pi_\pm ^{-1}(z))=\e^{\pm\i (zx+2z^2t)}\left(\begin{pmatrix}1\\\mp\i\end{pmatrix}+O(1/z)\right).
\end{align}
Moreover,
$\check{\psi}(x,t,\pi_\pm^{-1}(\,\cdot\,))\in\mathcal{A}_2(\mathcal{D})$.
\end{corollary}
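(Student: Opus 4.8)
The plan is to set up a bijection between solutions of RHP~\ref{RHP2} and meromorphic functions on $\mathcal{X}$ of the stated type, so that the statement follows directly from Theorem~\ref{qtoPhi}. The starting point is the observation that $\mathcal{X}$ in~\eqref{w1} is precisely the two-sheeted branched cover of the $z$-plane with branch points the non-degenerate band edges $\{E_k\}$: it is obtained by gluing two copies of $\Complex\setminus\Sigma(\L)$ along the spectral bands with a sheet interchange, the hyperelliptic involution swaps the sheets, and $\pi_\pm^{-1}(\infty)$ are the two points over $z=\infty$. Equivalently, a meromorphic function on $\mathcal{X}$ is the same datum as a pair of functions on $\overline{\Complex^+}$ and $\overline{\Complex^-}$ that agree on $\Real\setminus\Sigma(\L)$ and continue each other analytically across $\Sigma(\L)$; note that the degenerate band edges, where $\D^2-1$ has a double zero, are regular points of $\mathcal{X}$ with two preimages each, which is exactly why the two-point blow-ups mentioned after~\eqref{w1} are needed.

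For existence, I would start from the solution $\check\Phi(x,t,z)$ of RHP~\ref{RHP2} produced by Theorem~\ref{qtoPhi}, recall that $\check\Phi=\check\Psi\,B\,\e^{\i\sigma_3(zx+2z^2t)}$ with $\check\Psi=(\check\psi^\mp,\check\psi^\pm)$ on $\Complex^\pm$ and $\check\psi^\pm=\psi^\pm e^\pm$ as in~\eqref{checkpsi}, and define $\check\psi$ on the two sheets of $\mathcal{X}$ to be $\check\psi^\pm$, up to the fixed (that is, $(x,t,z)$-independent) change of frame realizing the corollary's normalization $(1,\mp\i)^{T}$ at $\pi_\pm^{-1}(\infty)$ rather than the $(1,\pm\i)^{T}$ supplied by Proposition~\ref{p:asympsi} (flipping the sign of the second component suffices). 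Then I would check, in order: that $\check\psi$ is single-valued and meromorphic on $\mathcal{X}$, which reduces to the jump identities $\Psi_+=\Psi_-$ on $\Sigma(\L)$ and $\Psi_+=\Psi_-\sigma_1$ on $\Real\setminus\Sigma(\L)$ from the proof of Theorem~\ref{t:direct}, together with $e^\pm_+=e^\mp_-$ on $\Sigma(\L)$ from item~(4) of Proposition~\ref{p:e} --- exactly the compatibility needed for the two sheets to glue --- plus a local analysis at the $E_k$ (discussed below); that $\check\psi$ has the prescribed essential behavior at $\pi_\pm^{-1}(\infty)$, by Propositions~\ref{p:asympsi}, \ref{asympsi0} and item~(5) of Proposition~\ref{p:e}; that its pole divisor is as claimed --- by Proposition~\ref{p:BFyexpansion}, Lemma~\ref{y2} and Theorem~\ref{spectraprop}, $\psi^+$ has a simple pole at each Dirichlet eigenvalue in a non-degenerate gap with $\sigma_n=1$ and is regular at all other Dirichlet eigenvalues (symmetrically for $\psi^-$, $\sigma_n=-1$), while by items~(2)--(3) of Proposition~\ref{p:e} the moving pole of $\psi^\pm$ at $\gamma_k(t)$ is cancelled by a zero of $e^\pm$ and replaced by a pole of $e^\pm$ at the $t$-independent point $\gamma_k(0)$, so the poles of $\check\psi$ are simple and confined to the $g_+-g_-+1$ points $p_k$ over the initial Dirichlet eigenvalues in the open gaps; and finally that $\check\psi(x,t,\pi_\pm^{-1}(\cdot))\in\A_2(\mathcal{D})$, from condition~5 of RHP~\ref{RHP2}, item~(6) of Proposition~\ref{p:e}, and the fact that $\A_2(\mathcal{D})$ is an algebra.

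For uniqueness I would run this construction backwards: given any $\check\psi$ on $\mathcal{X}$ with at worst simple poles among $\{p_k\}$, the prescribed behavior at $\pi_\pm^{-1}(\infty)$, and $\check\psi(x,t,\pi_\pm^{-1}(\cdot))\in\A_2(\mathcal{D})$, form $\check\Psi(x,t,z)$ from the two columns $\check\psi(x,t,\pi_\mp^{-1}(z))$ and $\check\psi(x,t,\pi_\pm^{-1}(z))$ for $z\in\Complex^\pm$ (undoing the frame change) and set $\check\Phi=\check\Psi\,B\,\e^{\i\sigma_3(zx+2z^2t)}$; then verify that $\check\Phi$ solves RHP~\ref{RHP2} --- the jump~\eqref{e:Vxtz} from the sheet-exchange symmetry of $\mathcal{X}$ across $\Real$, the asymptotics~\eqref{asymcheckPhi} from the prescribed leading vectors, condition~5 from the algebra property, and, crucially, the fact that the simple poles of $\check\psi$ at the $p_k$ are cancelled by the zeros of $B(z)$ built into~\eqref{B} and~\eqref{e:f+-0def}, leaving $\check\Phi$ with at worst quartic-root singularities at $\{E_k\}$ and none elsewhere, just as in the proof of Theorem~\ref{t:direct}. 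By the uniqueness clause of Theorem~\ref{qtoPhi}, $\check\Phi$ is unique, and since $\check\psi$ is read off from $\check\Phi$ by evaluating the two columns at the two preimages, $\check\psi$ is unique as well. (When $g_\pm$ are finite this is the classical uniqueness of a Baker--Akhiezer function on the curve $\mathcal{X}$ of genus $g_+-g_-$ with a non-special pole divisor of degree $g_+-g_-+1$, the extra $+1$ reflecting that the leading coefficient is prescribed at \emph{both} points over $\infty$; the hypothesis $\check\psi(x,t,\pi_\pm^{-1}(\cdot))\in\A_2(\mathcal{D})$ substitutes for the compactness of $\mathcal{X}$ that is lost when the genus is infinite.)

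The step I expect to be the main obstacle is the local analysis at the band edges $\{E_k\}$ used throughout: one must show that, after pulling back by the branched cover $\mathcal{X}\to\Complex$, in whose local coordinate $w$ one has $z-E_k\sim w^2$, the half-integer powers of $z-E_k$ carried by $\rho^{\pm1}(z)-\~y_{11}(L,z)$, by $\~y_{12}(L,z)^{-1}$ and by $\sqrt[4]{\D^2(z)-1}$ in~\eqref{B} combine into genuine simple poles located exactly at the $p_k$ and nothing worse --- in particular that everything is regular at the degenerate band edges, and that the quartic-root singularities of $\check\Phi$ do not obstruct single-valuedness of $\check\psi$ on $\mathcal{X}$ --- and, on the uniqueness side, that the Liouville-type argument from the proof of Theorem~\ref{qtoPhi} (extending across the $E_k$ using the quartic-root bound and the fact that an isolated singularity of a holomorphic function has order at least one, then passing from $\A_2(\mathcal{D})$ to $\A_2(\Complex)$ and applying Liouville) goes through uniformly in the genus.
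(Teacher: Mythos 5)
The paper states this corollary without proof, presenting it simply as a consequence of Theorems~\ref{t:direct} and~\ref{qtoPhi}; your proposal supplies exactly the argument that framing calls for --- identifying $\mathcal{X}$ with two copies of $\C\setminus\Sigma(\L)$ glued along the bands, reading $\check\psi$ off the columns of $\check\Phi$ via the jump identities and the relation $e^\pm_+=e^\mp_-$ on $\Sigma(\L)$, tracking the pole divisor through Propositions~\ref{p:BFyexpansion} and~\ref{p:e}, and deriving uniqueness from the uniqueness of RHP~\ref{RHP2} --- so it is faithful to the paper's intent and I see no gap in the outline. Two small points are worth flagging. First, the normalization $(1,\mp\i)^T$ at $\pi_\pm^{-1}(\infty)$ genuinely conflicts with the $(1,\pm\i)^T$ leading vectors of Proposition~\ref{p:asympsi} together with the $\e^{\pm\i(zx+2z^2t)}$ exponential; this is almost certainly a sign slip in the corollary rather than something requiring the frame change you introduce, and your workaround, while making the statement literally true, replaces the actual Bloch--Floquet column by $\sigma_3$ times it. Second, when some $\sigma_{n_k}=0$ the point $p_k$ sits at a branch point of $\mathcal{X}$, where ``simple pole'' must be read in the local coordinate $w$ with $z-E\sim w^2$; you correctly single this out, together with the cancellation of the half-integer powers against $B(z)$, as the one step that needs a genuine local computation rather than a formal appeal to the earlier theorems.
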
    
The function $\check\psi$ is known as the Baker-Akhiezer function for the NLS equation
(e.g., see \cite{BBEIM,KotlyarovShepelsky} for the finite-genus case).


In the finite gap case, let $\mathcal{X}$ be the Riemann surface of genus $\g = \g_+-\g_-$ 
defined by the equation $w^2=P(z)$, where
\be
P(z)=\prod_{j=2g_--1}^{2g_+}(z-E_{j})
\ee
with cuts along the arcs $\Gamma_j=(E_{2j},E_{2j+1})$. 
The standard projection $\pi:\mathcal{X}\to \mathbb{CP}^1$ is defined by:
\be
\pi(P)=z,\quad P=(w,z).
\ee
The projection $\pi$ defines $\mathcal{X}$ as a two-sheeted covering of $\mathbb{CP}^1$. There are two points $\infty^\pm\in\mathcal{X}$, with the property $\pi(\infty^\pm)=\infty\in\mathbb{CP}^1$.

Let us introduce a canonical basis $a_i$, $b_j$ of cycles satisfying $a_i\circ b_j=\delta_{ij}$, $a_i\circ a_j=0$ and $b_i\circ b_j=0$, where $\circ$ denotes minimal crossing number in the homology class of $a_i$ and $b_j$. Then we introduce the basis of Abelian differentials $\omega_i$ of the first kind on the Riemann surface $\mathcal{X}$, normalized such that 
\be
\int_{a_j}\omega_i=2\pi\i\,\delta_{ij}.
\ee
The Abel map $\mathbf{A}: \mathcal{X}\rightarrow\mathbb{C}^{\g}$ is defined as follows:
\be
A_j(P)=\int_{P_{2g_-}}^P\omega_j.
\ee
The normalized holomorphic differentials define the $b$-period matrix as:
\be
B_{jk}=\int_{b_k}\omega_j.
\ee
Associated with the matrix $B$ there is the Riemann theta function defined for $\textbf{u}\in\C^{\g}$ by the Fourier series:
\be
\Theta(\mathbf{p})=\sum_{\mathbf{m}\in\mathbb{Z}^{\g}}\exp \{\half\langle B\mathbf{m},\mathbf{m}\rangle+\langle\mathbf{p},\mathbf{m}\rangle\},
\ee
where $\langle\mathbf{p},\mathbf{q}\rangle=p_1q_1+\dots+p_nq_n$ for $\mathbf{p}$, $\mathbf{q}\in\C^n$.
The Abelian integrals $\Omega_1(P)$, $\Omega_2(P)$ and $\Omega_3(P)$, $P\in\mathcal{X}$ are fixed by the following conditions:
\begin{itemize}
	\item $ \int_{a_i}\d\Omega_j=0.$
	\item 	 
	$\Omega_1(P)=\pm(z+O(1)),\quad P\to\infty^\pm$,\\
	$\Omega_2(P)=\pm(2z^2+O(1)),\quad P\to\infty^{\pm},$\\
	$\Omega_3(P)=\pm(\log z+O(1)),\quad P\to\infty^{\pm},\quad z=\pi(P).$
	\item $\Omega_j(P)$ have no singularities at any points other than $\infty^{\pm}$.
\end{itemize}
Next we introduce an arbitrary divisor $\mathcal{D}$ with $\deg\mathcal{D}=g$ in general position, namely
$\mathcal{D}=\sum\nolimits_{j=1}^\g P_j$, with $\pi(P_j)\neq E_j$.
Then following \cite{BBEIM}, for all $\alpha\in\Complex$ we consider the vector-valued Baker-Akhiezer function $\psi(P,x,t)=(\psi_1,\psi_2)^T$ defined as follows:
\bse
\label{e:BakerAkhiezer}
\begin{align}
&\psi_1(P)=\frac{\Theta(\int_{\infty^-}^P \boldsymbol{\omega}+\i \mathbf{V}x+\i\mathbf{W}t-\mathbf{D})\,\Theta(\mathbf{D})}
{\Theta(\int_{\infty^-}^P\boldsymbol{\omega}-\mathbf{D})\,\Theta(\i\mathbf{V} x+\i\mathbf{W}t-\mathbf{D})}
  \exp\big\{\i x\Omega_1(P)+\i t\Omega_2(P)-\txtfrac{\i}{2}E x+\txtfrac{\i}{2}Nt\big\},
\\
&\psi_2(P)=\alpha\sqrt{\omega_0}\frac{\Theta(\int_{\infty^-}^P \boldsymbol{\omega}+\i \mathbf{V}x+\i\mathbf{W}t-\mathbf{D}-\mathbf{r})\,\Theta(\mathbf{D}-\mathbf{r})}
{\Theta(\int_{\infty^-}^P\boldsymbol{\omega}-\mathbf{D})\,\Theta(\i\mathbf{V} x+\i\mathbf{W}t-\mathbf{D})}
  \exp\big\{\i x\Omega_1(P)+\i t\Omega_2(P)+\txtfrac{\i}{2}E x-\txtfrac{\i}{2}Nt+\Omega_3\big\}.
\end{align}
\ese
The vector-valued parameters appearing in \eqref{e:BakerAkhiezer} are as follows:
\begin{align*}
&\boldsymbol{\omega}=(\omega_1,\dots,\omega_{\g}),\quad
    \mathbf{V}=(V_1,\dots,V_{\g}),\quad
    \mathbf{W}=(W_1,\dots,W_{\g}),\\
&\mathbf{r}=\int_{\infty^-}^{\infty^+}\boldsymbol{\omega},\quad\mathbf{D}=\sum_{j=1}^{\g}\int_{\infty^-}^{P_j}\boldsymbol{\omega}+\mathbf{K},\\
&V_j=\int_{b_j}\d\Omega_1,\quad
    W_j=\int_{b_j}\d\Omega_j,\quad
    K_j=\pi\i+\half B_{jj}-\frac{1}{2\pi\i}\sum_{k\neq j}\int_{a_k}(\int_{\infty^-}^P\omega_j)\omega_k(P).
\end{align*}
The quantities $E$, $N$ and $\omega_0$ are determined by the second terms of the asymptotic expansions of the integrals $\Omega_j(P)$ at the points $\infty^{\pm}$ \cite{BBEIM}.

Finally, we fix some simple connected neighborhood $U$ of the point $z=\infty$ which has no branch points. Then for each $z\in U$, $\pi^{-1}(z)$ identifies exactly two points denoted by $P^{\pm}\in\mathcal{X}$ so that $p^\pm\to\infty^\pm$ when $z\to\infty$. 
For all $z\in U$, we the define the matrix-valued function
\be\label{BA}
\Psi(z,x,t)=(\psi(P^+),\psi(P^-)).
\ee
It is relatively straightforward to show that the above function satisfies all the propeties of the finite-genus Baker-Akhiezer function \cite{BBEIM}. 
As a result, the solution to the NLS equation recovered from the Baker-Akhiezer function is given by
\be
q(x,t)= 2\e^{-\i Ex+\i Nt}
  \frac{2\,\Theta(\i\mathbf{V}x+\i\mathbf{W}t-\mathbf{D}+\mathbf{r})\,\Theta(\mathbf{D})}
    {\alpha\,\Theta(\i\mathbf{V}x+\i\mathbf{W}t-\mathbf{D})\,\Theta(\mathbf{D}-\mathbf{r})}.
\ee

\section{Concluding remarks}
\label{s:conclusions}

In summary, we formulated the direct and inverse transform for a one-dimensional self-adjoint Dirac operator 
with periodic boundary conditions via Riemann-Hilbert techniques, and we used the formalism to 
solve the initial value problem for the defocusing NLS equation with 
periodic boundary conditions.

The formalism of this work generalizes the one developed in \cite{McLaughlinNabelek} for Hill's operator.
Compared to \cite{McLaughlinNabelek}, one of the main complications in the theory for Dirac operator is that,
the spectrum for Hill's operator is bounded from below, which, even in the case of infinite genus, 
provides a natural starting point for the count of all sequences of eigenvalues 
and also for the definition of the branch points of all square roots and fourth roots.
In contrast, the spectral bands of the Dirac operator extend to infinity in both directions along the real $z$~axis, 
which complicates the analysis.
Another significant difference from the analysis of Hill's equation in \cite{McLaughlinNabelek} is that,
in that case the spectral problem is an eigenvalue problem for a scalar second-order differential operator.
As a result, the asymptotic behavior of various quantities contains square roots of the spectral parameter,
which complicates the analysis.
In this respect, the Zakharov-Shabat problem is simpler, since no square roots of $z$ appear in the analysis.
On the other hand, the Zakharov-Shabat problem is complicated significantly by the need to introduce the
modified Bloch-Floquet solutions.

Recall that, in the finite-genus formalism, 
the Dirichlet eigenvalues associated with all base points in the spatial domain
are needed in order to reconstruct the potential,
and the dependence of the Dirichlet eigenvalues on the base point (as governed by the Dubrovin equations) 
is highly nontrivial,
and is linearized by the Abel map.
With the Riemann-Hilbert formalism, in contrast, only one base point is all that is needed.
Indeed, the correct spatial and temporal dependence of the solution is simply a consequence of the explicit, 
parametric dependence of the jump condition of the RHP on~$x$ and~$t$,
precisely like in the IST formalism on the line.

The present work should also be compared with that in \cite{DeconinckFokasLenells}, 
where the IVP for the focusing and defocusing NLS was 
studied using the unified transform method.  The latter is based on simultaneous analysis of both parts of the 
Lax pair, but where the problem is posed on the finite interval.  
As a result, the formalism requires the use of unknown boundary values, which must then be eliminated.
In constrast, the present work uses the spectral theory of the ZS problem, and is therefore much more analogous
to the IVP on the line, and does not use the values of the potential at the boundary of the domain.

A result of this work is that only one set of Dirichlet eigenvalues is needed for the inverse problem,
just like in the inverse spectral theory for the KdV equation. 
This is in contrast with the  trace formulae obtained from the  finite-gap formalism, 
in which two sets of Dirichlet eigenvalues are used to reconstruct the potential.
The result was illustrated by computing explicitly the solution of the NLS in the case of genus zero in Appendix~\ref{s:genus0}.
In Appendix~\ref{a:alternative} we also showed that either of the two sets of Dirichlet eigenvalues used in the finite-genus formalism
works equally well for the purposes of the present inverse spectral theory.
The fact that only one set of Dirichlet eigenvalues is needed is consistent with the fact that,
for the direct and inverse spectral theory on the line, each norming constant in the defocusing case 
is uniquely determined by one real degree of freedom.
Similarly, in the theta function representation of the solution of the NLS equation, the real divisor also 
contains only one set of real degrees of freedom (cf.\ section~\ref{s:finitegap}).
Note also that, since the RHP~\ref{RHP1} already admits a unique solution, one does not have the option of prescribing
additional data (e.g., such as a second set of Dirichlet eigenvalues) while preserving the solvability of the RHP.
This means that a bijective map must exist between the two sets of Dirichlet eigenvalues.

The results of this work also open a number of interesting problems for future study, 
which can be divided along three main classes.
The first class of questions concerns the NLS equation, and 
a first question in this regard is whether the present results can be used to establish the well-posedness of the 
initial-value problem in some appropriate functional classes.
In this regard, note that (as is usual) 
the IST was formulated under the assumption of existence and uniqueness of solutions of the IVP.  
On the other hand, one can now turn the perspective around and use the results of the present work to
prove the well-posedness of the IVP in appropriate functional classes, similarly to what was done in 
\cite{JenkinsLiuPerrySulem,KappelerTopalov,PelinovskySaalmannShimabukuro,PelinovskyShimabukuro,VillarroelPrinariAblowitz,XinZhou1998}.

Another question is related to the fact that the NLS equation is an infinite-dimensional Hamiltonian system. It is well-known, when the IVP is posed on the line, the system is completely integrable, and the IST can be viewed as a canonical transformation to action-angle variables.
For the KdV equation, action-angle variables with periodic BC are also known \cite{KappelerPoschel2010}.  
An obvious question is therefore what are the action-angle variables for the defocusing NLS equation with periodic BC
in the infinite-genus case.

Yet another interesting question relates to possible existence of a suitable infinite period limit
of the present formalism.  
In other words, the question is whether there is any way in which the formalism can reduce to the IST on the line
with either zero or non-zero BC.  
The question is nontrivial, since the periodic problem
and the problem on the line are fundamentally different.
In particular, for the IST on the line, the value of the potential as $x\to\pm\infty$ must be prescribed (whether it be zero or nonzero), whereas in our case the value of $q(x)$ at $x= \pm L/2$ is not given.
For the NLS equation, this difficulty is also compounded by the fact that the IST on the line differs quite a bit
depending on whether zero or non-zero BC are given.
(For the KdV equation, in contrast, the value of the potential at infinity can always be set without loss of generality 
thanks to the Galilean invariance of the equation.)

A last question in the first class relates to the ``linearization'' of the direct and inverse transform.  
It is well known that, 
for potentials on the line, the inverse scattering transform reduces exactly to the direct and inverse Fourier transform 
in the ``linear limit'', i.e., the limit of vanishingly small potentials~\cite{AKNS1974}. 
A natural question is therefore whether there is any way in which the present formalism reduces to the solution
of the periodic problem by Fourier series in the linear limit.

A second class of open questions concern whether the analysis of the present work can be generalized to other systems.
For example, an obvious interesting question is whether the fact that a single set of Dirichlet eigenvalues suffices 
for the purposes of inverse spectral theory also carries through to the focusing Zakharov-Shabat problem.
In this respect, note that the analysis of the focusing Zakharov-Shabat problem is significantly complicated by
the fact that the corresponding Dirac operator is not self-adjoint, and as a result the eigenvalues
are not relegated to the real $z$-axis \cite{MaAblowitz,McLaughlinOverman}.
This difficulty is also compounded by the fact that,
while the notion of spectral bands and gaps can still be introduced in a relatively straightforward way
\cite{BiondiniOregero2020,BOT_JST2023}, 
the movable Dirichlet eigenvalues are not confined to curves in the complex plane \cite{McKean1981}.

Another possible future extension of the present work concerns the 
periodic problem for the Manakov system, i.e., the integrable two-component coupled nonlinear Schr\"odinger equation.
Realizing such an extension will require overcoming several significant difficulties,
which stem from the fact that the associated scattering problem is a three-component system.
Therefore, even though in the defocusing case the spectral problem is still self-adjoint, 
the characterization of the spectrum is much more complicated and cannot be reduced to the study of a single
entire function like with the KdV and NLS equations.
At the same time, 
we remark that the periodic problem for the Manakov system is still completely open even with the finite-genus formalism.
The reason for this state of affairs is similar: 
since the scattering problem is effectively third-order,
the spectral curve associated with the finite-genus solutions is not hyperelliptic as in the case of the KdV and NLS equation.
For this reason, the analysis of the periodic Manakov system remains as an oustanding open problem.

Finally, a third class of questions relates to the application of the present formalism to study concrete problems.
One such possible application concerns the study of semiclassical limits and small dispersion problems~\cite{bronski1996,DVZ,jenkins2013,KMM2003,LaxLevermore,TV2012},
which continue to receive considerable attention 
(e.g., see \cite{ElHoefer,BiondiniOregero2020,BOT_JST2023,physd2016deng,pre2017deng,prl2016deng} and references therein).
Another possible application concerns the emerging topics of soliton gases and the statistics of integrable systems 
\cite{Congy2021,Congy2023,El2021,ElTovbisPRE2020,Gelash_PRL2019,Gelash_PRE2018,GirottiGravaJenkinsMcLaughlin,GirottiGravaJenkinsMcLMinakov}.
We hope that the results of this work and the present discussion will stimulate further work on all of these topics.

\subsection*{Declarations}

\noindent\textbf{Acknowledgment.}
We thank Marco Bertola, Dionyssis Mantzavinos, Ken McLaughlin, Jeffrey Ore\-gero and Barbara Prinari for many insightful conversations on topics related to the present work.
We especially thank Alexander Tovbis and Zachery Wolski for their careful review of the manuscript and their constructive feedback.
%
This work was partially supported by the National Science Foundation under grant number DMS-2009487.



\section*{Appendix}
\setcounter{section}1
\setcounter{subsection}0
\setcounter{equation}0
\def\thesection{\Alph{section}}
\def\theequation{\Alph{section}.\arabic{equation}}
\def\thetheorem{\Alph{section}.\arabic{theorem}}
\def\thefigure{\Alph{section}.\arabic{figure}}
\addcontentsline{toc}{section}{Appendix}

\subsection{Notation and standard definitions}
\label{a:notation}

We use the Pauli matrices defined as
\be
\sigma_1 = \begin{pmatrix} 0 &1 \\ 1&0 \end{pmatrix}\,,
\qquad
\sigma_2 = \begin{pmatrix} 0 & -i \\ i&0 \end{pmatrix}\,,
\qquad
\sigma_3 = \begin{pmatrix} 1 & 0 \\ 0 &-1 \end{pmatrix}\,.
\label{e:Paulidef}
\ee
Throughout this work, 
subscripts $x$ and $t$ denote partial differentiation,
the superscript $T$ denotes matrix transpose, the asterisk complex conjugation,
and 
the dagger conjugate transpose.
Also, we denote by $f_\re = \half(f + f^*)$ and $f_\im = -\half\i(f - f^*)$ the real and imaginary parts of a complex quantity.
For any $2\times2$ matrix $A = (a_{ij})$, we use the notation $a_1 = (a_{11},a_{21})^T$ and $a_2 = (a_{12},a_{22})^T$ 
to denote its first and second columns, respectively.
The ``half-plane residues'' $\Res\nolimits^{\pm}$ used in section~\ref{s:genus0} are defined as 
\be
\Res_{z=z_o}{\!}^\pm f(z) = \lim_{{z\to z_o^{\phantom I}}\atop{\Im(z-z_o)\gl0}}\!\![(z-z_o)f(z)]\,.
\ee
Finally, we take the signum function $\sgn(z)$ to take zero value when its argument is zero. 
%

\subsection{Basic properties of the spectrum}
\label{a:Dirichlet}

In this appendix we prove some basic properties of the Dirichlet eigenvalues, largely following~\cite{MaAblowitz}.

\begin{proposition}
	For all $z\in\Complex$, the Floquet discriminant $\Delta(z)$ satisfies the Schwarz symmetry: $\Delta^*(z^*)=\Delta(z)$. Subsequently, for all $z\in\Complex/\Real$, the Floquet multiplier $\rho(z)$ satisfies $\rho^*(z^*)=\rho(z)$.
\end{proposition}
\begin{proof}
	From \eqref{e:symY~}, we have that $\Delta^*(z^*)=\half \tr\~Y^*(z^*)=\half \tr\~Y(z)=\Delta(z)$, which directly gives the symmetry for $\rho(z)$.
\end{proof}

\begin{proposition}
All the Dirichlet eigenvalues $\mu_j$  are real.
\end{proposition}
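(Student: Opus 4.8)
The plan is to run the standard Lagrange-identity (Green's formula) argument showing that the Dirichlet realization of $\L$ on the interval $[x_o,x_o+L]$ is symmetric, so that its eigenvalues must be real. For concreteness I take $x_o=0$, as in Proposition~\ref{p:dirzeros}; the argument is identical for any base point.

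First I would fix a Dirichlet eigenvalue $\mu$ with eigenfunction $v=(v_1,v_2)^T\in H^1([0,L],\C^2)$, $v\not\equiv 0$, so that $\L v=\mu v$ together with the Dirichlet conditions~\eqref{e:Dirbcs}, $v_1(0)+v_2(0)=v_1(L)+v_2(L)=0$. Writing $\L v=\i\sigma_3 v_x-\i\sigma_3 Qv$ and using that $Q$ is Hermitian ($Q^\dag=Q$ by~\eqref{e:Q}) and $\sigma_3^\dag=\sigma_3$, I would integrate by parts once in
\[
\int_0^L\big[(\L v)^\dag v-v^\dag(\L v)\big]\,\d x .
\]
The boundary term produced is $-\i\,[\,v^\dag\sigma_3 v\,]_0^L$, while the remaining bulk integrand is proportional to $v^\dag(\sigma_3 Q+Q\sigma_3)v$. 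The key algebraic fact is that $\sigma_3$ anticommutes with $Q$, i.e. $\sigma_3 Q+Q\sigma_3=0$ (immediate from the off-diagonal form of $Q$), so the bulk term drops out entirely and one is left with the exact identity $\int_0^L\big[(\L v)^\dag v-v^\dag(\L v)\big]\,\d x=-\i\,[\,v^\dag\sigma_3 v\,]_0^L$.

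Next I would evaluate both sides. On the left, $\L v=\mu v$ gives $(\mu^*-\mu)\,\|v\|_{L^2([0,L],\C^2)}^2$. On the right, $v^\dag\sigma_3 v=|v_1|^2-|v_2|^2$; at $x=0$ the Dirichlet condition gives $v_2(0)=-v_1(0)$, hence $|v_1(0)|^2-|v_2(0)|^2=0$, and likewise at $x=L$, so the boundary term vanishes. Therefore $(\mu^*-\mu)\,\|v\|^2=0$, and since $v\not\equiv 0$ we have $\|v\|>0$, forcing $\mu=\mu^*$, i.e. $\mu\in\Real$.

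There is no serious obstacle here; the only points requiring a modicum of care are the rigorous justification of the integration by parts for $H^1$ vector fields (standard, using the one-dimensional Sobolev embedding $H^1\hookrightarrow C^0$ so that the endpoint values, and hence the Dirichlet conditions, make sense) and the two elementary matrix identities $Q^\dag=Q$ and $\sigma_3 Q=-Q\sigma_3$. Alternatively, one could note that $\~y_{12}(L,z)$ is real on the real axis by the Schwarz symmetry $\D(z^*)=\D^*(z)$ of the ZS problem, so that its zeros come in complex-conjugate pairs; but this alone does not preclude genuinely complex Dirichlet eigenvalues, so the symmetric-operator computation above is the argument I would actually carry out.
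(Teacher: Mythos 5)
Your proof is correct and is essentially the argument the paper gives: both are Lagrange-identity (Green's formula) computations in which the Dirichlet boundary conditions kill the boundary term, forcing $(\mu^*-\mu)\|v\|^2=0$. The only cosmetic difference is that you work in the original variables with the matrix identities $Q^\dag=Q$ and $\sigma_3Q+Q\sigma_3=0$, whereas the paper carries out the same computation component-wise on the transformed system~\eqref{e:modZS}, where the condition $v_1+v_2=0$ at both endpoints becomes the vanishing of $\~y_{12}$ there.
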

\begin{proof}
Since $\~Y$ a the fundamental solution of \eqref{e:modZS}, 
its second column $\tilde{y_2}$ also satisfies \eqref{e:modZS}. 
Thus,
\be
\~y_{12,x} - z\~y_{22}=q_\re\~y_{12}+q_\im\~y_{22},
\qquad
    \~y_{22,x} + z\~y_{12}=-q_\re\~y_{22}+q_\im\~y_{12}.
\label{zs2}
\ee
Using \eqref{zs2}, it is easy to show that 
\be
	(\~y_{12}\~y_{22}^*-\~y_{22}^*\~y_{12})_x
	  + (z^*-z)(|\~y_{12}|^2+|\~y_{22}|^2)=0;
\ee
hence
\be
	(z^*-z)\int_0^L(|\~y_{12}|^2+|\~y_{22}|^2)\,\d x
	  + \big[ \~y_{12}\~y_{22}^*-\~y_{22}^*\~y_{12}) \big]_0^L = 0\,.
\label{e:tempint}
\ee
Also, by definition we have $\~y_{12}(0,\mu_j)=\~y_{12}(L,\mu_j)=0$.
Evaluating~\eqref{e:tempint} at $z=\mu_j$, the second term in the above equation vanishes. 
Since the integral is strictly positive, we conclude that $\mu_j=\mu_j^*$.
\end{proof}

\noindent
The following result will be useful
in the proof of Proposition~\ref{p:oneDirichlet} later in this section: 

\begin{proposition}
For all $z\in\Real$, the Floquet discriminant $\D(z)$ satisfies
\begin{multline}
\D'(z) = \half\sgn[\~y_{21}(L,z)]\,\left\{\int_0^L\big[(a\~y_{21} + b \~y_{21})^2 + 
  (a\~y_{11}+b\~y_{12})^2\big]\,\d x
		- \frac{\Delta^2-1}{|\~y_{21}(L,z)|}\int_0^L(\~y_{11}^2+\~y_{21}^2)\,\d x \right\},
\label{dd}
\end{multline}
with 
\be
a = \frac{\~y_{22}(L,z)-\~y_{11}(L,z)}{2\sqrt{|\~y_{21}(L,z)|}}\,,
\qquad 
b = -\sgn\~y_{21}(L,z)\sqrt{|\~y_{21}(L,z)|}\,.
\ee
As a result, $\sgn(\D'(z)) = \sgn\~y_{21}(L,z)$ for all $z\in\Real$.
\end{proposition}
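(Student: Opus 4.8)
The plan is to derive an integral representation of $\D'(z)$ by differentiating the modified scattering problem~\eqref{e:modZS} in~$z$, and then to recast it into the completed-square form~\eqref{dd} using only $\det\~Y(x,z)\equiv1$.

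First I would differentiate $\~Y_x=\~A\,\~Y$ in~$z$, where $\~A=U(-\i z\sigma_3+Q)U^{-1}$ with $U$ as in~\eqref{e:Udef}. A direct computation gives $\partial_z\~A=-\i\,U\sigma_3U^{-1}=\begin{pmatrix}0&1\\-1&0\end{pmatrix}$, a constant matrix which is moreover \emph{real}. Since $\~Y(0,z)\equiv\I$, the matrix $\partial_z\~Y$ solves an inhomogeneous linear ODE with zero data at $x=0$, so variation of parameters yields $\partial_z\~Y(L,z)=\~M(z)\int_0^L\~Y^{-1}(x,z)\,\partial_z\~A\,\~Y(x,z)\,\d x$. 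Taking one half of the trace, writing $\~Y^{-1}$ explicitly via $\det\~Y\equiv1$, and using that $\~Y^{-1}(\partial_z\~A)\~Y$ is traceless with entries quadratic in the $\~y_{ij}(x,z)$, one obtains
\be
\D'(z)=\tfrac12\Bigl[(\~y_{11}(L,z)-\~y_{22}(L,z))\,I_{12}+\~y_{21}(L,z)\,I_{22}-\~y_{12}(L,z)\,I_{11}\Bigr],
\ee
with $I_{11}=\int_0^L(\~y_{11}^2+\~y_{21}^2)\d x$, $I_{12}=\int_0^L(\~y_{11}\~y_{12}+\~y_{21}\~y_{22})\d x$ and $I_{22}=\int_0^L(\~y_{12}^2+\~y_{22}^2)\d x$ (integrands evaluated at $x$). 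For $z\in\R$ the matrix $\~A$ is real, hence $\~Y(x,z)$ and all the above quantities are real.

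The next step is purely algebraic. Writing the entries of $\~M(z)$ as $\alpha,\beta,\gamma,\delta$, the relation $\det\~M=1$ gives $\D^2-1=\tfrac14(\alpha-\delta)^2+\beta\gamma$, so with $a$ and $b$ as in the statement one has $a^2=\tfrac{(\alpha-\delta)^2}{4|\gamma|}$ and $\tfrac{\D^2-1}{|\gamma|}=a^2+\sgn(\gamma)\beta$. A one-line check then shows $a^2I_{11}+2abI_{12}+b^2I_{22}-\tfrac{\D^2-1}{|\gamma|}I_{11}=\sgn(\gamma)\cdot2\D'(z)$, and since $a^2I_{11}+2abI_{12}+b^2I_{22}=\int_0^L\|a\,\~y_1+b\,\~y_2\|^2\d x$ for the columns $\~y_1,\~y_2$ of $\~Y$, this is exactly~\eqref{dd}. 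Two remarks: $a\,\~y_1+b\,\~y_2$ is precisely the solution of~\eqref{e:modZS} whose second component is scaled by $\D(z)$ after one period, which is the choice that extracts a manifestly nonnegative term; and although the two terms in the brace of~\eqref{dd} each blow up like $1/|\gamma|$ as $\~y_{21}(L,z)\to0$, their singular parts cancel, so the product with the prefactor $\sgn[\~y_{21}(L,z)]$ extends continuously across the zeros of $\~y_{21}(L,\cdot)$.

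For the sign assertion, rewrite~\eqref{dd} as $\D'(z)=\tfrac12\sgn[\~y_{21}(L,z)]\bigl\{\int_0^L\|a\~y_1+b\~y_2\|^2\d x+\tfrac{1-\D^2(z)}{|\~y_{21}(L,z)|}I_{11}\bigr\}$. When $-1<\D(z)<1$ both terms in the brace are nonnegative and the second is strictly positive (since $I_{11}>0$), so the brace is positive and $\sgn\D'(z)=\sgn\~y_{21}(L,z)$; this already covers the interior of the spectral bands. I expect the main obstacle to be the complementary case, namely $z$ in the spectral gaps and at the band edges, where $\D^2(z)\ge1$ makes the second term nonpositive and one must show that the quadratic term dominates. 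Here I would exploit the real Bloch solutions available throughout a gap (where $\rho(z)$ is real), together with the facts that $\~y_{21}(L,\cdot)$ and $\D'$ vanish only at band edges and degenerate points and that $\D$ has a single interior extremum in each gap, to conclude that the brace retains a definite sign and that the zeros of $\D'$ and of $\~y_{21}(L,\cdot)$ coincide with matching signs.
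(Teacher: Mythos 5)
Your derivation of the identity is correct and follows essentially the same route as the paper: differentiate the monodromy problem in $z$, use variation of parameters with zero data at $x=0$ to express $\partial_z\~Y(L,z)$ as $\~M(z)$ times an integral of quadratics in the $\~y_{ij}$, take half the trace to get $2\D'=(\~y_{11}(L)-\~y_{22}(L))I_{12}+\~y_{21}(L)I_{22}-\~y_{12}(L)I_{11}$, and then complete the square using $\det\~M=1$. Your algebraic reduction is cleaner and more explicit than the paper's (which hides this step behind ``straightforward calculations''), and you correctly read the first square in the integrand as $(a\~y_{21}+b\~y_{22})^2$, which is evidently what is intended in \eqref{dd}. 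The observations that $\partial_z\~A$ is a real constant matrix (so everything is real for real $z$) and that the singular parts of the two brace terms cancel as $\~y_{21}(L,z)\to0$ are both correct and worth keeping.

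On the sign assertion, however, your instinct that the case $\D^2(z)\ge 1$ is ``the main obstacle'' is more right than you may realize: the plan you sketch for the spectral gaps cannot be completed, because the claim $\sgn\D'(z)=\sgn\~y_{21}(L,z)$ is actually \emph{false} in the interior of a gap for genuinely complex potentials. The paper's own genus-zero example exposes this: for $q\equiv A\e^{\i\alpha}$ one has $\D(z)=\cos(\lambda L)$ with $\lambda=(z^2-A^2)^{1/2}$, hence $\D'(z)=-Lz\,\sin(\lambda L)/\lambda$, while $\~y_{21}(L,z)=(A\sin\alpha-z)\sin(\lambda L)/\lambda$. In the gap $(-A,A)$ the factor $\sin(\lambda L)/\lambda=\sinh(\kappa L)/\kappa>0$, so $\D'$ changes sign at $z=0$ whereas $\~y_{21}(L,\cdot)$ changes sign at $z=A\sin\alpha$; for $\alpha=\pi/2$ and $z\in(0,A)$ the two signs disagree. (The identity \eqref{dd} itself is not contradicted --- there the term $-(\D^2-1)|\~y_{21}(L,z)|^{-1}I_{11}$ simply dominates the nonnegative quadratic term.) What \emph{is} true, and is all that the paper subsequently uses, is the equality of signs wherever $\D^2(z)\le 1$, i.e.\ on the closed bands: there your argument is complete, since the brace is the sum of $\int_0^L\|a\~y_1+b\~y_2\|^2\d x>0$ (as $b\ne0$ when $\~y_{21}(L,z)\ne0$) and a nonnegative term. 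So the correct resolution is not to push harder in the gaps but to restrict the final sentence of the proposition to $\{z:\D^2(z)\le1\}$; the paper itself offers no argument for the gap case, and none exists.
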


\begin{proof}
Let $W(x,z) = \~Y'(x,z) = (w_{ij})$.
Differentiating \eqref{zs2} with respect to $z$ yields
\be
\label{zs3}
	w_{1j,x} - zw_{2j}-q_\re w_{1j}-q_\im w_{2j}=\~y_{2j}\,,
\qquad
    w_{2j,x} + zw_{1j}+q_\re w_{2j}-q_\im w_{1j}=-\~y_{1j}\,.
\ee
Comparing~\eqref{zs3} with~\eqref{zs2}
shows that 
$(w_{1j},w_{2j})^T$ satisfies the same ODE as $(\~y_{1j},\~y_{2j})^T$, 
except for the presence of a ``forcing'' term on the right-hand side of~\eqref{zs3}.
Taking $j=1$ in \eqref{zs3} and expanding $(w_{11},w_{21})^T$ in terms of 
$(\~y_{11},\~y_{21})^T$ and 
$(\~y_{12},\~y_{22})^T$, 
we can write
\be
w_{11}=c_{11}\~y_{11}+c_{12}\~y_{12}\,,\qquad
w_{21}=c_{11}\~y_{21}+c_{12}\~y_{22}\,,
\ee
where $c_{11}$ and $c_{12}$ are functions of $x$,
and using variation of parameters in the ODE we have 
\be
c_{11}=\int_{0}^x(\~y_{21}\~y_{22}+\~y_{11}\~y_{12})(s)\,\d s\,,\qquad
c_{12}=-\int_{0}^x(\~y_{21}^2+\~y_{11}^2)(s)\,\d s\,.
\ee
Then 
\bse
\label{e:mij}
\begin{gather}
	w_{11}(x) = \bigg( \int_0^x(\~y_{21}\~y_{22}+\~y_{11}\~y_{12})(s)\d s \bigg)\,\~y_{11}(x)
	  - \bigg( \int_0^x(\~y_{21}^2+\~y_{11}^2)(s)\d s \bigg)\,\~y_{12}(x)\,,\\
	w_{21}(x) = \bigg(\int_0^x(\~y_{21}\~y_{22}+\~y_{11}\~y_{12})(s)\d s \bigg)\,\~y_{21}(x)
	  - \bigg(\int_0^x(\~y_{21}^2+\~y_{11}^2)(s)\d s)\~y_{22}(x)\,,\\
	w_{12}(x) = \bigg(\int_0^x(\~y_{22}^2+\~y_{12}^2)(s)\d s \bigg)\,\~y_{11}(x)
	  - \bigg(\int_0^x(\~y_{12}\~y_{11}+\~y_{22}\~y_{21})(s)\d s \bigg)\,\~y_{12}(x)\,,\\
	w_{22}(x) = \bigg(\int_0^x(\~y_{22}^2+\~y_{12}^2)(s)\d s \bigg)\,\~y_{21}(x)
	  - \bigg(\int_0^x(\~y_{12}\~y_{11}+\~y_{22}\~y_{21})(s)\d s \bigg)\,\~y_{22}(x)\,.
\end{gather}
\ese
Next, differentiating $\Delta(z)$ with respect to $z$, we have
\vspace*{-1ex}
\begin{multline}
2\Delta'(z) = w_{11}(L,z)+w_{22}(L,z)
		=\left(\int_0^L(\~y_{21}\~y_{22}+\~y_{11}\~y_{22})\d x\right)\~y_{11}(L,z)-\left(\int_0^L(\~y_{21}^2+\~y_{11}^2)\d x\right)\~y_{12}(L,z)
\\
		+ \left(\int_0^L(\~y_{22}^2+\~y_{12}^2)\d x\right)\~y_{21}(L,z)-\left(\int_0^L(\~y_{12}\~y_{11}+\~y_{22}\~y_{21})\d x\right)\~y_{22}(L,z)\,.
 \label{dD}
\end{multline}
Assuming for the moment that $\~y_{21}(L,z)\neq0$, we have
\be
\~y_{12}(L,z)=\sgn\~y_{21}(L,z)\frac{4(\Delta^2-1)-[\~y_{11}(L,z)-\~y_{22}(L,z)]^2}{4|\~y_{21}(L,z)|}\,.
\ee
Next, to evaluate $\Delta'(z)$ we write
\be
-\~y_{12}(L)\int_0^L\~y_{21}\d x
  = -\sgn\~y_{21}(L,z)\frac{\Delta^2-1}{\~y_{21}(L,z)}\int_0^L\~y_{21}^2\d x
	+ \sgn\~y_{21}(L,z)\frac{[\~y_{11}(L,z)-\~y_{22}(L,z)]^2}{4|\~y_{21}(L,z)|}\int_0^L\~y_{21}^2\d x,
\nonumber
\ee
and
\be
\~y_{21}(L)\int_0^L\~y_{22}^2\d x=\sgn\~y_{21}(L,z)|\~y_{21}(L,z)|\int_0^L\~y_{22}^2\d x[\sgn \~y_{21}(L,z)]^2.
\nonumber
\ee
Then, after some straightforward calculations, we finally obtain~\eqref{dd}.
\end{proof}

\begin{proposition}
All the Dirichlet eigenvalues $\mu_j$s are simple roots of $\~y_{12}(L,z)$.
\end{proposition}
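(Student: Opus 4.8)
The plan is to argue that a Dirichlet eigenvalue $\mu_j$ cannot be a multiple root of $\tilde y_{12}(L,z)$ by combining the reality of the $\mu_j$ (just proved) with the sign identity $\sgn(\Delta'(z)) = \sgn \tilde y_{21}(L,z)$ from the previous proposition, together with a Wronskian-type argument linking $\tilde y_{12,z}(L,\mu_j)$ to the values of the entries of $\tilde Y(L,\mu_j)$. First I would recall from Lemma~\ref{y2} that at $z=\mu_j$ we have $\tilde y_{22}(L,\mu_j) = 1/\tilde y_{11}(L,\mu_j) = \rho(\mu_j)^{\pm 1}$, and from $\det\tilde Y(L,\mu_j)=1$ together with $\tilde y_{12}(L,\mu_j)=0$ that $\tilde y_{11}(L,\mu_j)\tilde y_{22}(L,\mu_j)=1$. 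Thus at $\mu_j$ the monodromy matrix $\tilde M(\mu_j)$ is lower triangular with nonzero diagonal entries that are reciprocal.

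Next I would use the variation-of-parameters formula \eqref{e:mij} for $w_{12}(L,z) = \tilde y_{12,z}(L,z)$, namely
\be
\tilde y_{12,z}(L,\mu_j) = \bigg(\int_0^L(\tilde y_{22}^2+\tilde y_{12}^2)\,\d x\bigg)\tilde y_{11}(L,\mu_j)
  - \bigg(\int_0^L(\tilde y_{12}\tilde y_{11}+\tilde y_{22}\tilde y_{21})\,\d x\bigg)\tilde y_{12}(L,\mu_j)\,,
\ee
which at $z=\mu_j$ simplifies, since $\tilde y_{12}(L,\mu_j)=0$, to
\be
\tilde y_{12,z}(L,\mu_j) = \tilde y_{11}(L,\mu_j)\int_0^L\big(\tilde y_{22}^2(x,\mu_j)+\tilde y_{12}^2(x,\mu_j)\big)\,\d x\,.
\ee
Because $\mu_j$ is real, the modified ZS problem \eqref{e:modZS} at $z=\mu_j$ has real coefficients after conjugating by $U$ — more precisely, the combination $\tilde y_{22}^2+\tilde y_{12}^2$ is, up to a fixed unitary rotation, a sum of squares of the components of a real vector solution (the image under $U^{-1}$ of a real solution of the original self-adjoint Dirac problem), so the integrand is nonnegative and not identically zero unless the solution vanishes identically, which it cannot. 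Hence $\int_0^L(\tilde y_{22}^2+\tilde y_{12}^2)\,\d x\ne 0$, and since $\tilde y_{11}(L,\mu_j)\ne 0$ we conclude $\tilde y_{12,z}(L,\mu_j)\ne 0$, i.e. $\mu_j$ is a simple zero.

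The main obstacle I expect is making rigorous the claim that $\tilde y_{22}^2(x,\mu_j)+\tilde y_{12}^2(x,\mu_j)$ is genuinely positive (not merely nonzero) when integrated, given that $\tilde Y$ is complex-valued even for real $z$ because of the similarity transformation $U$. The cleanest route is to transform back: writing $\tilde y_2(x,\mu_j) = U y_2(x,\mu_j)U^{-1}$-column and noting that at real $z=\mu_j$ the original self-adjoint Dirac equation $\mathcal L v = \mu_j v$ admits a solution that can be taken real up to an overall phase (because of the Schwarz symmetry $\tilde Y(x,z^*)=\ldots$), one shows $\tilde y_{22}^2+\tilde y_{12}^2$ equals a positive constant multiple of $|v_1|^2+|v_2|^2$ for that real solution, hence is strictly positive a.e. Alternatively one can avoid the transformation entirely and run the same variation-of-parameters computation directly on the identity \eqref{e:tempint} with its strictly positive integrand $|\tilde y_{12}|^2+|\tilde y_{22}|^2$, differentiating the relation $\tilde y_{12}(L,z)=0$ implicitly along the curve of Dirichlet eigenvalues; this is the approach of \cite{MaAblowitz} and is probably the safest to present. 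I would go with the latter, deriving $\tilde y_{12,z}(L,\mu_j)\ne 0$ from \eqref{e:tempint} and \eqref{zs2} directly.
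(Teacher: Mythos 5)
Your proof follows essentially the same route as the paper's: evaluate the variation-of-parameters formula \eqref{e:mij} for $w_{12}(L,z)$ at $z=\mu_j$, use $\~y_{12}(L,\mu_j)=0$ together with $\det\~Y(L,\mu_j)=1$ to get $\~y_{11}(L,\mu_j)\ne0$, and conclude from the nonvanishing of $\int_0^L(\~y_{22}^2+\~y_{12}^2)\,\d x$. The one place where you go beyond the paper --- justifying that this integral is genuinely nonzero even though $\~Y$ is a priori complex-valued --- is a legitimate point the paper's own proof leaves implicit, and your first proposed fix is the correct one: for real $z=\mu_j$ the conjugation symmetry of the self-adjoint ZS system forces the second column of $\~Y$ to be real (it equals $(2\Re v_1,2\Im v_1)^T$ for the Dirichlet solution $v$ with $v_2=v_1^*$), so the integrand is a nonnegative sum of squares that is strictly positive near $x=0$, where $\~y_{22}(0,\mu_j)=1$.
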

\begin{proof}
Since $\mu_j$ is a root of $\~y_{12}(L,z)$, we have $\~y_{11}(L,z)\~y_{22}(L,z)=1$, which implies that $\~y_{11}(L,z)\neq 0$. Therefore, from \eqref{e:mij}, we have $w_{12}(L,z)\neq 0$. 
Thus, we conclude that $\mu_j$ is a simple root of $\~y_{12}(L,z)$.
\end{proof}

\begin{proposition}
All the main eigenvalues are simple or double roots of $\Delta^2(z)-1$.
\end{proposition}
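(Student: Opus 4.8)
The plan is to classify a main eigenvalue $\zeta$, which is real, according to whether $\Delta'(\zeta)$ vanishes. Put $\epsilon:=\Delta(\zeta)\in\{-1,1\}$ and write $\Delta^2(z)-1=(\Delta(z)-\epsilon)(\Delta(z)+\epsilon)$; since $(\Delta(z)+\epsilon)|_{z=\zeta}=2\epsilon\neq0$, the order of vanishing of $\Delta^2-1$ at $\zeta$ equals that of $\Delta-\epsilon$. If $\Delta'(\zeta)\neq0$ this order is $1$, so $\zeta$ is a simple root. Thus the entire content of the statement is the implication: \emph{if $\Delta'(\zeta)=0$ then $\Delta''(\zeta)\neq0$}, which makes $\zeta$ a root of order exactly $2$, i.e.\ a double root.

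So assume $\Delta'(\zeta)=0$. First I would invoke the relation $\sgn\Delta'(z)=\sgn\~y_{21}(L,z)$, valid for all real $z$ (established above via~\eqref{dd}), to conclude $\~y_{21}(L,\zeta)=0$; together with $\det\~Y(L,\zeta)=1$ and $\tfrac12\tr\~Y(L,\zeta)=\epsilon$ this forces $\~y_{11}(L,\zeta)=\~y_{22}(L,\zeta)=\epsilon$, leaving $c:=\~y_{12}(L,\zeta)$ as the only undetermined entry of $\~Y(L,\zeta)$. Next I regroup~\eqref{dD} using~\eqref{e:mij} (the integral coefficients of $\~y_{11}(L,\cdot)$ in $w_{11}$ and of $\~y_{22}(L,\cdot)$ in $w_{22}$ coincide) to obtain
\[
2\Delta'(z)=I_1(z)\big(\~y_{11}(L,z)-\~y_{22}(L,z)\big)-I_2(z)\,\~y_{12}(L,z)+I_3(z)\,\~y_{21}(L,z),
\]
where $I_1(z)=\int_0^L(\~y_{11}\~y_{12}+\~y_{21}\~y_{22})\,\d x$, $I_2(z)=\int_0^L(\~y_{11}^2+\~y_{21}^2)\,\d x$ and $I_3(z)=\int_0^L(\~y_{12}^2+\~y_{22}^2)\,\d x$, all integrands evaluated at the same $z$. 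Evaluating at $z=\zeta$ kills the first and third terms, so $0=2\Delta'(\zeta)=-c\,I_2(\zeta)$. Because $\~Y(x,z)$ is real for real $z$ (the coefficient matrix in~\eqref{e:modZS}/\eqref{zs2} being real on the real axis) and $\~y_1(0,\zeta)=(1,0)^T$, one has $I_2(\zeta)=\int_0^L|\~y_1(x,\zeta)|^2\,\d x>0$, whence $c=0$ and $\~Y(L,\zeta)=\epsilon\,\mathbb{I}$.

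With $\~Y(L,\zeta)=\epsilon\,\mathbb{I}$ in hand I would differentiate the displayed identity once more in $z$ and evaluate at $\zeta$. Since $\~y_{11}(L,\cdot)-\~y_{22}(L,\cdot)$, $\~y_{12}(L,\cdot)$ and $\~y_{21}(L,\cdot)$ all vanish at $\zeta$, every term carrying $I_j'(\zeta)$ drops out, leaving
\[
2\Delta''(\zeta)=I_1(\zeta)\big(w_{11}(L,\zeta)-w_{22}(L,\zeta)\big)-I_2(\zeta)\,w_{12}(L,\zeta)+I_3(\zeta)\,w_{21}(L,\zeta),
\]
with $w_{ij}=\~y_{ij}'(L,\cdot)$. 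Substituting $\~y_{11}(L,\zeta)=\~y_{22}(L,\zeta)=\epsilon$ and $\~y_{12}(L,\zeta)=\~y_{21}(L,\zeta)=0$ into~\eqref{e:mij} gives $w_{11}(L,\zeta)-w_{22}(L,\zeta)=2\epsilon I_1(\zeta)$, $w_{12}(L,\zeta)=\epsilon I_3(\zeta)$ and $w_{21}(L,\zeta)=-\epsilon I_2(\zeta)$, so that
\[
\Delta''(\zeta)=\epsilon\big(I_1(\zeta)^2-I_2(\zeta)I_3(\zeta)\big)=-\Delta(\zeta)\Big(\|\~y_1\|_{L^2}^2\,\|\~y_2\|_{L^2}^2-\langle\~y_1,\~y_2\rangle_{L^2}^2\Big),
\]
the norms and inner product being those of $L^2([0,L],\Real^2)$ at $z=\zeta$. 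By the Cauchy--Schwarz inequality the bracket is $\ge0$, and it is strictly positive because $\~y_1(\cdot,\zeta)$ and $\~y_2(\cdot,\zeta)$ cannot be proportional (their Wronskian equals $\det\~Y\equiv1$). Hence $\Delta''(\zeta)\neq0$, and $\zeta$ is a double root, completing the classification.

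I expect the main obstacle to be ruling out that, at a degenerate band edge, $\~Y(L,\zeta)$ is a genuine Jordan block (the case $c\neq0$), which would obstruct the clean second-derivative computation above; the mechanism that defeats it is the exact relation $2\Delta'(\zeta)=-c\,I_2(\zeta)$ together with the strict positivity of $I_2(\zeta)$. The only other point requiring care is the reality of $\~Y$ on the real axis, which is what makes $I_2(\zeta)>0$ and the Cauchy--Schwarz bracket sign-definite; this is immediate from the real form~\eqref{zs2} of the modified scattering problem.
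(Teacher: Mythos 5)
Your proof is correct and follows essentially the same route as the paper's own: reduce to showing $\Delta''(\zeta)\neq0$ wherever $\Delta'(\zeta)=0$, deduce $\~Y(L,\zeta)=\pm\I$, and then compute $2\Delta''(\zeta)=\pm2\big(I_1^2-I_2I_3\big)$ from \eqref{e:mij} and \eqref{dD}, which is nonzero by Cauchy--Schwarz together with the linear independence of $\~y_1$ and $\~y_2$. Your intermediate step deriving $\~y_{12}(L,\zeta)=0$ from the identity $0=2\Delta'(\zeta)=-\~y_{12}(L,\zeta)\,I_2(\zeta)$ with $I_2(\zeta)>0$ (using the reality of $\~Y$ on the real axis) supplies a justification for a fact the paper's proof simply asserts, and you handle the periodic and antiperiodic cases uniformly via $\epsilon=\Delta(\zeta)$ rather than treating only the periodic case.
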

\begin{proof}
Here we only prove the result for the periodic band edges $\hat{\zeta}_j$. 
The proof for the antiperiodic band edges follows similarly. 
First we claim that $\~y_{21}(L,\hat{\zeta}_j)=0$. 
At $z=\hat{\zeta}_j$, we also have
\bse
\begin{gather}
    \~y_{11}(L,\hat{\zeta}_j)+\~y_{22}(L,\hat{\zeta}_j)=2,\qquad
    \~y_{12}(L,\hat{\zeta}_j)=0.
\end{gather}
\ese
Hence, we have
\be
	\~y_{11}(L,\hat{\zeta}_j)=\~y_{22}(L,\hat{\zeta}_j)=1.
\ee
So from \eqref{e:mij}, we obtain
\begin{gather*}
    w_{11}(L,\hat{\zeta}_j)=\int_0^L(\~y_{21}\~y_{22}+\~y_{11}\~y_{12})(x)\d x,\qquad
    w_{21}(L,\hat{\zeta}_j)=-\int_0^L(\~y_{21}^2+\~y_{11}^2)(x)\d x,\\
    w_{12}(L,\hat{\zeta}_j)=\int_0^L(\~y_{22}^2+\~y_{12}^2)(x)\d x,\qquad
    w_{22}(L,\hat{\zeta}_j)=-\int_0^L(\~y_{12}\~y_{11}+\~y_{22}\~y_{21})(x)\d x.
\end{gather*}
From \eqref{dD}, we have 
\be
	2\frac{d^2\Delta}{d\zeta^2}=2\left\{\left(\int_0^L(\~y_{21}\~y_{22}+\~y_{11}\~y_{22})(x)\d x\right)^2-\left(\int_0^L(\~y_{21}^2+\~y_{11}^2)\d x\right)\left(\int_0^L(\~y_{22}^2+\~y_{21}^2)\d x\right)\right\}\leq 0\,,
\ee
where the last inequality follows from the Cauchy-Schwarz inequality. 
The equality sign occurs when $\~y_1$ and $\~y_2$ are linearly dependent. 
This can never happen, however, since $\~y_1$ and $\~y_2$ are linearly independent. 
Therefore, we have $d^2\Delta/d\zeta^2<0$.
\end{proof}

\begin{proposition}
There is exactly one Dirichlet eigenvalue $\mu_j$ in each degenerate or non-degenerate spectral gap.  
\label{p:oneDirichlet}
\end{proposition}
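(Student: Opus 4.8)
The plan is to combine one algebraic identity for the monodromy matrix with the two sign relations already proved in this appendix. Writing $\tilde M(z)=\tilde Y(L,z)=\big(\tilde y_{ij}(L,z)\big)$, the relations $\Delta=\tfrac12\tr\tilde M$ and $\det\tilde M\equiv1$ give, for $z\in\R$,
\[
\Delta^2(z)-1=\tfrac14\big(\tilde y_{11}(L,z)-\tilde y_{22}(L,z)\big)^2+\tilde y_{12}(L,z)\,\tilde y_{21}(L,z).
\]
An immediate consequence is that no zero of $\tilde y_{12}(L,\cdot)$ can lie in an open band: if $-1<\Delta(z_0)<1$, the right-hand side must be negative, so $\tilde y_{12}(L,z_0)\ne0$. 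Since, by the preceding propositions, every Dirichlet eigenvalue is a real and simple zero of $\tilde y_{12}(L,\cdot)$, every Dirichlet eigenvalue lies in a closed spectral gap, and it remains to show that each closed gap contains exactly one of them.

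For existence in a non-degenerate gap $[E_{2n-1},E_{2n}]$ I would use the intermediate value theorem. Since $\Delta$ is continuous and $\Delta^2>1$ on the open gap, $\Delta$ has constant sign there; assume $\Delta>1$ (the case $\Delta<-1$ is entirely analogous). Then $\Delta(E_{2n-1})=\Delta(E_{2n})=1$, and non-degeneracy forces $\Delta'(E_{2n-1})>0$, $\Delta'(E_{2n})<0$; hence, by the relation $\sgn\Delta'(z)=\sgn\tilde y_{21}(L,z)$, we get $\tilde y_{21}(L,E_{2n-1})>0$ and $\tilde y_{21}(L,E_{2n})<0$. Evaluating the displayed identity at the two edges, where $\Delta^2-1=0$, yields $\tilde y_{12}(L,E_{2n-1})\le0\le\tilde y_{12}(L,E_{2n})$, so $\tilde y_{12}(L,\cdot)$ vanishes somewhere in the closed gap. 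For a degenerate gap $\{\hat\zeta\}$ I would instead use \eqref{e:mij}: here $\Delta'(\hat\zeta)=0$ forces $\tilde y_{21}(L,\hat\zeta)=0$, and the identity then forces $\tilde y_{11}(L,\hat\zeta)=\tilde y_{22}(L,\hat\zeta)$. Substituting these values into the expressions for $w_{11}(L,\cdot)$ and $w_{22}(L,\cdot)$ in \eqref{e:mij} makes their sum collapse to $-\tilde y_{12}(L,\hat\zeta)\int_0^L\big(\tilde y_{11}^2+\tilde y_{21}^2\big)(x,\hat\zeta)\,\d x$, which by \eqref{dD} equals $2\Delta'(\hat\zeta)=0$; since the integral is strictly positive (because $\tilde y_{11}(0,\hat\zeta)=1$), we conclude $\tilde y_{12}(L,\hat\zeta)=0$, i.e.\ $\hat\zeta$ itself is the Dirichlet eigenvalue of that gap.

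The main point is uniqueness: at most one Dirichlet eigenvalue per closed gap. I would first determine the sign of $\tilde y_{11}(L,\cdot)$ at any Dirichlet eigenvalue $\mu$ of a given closed gap. The identity gives $\tilde y_{11}(L,\mu)\tilde y_{22}(L,\mu)=1$, so these two numbers have the same sign, while their sum equals $2\Delta(\mu)$, which is $\ge2$ on a gap where $\Delta\ge1$ and $\le-2$ on a gap where $\Delta\le-1$; hence $\sgn\tilde y_{11}(L,\mu)=\sgn\Delta(\mu)$, a sign that is the \emph{same} for all Dirichlet eigenvalues of that gap. On the other hand, evaluating the formula for $w_{12}(L,\cdot)$ in \eqref{e:mij} at $\mu$, where $\tilde y_{12}(L,\mu)=0$, gives $\partial_z\tilde y_{12}(L,\mu)=\big(\int_0^L(\tilde y_{22}^2+\tilde y_{12}^2)(x,\mu)\,\d x\big)\,\tilde y_{11}(L,\mu)$, so $\partial_z\tilde y_{12}(L,\mu)$ has the same sign as $\tilde y_{11}(L,\mu)$ — again the same sign for all Dirichlet eigenvalues of the gap. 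But all zeros of $\tilde y_{12}(L,\cdot)$ are simple and confined to the closed gaps, so the derivative of this $C^1$ function at two consecutive Dirichlet eigenvalues of one gap would have opposite signs, a contradiction. Hence each closed gap contains at most one, and combined with the existence statements, exactly one Dirichlet eigenvalue.

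I expect the main obstacle to be making the uniqueness argument airtight in the borderline configurations, in particular when a Dirichlet eigenvalue sits exactly on a band edge (so it is shared by a closed gap and an adjacent band) or when both edges of a non-degenerate gap are Dirichlet eigenvalues. To handle these cleanly one must note, using the part of Theorem~\ref{spectraprop} stating that no main eigenvalue has order greater than $2$, that distinct closed gaps have disjoint closures, so that each zero of $\tilde y_{12}(L,\cdot)$ is unambiguously assigned to a single gap; the ``consecutive simple zeros'' argument is then applied uniformly to the endpoints as well as to interior points. A minor secondary point is the strict positivity of the integrals $\int_0^L(\tilde y_{11}^2+\tilde y_{21}^2)\,\d x$ and $\int_0^L(\tilde y_{22}^2+\tilde y_{12}^2)\,\d x$ at the relevant parameter values, which is immediate from the normalization $\tilde Y(0,z)=\mathbb I$.
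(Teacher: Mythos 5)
Your proof is correct, and it rests on the same ingredients as the paper's own argument: the sign relation $\sgn\Delta'(z)=\sgn\tilde y_{21}(L,z)$ from \eqref{dd}, the variation-of-parameters formulas \eqref{e:mij}, unimodularity of the monodromy matrix, the intermediate value theorem for existence, and the ``consecutive simple zeros have derivatives of opposite sign'' argument for uniqueness. The one genuine difference is which off-diagonal entry you track. The paper runs both halves of its proof on $\tilde y_{21}(L,\cdot)$ — its existence step produces a zero of $\tilde y_{21}$, and its uniqueness step controls $w_{21}(L,\mu_j)=-\tilde y_{22}(L,\mu_j)\int_0^L(\tilde y_{21}^2+\tilde y_{11}^2)\,\d x$ — even though Proposition~\ref{p:dirzeros} identifies the Dirichlet spectrum with the zero set of $\tilde y_{12}(L,\cdot)$. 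You instead work consistently with $\tilde y_{12}$, using the determinant identity $\Delta^2-1=\tfrac14(\tilde y_{11}-\tilde y_{22})^2+\tilde y_{12}\tilde y_{21}$ to convert the sign of $\tilde y_{21}$ at the band edges into sign information for $\tilde y_{12}$, and then controlling $w_{12}(L,\mu)=\big(\int_0^L(\tilde y_{22}^2+\tilde y_{12}^2)\,\d x\big)\,\tilde y_{11}(L,\mu)$ for uniqueness. This buys internal consistency with the definition of the Dirichlet spectrum, at the cost of one extra algebraic step; your explicit handling of degenerate gaps (where $\Delta'(\hat\zeta)=0$ forces $\tilde y_{21}(L,\hat\zeta)=0$ and hence $\tilde y_{12}(L,\hat\zeta)=0$) is likewise slightly more complete than the paper's, which absorbs that case into the earlier proposition on the order of the main eigenvalues. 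Your concern about borderline configurations is resolved exactly as you suggest: adjacent closed gaps are separated by bands of positive length, since $\Delta$ must traverse from $+1$ to $-1$ across each band, so the closed gaps are pairwise disjoint and the consecutive-zeros argument applies verbatim, including at endpoints.
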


\begin{proof}
We first prove that there is at least one Dirichlet eigenvalue in each spectral gap.
At two adjacent band edges $\zeta_j$ and $\zeta_{j+1}$, $\D'(\zeta_j)$ and $\D'(\zeta_{j+1})$ have different signs. 
From \eqref{dd} this means that $\~y_{21}(L,\zeta_j)$ and $\~y_{21}(L,\zeta_{j+1})$ have different signs. 
By continuity, there must be $\mu_j$ between $\zeta_j$ and $\zeta_{j+1}$, such that $\~y_{21}(L,\mu_j)=0$. 
We have thus proved the first part of the proposition.

Next we prove that there is at most one Dirichlet eigenvalue in each spectral gap.  Since $\det\~Y(x;z)=1$, we have 
$\~y_{11}(L,\mu_j)\~y_{22}(L,\mu_j)-\~y_{12}(L,\mu_j)\~y_{21}(L,\mu_j)=1$,
implying 
\begin{equation*}
	\~y_{22}(L,\mu_j)=\frac{1}{\~y_{11}(L,\mu_j)},
\qquad
    2\Delta(\mu_j)=\~y_{11}(L,\mu_j)+\frac{1}{\~y_{11}(L,\mu_j)}.
\end{equation*}
Therefore,
\be
\~y_{11}(L,\mu_j)=\Delta(\mu_j)+\sqrt{\Delta^2(\mu_j)-1}.
\ee
It is clear that $\~y_{11}(L,\mu_j)>0$ if $\Delta(\mu_j)>1$, and $\~y_{11}(L,\mu_j)<0$ if $\Delta(\mu_j)<-1$.
Then we have 
\be
	w_{21}(L,\mu_j)=-\~y_{22}(L,\mu_j)\int_0^L(\~y_{21}^2+\~y_{11}^2)\d x=-\frac{1}{\~y_{11}(L,\mu_j)}\int_0^L(\~y_{21}^2+\~y_{11}^2)\d x\,.
\ee
This implies that $w_{21}(L,\mu_j)$ can only have a definite sign in each unstable band, so that $\~y_{21}(L,\mu_j)$ is monotonic in one band. 
This means that there is at most one $\mu_j$ in each unstable band.
\end{proof}

\subsection{Asymptotics of the modified fundamental matrix as $z\to\infty$}
\label{a:Ytildeasymp}

\begin{proof}[Proof of Lemma~\ref{l:asymY}]
We want to derive the asymptotic expansion of $Y(x,z)$ as $z\to\infty$ up to order $1/z^2$.
Let
\be
\label{e:defmuxz}
\mu(x,z)=Y(x,z)\,\e^{\i zx\sigma_3}\,,
\ee
and expand $\mu(x,z)$ as  
\be
\mu(x,z)= \sum_{n=0}^\infty \frac{\mu^{(n)}(x,z)}{z^n}\,.
\label{e:muexpansion}
\ee 
If $q\in L^2([0,L],\Complex)$, 
integration by parts on the integral equation~\eqref{phi} 
then yields the asymptotics of $\mu$ as $z\to\infty$ along the real axis:
\bse
\label{e:asymmu}
\begin{gather}
\begin{align}
&\mu^{(0)}(x,z) = \I\,,\qquad 
\mu^{(1)}(x,z) = \frac{1}{2\i z}\sigma_3\left(\bigg(Q(x)-\e^{-2\i zx\sigma_3}Q(0)\bigg)-\int_0^xQ^2(s)\d s\right)\,,\\
&\begin{aligned}
\mu^{(2)}(x,z) = \frac{1}{4z^2}\Bigg(&Q_x(x)-\e^{-2\i zx\sigma_3}Q_x(0)-\bigg(\e^{2 \i z x\sigma_3} Q(x) - Q(0)\bigg)Q(0)+\int_0^x Q(s)  Q_x(s) \d s
	\\&\kern8em
	-Q(x)\int_0^x Q^2(s)\d s -\half\bigg(\int_0^x Q^2(s)\d s\bigg)^2-\e^{-2\i zx\sigma_3}\bigg(Q^2(s)\d s \bigg)Q(0)
	\Bigg)\,.
	\end{aligned}
\end{align}
\end{gather}
\ese
Combining these expressions then yields~\eqref{asymY} for $z\in\Real$.

When $z\not\in\Real$, it is necessary to consider separately the first and second column of $\mu(x,z)$.
Nonetheless, one can rigorously prove \cite{BOT_JST2023} that, 
if $q\in C^2([0,L])$, the asymptotic behavior of each of the columns of $\mu(x,z)$ also holds as $z\to\infty$ 
for $z\in\overline{\Complex^+}$
(e.g., see Lemmas~6.1 and~Lemma~6.2 in \cite{BOT_JST2023} for the first and second column of $\mu$, respectively).
For brevity we will not repeat the proof here.

Finally, the asymptotic behavior as $z\to\infty$ in the lower-half plane is obtained by noting that, 
if $A(x,z)$ is any matrix solution of~\eqref{e:zs}, so is $\sigma_2 A^*(x,z^*)$.
Then, taking into account that $Y(0,z) = \I$ for all $z\in\Complex$, we have
\be
Y(x,z^*) = \sigma_2 Y^*(x,z) \sigma_2\,,
\ee
which yields the desired behavior as $z\to\infty$ in $\overline{\Complex^-}$.
\end{proof}

Next, combining \eqref{e:defmuxz}, \eqref{e:muexpansion} and \eqref{e:asymmu}, and 
recalling the definition~\eqref{e:Ytildedef} of $\~Y(x,z)$, 
tedious but straightforward calculations then yield the following asymptotic behavior for 
the entries of $\~Y(x,z)$ as $z\to \infty$ along the real axis:
\bse
\begin{align}
\label{asymy11tilde}
&\begin{aligned}
	\~y_{11}(x,z)=\frac{1}{2}\e^{-\i zx}\Bigg(1-\frac{1}{2\i z}K(x)-\frac{1}{2\i z}&(q(0)+q^*(x))+\frac{1}{4z^2}\bigg(|q(0)|^2+\Lambda(x)-\half K^2(x)
	\\
	&-q(0)q^*(x)+q_x^*(x)-q^*(x)K(x)-q(0)K(x)-q_x(0)\bigg)+O\left(\frac{1}{z^3}\right)\Bigg)
	\\
	+\frac{1}{2}\e^{\i zx}\Bigg(1+\frac{1}{2\i z}K(x)+\frac{1}{2\i z}&(q(x)+q^*(0))+\frac{1}{4z^2}\bigg(|q(0)|^2+\Lambda^*(x)-\half K^2(x)
	\\
	&-q^*(0)q(x)+q_x(x)-q(x)K(x)-q^*(0)K(x)-q_x^*(0)\bigg)+O\left(\frac{1}{z^3}\right)\Bigg)\,,
\end{aligned}
\\
\label{asymy12tilde}
&\begin{aligned}
		\~y_{12}(x,z)=\frac{\i}{2}\e^{-\i zx}\Bigg(1-\frac{1}{2\i z}K(x)+\frac{1}{2\i z}&(q(0)-q^*(x))+\frac{1}{4z^2}\bigg(|q(0)|^2+\Lambda(x)-\half K^2(x)
	\\
	&+q(0)q^*(x)+q_x^*(x)-q^*(x)K(x)+q(0)K(x)+q_x(0)\bigg)+O\left(\frac{1}{z^3}\right)\Bigg)
	\\
	-\frac{\i}{2}\e^{\i zx}\Bigg(1+\frac{1}{2\i z}K(x)+\frac{1}{2\i z}&(q(x)-q^*(0))+\frac{1}{4z^2}\bigg(|q(0)|^2+\Lambda^*(x)-\half K^2(x)
	\\
	&+q^*(0)q(x)+q_x(x)-q(x)K(x)+q^*(0)K(x)+q_x^*(0)\bigg)+O\left(\frac{1}{z^3}\right)\Bigg)\,,
   \end{aligned}
	\\
&\begin{aligned}
\label{asymy21tilde}
	\~y_{21}(x,z)=-\frac{\i}{2}\e^{-\i zx}\Bigg(1-\frac{1}{2\i z}K(x)+\frac{1}{2\i z}&(q^*(x)-q(0))+\frac{1}{4z^2}\bigg(|q(0)|^2+\Lambda(x)-\half K^2(x)
	\\
	&+q(0)q^*(x)-q_x^*(x)+q^*(x)K(x)-q(0)K(x)-q_x(0)\bigg)+O\left(\frac{1}{z^3}\right)\Bigg)
	\\
	+\frac{\i}{2}\e^{\i zx}\Bigg(1+\frac{1}{2\i z}K(x)+\frac{1}{2\i z}&(q^*(0)-q(x))+\frac{1}{4z^2}\bigg(|q(0)|^2+\Lambda^*(x)-\half K^2(x)
	\\
	&+q^*(0)q(x)-q_x(x)+q(x)K(x)+q^*(0)K(x)-q_x^*(0)\bigg)+O\left(\frac{1}{z^3}\right)\Bigg)\,,
\end{aligned}
\\
	&\begin{aligned}\label{asymy22tilde}
		\~y_{22}(x,z)=\frac{1}{2}\e^{-\i zx}\Bigg(1-\frac{1}{2\i z}K(x)+\frac{1}{2\i z}&(q(0)+q^*(x))+\frac{1}{4z^2}\bigg(|q(0)|^2 + \Lambda(x)-\half K^2(x)
		\\
		&-q(0)q^*(x)-q_x^*(x)+q^*(x)K(x)+q(0)K(x)+q_x(0)\bigg)+O\left(\frac{1}{z^3}\right)\Bigg)
		\\
		+\frac{1}{2}\e^{\i zx}\Bigg(1+\frac{1}{2\i z}K(x)-\frac{1}{2\i z}&(q(x)+q^*(0))+\frac{1}{4z^2}\bigg(|q(0)|^2+\Lambda^*(x)-\half K^2(x)
		\\
		&-q^*(0)q(x)-q_x(x)+q(x)K(x)+q^*(0)K(x)+q_x^*(0)\bigg)+O\left(\frac{1}{z^3}\right)\Bigg)\,,
	\end{aligned}
\end{align}
\ese
where 
\be
K(x) = \int_0^x \ |q(s)|^2\d s\,,\qquad
\Lambda(x) = \int_{0}^x q(s)q_x^*(s)\d s\,.
\label{e:Kdef}
\ee
Importantly, similarly to~\eqref{asymY},
these expressions also hold off the real $z$-axis, for the same reasons as in the proof of Lemma~\ref{l:asymY}.
The above expressions are used in the proof of Lemma~\ref{asympsi0}.

\subsection{Asymptotics of the modified Bloch-Floquet solutions as $z\to\infty$}
\label{asymofpsipm}

Here we derive the expressions in Proposition~\ref{p:BFyexpansion} for $\psi^\pm(x,z)$,
as well as their asymptotic behavior as $z\to\infty$ given in Propositions~\ref{p:asympsi}
and~\ref{asympsi0}.

\begin{proof}[Proof of Proposition~\ref{p:BFyexpansion}]
Since $\~y_1$ and $\~y_2$ are independent solutions, we can expand each 
Bloch eigenfunction $\psi = (\psi_1,\psi_2)^T$ in terms of them:
\be
\psi(x,z) = c_1 \~y_1(x,z) + c_2 \~y_2(x,z)
\label{e:BF_yexpansion}
\ee
We can also express $\~y_1(x+L,z) = (\~y_{11}(x+L,z),\~y_{21}(x+L,z))^T$ in a similar way:
\bse
\label{e:ytildex+L}
\begin{gather}
\~y_1(x+L,z) = \~y_{11}(L,z) \~y_1(x,z) + \~y_{21}(L,z) \~y_2(x,z)\,,
\\
\~y_2(x+L,z) = \~y_{11}(L,z) \~y_1(x,z) + \~y_{22}(L,z) \~y_2(x,z)\,,
\end{gather}
\ese
where the coefficients of the expansion are easily obtained by evaluating the above expressions at $x = 0$.
Evaluating~\eqref{e:BF_yexpansion} at $x+L$ and using~\eqref{e:ytildex+L}, we then have
\be
\psi(x+L,z) = c_1 (\~y_{11}(L,z) \~y_1(x,z) + \~y_{21}(L,z) \~y_2(x,z) ) + c_2 (\~y_{12}(L,z) \~y_1(x,z) + \~y_{22}(L,z) \~y_2(x,z) )\,.
%
\ee
Hence,
\be
	c_1(\~y_{11}(L)-\rho)+c_2\~y_{12}(L)=0,
\qquad
	c_1\~y_{21}(L)+c_2(\~y_{22}(L)-\rho)=0.
\ee
Recalling that the Floquet multipliers $\rho^{\pm1}$ are precisely the values for which the determinant of the above system is zero,
solving for $c_1$ and $c_2$, 
and imposing the normalization $\psi_1(0) = 1$,
one then obtains~\eqref{BF}.
\end{proof}

\begin{proof}[Proof of Proposition~\ref{p:asympsi}]
From \eqref{rho} and \eqref{asymY}, as $z\to\infty$, we have
\be
\label{asymprhoinverse}
\rho^{-1}(z) = 
\e^{\mp\i zL}\bigg(1 \mp \frac{1}{2\i z}K_o+O(1/z^2)\bigg), \qquad z\in\Complex^\pm\,,
\ee
with $K_o = K(L)$ and $K(x)$ is as in~\eqref{e:Kdef}, implying 
\begin{align}
\log(\rho(z)) = \pm \i zL \pm \frac{1}{2\i z}K_o+O(1/z^2),\qquad z\in\Complex^\pm\,.
\label{e:logrhoasymp}
\end{align}
We thus have the following asymptotic behavior for $\rho^{-x/L}(z)$:
\begin{equation}
\rho^{-x/L}(z) = 
			\e^{\mp\i zx}\bigg(1\mp\frac{K_o x}{2\i L z}+O(1/z^2)\bigg),\qquad z\in\Complex^\pm\,.
\label{e:rhoexpxoverL}
\end{equation}
\bm
Then Proposition~\ref{p:BFyexpansion} follows from Proposition~\ref{p:BFyexpansion} and the asymptotic behavior of $\~Y(z)$.  
On the other hand, we next provide an alternative proof.
\em
By Floquet's theorem, we can write 
\be
	(\psi^-(x,z),\psi^+(x,z)) =: P(x,z)\,\rho^{-x/L\sigma_3}(z)\,,
\label{Prho}
\ee
where $P(x,z) = (p^-(x,z),p^+(x,z))$ 
and where $p^\pm(x,z)$ are periodic in~$x$ with period $L$. 
Inserting~\eqref{Prho} into \eqref{e:modZS} we have
\be
	P_x(x,z)=L^{-1}\log(\rho(z))P(x,z)\sigma_3+\i z\sigma_2P(x,z)+\tilde{Q}(x)P(x,z)\,,
\label{e:P_ODE}
\ee
where $\tilde{Q}(x)=UQU^{-1}$.
To compute the asymptotic behavior of $P(x,z)$, we
convert the above first-order system of ODEs into a scalar second-order ODE.
Consider $p^-(x,z)$ first for simplicity. 
Equation~\eqref{e:P_ODE} implies
\be
	p_x^-=L^{-1}\log(\rho(z))P(x,z)+\i z\sigma_2P(x,z)+\tilde{Q}(x)P(x,z)\,,
\ee
which can be converted to integral form as
\vspace*{-0.6ex}
\begin{multline}
	p^-(x,z)=p^-(0,z)-\frac{L}{2\log\rho}p_x^-(0,z)(1-\rho^{2x/L})
\\
-\frac{L}{2\log\rho}\int_0^x(1-\rho^{-2(s-x)/L})(-\frac{1}{L^2}(\log\rho)^2-z^2+|q|^2+\tilde{Q}_x)p^-(s,z)\d s\,.
\end{multline}
Now we consider the Neumann series expansion
\bse
\label{defp}
\begin{gather}
p^-(x,z) = \sum_{n\in\Natural} p^{(n)}(x,z),
\qquad
p^{(0)}(x,z) = p^-(0,z)-\frac{L}{2\log\rho}p_x^-(0,z)(1-\rho^{2x/L}).
\\
p^{(n+1)}(x,z) = -\frac{L}{2\log\rho}\int_0^x(1-\rho^{-2(s-x)/L})
  \bigg(-\frac{1}{L^2}(\log\rho)^2-z^2+|q|^2+\tilde{Q}_x\bigg)\,p^{(n)}(s,z)\,\d s\,,
\qquad n\in\Natural\,.
\end{gather}
\ese
The asymptotic behavior of $\log\rho$ in~\eqref{e:logrhoasymp} implies
\begin{align}
    &-\frac{1}{L^2}(\log\rho)^2-z^2=-K_o/L+O(1/z),\label{asymln}\\
\noalign{\noindent and, in turn,}
&\frac{L}{2\log\rho}= \pm \frac{1}{2\i z} \pm \frac{K_o}{4\i z^3}+O(z^{-4})\,,\qquad z\in\Complex^\pm\,.
\label{asymL/ln}
\end{align}
Equation~\eqref{asymln} allows one to prove that the Neumann series~\eqref{defp} is convergent. 
Next we compute the asymptotic behavior of $p^{(0)}$.
From~\eqref{BF} we have
\vspace*{-0.8ex}
\be
	p^-(0,z)=\psi^-(0,z) = \begin{pmatrix} 1 \\ \frac{\rho^{-1}-\~y_{11}(L,z)}{\~y_{12}(L,z)} \end{pmatrix}\,.
\ee
Moreover, from \eqref{e:modZS}, we have $\psi^\pm_x=(-\i z\sigma_2+\tilde{Q}(x))\psi^\pm$, which yields
	\be
	\psi_x^-(0,z)=\~y_{1,x}(0,z)+\frac{\rho^{-1}-\~y_{11}(L,z)}{\~y_{12}(L,z)}\~y_{2,x}(0,z)=\left(\begin{array}{c}q_\re(0)+(z+q_\im(z))\psi_2^-(0,z)\\-z+q_\im(0)-q_\re(0)\psi^-_2(0,z)\end{array}\right).
	\ee
Thus we have 
\be
	\label{p-x0}
	p_x^-(0,z)=\frac{1}{L}\log\rho(z)\,\psi^-(0,z)+\psi_x^-(0,z)\,.
\ee
We compute the asymptotic behavior of the two components of $p^{(0)}$ separately.
Equation~\eqref{e:rhoexpxoverL} implies $\rho^{2x/L}(z) = 1+o(1)$ as $z\to\infty$ for $z\in\overline{\Complex^+}$ and $x\in(0,L)$ and for $z\in\overline{\Complex^-}$ and $x\in(-L,0)$.
Combining \eqref{defp}, \eqref{asymL/ln} and \eqref{p-x0}, the asymptotic behavior of the first component of $p^{(0)}$ can be expressed as
\begin{align}
p^{(0)}_1(x,z) =1-\half-\frac{L}{2\log\rho}q_\re(0)-\frac{L}{2\log\rho}(z+q_\im(0))\psi^-_2(0,z)
    = 1+O(1/z^2)
\end{align}
as $z\in\overline{\Complex^+}$ and $x\in(0,L)$, 
where the subscripts 1 and 2 denote the components of $p^{(0)}$. 
Similarly, as $z\in\overline{\Complex^-}$ and $x\in(-L,0)$, we have 
\be
	p^{(0)}_1(x,z)=1+O(1/z^2)\,.
\ee
For the second component of $p^{(0)}$, as $z\in\overline{\Complex^+}$ and $x\in(0,L)$ we have
\begin{align}
p^{(0)}_2(x,z) = \psi^-_2(0,z)-\half\psi_2^-(0,z)-\frac{L}{2\log\rho}(z-q_\im(0)+q_\re(0)\psi_2^-(0,z))
        = -\i-\frac{q^*(0)}{z}+O(1/z^2)\,.
\end{align}
Similarly, as $z\in\overline{\Complex^-}$ and $x\in(-L,0)$, we have 
\be
p^{(0)}_2(x,z)=\i-\frac{q(0)}{z}+O(1/z^2)\,.
\ee
Next we consider the asymptotic behavior of $p^{(1)}$.
It is obvious that $ 1-\rho^{-2(s-x)/L} = 1 + O(1/z)$ for $z\in\overline{\Complex^+}$ and $0 < s\le x< L$ and for $z\in\overline{\Complex^-}$ and 
$- L < x \le s < 0$, 
so that 
\be
    p^{(1)}(x,z)=-\frac{L}{2\log\rho}\int_0^x(-K_o/L+|q|^2+\tilde{Q}_x)\,p^{(0)}\d s\,.
\ee
Therefore, we can write the first element of $p^{(1)}$ as
	\be
	p^{(1)}_1(x,z)=-\frac{L}{2\log\rho}\left[\int_0^x(-K_o/L+|q|^2+q_{\re,x})p^{(0)}_1\d s+\int_0^x(q_{\im,x})p^{(0)}_2\d s\right]\,.
	\ee
Plugging the asymptotic behaviors of $p^{(0)}_1$, $p^{(0)}_2$ and ${L}/({2\log\rho})$ into the above equation, 
we obtain the asymptotic expression for $p^{(1)}_1$:
\bse
\be
	p^{(1)}_1=\frac{K_ox}{2\i zL} - \frac{1}{2\i z}\int_0^x|q|^2\d s-\frac{1}{2\i z}(q^*(x)-q^*(0))+O(1/z^2)\,.
\ee
Moreover, following the same method, we have the following asymptotic behavior of $p^{(1)}_2$:
\be
	p^{(1)}_2=-\frac{K_ox}{2zL} + \frac{1}{2z}\int_0^x|q|^2\d s-\frac{1}{2z}(q^*(0)-q^*(x))+O(1/z^2)\,.
\ee
\ese
Therefore, the Neumann series for $p^-$ yields
\bse
\begin{gather}
p^-(x,z)= \begin{pmatrix} 1 \\ -\i \end{pmatrix}
  + \begin{pmatrix} -\i \\ -1 \end{pmatrix} \frac{K_o x}{2z L}
  + \begin{pmatrix} \i \\ 1 \end{pmatrix} \frac1{2z} \int_0^x|q(s)|^2\,\d s
  + \begin{pmatrix} \i \\ -1 \end{pmatrix} \frac 1{2 z}q^*(x) +  \begin{pmatrix} -\i \\ -1 \end{pmatrix} \frac 1{2 z} q^*(0)+O(1/z^2)\,,\quad z\in\Complex^+\,,\\
  p^-(x,z)= \begin{pmatrix} 1 \\ \i \end{pmatrix}
  + \begin{pmatrix} \i \\ -1 \end{pmatrix} \frac{K_o x}{2z L}
  + \begin{pmatrix} -\i \\ 1 \end{pmatrix} \frac1{2z} \int_0^x|q(s)|^2\,\d s
  + \begin{pmatrix} -\i \\ -1 \end{pmatrix} \frac 1{2 z}q(x) +  \begin{pmatrix} \i \\ -1 \end{pmatrix} \frac 1{2 z} q(0)+O(1/z^2)\,,\quad z\in\Complex^-\,.
\end{gather}
\ese
Combining~\eqref{asymprhoinverse}, \eqref{Prho} and the above expression, we obtain~\eqref{asympsi-}.
Similar calculations yield~\eqref{asympsi+}.  
The details are omitted for brevity.
Importantly, note that $K_o$ does not appear there.
\end{proof}

\begin{proof}[Proof of Proposition \ref{asympsi0}]
From (\ref{rho}), it is obvious that, as $z\to\infty$ in $\Complex$,  
	\be
	\rho(z)=O(1/\Delta(z)),\qquad
	\rho^{-1}(z)=2\Delta(z)+O(1/\Delta(z)).
	\ee
From Proposition \ref{asymD}, $O(\Delta^{-1}(z))$ decays exponentially as $z\to\infty$ and $z\in\Complex$. Therefore, 
we obtain the asymptotic behavior for $\psi_2^-(x,z)$ as:
\be
\label{psi-20}
    \psi_2^-(0,z)=\frac{2\Delta(z)-\~y_{11}(L,z)}{\~y_{12}(L,z)}=\frac{\~y_{22}(L,z)}{\~y_{12}(L,z)}+O(1/z)
    = \mp\i + O(1/z)\,\qquad z\in\Complex^\pm\,,
\ee
where we have used the asymptotics of $\~y_{12}$ in \eqref{asymy12tilde}  and $\~y_{22}$  in \eqref{asymy22tilde}
(cf.\ Appendix~\ref{a:Ytildeasymp}).
Similarly, using the asymptotics of $\~y_{11}$ in~\eqref{asymy11tilde},
we find the asymptotic behavior of $\psi_2^+(0,z)$ as
\be
\label{psi+20}
    \psi_2^+(0,z) = -\frac{\~y_{11}(L,z)}{\~y_{12}(L,z)}+O(1/z^3)
	= \pm\i + O(1/z)\,,\qquad z\in\Complex^\pm\,.
\ee
Moreover, taking into account the higher order terms in the asymptotic expansion of $\~Y = (\~y_{ij})$, 
\eqref{psi-20} and \eqref{psi+20} yield \eqref{asymppsi-20} and~\eqref{asymppsi+20}.
\end{proof}

\subsection{Order of growth of eigenfunctions and monodromy matrix at infinity}
\label{ordereigenf}
\begin{proposition}
The order of growth of $y_{ij}$ and $\tilde{M}_{12}$ is~1.
\end{proposition}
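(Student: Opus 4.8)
The plan is to squeeze the order of growth between $1$ and $1$. For the upper bound I would start from the Volterra representation~\eqref{phi} of $Y(x,z)$: using $|\e^{-\i z\sigma_3(x-x')}|\le\e^{|z|\,|x-x'|}$ together with a Gronwall estimate yields $|Y(x,z)|\le\e^{|z|x}\exp\!\big(\int_0^x|Q(x')|\,\d x'\big)$, hence $|y_{ij}(x,z)|\le C\,\e^{L|z|}$ for $x\in[0,L]$, with $C$ depending only on $q$. This already shows each $y_{ij}(x,\cdot)$ is entire of order at most~$1$; and since $\~M(z)=U M(z)U^{-1}$ with $M=Y(L,\cdot)$, the entry $\~M_{12}=\~y_{12}(L,\cdot)$ is a fixed linear combination of the $y_{ij}(L,\cdot)$ and is therefore also of order at most~$1$.

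For the matching lower bound I would use the asymptotics already established. By Lemma~\ref{l:asymY}, $y_{11}(x,z)=(1+O(1/z))\e^{\i zx}$ and $y_{22}(x,z)=(1+O(1/z))\e^{-\i zx}$ as $z\to\infty$ in $\Complex$, so for fixed $x\in(0,L]$, evaluation along $z=-\i r$ (resp.\ $z=\i r$) with $r\to+\infty$ gives $\max_{|w|=r}|y_{11}(x,w)|\ge(1+o(1))\e^{xr}$ (resp.\ the same for $y_{22}$); this is incompatible with order strictly less than~$1$, so---combined with the upper bound---$y_{11}(x,\cdot)$ and $y_{22}(x,\cdot)$ have order exactly~$1$. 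For $\~M_{12}$ I would use the expansion recorded in the proof of Proposition~\ref{p:spectralproducts}, namely $\~y_{12}(L,z)=-\sin(Lz)\,(1+o(1))$ as $|\Im z|\to\infty$; since $|\sin(Lz)|\sim\tfrac12\e^{L|\Im z|}$ along the imaginary axis, $\~M_{12}$ again cannot have order below~$1$, hence has order exactly~$1$.

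The one point that needs care---and the only (mild) obstacle---is the off-diagonal entries $y_{12},y_{21}$: their leading $O(1/z)$ terms in Lemma~\ref{l:asymY} degenerate at those $x$ where $q$ vanishes both at $0$ and at $x$. To treat all $x\in(0,L]$ uniformly I would instead read the lower bound off the entries of $\~Y$: the expansions \eqref{asymy11tilde}--\eqref{asymy22tilde} give $\~y_{11}(x,z),\~y_{22}(x,z)=\cos(zx)(1+O(1/z))$ and $\~y_{12}(x,z),\~y_{21}(x,z)=\pm\sin(zx)(1+O(1/z))$, each manifestly of order exactly~$1$ for every $x>0$ with no nondegeneracy hypothesis, and then transfer to the $y_{ij}$ through $Y=U^{-1}\~YU$ together with the diagonal computation above. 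Everything else is routine bookkeeping with the $z\to\infty$ expansions.
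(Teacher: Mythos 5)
Your proposal is correct and follows essentially the same route as the paper: the upper bound via the Volterra equation with a Gronwall estimate is exactly the paper's Picard iteration summed up (giving $|y_{ij}|\le 2\e^{|z|L}\e^{K^*L}$), and the lower bound is read off the same way, from the $z\to\infty$ asymptotics along the imaginary axis where $|\sin(zL)|,|\cos(zL)|\sim\tfrac12\e^{L|\Im z|}$. Your extra care about the off-diagonal entries $y_{12},y_{21}$ (whose $O(1/z)$ leading coefficients can degenerate when $q$ vanishes at $0$ and at $x$) is a genuine refinement of a point the paper dismisses with ``identical arguments''; just note that transferring the lower bound back through $Y=U^{-1}\~YU$ is itself delicate, since a linear combination of order-one functions can lose the leading exponential by cancellation, so the diagonal entries of $\~Y$ (or of $Y$) remain the safest place to certify order exactly one.
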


\begin{proof}
We prove in detail the claim for $\~M_{12}$;
the proof for $\~y_{ij}$ follows identical arguments.
Recall that an entire function $f$ has order of growth $p$ as $z\to\infty$ if $p$ is the infimum of all positive numbers such that
$|f(z)|\le A\,\e^{B|z|^p}$ in a neighborhood of $\infty$ for some positive constants $A$ and~$B$.
Accordingly, we need to show there exists a constant $C>0$ such that 
\vspace*{-0.4ex}
\be
|\tilde{M}_{12}(z)|\exp(-C|z|)
\label{o1}
\ee
is bounded for all $z$ and a constant $c>0$ such that, for $z=-\i k$ and $k\in\mathbb{R}^+$,
as $k\to\infty$ one has
\be
|\tilde{M}_{12}(z)|\,\exp(-c|z|)\to\infty\,.
\label{o2}
\ee 
We apply Picard's iteration to the differential equation~\eqref{e:zs}. 
Letting $Y=(y_1,y_2)$ as before, 
we expand $y_1$ as 
\bse
\be
y_1(x,z)=\sum_{n=0}^\infty y_1^{(n)}(x,z),
\ee 
where 
\vspace*{-1ex}
\begin{gather}
y_1^{(0)}(x,z)=\begin{pmatrix}\e^{-izx}\\0\end{pmatrix},
\\
y_1^{(n)}(x,z)=\int_0^x\e^{-\i z\sigma_3(x-s)}Q(s)y_1^{(n-1)}(s,z)\,\d s\,,
\quad n>0\,.
\label{p1n}
\end{gather}
\ese
Using the $L^1$ vector norm $\|y_1\|=|y_{11}|+|y_{21}|$ and the corresponding subordinate matrix norm,
from the above definition it is obvious that, for all $x\geq0$,
\be
\|y_1^{(0)}\|\leq \e^{|z|x}.
\ee
Now let $K^*>0$ be a constant such that for all real values of $x$, $\|Q(x)\|\leq K^*$.  Then by induction from \eqref{p1n}, we find that 
\be
\|y_1^{(n)}\|\leq2\frac{(K^*x)^n}{n!}\e^{|z|x}.
\ee
By a similar method, we can also find
\begin{equation*}
\|y_2^{(n)}\|\leq2\frac{(K^*x)^n}{n!}\e^{|z|x}.
\end{equation*}
Using the definition of $\tilde{M}$, we then have
\be
|\tilde{M}_{12}|\leq\half(|y_1|+|y_2|)\leq2\sum_{n=0}^{\infty}\frac{(K^*L)^n}{n!}\e^{|z|L}=2e^{|z|L}\e^{K^*L}.
\ee
This proves that the expression \eqref{o1} is bounded if we choose $C=L$. 
In order to complete the proof, we will use \eqref{asymY}. 
Taking $z=-\i k$, 
it is easy to see that as $k\to\infty$,
\be 
|\sin(zL)| = \half\e^{kL}(1+O(1/k)),\quad 
|\cos(zL)| =  \half\e^{kL}(1+O(1/k)).
\nonumber
\ee
This proves that \eqref{o2} is true for any $c$ between $0$ and $L$. 
So the proof is complete.
\end{proof}

\subsection{Phragmen-Lindel\"{o}f theorem}
\label{s:phragmenlindelof}

Here we obtain some technical results that are needed in order to prove that the matrix function $\Phi(x,z)$ satisfies the RHP~\ref{RHP1}.
We begin by stating without proof a few results from~\cite{McLaughlinNabelek}.

\begin{theorem}[Phragmen-Lindel\"{o}f]
\label{t:PhragmenLindelof}
Let $\alpha$, $\beta\in\mathbb{R}$ with $\alpha<\beta$ be such that $\beta-\alpha<2\pi$ and let $p>0$ satisfy $p<{\pi}/{|\beta-\alpha|}$. 
Choose a branch of the complex argument function so that the interval $(\alpha,\beta)$ is a subset of the image of $\arg z$. 
Let $f(\lambda)$ be a holomorphic function for 
\be
 	z\in\Omega_{\alpha,\beta}:=\{z\in\mathbb{C}:\alpha<\arg z<\beta\}
\ee
that extends to a continuous function on $\overline\Omega_{\alpha,\beta}$. If $f(z)$ is bounded for $z\in\partial\Omega_{\alpha,\beta}$ and $|f(z)|\leq M\e^{c|z|^p}$ for $z\in\Omega_{\alpha,\beta}$, then $f(z)$ is bounded for $z\in\Omega_{\alpha,\beta}$.
\end{theorem}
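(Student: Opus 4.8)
The plan is to run the classical auxiliary-function argument underlying Phragm\'en--Lindel\"of. First I would normalize the geometry: multiplying $z$ by a fixed unimodular constant rotates $\Omega_\alpha^\beta$ without affecting any of the hypotheses or the conclusion, so we may assume $\alpha=-\gamma$, $\beta=\gamma$ with $\gamma=\half(\beta-\alpha)\in(0,\pi)$, and the condition $p<\pi/|\beta-\alpha|$ becomes $p\gamma<\pi/2$. Set $B=\sup_{z\in\partial\Omega_{-\gamma}^{\gamma}}|f(z)|<\infty$; the goal is to show $|f(z)|\le B$ on all of $\Omega_{-\gamma}^{\gamma}$.

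Next I would pick an intermediate exponent $q$ with $p<q<\pi/(2\gamma)$ --- possible exactly because $p<\pi/(2\gamma)$ --- and put $\delta=\cos(q\gamma)>0$ (positive since $q\gamma<\pi/2$). For each $\epsilon>0$ define the auxiliary holomorphic function $g_\epsilon(z)=f(z)\,\e^{-\epsilon z^q}$ on $\Omega_{-\gamma}^{\gamma}$, where $z^q=\e^{q(\log|z|+\i\arg z)}$ uses the branch of $\arg$ fixed in the statement. Writing $z=r\e^{\i\theta}$ with $|\theta|\le\gamma$, one has $\Re(z^q)=r^q\cos(q\theta)\ge\delta r^q$, hence
\be
|g_\epsilon(z)|=|f(z)|\,\e^{-\epsilon r^q\cos(q\theta)}\le M\,\e^{\,cr^p-\epsilon\delta r^q}\,.
\ee
Since $q>p$, the exponent tends to $-\infty$ as $r\to\infty$, so $|g_\epsilon(z)|\to0$ uniformly in $\theta$ as $|z|\to\infty$; in particular $g_\epsilon$ is bounded on the sector, and on the two boundary rays $|g_\epsilon(z)|\le|f(z)|\le B$.

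Then I would apply the maximum modulus principle on the truncated sectors $\Omega_R=\Omega_{-\gamma}^{\gamma}\cap\{|z|<R\}$: on $\partial\Omega_R$ the modulus of $g_\epsilon$ is at most $\max\{B,M_R\}$, where $M_R=\sup\{|g_\epsilon(z)|:|z|=R,\ |\arg z|\le\gamma\}$, and by the decay just established $M_R\le B$ once $R$ is large enough. Hence for every fixed $z$ in the sector, choosing $R>|z|$ large gives $|g_\epsilon(z)|\le B$, i.e.\ $|f(z)|\le B\,\e^{\epsilon r^q\cos(q\theta)}$. Letting $\epsilon\to0^+$ yields $|f(z)|\le B$ throughout $\Omega_{-\gamma}^{\gamma}$, which is the assertion. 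The only step needing genuine care is the selection of the intermediate exponent $q$ together with the verification that $\cos(q\theta)$ stays bounded below by a positive constant on the entire rotated sector; the hypothesis $p<\pi/|\beta-\alpha|$ is precisely what makes this possible, and everything else is routine bookkeeping with the maximum principle and the truncation $R\to\infty$, $\epsilon\to0^+$ limits.
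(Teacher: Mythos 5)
Your argument is correct: it is the classical Phragm\'en--Lindel\"of proof via the auxiliary function $g_\epsilon(z)=f(z)\,\e^{-\epsilon z^q}$ with an intermediate exponent $p<q<\pi/(\beta-\alpha)$, and every step (the rotation to a symmetric sector, the lower bound $\cos(q\theta)\ge\cos(q\gamma)>0$ on the closed sector, the decay of $g_\epsilon$ at infinity, the maximum principle on truncated sectors, and the limits $R\to\infty$, $\epsilon\to0^+$) is carried out properly. Note that the paper itself does not prove this statement; it is quoted without proof from the reference of McLaughlin and Nabelek, so your write-up simply supplies the standard argument that the paper omits. The only cosmetic remark is that the rotation step should be phrased as replacing $f(z)$ by $f(\e^{\i\tau}z)$ for a suitable fixed $\tau$ (rather than ``multiplying $z$ by a unimodular constant''), which is what you clearly intend and which indeed preserves all hypotheses.
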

 
\begin{definition}
Given a domain $\Omega\subset\mathbb{C}$, let $\mathcal{A}(\Omega)$ be the algebra of holomophic functions on $\Omega$. 
We define the Phragmen-Lindel\"{o}f subalgebras $\A_p(\Omega)$ as follows:
\be
\A_p(\Omega)=\{f\in\A(\Omega): \exists c,M>0~ \text{s.t.}~|f(z)|\leq M\e^{c|z|^p}\}.
\ee
\end{definition}
\begin{proposition}
\label{3.1}
 	If $\Omega_1\subset\Omega$ and $p_1\geq p$, then $\A_p(\Omega)\subset\A_{p_1}(\mathbb{\Omega}_1)$.
 \end{proposition}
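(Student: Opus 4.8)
The plan is to verify directly, for an arbitrary $f\in\A_p(\Omega)$, that the restriction $f|_{\Omega_1}$ satisfies the two defining conditions of membership in $\A_{p_1}(\Omega_1)$. The first condition, holomorphy, is immediate: since $\Omega_1\subset\Omega$ and $f$ is holomorphic on $\Omega$, its restriction to $\Omega_1$ is holomorphic on $\Omega_1$. So the only point requiring argument is the exponential growth bound.

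For that bound, I would start from the hypothesis that there exist constants $c,M>0$ such that $|f(z)|\leq M\e^{c|z|^p}$ for all $z\in\Omega$, hence in particular for all $z\in\Omega_1$. The goal is to exhibit constants $c',M'>0$ with $|f(z)|\leq M'\e^{c'|z|^{p_1}}$ for all $z\in\Omega_1$. I would split the plane into $\{|z|\leq1\}$ and $\{|z|>1\}$. On $\{|z|\leq1\}$ one has $|z|^p\leq1$, so $|f(z)|\leq M\e^{c}$, a uniform bound. On $\{|z|>1\}$, the assumption $p_1\geq p$ gives $|z|^p\leq|z|^{p_1}$, hence $\e^{c|z|^p}\leq\e^{c|z|^{p_1}}$ and therefore $|f(z)|\leq M\e^{c|z|^{p_1}}$. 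Choosing $c'=c$ and $M'=M\e^{c}$ then yields $|f(z)|\leq M'\e^{c'|z|^{p_1}}$ on all of $\Omega_1$, which completes the verification that $f|_{\Omega_1}\in\A_{p_1}(\Omega_1)$; since $f$ was arbitrary, $\A_p(\Omega)\subset\A_{p_1}(\Omega_1)$.

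There is no genuine obstacle here; the one point that deserves a moment's care is the behavior near the origin, where $|z|^p$ need not dominate $|z|^{p_1}$, which is precisely why one absorbs the estimate on the bounded region $\{|z|\leq1\}$ into the prefactor $M'$ rather than attempting to compare the two exponents directly everywhere.
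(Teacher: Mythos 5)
Your argument is correct, and the key point --- that the comparison $|z|^{p}\leq|z|^{p_1}$ only holds for $|z|\geq 1$, so the region $|z|\leq 1$ must be absorbed into the multiplicative constant --- is handled properly. The paper states this proposition without proof (it is quoted from \cite{McLaughlinNabelek}), so there is nothing to compare against, but your proof is the standard one and is complete.
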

 \begin{proposition}
\label{3.2}
If $f(z)$ is an entire function of order $q$, and $q<p$, then $f\in\A_p(\C)$.
\end{proposition}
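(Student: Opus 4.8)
The plan is to read off the bound required for membership in $\A_p(\C)$ directly from the definition of the order of growth, with the strict inequality $q<p$ doing the essential work. If $f$ is constant the claim is immediate, so assume $f$ is non-constant; then $M_f(r):=\max_{|z|=r}|f(z)|\to\infty$ as $r\to\infty$, the double logarithm $\log\log M_f(r)$ is well defined for all large $r$, and the order is $q=\limsup_{r\to\infty}\bigl(\log\log M_f(r)\bigr)/\log r$.

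First I would fix $\epsilon=(p-q)/2>0$, so that $q+\epsilon<p$. By the definition of the limit superior there is an $R_0>1$ (which we may take large enough that $M_f(r)>\e$ for $r\ge R_0$) such that $\bigl(\log\log M_f(r)\bigr)/\log r<q+\epsilon$ for every $r\ge R_0$; exponentiating twice gives $M_f(r)\le\e^{r^{q+\epsilon}}$ for $r\ge R_0$. Since $r\ge R_0>1$ and $q+\epsilon<p$, we have $r^{q+\epsilon}\le r^{p}$, hence $|f(z)|\le M_f(|z|)\le\e^{|z|^{p}}$ whenever $|z|\ge R_0$. On the compact disc $\{|z|\le R_0\}$ the continuous function $f$ is bounded by some $M_1$, so with $M:=\max\{M_1,1\}$ and $c:=1$ we obtain $|f(z)|\le M\e^{c|z|^{p}}$ for all $z\in\C$, which is exactly the condition $f\in\A_p(\C)$.

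I do not expect any genuine obstacle here: the argument is just an unwinding of the definition of order together with a compactness bound on a bounded disc. The only subtlety worth flagging is that one may \emph{not} take $\epsilon=0$ — an entire function of order exactly $q$ need not belong to $\A_q(\C)$, since its maximal growth can carry an extra logarithmic factor in the exponent — so the strict inequality $q<p$ in the hypothesis is essential to absorb the $r^{q+\epsilon}$ growth into $r^{p}$. Equivalently, the computation above shows $f\in\A_{q'}(\C)$ for every $q'>q$, after which one could instead invoke Proposition~\ref{3.1} (with $\Omega_1=\Omega=\C$ and $p\ge q'$ for any chosen $q'\in(q,p)$) to conclude $f\in\A_p(\C)$.
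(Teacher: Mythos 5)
Your argument is correct: unwinding the $\limsup$ definition of order with $\epsilon=(p-q)/2$ to get $M_f(r)\le\e^{r^{q+\epsilon}}\le\e^{r^p}$ for $r\ge R_0>1$, then bounding $f$ on the compact disc $\{|z|\le R_0\}$, is exactly the standard proof, and your remark that the strict inequality $q<p$ is essential (an order-$q$ function of maximal type fails to lie in $\A_q(\C)$) is accurate. The paper itself states this proposition without proof, citing \cite{McLaughlinNabelek}, so there is nothing to compare against; your write-up supplies the intended elementary argument and has no gaps.
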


\begin{proposition}
$\A_p(\Omega)$ is a subalgebra of $\A(\Omega)$. 
Moreover, if $f\in\A_p(\Omega)$ and $0<q<1$, then $f^q\in\A_p(\Omega')$ where $\Omega'\subset\Omega$ is some domain on which $f^q$ can be defined as a single-valued function.
\end{proposition}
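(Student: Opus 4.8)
The plan is to verify the algebra axioms directly from the defining exponential bound, and then to read off the power statement from the elementary identity $|f^q|=|f|^q$.

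First I would check that $\A_p(\Omega)$ is a subalgebra of $\A(\Omega)$. It contains every constant function (for a constant $a$ take $M=|a|$ and any $c>0$), so in particular it is unital, and it is evidently closed under scalar multiplication. For closure under addition, if $|f(z)|\le M_1\e^{c_1|z|^p}$ and $|g(z)|\le M_2\e^{c_2|z|^p}$ on $\Omega$, then $|f(z)+g(z)|\le (M_1+M_2)\,\e^{\max\{c_1,c_2\}|z|^p}$, so $f+g\in\A_p(\Omega)$. For closure under multiplication, the same two bounds give $|f(z)g(z)|\le M_1M_2\,\e^{(c_1+c_2)|z|^p}$, which is again of the required form, so $fg\in\A_p(\Omega)$. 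Since $\A_p(\Omega)\subset\A(\Omega)$ by definition, this settles the first assertion.

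Next I would treat the fractional power. By hypothesis $\Omega'\subset\Omega$ is a domain on which $f^q$ is well-defined and single-valued; concretely one fixes a holomorphic branch of $\log f$ on $\Omega'$ (which in particular forces $f$ to be nonvanishing there) and sets $f^q:=\exp(q\log f)$, so that $f^q\in\A(\Omega')$. For $z\in\Omega'$ one then has
\[
|f(z)^q| = \big|\exp(q\log f(z))\big| = \exp\!\big(q\,\Re\log f(z)\big) = \exp\!\big(q\ln|f(z)|\big) = |f(z)|^q ,
\]
independently of the choice of branch of the argument. Using $f\in\A_p(\Omega)$, pick $c,M>0$ with $|f(z)|\le M\e^{c|z|^p}$ on $\Omega$; since $0<q<1$ and $t\mapsto t^q$ is increasing on $[0,\infty)$, it follows that $|f(z)^q|\le M^q\,\e^{qc|z|^p}$ for all $z\in\Omega'$. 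As $M^q>0$ and $qc>0$, this is precisely an estimate of the form demanded by the definition of $\A_p(\Omega')$, whence $f^q\in\A_p(\Omega')$.

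I do not expect any genuine obstacle here: the whole content is that the class of functions of exponential order at most $p$ is stable under sums, products, and fractional powers, all of which are visible at the level of the defining inequalities. The only point that deserves a word of care is that $f^q$ must be read as a single-valued holomorphic function, which is why the statement restricts to a subdomain $\Omega'$ carrying a branch of $\log f$; once such an $\Omega'$ is fixed the estimate above is immediate, and one could even keep the original exponent $c$ in place of $qc$ since $qc<c$.
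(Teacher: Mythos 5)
Your proof is correct. Note that the paper itself states this proposition without proof, importing it (along with the neighboring Propositions in this appendix) from \cite{McLaughlinNabelek}, so there is no in-paper argument to compare against; your elementary verification — closure of $\A_p(\Omega)$ under sums and products via $\max\{c_1,c_2\}$ and $c_1+c_2$ in the exponent, and the identity $|f^q|=|f|^q$ combined with monotonicity of $t\mapsto t^q$ for the fractional-power claim — is exactly the standard argument one would supply. The only cosmetic remark is that fixing a branch of $\log f$ is sufficient but not strictly necessary for $f^q$ to be single-valued on $\Omega'$; since your estimate relies only on $|f^q|=|f|^q$, which holds for any single-valued determination, this does not affect the proof.
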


\begin{proposition}
Let $\{\overline{D}_n\}_{n\in\Integer}$ be are a family of closed discs of radius $R>0$ for which there exists some $N$ such that $\overline{D}_n$ are disjoint for $|n|>N$. Then the domain
\be
 \Omega=\C\setminus\bigcup_{n\in\Integer}\overline{D}_n
\ee
is an open set.
\end{proposition}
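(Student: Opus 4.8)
The statement amounts to showing that the union $\bigcup_{n}\overline{D}_n$ is a \emph{closed} subset of $\C$, since an arbitrary union of closed sets need not be closed; the whole content lies in exploiting the two hypotheses, namely that all the discs share the common radius $R>0$ and that all but finitely many of them are pairwise disjoint. The plan is to establish that the family $\{\overline{D}_n\}_{n\in\Z}$ is locally finite, i.e.\ that every bounded region of the plane meets only finitely many of the discs, and to deduce openness of $\Omega$ from this directly.

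Concretely, I would fix $z\in\Omega$ and put $\rho=|z|+1$. First I would observe that a disc $\overline{D}_n$, centered at $c_n$, can meet the closed ball $\overline{B}(0,\rho)$ only if $|c_n|\le\rho+R$. Among the discs with $|n|\le N$ there are at most $2N+1$, so these contribute only finitely many. For the discs with $|n|>N$ the disjointness hypothesis forces the centers $c_n$ to be pairwise at distance $>2R$ (otherwise the midpoint of two centers would lie in both closed discs), so the open discs $B(c_n,R)$ with $|n|>N$ are pairwise disjoint; those among them whose centers satisfy $|c_n|\le\rho+R$ are all contained in $B(0,\rho+2R)$, and comparing areas gives at most $(\rho+2R)^2/R^2$ such indices. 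Hence only finitely many discs, say $\overline{D}_{n_1},\dots,\overline{D}_{n_k}$, meet $\overline{B}(0,\rho)$.

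To finish, since each $\overline{D}_{n_j}$ is closed and $z\notin\overline{D}_{n_j}$, the distance $\delta_j=\mathrm{dist}(z,\overline{D}_{n_j})$ is positive; I would set $\varepsilon=\min\{1,\delta_1,\dots,\delta_k\}>0$ (with $\varepsilon=1$ if $k=0$). Then $B(z,\varepsilon)$ is disjoint from every $\overline{D}_{n_j}$, while each remaining disc $\overline{D}_m$ misses $\overline{B}(0,\rho)\supset B(z,\varepsilon)$, so $B(z,\varepsilon)$ is disjoint from $\bigcup_n\overline{D}_n$; that is, $B(z,\varepsilon)\subset\Omega$. As $z\in\Omega$ was arbitrary, $\Omega$ is open. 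The only mildly delicate point is the packing estimate in the second step — everything else is bookkeeping — and even that reduces to the elementary fact that disjoint discs of a fixed radius cannot accumulate in a bounded region.
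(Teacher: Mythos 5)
Your proof is correct. Note that the paper itself states this proposition without proof, importing it verbatim from the cited reference of McLaughlin and Nabelek, so there is no in-paper argument to compare against; your local-finiteness argument (finitely many exceptional discs, plus the packing estimate showing that pairwise disjoint discs of a common radius $R$ cannot accumulate in any bounded region, hence the union of the closed discs is closed) is a complete and standard way to establish the claim. The two hypotheses you isolate -- common radius and eventual pairwise disjointness -- are indeed exactly what is needed, and each step (centers of disjoint closed discs of radius $R$ are separated by more than $2R$; the area comparison; the positive distance from $z$ to the finitely many nearby closed discs) is sound.
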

 
\begin{proposition}
\label{3.4}
Let $\Omega\subset\C$ be a domain, 
and let $\overline{D}_n\subset\Omega$ for $|n|=1,2,...,\mathcal{N}$ 
(with $\mathcal{N}$ either finite or infinite) be a family of discs with radius $R>0$.
When $\mathcal{N}$ is infinite, assume further that there exists $N\in\Natural$ such that the $\overline{D}_n$ are disjoint for $|n|>N$. 
Then 
\be
\A(\Omega)\cap\A_p\left(\Omega\setminus\bigcup_{|n|=1}^\mathcal{N}\overline{D}_n\right)\subset\A_p(\Omega).
\ee
\end{proposition}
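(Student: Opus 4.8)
The plan is to propagate the Phragm\'en--Lindel\"of bound from $\Omega\setminus\bigcup_n\overline D_n$ across each disc by the maximum modulus principle, and to show the constants so produced can be chosen uniformly in $n$. Write $f\in\A(\Omega)\cap\A_p\big(\Omega\setminus\bigcup_n\overline D_n\big)$, so there are $c,M>0$ with $|f(z)|\le M\e^{c|z|^p}$ for all $z\in\Omega\setminus\bigcup_n\overline D_n$; the goal is to find $c',M'>0$ with $|f(z)|\le M'\e^{c'|z|^p}$ for all $z\in\Omega$. The case $\mathcal N<\infty$ is immediate: $K:=\bigcup_{|n|\le\mathcal N}\overline D_n$ is a compact subset of $\Omega$, so the holomorphic (hence continuous) $f$ is bounded on $K$ by some $M_K$, and since $\e^{c|z|^p}\ge1$ this combines with the hypothesis bound off $K$ to give $|f(z)|\le\max(M,M_K)\,\e^{c|z|^p}$ on all of $\Omega$.

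For $\mathcal N=\infty$, I would first isolate the finitely many ``bad'' (low-index, possibly overlapping) discs. The discs $\{\overline D_n\}_{|n|>N}$ are pairwise disjoint and have the common radius $R$, so their centers $c_n$ satisfy $|c_n|\to\infty$ as $|n|\to\infty$ (two closed discs of radius $R$ with centers within $2R$ would intersect, so the centers cannot accumulate at a finite point). Hence one may choose $N'\ge N$ so large that for every $|n|>N'$ the disc $\overline D_n$ is disjoint from $\overline D_m$ for all $m\ne n$, including the low-index ones. Set $K:=\bigcup_{|n|\le N'}\overline D_n$, a compact subset of $\Omega$, on which $|f|\le M_K$ by continuity.

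Now fix $|n|>N'$. Since $\overline D_n\subset\Omega$, $f$ is holomorphic on $\overline D_n$, so the maximum modulus principle gives $\max_{\overline D_n}|f|=\max_{\partial D_n}|f|$. Because $\overline D_n$ is isolated from the other discs and $|c_m|\to\infty$, the set $\bigcup_{m\ne n}\overline D_m$ is closed and misses $\partial D_n$, so every $z_0\in\partial D_n$ is approached by points of $\Omega\setminus\bigcup_m\overline D_m$; by continuity $|f(z_0)|\le M\e^{c|z_0|^p}\le M\e^{c(|c_n|+R)^p}$. Thus $\max_{\overline D_n}|f|\le M\e^{c(|c_n|+R)^p}$, and for every $z\in\overline D_n$ we have $|c_n|+R\le|z|+2R$, so the elementary estimate $(t+2R)^p\le\kappa_p\big(t^p+(2R)^p\big)$ with $\kappa_p=\max(1,2^{p-1})$ yields $|f(z)|\le M\,\e^{\kappa_p c(2R)^p}\,\e^{\kappa_p c|z|^p}$, with constants independent of $n$. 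Collecting the three regions --- $z\in K$, $z\in\overline D_n$ with $|n|>N'$, and $z\notin\bigcup_n\overline D_n$ --- and taking $c'=\kappa_p c\ge c$ and $M'$ the largest of the three prefactors gives $|f(z)|\le M'\e^{c'|z|^p}$ for all $z\in\Omega$, i.e.\ $f\in\A_p(\Omega)$.

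The step I expect to be the main obstacle is exactly the uniformity: extracting constants $c',M'$ that work simultaneously for all infinitely many discs. This is where the two standing hypotheses enter essentially --- the common radius $R$ bounds how far a point of $\overline D_n$ can lie from $c_n$, and the subpower growth of $t\mapsto t^p$ keeps $\e^{c(|c_n|+R)^p}$ comparable to $\e^{c'|z|^p}$ with a single exponent. The preliminary reduction to a configuration in which all but finitely many discs are mutually isolated (and whence the observation $|c_n|\to\infty$) is the other point requiring a little care, though it is elementary; everything else is the maximum modulus principle together with continuity of $f$ on $\Omega$.
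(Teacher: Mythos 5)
The paper states Proposition~\ref{3.4} without proof, importing it from \cite{McLaughlinNabelek}, so there is no in-paper argument to compare your proposal against. Your proof is correct and is the standard argument: the only points that genuinely need checking are (i) the reduction to a configuration where all but finitely many discs are mutually isolated, which you get from the observation that pairwise disjoint closed discs of common radius $R$ have centers separated by more than $2R$, hence $|c_n|\to\infty$; (ii) the transfer of the bound from $\partial D_n$ into $\overline D_n$ by the maximum modulus principle, which is legitimate because $\partial D_n$ lies in the closure of $\Omega\setminus\bigcup_m\overline D_m$; and (iii) the uniformity of the constants, which follows from $|c_n|+R\le|z|+2R$ for $z\in\overline D_n$ together with $(a+b)^p\le\kappa_p(a^p+b^p)$. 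You address all three, so the argument is complete.
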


Next we prove some additional results which are generalizations of the above ones, 
and which are needed in our case because of the different form of the infinite product expansions~\eqref{e:Hadamardproducts}
compared to those in \cite{McLaughlinNabelek}.

\begin{lemma}
Let $\{\eta_n\}_{n\in\Integer}$ be a sequence of nonzero complex numbers for which
there exist positive constants $N$ and $C$ such that $|\eta_n|\geq C|n|$ for $|n|\geq N$. 
For all $z\in\Complex$, let $\mathfrak{N}(z)\geq0$ be the smallest integer such that $|\eta_n|>2|z|$ for $|n|>\mathfrak{N}(z)$. 
Then $\mathfrak{N}(z)\leq D|z|$, where
\be
	D=\max\left\{\frac{2}{C},\frac{2N}m\right\},
\ee
\label{l:PA0}
with $m = \min\nolimits_{n\in\Integer}|\eta_n|\ne0$.
\end{lemma}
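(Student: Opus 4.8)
The plan is to bound $\mathfrak{N}(z)$ by finding, for a given $z$, an explicit threshold $M(z)$ such that $|n| > M(z)$ forces $|\eta_n| > 2|z|$; since $\mathfrak{N}(z)$ is by definition the smallest such integer, it is then at most $M(z)$. First I would split the indices into two regimes according to whether the asymptotic lower bound $|\eta_n| \geq C|n|$ is available. For $|n| \geq N$, the hypothesis gives $|\eta_n| \geq C|n|$, so $|\eta_n| > 2|z|$ is guaranteed as soon as $C|n| > 2|z|$, i.e. $|n| > \tfrac{2}{C}|z|$. For $|n| < N$ (the finitely many ``small-index'' terms), I cannot use the asymptotic bound, but I can use the uniform lower bound $|\eta_n| \geq m := \min_{n\in\Integer}|\eta_n|$, which is positive because the $\eta_n$ are nonzero and, by the previous paragraph, only finitely many of them can be small (so the infimum is attained and nonzero). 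Then $|\eta_n| \geq m > 2|z|$ holds whenever $|z| < m/2$; but to handle all $z$ uniformly I instead observe that if $|n| > \tfrac{2N}{m}|z|$ and also $|n| < N$, then $|z| < \tfrac{m}{2}\cdot\tfrac{|n|}{N} < \tfrac{m}{2}$, so again $|\eta_n| \geq m > 2|z|$.

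The key step is then simply to take $M(z) = \max\{\tfrac{2}{C}|z|, \tfrac{2N}{m}|z|\} = D|z|$ with $D = \max\{\tfrac{2}{C}, \tfrac{2N}{m}\}$, and to check that $|n| > D|z|$ implies $|\eta_n| > 2|z|$ in both regimes: if $|n| \geq N$ then $|n| > \tfrac{2}{C}|z|$ gives $|\eta_n| \geq C|n| > 2|z|$, and if $|n| < N$ then $|n| > \tfrac{2N}{m}|z|$ gives $|z| < \tfrac{m}{2N}|n| < \tfrac{m}{2}$, whence $|\eta_n| \geq m > 2|z|$. Consequently every index $n$ with $|\eta_n| \leq 2|z|$ satisfies $|n| \leq D|z|$, so the smallest integer $\mathfrak{N}(z)$ beyond which all $|\eta_n| > 2|z|$ obeys $\mathfrak{N}(z) \leq D|z|$, which is the claim.

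I would also remark at the outset why $\mathfrak{N}(z)$ is well-defined and finite: the condition $|\eta_n| \geq C|n|$ for $|n| \geq N$ shows $|\eta_n| \to \infty$, so only finitely many indices can violate $|\eta_n| > 2|z|$, and $m = \min_n |\eta_n|$ exists and is strictly positive for the same reason together with $\eta_n \neq 0$. (There is a small typo in the statement — ``with $|\eta_n|$'' should presumably read ``with $|\eta_n| \to \infty$'' or similar — but this follows from the stated growth condition anyway, so nothing extra is needed.)

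I do not expect any real obstacle here; the only mild subtlety is making sure the small-index case is handled uniformly in $z$ rather than only for small $|z|$, which is exactly what the factor $\tfrac{2N}{m}$ in $D$ accomplishes. The lemma is a bookkeeping step that will feed into the Phragmén–Lindelöf-type estimates (Lemma~\ref{PA} and the $\A_2$ membership arguments used in the proof of RHP~\ref{RHP1}), so the proof should be kept short and explicit.
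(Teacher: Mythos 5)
Your proof is correct, and it is organized differently from the paper's. The paper splits into three cases according to the size of $|z|$: if $2|z|<m$ then $\mathfrak{N}(z)=0$; if $m\leq 2|z|\leq \min\{|\eta_N|,|\eta_{-N}|\}$ then $\mathfrak{N}(z)\leq N\leq \frac{2N}{m}|z|$; and if $2|z|\geq\min\{|\eta_N|,|\eta_{-N}|\}$ it invokes the minimality of $\mathfrak{N}(z)$ to argue (by contradiction) that $\min\{|\eta_{\mathfrak{N}(z)}|,|\eta_{-\mathfrak{N}(z)}|\}\leq 2|z|$, which combined with $|\eta_{\pm\mathfrak{N}(z)}|\geq C\,\mathfrak{N}(z)$ yields $\mathfrak{N}(z)\leq\frac{2}{C}|z|$. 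You instead case on the size of the index $|n|$ and verify directly that every $n$ with $|n|>D|z|$ satisfies $|\eta_n|>2|z|$, so that the set of ``bad'' indices is contained in $\{|n|\leq D|z|\}$ and minimality of $\mathfrak{N}(z)$ does the rest. The quantitative inputs are identical in both arguments (the bound $C|n|$ for $|n|\geq N$ and the uniform bound $m$ for $|n|<N$), but your version buys a cleaner logical structure: it avoids the paper's contradiction step at the index $\mathfrak{N}(z)$ itself, handles $n=0$ and $z=0$ automatically, and makes the uniformity in $z$ of the small-index case explicit via the factor $\frac{2N}{m}$. Your side remarks on well-definedness of $\mathfrak{N}(z)$ and positivity of $m$ are also correct (the paper takes these for granted in the statement).
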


\begin{proof}
We separate this proof into three parts. 
Firstly, we consider the case when $2|z|<m$. 
It is obvious that $|\eta_n|>2|z|$ for all $n$. 
Thus, $\mathfrak{N}(z)=0\leq D|z|$. 
Secondly, we consider the case when $m\leq2|z|\leq M$, where $M=\min\{|\eta_N|, |\eta_{-N}|\}$ 
Then we have $\mathfrak{N}(z)\leq N$. Hence,
 \be
 \mathfrak{N}(z)
 \leq \frac{2N}{m}|z|.
 \ee
Finally, we consider the case when $M\leq2|z|$. 
It is easy to see that $\mathfrak{N}(z)\geq N$, so that according to the definition of sequence $\{\eta_n\}$, we have
 \be
 \min\{|\eta_{\mathfrak{N}(z)}|,|\eta_{-\mathfrak{N}(z)}|\} \geq C|\mathfrak{N}(z)|.
 \ee
Now we need to prove $\min\{|\eta_{\mathfrak{N}(z)}|,|\eta_{-\mathfrak{N}(z)}|\}\leq 2|z|$. If it is not true, then there will exists an integer $M=\mathfrak{N}(z)-1$ such that $|\eta_n|>2|z|$ for $|n|>M$. This contradicts the fact that $\mathfrak{N}(z)$ is the smallest one. These two inequalities give us $\mathfrak{N}(z)\leq {2}|z|/C$.
\end{proof}

\begin{remark}
If for the complex sequence $\{\eta_n\}_{n\in\Integer}$ there exists an $\=n$ such that $\eta_{\=n}=0$, 
one can consider the sequence $\{\eta_n\}_{n\in\Integer,n\neq\=n}$. 
Following the above methods, the same conclusions can be obtained.
\end{remark}

\begin{lemma}
\label{PA}
Let $\{\eta_n\}_{n\in\Integer}$ 
satisfy the same hypotheses as in Lemma~\ref{l:PA0},
and let $D_n$ with $n\in\Z$ be a family of open discs of radius $R>0$ centered at $c_n$ with $c_0=0$, such that $\eta_n\in\{z\in\C:|z-c_n|\leq R/2\}$. 
Let $N'\in\Natural$ be such that the discs $\overline{D}_n$ are disjoint for $n>N'$. 
If the function $P(z)$ is defined by the canonical product 
\be
	P(z)=e^{Az+B}\prod_{n\in\Z}\bigg(1-\frac{z}{\eta_n}\bigg)\,\e^{\frac{z}{\eta_n}}
\ee
where $A$ and $B$ are constants,
then 
\be
	P^{-1}\in\A_2\left(\C\setminus\bigcup_n\overline{D}_n\right).
\label{e:canonicalproduct}
\ee
\end{lemma}
 
\begin{proof}
Let $z\in\C\setminus\bigcup D_n$. 
Decompose the canonical product \eqref{e:canonicalproduct} into three parts as follows:
\begin{equation*}
 	P(z):=\mathrm{e}^{Az+B}P_1(z)P_2(z),
\end{equation*}
where
\be
    P_1(z)=\prod_{|n|\leq\mathfrak{N}(z)}\left(1-\frac{z}{\eta_n}\right)\mathrm{e}^{\frac{z}{\eta_n}},\qquad
 	P_2(z)=\prod_{|n|>\mathfrak{N}(z)}\left(1-\frac{z}{\eta_n}\right)\mathrm{e}^{\frac{z}{\eta_n}}.
\ee
For the first term $\e^{-(Az+B)}$, we can always find a constant $C_1$ such that $|\e^{-(Az+B)}|\leq C_1\e^{|z|}$. 
So it is clear that $\e^{-(Az+B)}\in\A_1(\C\setminus\bigcup D_n).$
By the reverse triangle inequality,
\be
 	|z-\eta_n|=|z-c_n-\eta_n+c_n|\geq||z-c_n|-|\eta_n-c_n||.
\ee
Together with $|z-c_n|\geq R$ and $|\eta_n-c_n|\leq R/2$, we have $|z-c_n|-|\eta_n-c_n|>0$ and $|z-\eta_n|\geq R/2$,
so that 
\be
 \prod_{|n|\leq\mathfrak{N}(z)}\left|1-\frac{z}{\eta_n}\right|=\prod_{|n|\leq\mathfrak{N}(z)}\frac{1}{|\eta_n|}|\eta_n-z|\geq 	|\prod_{|n|\leq\mathfrak{N}(z)}\frac{R}{2|\eta_n|}\geq R^{2\mathfrak{N}(z)}(2|\eta_{\mathfrak{N}(z)}|)^{2\mathfrak{N}(z)}.
\ee
We can always find positive constants $C_2$ and $M_1$ so that
\be
 	\bigg|\e^{-\sum_{n\leq\mathfrak{N}(z)}\frac{z}{\eta_n}}\bigg|
 	=
 	\bigg|\e^{z(-\sum_{n\leq\mathfrak{N}(z)}\frac{1}{\eta_n})}\bigg|
 	\leq M_1\e^{C_2|z|}.
\ee
So we can bound $|P_1|^{-1}$ as
\vspace*{-0.6ex}
\begin{multline}
	|P_1(z)|^{-1}\leq M_1(2\mathfrak{N}(z)R^{-1})^{2\mathfrak{N}(z)}\e^{C_2|z|}\leq M_1(4R^{-1}|z|)^{2\mathfrak{N}(z)}\e^{C_2|z|}\\
\leq M_1\e^{2D|z|[\log(4R^{-1})+\log|z|]}e^{C_2|z|}\leq M_2\e^{C_3|z|^2},
\end{multline}
 where $C_3$ and $M_2$ are some positive contants.
 
Define a set $S=\{z\in\C: 2|z|\leq\min\{|\eta_N|,|\eta_{-N}|\}\}\setminus\bigcup D_n$,
 so that $S$ is a compact set on which $P_2^{-1}(z)$ is piecewise polynomial with a finite number of domains of definition. Hence, there must exists $M_3\geq 1$ such that $|P_2^{-1}(z)|\leq M_3$ for $z\in S$. Now let $z\in\C\setminus\bigcup D_n$ such that $2|z|>\min\{|\eta_N|,|\eta_{-N}|\}$, which also implies that $\mathfrak{N}(z)\geq N$. 
Taking the principal branch of the logarithm, we then obtain
\be
 \log \,P_2(z) = \sum_{|n|>\mathfrak{N}(z)}\left(\frac{z}{\eta_n}+\log\bigg(1-\frac{z}{\eta_n}\bigg)\right)=\sum_{|n|>\mathfrak{N}(z)}\frac{z}{\eta_n}+\sum_{|n|>\mathfrak{N}(z)}\sum_{m=1}^{\infty}\left(-\frac{z^m}{m\eta_n^m}\right).
\ee
Therefore,
\begin{multline}
|\log\,P_2(z)| = \Bigg| \sum_{|n|>\mathfrak{N}(z)}\sum_{m=2}^\infty \frac{z^m}{m\eta_n^m} \Bigg|
 	\le \sum_{|n|>\mathfrak{N}(z)}\sum_{m=2}^{\infty}\frac{|z|^m}{|\eta_n|^m}\\
 	= \sum_{|n|>\mathfrak{N}(z)}\frac{|z|^2}{|\eta_n|^2}\left(1-\frac{|z|}{|\eta_n|}\right)^{-1}
 	\le 2|z|^2\sum_{|n|>\mathfrak{N}(z)}\frac{1}{|\eta_n|^2}
 	\le \frac{2}{C^2}|z|^2\sum_{|n|>\mathfrak{N}(z)}\frac{1}{|n|^2}\leq\frac{2\pi^2}{3C^2}|z|^2,
\end{multline}
which implies 
\be
 |P_2(z)|=\e^{\Re(\log P_2(z))}\geq\e^{-\frac{2\pi^2}{3C^2}|z|^2}.
\ee
Thus,
\be
 |P_2(z)|^{-1}|\leq\e^{\frac{2\pi^2}{3C^2}|z|^2}.
\ee
Therefore, $|P_2(z)|^{-1}|\leq M\e^{\frac{2\pi^2}{3C^2}|z|^2}$ holds for all $z\in\C\setminus\bigcup D_n$. 
Thus we have proved that 
$P^{-1}\in\A_2(\C\setminus\bigcup\overline{D}_n)$, 
QED.
\end{proof}

\subsection{Alternative sets of Dirichlet eigenvalues}
\label{a:alternative}

One of the main results of this work is that it is sufficient to use only one set of Dirichlet eigenfunctions
in order to reconstruct the potential $q(x)$.
This is in contrast with the finite-genus formalism, in which two sets of Dirichlet eigenvalues are introduced 
and used \cite{MaAblowitz,McLaughlinOverman}.
Here we show that the second set of Dirichlet eigenfunctions would work equally well for the purposes of inverse spectral theory.
In a nutshell, the result follows because both choices have the correct asymptotics as $z\to\infty$.
This is in contrast with the Dirichlet eigenvalues of the original problem. 

Recall that~\cite{MaAblowitz,McLaughlinOverman} introduce a second set of Dirichlet eigenvalues, 
which \cite{MaAblowitz} refer to as the ``second auxiliary spectrum''.  
This auxiliary Dirichlet spectrum is still defined by \eqref{e:dirichlet}, 
except that ``BC$_{{\rm Dir},x_o}$'' now denotes the following modified Dirichlet boundary conditions with base point $x_0$:
\be
v_1(x_0)+\i v_2(x_0)=v_1(x_0+L)+\i v_2(x_0+L)=0.
\label{e:DirichletBC2}
\ee
Let $\{\mu_j(x_o)\}_{j\in\mathbb{Z}}$ and $\{\check\mu_j(x_o)\}_{j\in\mathbb{Z}}$ 
be the set of Dirichlet eigenvalues with base point $x_o$
associated with the BC~\eqref{e:Dirbcs} and~\eqref{e:DirichletBC2},
respectively. 
Then (e.g., see Theorem~2.2 in \cite{McLaughlinOverman})
\bse
\label{e:trace}
\begin{gather}
q(x) + q^*(x) = 2 \sum_{j\in\mathbb{Z}} (\z_{2j} + \z_{2j+1} - 2\mu_j(x))\,,
\label{e:trace1}
\\
q(x) - q^*(x) = -2{\rm i} \sum_{j\in\mathbb{Z}} (\z_{2j} + \z_{2j+1} - 2\check\mu_j(x))\,.
\label{e:trace2}
\end{gather}
\ese

Next we show that this second set of Dirichlet eigenvalues could also be used instead of the original one
to formulate a suitable Riemann-Hilbert problem.
To this end, we introduce the following modified similarity transformation:
\be
\check{Y}=\check{U}Y\check{U}^{-1}\,,\qquad
\check{U}=U\e^{-\frac{\i\pi}{4}\sigma_3}\,.
\ee
The modified solutions $\check{Y}$ satisfy the ODE
$\check{Y}_x = \check{U}(-\i z\sigma_3 + Q)\check{U}^{-1}\,\tilde{v}$.
It is easy to see, however, that, 
similarly to what happens with the original similarity transformation~\eqref{e:Ytildedef},
under this similarity transformation the new Floquet discriminant $\check{\D}$ and Floquet multiplier $\check{\rho}$ 
coincide with the original ones, $\D$ and $\rho$, respectively. 
As a result, the main spectrum $\{\zeta_j\}$ remains unchanged.
On the other hand, it is also straightforward to see that the auxiliary Dirichlet spectrum defined above 
consists of the eigenvalues $\check{\mu}$ satisfying
\be
\check{y}_{12}(L,\check{\mu})=0.
\ee
Moreover, all the properties stated in Theorem~\ref{spectraprop} 
can also be proved to hold for the auxiliary Dirichlet spectrum. 
Therefore, we can define the auxiliary spectral data as the set 
\be
\check{S}(q)=\{E_{2k},E_{2k+1},\check{\gamma}_k,\check{\sigma}_k\},
\ee
where $\check{\gamma}_k$ and $\check{\sigma}_k$ are defined similarly to those in Definition \ref{defgamma} and \ref{specdata}.  Standard Bloch-Floquet theory enables us to define an alternative set of Bloch-Floquet solutions $\check{\psi}^\pm$, 
similarly to \eqref{BF}, except with $\~y_{ij}$ replaced by $\check{y}_{ij}$.  
Importantly, the new RHP $\check{\Phi}$ has  the following asymptotic behavior 
\be
\check\Phi(x,z) = \check{U}\,(I+O(1/z))\,\check{B}(z)\,,
\ee
where $\check{B}(z)$ is defined in \eqref{B} with $\mu_n$ replaced by $\check{\mu}_n$.
Consequently, by following the same procedure described in section~\ref{s:asymptotics}, 
Section~\ref{s:inverse} and section~\ref{s:time}, 
it is clear that using the alternative definition of the Dirichlet eigenfunction would also allow one to construct a Riemann-Hilbert problem with a unique solution, from which we could obtain the reconstruction formula.

In contrast, if one replaces the BC~\eqref{e:Dirbcs} or~\eqref{e:DirichletBC2} with 
a more standard set of Dirichlet BC, e.g., 
\be
v_1(x_o) = v_1(x_o+L) = 0\,,
\label{e:BC_Dir_no}
\ee
the resulting Bloch-Floquet solutions would not allow one to define an effective RHP.
To see this, note first that the Dirichlet spectrum associated with~\eqref{e:BC_Dir_no} corresponds to the zeros of $y_{12}(L,z)$.
Now suppose that one replaces $\~y_{ij}(x,z)$ with~$y_{ij}(x,z)$ in the definition of the Bloch-Floquet solutions
in~\eqref{BF}. 
Let us denote the corresponding functions by by $\mathring{\psi}^\pm(x,z)$.
Following the same procedure used in Appendix \ref{asymofpsipm}, 
and omitting the details for brevity,
one then obtains the asymptotic behavior
of $\mathring\psi^-(x,z)$ and $\mathring\psi^+(x,z)$ as $z\to\infty$ as
\bse
\begin{align}
&\mathring\psi^-(x,z) = \begin{cases}
  \e^{-izx}\left(\begin{array}{c}1-\frac{1}{2iz}\int_0^x|q(t)|^2dt+O(1/z^2)\\-\frac{1}{2iz}q^*(x)+O(1/z^2)\end{array}\right),
&z\in\C^+,
\\
  \e^{izx}\left(\begin{array}{c}\frac{q(x)}{q(0)}+O(1/z)\\\frac{2iz}{q(0)}+O(1)\end{array}\right),
&z\in\C^-\,,
\end{cases}
\\
&\mathring\psi^+(x,z) = \begin{cases}
e^{izx}\left(\begin{array}{c}\frac{q(x)}{q(0)}+O(1/z)\\\frac{2iz}{q(0)}+O(1)\end{array}\right),
&z\in\C^+,
\\
e^{-izx}\left(\begin{array}{c}1+O(1/z)\\-\frac{1}{2iz}q^*(x)+O(1/z^2)\end{array}\right),
&z\in\C^-.
\end{cases}
\end{align}
\ese
If we then follow the same steps as in sections~\ref{s:asymptotics} and~\ref{s:inverse}, 
i.e., we define a new matrix Bloch-Floquet solution $\mathring\Psi(x,z)$ by replacing $\psi^\pm(x,z)$ with $\mathring\psi^\pm(x,z)$ in~\eqref{e:defPsi} 
and we define a new matrix $\mathring\Phi(x,z)$ by replacing $\Psi(x,z)$ with $\mathring\Psi(x,z)$ in~\eqref{Phi}, 
we finally get a new RHP with the asymptotic condition
\be
\label{asymhatPhi}
\mathring\Phi(x,z) = \begin{pmatrix} 1 & {q(x)}/{q(0)}\\0 &{2iz}/{q(0)} \end{pmatrix} (I+O(z^{-1}))\mathring B(z),
\ee
where $\mathring B(z)$ is defined in \eqref{B} with all the $\mu_j$s replaced by the new Dirichlet spectrum. 
But since the leading-order term in \eqref{asymhatPhi} contains the potential $q(x)$, 
which is precisely the unknown for the inverse problem,
the matrix $\mathring\Phi(x,z)$ obtained from the standard Dirichlet BC~\eqref{e:BC_Dir_no} 
cannot be used to construct an effective RHP.

\subsection{Explicit solution of the inverse problem for genus-zero potentials}
\label{s:genus0}

As a simple illustration of the formalism presented in sections~\ref{s:asymptotics} and~\ref{s:inverse}, 
here we present in detail the direct and inverse spectral theory for genus-zero potentials,
which also serves as an example of the fact that only one set of Dirichlet eigenvalues is needed for the inverse problem.

Let $q(x) = A\,\e^{\i\alpha}$, with $\alpha\in\Real$ and where we can take $A>0$ without loss of generality, 
owing to the scale and phase invariances of the NLS equation.
Since $q(x)$ is independent of $x$, the scattering problem can be solved exactly.
Let us choose the eigenvector matrix of $X = - \i z\sigma_3 + Q$ as
\be
W(z) = I - \i\sigma_3Q/(z+\lambda),
\ee
so that $X\,W = W\,(-\i\lambda)\sigma_3$, 
where $\lambda = \lambda(z) = (z^2-A^2)^{1/2}$.
Explicitly, we define the complex square root so that:
(i) its branch cut is $[-A,A]$;
(ii) $(z^2-A^2)^{1/2}>0$ for all $z\in\Real\setminus[-A,A]$;
(iii) on the cut, $(z^2-A^2)^{1/2}$ is continuous from above.
We then have
\bse
\begin{gather}
Y(x,z) = W(z)\,\e^{-\i\lambda x \sigma_3}W^{-1}(z)\,,
\label{e:Ygenus0}
\\
M(z) = Y(L,z) = 
\bm
\begin{pmatrix}
  \frac{\e^{-\i\l L}(\l+z)\left(A^2-\e^{2\i\l L}(z-\l)^2\right)}{2 A^2 \l}  & \frac{\e^{\i\alpha}A\sin(\l L)}{\l} \\ 
  \frac{\e^{-\i\alpha}A\sin(\l L)}{\l} & -\frac{\e^{-\i\l L}(\l+z)\left((z-\l)^2-A^2\e^{2\i\l L}\right)}{2 A^2 \l}
\end{pmatrix},
\em
\end{gather}
\ese
implying
$\D(z) = \cos(\lambda L)$.
Therefore, the Lax spectrum is the set of values of $z\in\Complex$ such that $\lambda\in\Real$, i.e.,
$\Sigma(\L) = (-\infty,-A)\cup(A,\infty)$, which implies that there is only one spectral gap.
Moreover, 
\bse
\begin{gather}
	\~y_{11}(L,z) = \cos(\l L)+\frac{\sin(\l L)}{\l}A\cos\alpha,
	\quad
	\~y_{12}(L,z) = \frac{\sin(\lambda L)}\lambda\,(z+ A\sin\alpha)\,,
	\\
	\~y_{21}(L,z) = \frac{\sin(\lambda L)}\lambda\,( A\sin\alpha-z)\,,
	\quad
	\~y_{22}(L,z)=\cos(\l L) - \frac{\sin(\l L)}\l A\,\cos\alpha\,.
\end{gather}
\ese
\bm It is then clear that the Dirichlet eigenvalues are the points $z_n$ such that $\l L=n\pi$ for $n\in\Integer\setminus\{0\}$. 
In addition, there is \em
also a single Dirichlet eigenvalue at $\mu_0 = - A\,\sin\alpha$.
For simplicity we will drop the subscript ``0'' below.
Moreover, 
$\rho(z) = \e^{\i\lambda L}$.
Finally, it is straightforward to verify that 
$\~y_{22}(L,\mu) = \e^{-AL\cos\alpha}$,
and that $\rho(\mu) = \e^{-AL|\cos\alpha|}$.
Therefore we have that, for all $n\in\Integer$,
$\nu_o = 1$ when $\alpha \in ((2n-\half)\pi,(2n+\half)\pi)$,
$\nu_o = -1$ when $\alpha \in ((2n+\half)\pi,(2n+\frac32)\pi)$
and $\nu_o = 0$ when $\alpha = (n+\frac12)\,\pi$.

Suppose $\nu_o = -1$, which implies $\alpha\in(\frac{\pi}{2},\frac{3\pi}{2})$.  
The cases $\nu_o = 0$ and $\nu_o=1$ are entirely analogous.
\bm 
Note that $-2\i\l(\mu)\,\e^{-\i\l(\mu)x} = 2 A \cos\alpha \big(\sinh(A x \cos\alpha)-\cosh(A x \cos\alpha)\big)$.
We also have 
$f^-(z) = \mu-z$, $f^+(z) = 1$ and $f^0(z) = \~y_{12}(L,z)/(f^-(z)f^+(z))=-\sin(\l L)/\l$,
\em
which imply
\bse
\begin{gather}
B(z) = \begin{cases}
	\displaystyle \bm \i b(z) \em \diag(\mu-z,1),&z\in\Complex^+,
	\\[0ex]
	\displaystyle \bm b(z) \em \diag(1,\mu-z),&z\in\Complex^-,
\end{cases}
\label{e:Bgenus0}
\end{gather}
\bm where
\begin{gather}
b(z)=\frac{\e^{\i\pi/4}}{(z^2-A^2)^{1/4}}\,.
\end{gather}
\em
\ese
\bm
For all $z\in \Sigma(\L)$, we have $\rho_+(z)=\rho_-^{-1}(z)$. 
Recall that for $z\in(-A,A)$, we have $\l_+(z)=-\l_-(z)$, where as usual the subscripts $\pm$ denote the non-tangential limits.
For $z\in\R$, from \eqref{e:defPsi} and \eqref{BF}, there is a switch between the first and second columns of $\Psi$ in different half planes, 
which cancels the jump of $\rho$ as $z\in\Sigma(\L)$, implying that $\Psi_+(x,z) = \Psi_-(x,z)$ for all $z\in\Sigma(\L)$, and $\Psi_+(x,z) = \Psi_-(x,z) \sigma_1$ for all $z\in(-A,A)$. 
Thus, we obtain the jump matrix as
\be\label{VR}
V(x,z) = \Phi^{-1}_-(x,z)\Phi_+(x,z) = 
\begin{cases}
    \e^{-\i zx\hat{\sigma}_3}(B_-^{-1}B_+)
  =  \i  \frac{ b_+(z) }{ b_-(z) } 
      \begin{pmatrix} {\mu-z} & 0\\ 0 & \frac{1}{\mu-z} \end{pmatrix}\,,
      \qquad z\in(-\infty,-A)\cup(A,\infty)\,,\\
       \e^{-\i zx\hat{\sigma}_3}(B_-^{-1}\sigma_1B_+)
  =  \i \frac{ b_+(z) }{ b_-(z) }\e^{-\i zx\hat{\sigma}_3}  \sigma_1\,,
      \qquad z\in(-A,A)\,.\\
    \end{cases}
\ee
From the above definition of $\lambda(z)$, we also have
\vspace*{-1ex}
\be\label{4throotjump}
\frac{ b_+(z) }{ b_-(z) } = \begin{cases}
    -\i, & z\in(-\infty,-A)\,,\\
    -1, & z\in (-A,A)\,,\\
    \i, & z\in(A,\infty)\,.
\end{cases}
\ee
\em
Hence, the $2\times2$ matrix-valued function $\Phi(x,z)$ defined by~\eqref{Phi} solves the following RHP:
\begin{RHP}
\label{RHPgenus0}
Find a $2\times2$ matrix-valued function $\Phi(x,z)$ such that
\vspace*{-1ex}
\begin{enumerate}
\advance\itemsep-4pt
\item 
		$\Phi(x,z)$ is a holomorphic function of $z$ for $z\in\C\setminus\R$.
\item 
		$\Phi^\pm(x,z)$ are continuous functions of $z$ for $z\in\R\setminus\{A, -A\}$, and have at worst quartic root singularities on $\{A,-A\}$.
		\item 
		$\Phi^\pm(x,z)$ satisfies the jump relation 
		\be
		\Phi^+(x,z)=\Phi^-(x,z)V(x,z),\qquad z\in\Real\,,
		\ee
        with 
    \be
    \label{e:Vg0}
    V(z)=\begin{cases}
	\displaystyle\begin{pmatrix} \mu-z & 0 \\ 0 &{1}/({\mu-z})\end{pmatrix}, &z\in(-\infty,-A),
	\\[2ex]
	\begin{pmatrix} 0 & -\i\e^{-2\i zx} \\ -\i\e^{2\i zx} & 0 \end{pmatrix},&z\in(-A,A),
	\\[2ex]
	\displaystyle\begin{pmatrix}z-\mu & 0 \\ 0 &1/(z-\mu)\end{pmatrix}, &z\in(A,\infty).
    \end{cases}
    \ee
\item 
    $\Phi(x,z)$ has the asymptoic behavior $\Phi(x,z) = U\,(I+O(1/z))\,B(z)$ as $z\to\infty$,
        with $U$ as in~\eqref{e:Udef} and $B(z)$ as in~\eqref{e:Bgenus0}.
\item 
		There exist positive constants $c$ and $M$ such that $|\phi_{ij}(x,z)|\leq M\e^{c|z|^2}$ for all $z\in \mathcal{D}$.
	\end{enumerate}
\end{RHP}

\bm
In the special case $\alpha= s\pi/2$, with $s=\pm1$,
the potential $q(x) = s\i A$ is purely imaginary, and the above formalism simplifies considerably.
Indeed, in this case, the movable Dirichlet eigenvalue is $\mu_0 = \mp A$.
Thus, $f^0(z) = \~y_{12}(L,z)$ and
$f^\pm(z)\equiv1$, 
implying $b(z) = \e^{i\pi/4} [(z + s A)/(z - s A)]^{1/4}$.
The jump matrix in the RHP~\ref{RHPgenus0} then simplifies to
\be
V(z) = \begin{pmatrix} 0 & -s\i\,\e^{-2\i zx} \\ -s\i\,\e^{2\i zx}& 0 \end{pmatrix}, \qquad z\in(- A, A)\,,
\ee
while $V(z) \equiv I$ for $z\in\Real$\,.
In this case, it is easy to solve the RHP explicitly, by employing the transformation
\be
\Phi^{(2)}(x,z) = \e^{-\i\pi/4} U^{-1}\,\Phi(x,z) \,\e^{\i (\l-z)x\sigma_3}\,.
\ee
motivated by the fact that $\lim_{z\to\infty} b(z) =\e^{\i\pi/4}$.
Then, $\Phi^{(2)}(x,z)$ satisfies a RHP with jump matrix $-\i\sigma_1$ on $(-A,A)$,
quartic root singularities at $z=\pm A$ and normalization $\lim_{z\to\infty}\Phi^{(2)}(x,z) = I$. 
And the solution of this RHP can be immediately written as (e.g., see~\cite{Deift1998})
\be
\Phi^{(2)}(z)=\frac1{2}\begin{pmatrix}
    \gamma(z) + \gamma^{-1}(z) & \gamma^{-1}(z)-\gamma(z)\\
    \gamma^{-1}(z)-\gamma(z) & \gamma(z) + \gamma^{-1}(z)
\end{pmatrix},
\ee
where $\gamma(z) = [(z-sA)/(z+sA)]^{1/4}$.
Then the reconstruction formula yields 
\be
q(x) = \lim_{z\to\infty}\i z [\sigma_3,\Phi^{(2)}(x,z)]=s\i A\,.
\ee
\em

\addcontentsline{toc}{section}{References}
\makeatletter
\def\@biblabel#1{#1.}
\def\doibase{http://dx.doi.org/}
\def\reftitle#1{``#1''}
\def\booktitle#1{\textit{#1}}
\def\href#1#2{#2}
\makeatother
\small

\end{document}